\theoremstyle{plain}
\newtheorem{theorem}{Theorem}[section]
\newtheorem{lemma}[theorem]{Lemma}
\newtheorem{proposition}[theorem]{Proposition}
\newtheorem{corollary}[theorem]{Corollary}
\theoremstyle{definition}
\newtheorem{definition}[theorem]{Definition}
\newtheorem{example}[theorem]{Example}
\newtheorem{remark}[theorem]{Remark}
\def\ss{\smallskip}
\def\be{\begin{enumerate}}
\def\ee{\end{enumerate}}
\def\bi{\begin{itemize}}
\def\ei{\end{itemize}}
\newcommand{\Z}{\mathbb{Z}} % integral
\newcommand{\Q}{\mathbb{Q}} % rational
\newcommand{\R}{\mathbb{R}} % real
\newcommand{\s}[1]{\mathbb{S}^{#1}} % the n-sphere
\newcommand{\cA}{\mathcal{A}} 
\newcommand{\cB}{\mathcal{B}} 
\newcommand{\cC}{\mathcal{C}}
\newcommand{\cD}{\mathcal{D}}
\newcommand{\cH}{\mathcal{H}}
\newcommand{\cI}{\mathcal{I}}
\newcommand{\cJ}{\mathcal{J}}
\newcommand{\cM}{\mathcal{M}}
\newcommand{\cN}{\mathcal{N}}
\newcommand{\cP}{\mathcal{P}}
\newcommand{\cQ}{\mathcal{Q}} 
\newcommand{\cR}{\mathcal{R}}
\newcommand{\cS}{\mathcal{S}}
\newcommand{\cU}{\mathcal{U}}
\newcommand{\cV}{\mathcal{V}}
\newcommand{\cZ}{\mathcal{Z}}
\renewcommand{\t}{^t} % transpose
\newcommand{\F}{F} % free (finitely generated) abelian group
\renewcommand{\mod}{\mathrm{mod}\ } %modulo
\newcommand{\ie}{{\it i.e. }} %id est
\newcommand{\eg}{{\it e.g. }} %exempli gratia
\newcommand{\id}{1}%the identity
\newcommand{\rk}{\mathrm{rank}\ } % rank
\newcommand{\tor}{\mathrm{Tor}\ } % torsion subgroup
\newcommand{\End}{\mathrm{End}\ } % Endomorphism group
\newcommand{\Aut}{\mathrm{Aut}\ } % Automorphism group
\newcommand{\Sp}{\mathrm{Sp}} %symplectic group
\newcommand{\genus}{\mathrm{genus}\ } %genus
\newcommand{\Ybar}{\bar{Y}}
\newcommand{\Nbar}{\bar{N}}
\newcommand{\Fbar}{\bar{F}}
\newcommand{\Bbar}{\bar{B}}
\newcommand{\T}{\mathsf{T}} %\Gamma_0 (1/2^k)
\newcommand{\U}{\mathsf{U}} %\Gamma_0 (\cI/2^k)
\newcommand{\N}{\mathbf{N}} % new solution number
\newcommand{\tr}{\mathrm{Tr}} % trace
\newcommand{\diag}{\mathrm{Diag}} % diagonal matrix
\newcommand{\Hom}{\mathrm{Hom}} % Hom
\newcommand{\leg}[2]{{\left(\frac{#1}{#2}\right)}} % Legendre symbol
\newcommand{\units}{\mathbb{U}} % group of units in \Z_\tau
\newcommand{\vectortwoone}[2]{
\left(\begin{smallmatrix}
#1 \\ #2
\end{smallmatrix}\right)}
\newcommand{\SL}{\mathrm{SL}} % Special Linear group
\newcommand{\GL}{\mathrm{GL}} % Linear Group
\newcommand{\mattwotwo}[4]{
\left(\begin{smallmatrix}
#1 & #2 \\ #3 & #4
\end{smallmatrix}\right)}
\newcommand{\bmattwotwo}[4]{
\left(\begin{matrix}
#1 & #2 \\ #3 & #4
\end{matrix}\right)}
\newcommand{\bvectortwoone}[2]{
\left(\begin{matrix}
#1 \\ #2
\end{matrix}\right)}
\newcommand\Ab{\mbox{ab}}
\newcommand\Th{\mbox{th}}
\begin{document}
\pagestyle{myheadings}
\author{Joan S. Birman\footnote{Supported in part by NSF Grant  DMS-040558}, \ Dennis Johnson,\ and Andrew Putman}

\title{Symplectic Heegaard splittings and linked abelian groups}
\date{04/14/08}
%%%%%%%%%%%%%%

\maketitle  
\newpage
\tableofcontents

\flushleft
\setlength{\parskip}{2ex plus 0.5ex minus 0.2ex}
%%%%%%%%%%%%%%%%%%%%%%%%%%%%%%%%%%%%%%%%%%%%%%%%%
%%%%%%%%%%%%%%%%%%%%%%%%%%%%%%%%%%%%%%%%%%%%%%%%%
%%%%%%%%%%%%%%%%%%%%%%%%%%%%%%%%%%%%%%%%%%%%%%%%%
\section{Introduction}
\label{S:introduction}
%%%%%%%%%%%%%%%%%%%%%%%%%%%%%%%%%%%%%%%%%%%%%%%%%
%%%%%%%%%%%%%%%%%%%%%%%%%%%%%%%%%%%%%%%%%%%%%%%%%
%%%%%%%%%%%%%%%%%%%%%%%%%%%%%%%%%%%%%%%%%%%%%%%%%
\subsection{The Johnson-Morita filtration of the mapping class group}
\label{SS:the Johnson-Morita filtration of the mapping class group}
Let $M_g$ be a closed oriented 2-manifold of genus $g$ and $\tilde{\Gamma}_g$ be its mapping class group, that is, the group
of isotopy classes of orientation-preserving diffeomorphisms of $M_g$.  Also, let $\pi = \pi_1(M_g)$, and denote by $\pi^{(k)}$ the 
$k^{\Th}$ term in the lower central series of $\pi$, i.e.\ $\pi^{(1)} = \pi$ and $\pi^{(k+1)} = [\pi,\pi^{(k)}]$.  
Then $\tilde{\Gamma}_g$ acts on the quotient groups $\pi/\pi^{(k)}$, and that action yields a representation 
$\rho^k:\tilde{\Gamma}_g \to \Gamma_g^k$, where $\Gamma_g^k < \Aut(\pi/\pi^{(k)})$.  
With these conventions, $\rho^1$ is the trivial representation and 
$\rho^2$ is the symplectic representation.  The kernels of these representations make what has been called the `Johnson filtration' 
of $\tilde{\Gamma}_g$, because they were studied by Johnson in \cite{Johnson1983b,Johnson1985}. Subsequently they were developed by Morita 
in a series of papers \cite{Morita1993a,Morita1993b,Morita1996,Morita2001}.  In particular, Morita studied the extensions of Johnson's homomorphisms to $\tilde{\Gamma}$, and so we refer to our representations as the {\it Johnson-Morita} filtration of $\tilde{\Gamma}$. 

Our work in this article is motivated by the case when $M_g$ is a Heegaard surface in a 3-manifold $W$ and elements of 
$\tilde{\Gamma}_g$ are `gluing maps' for the Heegaard splitting.  One may then study $W$ by investigating the image
under the maps $\rho^k$ of the set of all possible gluing maps that yield $W$.  Among the many papers which relate to 
this approach to 3-manifold topology are those of Birman \cite{Birman1975}, 
Birman and Craggs \cite{BC}, Brendle and Farb \cite{BF}, Broaddus, Farb and Putman \cite{BFP}, Cochran, Gerges and Orr \cite{CGO}, 
Garoufalidis and Levine \cite{GL}, Johnson \cite{Johnson1980, Johnson1980b, Johnson1983, Johnson1985}, Montesinos and Safont \cite{MS}, Morita \cite{Morita1991}, 
Pitsch \cite{Pitsch}, \cite{Pitsch2}, and Reidemeister \cite{R2}.  The papers just referenced relate to the cases $k=$1-4 
in the infinite sequence of actions of $\tilde{\Gamma}_g$ on the quotient groups of the lower central series, but the 
possibility is there to study deeper invariants of $W$, obtainable in principle from deeper quotients of the lower central series.   
The foundations for such deeper studies have been laid in the work of  Morita\cite{Morita1993a, Morita1993b}, who introduced the 
idea of studying higher representations via crossed homomorphisms.  It was proved by Day \cite{Day} that the crossed product 
structure discovered by Morita in the cases $k=3$ and $4$ can be generalized to all $k$, enabling one in principle to 
separate out, at each level, the new contributions.

The invariants of 3-manifolds that can be obtained in this way are known to be closely related to finite type invariants of 
3-manifolds \cite{CM,GL}, although as yet this approach to finite type invariants opens up many more questions than answers.
For example, it is known that the Rochlin and Casson invariants of 3-manifolds appear in this setting at levels 3 and 4, respectively. 
It is also known that in general there are finitely many linearly independent finite-type invariants of 3-manifolds at each fixed order 
(or, in our setting, fixed level) $k$, yet at this moment no more than one topological invariant has been encountered
at any level.

The simplest non-trivial example of the program mentioned above is the case $k=2$.  Here $\tilde{\Gamma}_g$ acts on 
$H_1(M_g) = \pi/[\pi,\pi]$.  The information about $W$ that is encoded in $\rho^2(\phi)$, where $\phi \in {\rm Diff}^+(M_g)$ 
is the Heegaard gluing map for a Heegaard splitting of $W$ of minimum genus, together with the images under 
$\rho^2$ of the Heegaard gluing maps of all `stabilizations' of the given splitting, is what we have in mind when we refer 
to a `symplectic Heegaard splitting'.    

The purpose of this article is to review the literature on symplectic Heegaard splittings of 3-manifolds and the closely related 
literature on linked abelian groups, with the goal of describing what we know, as completely and explicitly and efficiently as 
possible, in a form in which we hope will be useful for future work.  At the same time, we will add a few new things that we 
learned in the process. That is the broad outline of what the reader can expect to find in the pages that follow.   

This article dates back to 1989.  At that time, the first two authors had discussed the first author's invariant of Heegaard splittings, 
in \cite{Birman1975}, and had succeeded in proving three new facts : first, that the invariant in \cite{Birman1975} could be 
improved in a small way; second, that the improved invariant was essentially the only invariant of Heegaard splittings that 
could be obtained from a symplectic Heegaard splitting; and third, that the index of stabilization of a symplectic Heegaard 
splitting is one.  That work was set aside, in partially completed form, to gather dust in a filing cabinet.  An early version 
of this paper had, however, been shared with the authors of \cite{MS} (and was referenced and acknowledged in \cite{MS}).  Alas, 
it took us 18 years to prepare our work for publication!  Our work was resurrected, tentatively,  at roughly the time of the conference on 
{\it Groups of Diffeomorphisms} that was held  in Tokyo September 11-15,  2006.  As it turned out, the subject still 
seemed to be relevant, and since a conference proceedings was planned, we decided to update it and complete it, in the hope 
that it might still be useful to current workers in the area.   When that decision was under discussion,  the manuscript was shared with the third author, who contributed many excellent suggestions, and also answered a question posed by the first author (see $\S$\ref{S:a remark on higher invariants}). Soon after that, he became a coauthor. 

%%%%%%%%%%%%%%%%%%%%%%%%%%%%%%%%%%%%%%%%%%%%%%%%%%%
%%%%%%%%%%%%%%%%%%%%%%%%%%%%%%%%%%%%%%%%%%%%%%%%%%%
\subsection{Heegaard splittings of 3-manifolds}
\label{SS:Heegaard splittings of 3-manifolds}
%%%%%%%%%%%%%%%%%%%%%%%%%%%%%%%%%%%%%%%%%%%%%%%%%%%
%%%%%%%%%%%%%%%%%%%%%%%%%%%%%%%%%%%%%%%%%%%%%%%%%%%

Let $W$ be a closed, orientable 3-dimensional manifold. A {\it Heegaard surface} in $W$ is a closed, orientable surface $M$ of genus $g \geqslant 0$ embedded in $W$ which divides $W$ into homeomorphic handlebodies $N \cup \bar{N}$, where $N \cap \bar{N} = \partial N = \partial \bar{N} = M$. For example, if $W$ is the 3-sphere 
$$\s{3} = \left\{ (x_1, x_2, x_3, x_4) \in \R^4 \ \vert \ x_1^2 +x_2^2 + x_3^2 +x_4^2 = 1 \right\},$$
then the torus
$$
M = \left\{ (x_1, x_2, x_3, x_4) \in \s{3} \ \vert \ x_1^2 +x_2^2 = x_3^2 +x_4^2 = \frac{1}{2} \right\}
$$
is a Heegaard surface. 

\begin{proposition}
\label{P:every W admits a Heegaard splitting}
Every closed orientable 3-manifold $W$ admits Heegaard splittings.
\end{proposition}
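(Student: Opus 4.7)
The plan is to build a Heegaard splitting from a triangulation of $W$, using the dual cell decomposition. By Moise's theorem, every closed 3-manifold admits a (piecewise-linear) triangulation $\mathcal{T}$; alternatively, one can invoke the fact that 3-manifolds carry smooth structures and use a self-indexing Morse function whose level set separating index-1 from index-2 critical points is the Heegaard surface. I will follow the triangulation route since it is the most elementary.

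First, I would fix a triangulation $\mathcal{T}$ of $W$ and let $\mathcal{T}^*$ denote the dual cell decomposition, whose vertices are the barycenters of the 3-simplices of $\mathcal{T}$ and whose edges cross the 2-simplices of $\mathcal{T}$ transversally. Let $N$ be a closed regular neighborhood of the 1-skeleton $\mathcal{T}^{(1)}$ in $W$, and let $\bar{N}$ be the closure of $W\setminus N$. Since $N$ deformation retracts onto the graph $\mathcal{T}^{(1)}$, it is a handlebody; its genus $g$ equals the rank of $H_1(\mathcal{T}^{(1)};\Z)$, i.e.\ one plus the number of edges minus the number of vertices of $\mathcal{T}$ (for a connected complex).

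Second, I would verify that $\bar{N}$ is also a handlebody by identifying it with a regular neighborhood of the dual 1-skeleton $(\mathcal{T}^*)^{(1)}$. The point is that $W$ can be reconstructed by thickening $\mathcal{T}^{(1)}$ and $(\mathcal{T}^*)^{(1)}$ and gluing along their common boundary: the 2-cells of $\mathcal{T}^*$ (which are the dual polygons to edges of $\mathcal{T}$) fill up $\bar{N}$ together with a neighborhood of $(\mathcal{T}^*)^{(1)}$, and $\bar{N}$ collapses onto $(\mathcal{T}^*)^{(1)}$. Hence $\bar{N}$ is a handlebody, and an Euler characteristic computation shows that its genus equals that of $N$, as it must since $\partial N = \partial \bar{N} = M$ is a single closed orientable surface.

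The main obstacle is the rigorous verification that $\bar{N}$ deformation retracts onto $(\mathcal{T}^*)^{(1)}$. This requires a careful local analysis of the regular neighborhood $N$ of $\mathcal{T}^{(1)}$ inside each 3-simplex $\sigma$ of $\mathcal{T}$: one checks that $\sigma\setminus\mathrm{int}(N)$ is a 3-ball meeting $(\mathcal{T}^*)^{(1)}\cap\sigma$ in a single arc through the barycenter, and that these local pieces assemble (across the 2-simplices of $\mathcal{T}$) into a regular neighborhood of the whole dual graph. Once that local-to-global check is in place, the rest of the argument is a routine application of the standard fact that a regular neighborhood of a finite graph in an orientable 3-manifold is a handlebody.
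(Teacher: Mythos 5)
Your argument is correct and is the standard one: the paper itself does not prove this proposition but defers to Scharlemann's survey, where exactly this triangulation/dual $1$-skeleton argument (equivalently, the self-indexing Morse function argument you mention in passing) is the proof given. The local-to-global check you flag — that the closure of the complement of a regular neighborhood of $\mathcal{T}^{(1)}$ collapses onto the dual $1$-skeleton and is therefore a handlebody of the same genus — is the routine part of that standard proof, and your outline of it is fine.
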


See \cite{Scharlemann}, for example, for a proof.  One will also find there related notions of Heegaard splittings of non-orientable 
3-manifolds, of open 3-manifolds such as knot complements, and of 3-manifolds with boundary, and also an excellent introduction to 
the topic and its many open problems from the viewpoint of geometric topology. 

Since any Heegaard splitting clearly gives rise to others under homeomorphisms of $W$, an equivalence relation is in order.  

\begin{definition} \label{D:equivalent Heegaard splittings}  \rm  Assume that $W$ is an oriented 3-manifold, and write $W = N \cup \bar{N} = N' \cup \bar{N}'$.  These two Heegaard 
splittings will be said to be {\it equivalent} if there is a homeomorphism $F: W \to W$ which restricts to homeomorphisms 
$f: N \to N'$ and $\bar{f} : \bar{N} \to \bar{N}'$.  Observe that our particular way of defining equivalent Heegaard splittings involve a choice of the initial handlebody $N$ and a choice of an orientation on $W$.   The {\it genus} of the splitting $W = N \cup \bar{N}$ is the genus of $N$.  $||$
\end{definition}
There are 3-manifolds and even prime 3-manifolds which admit more 
than one equivalence class of splittings (for example, see \cite{{Eng}, {BGM}}), there are also 3-manifolds which admit 
unique equivalence classes of splittings of minimal genus (\eg lens spaces and the 3-torus $\s{1}\times \s{1} \times \s{1}$),
and there are also 3-manifolds which admit unique equivalence classes of Heegaard splittings of every genus.  
A very fundamental example was studied by Waldhausen in \cite{W}, who proved:
\begin{theorem}[{\cite{W}}] \label{T:Waldhausen on S3}
Any two Heegaard splittings of the same, but arbitrary, genus of the 3-sphere $\s{3}$ are equivalent.
\end{theorem}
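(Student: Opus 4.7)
The plan is to induct on the genus $g$ of the splitting. For the base case $g=0$, both handlebodies are $3$-balls and the Heegaard surface is a $2$-sphere; equivalence of all such splittings follows from the classical Alexander theorem, which says that every smoothly embedded $2$-sphere in $\s{3}$ bounds a $3$-ball on each side and that any two such spheres are ambient isotopic. Assembling a homeomorphism $F:\s{3}\to\s{3}$ sending one splitting to another is then straightforward from the cone constructions on each side.

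For $g=1$, I would argue directly: a genus $1$ splitting exhibits $\s{3}$ as the union of two solid tori glued along their boundary torus. The gluing data is recorded by the image on $\partial\bar{N}$ of a meridian of $N$, and the requirement that the glued manifold be $\s{3}$ (checked by, e.g., computing $H_1$) forces this image to be a longitude of $\bar{N}$. Together with the fact that the full mapping class group of the torus is realized by self-homeomorphisms of each solid torus, this pins down the gluing up to equivalence.

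For the inductive step $g\geq 2$, the plan is to show that every genus $g$ Heegaard splitting of $\s{3}$ is a stabilization of some genus $g-1$ splitting; combined with the inductive hypothesis and the fact that stabilization is well defined on equivalence classes, this gives the theorem. To destabilize, I would produce a \emph{cancelling pair} of compressing disks $D\subset N$ and $E\subset \bar{N}$ meeting transversely in a single point. A regular neighborhood $V$ of $D\cup E$ is then a $3$-ball meeting the Heegaard surface $M$ in a once-punctured torus, and surgering $M$ along $\partial V$ yields a Heegaard surface of genus $g-1$ whose induced splitting, when stabilized once, recovers the original one.

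The main obstacle is exactly the production of the cancelling pair $(D,E)$: this is the step that uses the topology of $\s{3}$ in an essential way, since the existence of compressing disks on either side alone follows only from the definition of a handlebody. Starting from arbitrary complete meridian systems $\{D_1,\dots,D_g\}\subset N$ and $\{E_1,\dots,E_g\}\subset\bar{N}$ placed in general position, I would analyze the arcs and closed curves of $(\bigcup_i D_i)\cap(\bigcup_j E_j)$ on $M$, and apply innermost-disk and outermost-arc arguments, together with the irreducibility of $\s{3}$ (to eliminate closed intersection curves) and the triviality of $H_1(\s{3})$ (to constrain the combinatorics of the remaining arcs), until the intersection pattern simplifies to one in which some $D_i$ meets some $E_j$ in exactly one point. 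This combinatorial reduction on disk systems is the technical heart of Waldhausen's argument, and is where I expect the real difficulty to lie.
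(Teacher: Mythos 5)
The paper does not prove this statement at all: it is quoted as a classical theorem with a citation to Waldhausen's 1968 paper, so there is no internal argument to compare yours against. Judged on its own, your outline has the right global shape --- the theorem is indeed equivalent to the base cases $g=0,1$ (Alexander's theorem, and the solid-torus/lens-space analysis you describe) together with the assertion that every genus $g\geq 2$ splitting of $\s{3}$ destabilizes, and the bookkeeping with stabilization being well defined on equivalence classes is fine. But the entire content of Waldhausen's theorem is concentrated in the step you defer to the end, and the method you propose for it cannot work as stated.

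Concretely, you claim a cancelling pair $(D,E)$ can be produced from arbitrary complete meridian systems by innermost-disk and outermost-arc reductions, using only irreducibility of the ambient manifold and the vanishing of $H_1$. Those two hypotheses are also satisfied by the Poincar\'e homology sphere $\Sigma(2,3,5)$, which is irreducible, has trivial first homology, and has an irreducible (non-stabilized) Heegaard splitting of genus $2$; since its Heegaard genus is $2$, that splitting cannot be destabilized to genus $1$. Hence no argument whose only input is irreducibility plus $H_1=0$ can establish the existence of a cancelling pair for genus $\geq 2$ splittings, and your inductive step collapses exactly at its technical heart. Any correct proof must use the full strength of the ambient manifold being $\s{3}$ (simple connectivity, or a sweepout/thin-position style argument as in Waldhausen's original paper or the later Scharlemann--Thompson proof), and such arguments are substantially more elaborate than the intersection-curve simplification you sketch. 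Until you supply that step, what you have is a reduction of the theorem to its well-known hard core, not a proof.
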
 
After that important result became known, other manifolds were investigated. At the present writing, it seems correct to say that 
`most' 3-manifolds admit exactly one equivalence class of minimal genus Heegaard splittings.  On the other hand, many examples 
are known of manifolds that admit more than one equivalence class of splittings.  See, for example, \cite{M-S}, where all 
the minimal genus Heegaard splittings of certain Seifert fiber spaces are determined.  

If a three-manifold admits a Heegaard splitting of genus $g$, then it also admits of one genus $g'$ for every $g'>g$.  To see why 
this is the case, let $N_g \cup \bar{N}_g$ be a Heegaard splitting of $W$ of genus $g$, and let $T_1 \cup \bar{T}_1$ be a 
Heegaard splitting of the 3-sphere $\s{3}$ of genus 1. Remove a 3-ball from $W$ and a 3-ball from $\s{3}$, choosing these 3-balls 
so that they meet the respective Heegaard surfaces in discs.  Using these 3-balls to form the connected sum $W\# \s{3}$, 
we obtain a new Heegaard splitting $(N_g\# T_1) \cup (\bar{N}_g\# \bar{T}_1)$ of $W\cong W\# \s{3}$ of genus $g+1$. 
This process is called {\it stabilizing} a Heegaard splitting.  Note that Theorem \ref{T:Waldhausen on S3} implies 
that the equivalence class of the new genus $g+1$ splitting is independent of the choice of $T_1$ and of $\bar{T}_1$, 
as subsets of $\s{3}$, since all splittings of $\s{3}$ of genus 1, indeed of any genus, are equivalent.  Iterating 
the procedure, we obtain splittings $(N_g\# T_1\# \cdots \# T_1) \cup (\bar{N}_g\# \bar{T}_1\# \cdots \# \bar{T}_1)$ of 
$W$ of each genus $g+k, k > 0$. 

Heegaard splittings of genus $g$ and $g'$  of a 3-manifold $W$ are said to be {\it stably equivalent} if they have equivalent 
stabilizations of some genus $g+k = g'+k'$.  In this regard, we have a classical result, proved in 1933 by Reidemeister \cite{R1}
and (simultaneously and independently) by James Singer \cite{Si}:
\begin{theorem}[\cite{R1}, \cite{Si}] 
\label{T:RS}
Any two Heegaard splittings of any closed, orientable 3-manifold $W$ are stably equivalent.
\end{theorem}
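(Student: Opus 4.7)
The plan is to follow the classical Singer strategy: associate a Heegaard splitting to every triangulation of $W$, invoke the fact that any two triangulations have a common stellar refinement, and then check that the individual stellar moves translate into stabilizations of the associated splittings.

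First I would establish that every Heegaard splitting is equivalent to one arising from a triangulation. Given a triangulation $T$ of $W$, let $K$ be its 1-skeleton and $K^{*}$ the 1-skeleton of the dual cell decomposition. A regular neighborhood $N = \cN(K)$ is a handlebody, and its complement $\bar{N} = \overline{W \setminus N}$ deformation retracts onto $K^{*}$ and is therefore also a handlebody; call the resulting splitting $\Sigma(T)$. Conversely, given any splitting $W = N \cup \bar{N}$, I would choose a 1-complex spine $L \subset N$ onto which $N$ collapses, thicken $L$ slightly so it lies in the interior of $N$, and extend to a triangulation $T$ of $W$ in which $L$ appears as (a subcomplex of) the 1-skeleton. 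After at most a few auxiliary bisections one can arrange that the 1-skeleton of $T$ is isotopic to $L$, so $\Sigma(T)$ is equivalent to the given splitting.

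Second I would invoke Alexander's theorem from PL topology: any two triangulations of a PL manifold have a common stellar subdivision, so any two are connected by a finite sequence of elementary stellar moves (coning a simplex from an interior point and its inverse). The remaining task is to check that a single stellar move changes $\Sigma(T)$ only by an equivalence or a stabilization.

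The main obstacle, and the core of the argument, is this local analysis of stellar moves. The decisive case is subdivision of a 3-simplex $\sigma$ by a central vertex $v$: adding $v$ introduces four new edges in the 1-skeleton, which thickens $N$ by attaching a small handlebody of genus one inside $\sigma$; symmetrically, the dual 1-skeleton gains one new vertex and one new edge inside $\sigma$, which amounts to attaching a canceling 2-handle in $\bar{N}$. The net effect is precisely a boundary connect-sum with a genus-one splitting of $\s{3}$, i.e.\ a stabilization. The moves that subdivide a 2-face or an edge can either be handled in the same spirit (tracking how $K$ and $K^{*}$ change in a neighborhood of the subdivided simplex) or reduced to the 3-simplex case by preceding them with harmless barycentric subdivisions. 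The delicate point throughout is bookkeeping: one has to verify that after each move the new 1-complex is still a spine of a handlebody and that the change of handlebody pair is genuinely a stabilization rather than some more exotic modification.

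With these pieces in place the theorem follows quickly. Given two splittings $W = N \cup \bar{N}$ and $W = N' \cup \bar{N}'$, realize them as $\Sigma(T)$ and $\Sigma(T')$ by the first step, pass to a common stellar refinement $T''$ of $T$ and $T'$, and conclude from the second and third steps that both $\Sigma(T)$ and $\Sigma(T')$ become equivalent to $\Sigma(T'')$ after a finite number of stabilizations, hence are stably equivalent to one another.
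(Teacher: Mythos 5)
The paper does not give a proof of this statement: Theorem~\ref{T:RS} is quoted as a classical 1933 result and attributed to Reidemeister~\cite{R1} and Singer~\cite{Si}, with no argument supplied. There is therefore no in-paper proof to compare against, and one can only evaluate your sketch on its own terms. Your outline is indeed the classical Singer-style triangulation argument, and its overall architecture --- associate a splitting $\Sigma(T)$ to each triangulation $T$ via the 1-skeleton and dual 1-skeleton, connect any two triangulations by stellar moves, and check that each move is a stabilization --- is a correct and standard route to the theorem.

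Two points in your write-up are looser than they should be. First, in Step 1 you claim that any given splitting $N \cup \bar N$ is \emph{equivalent} to some $\Sigma(T)$, arguing that you can arrange the 1-skeleton of $T$ to be isotopic to a genus-$g$ spine $L$ of $N$. But a triangulation of a closed 3-manifold always has a 1-skeleton with far more edges than vertices, so $\cN(K)$ has genus strictly larger than $g$ in all but degenerate cases; what you can actually arrange is that $K$ collapses to $L$ after removing finitely many tree-like pieces, which makes $\Sigma(T)$ a \emph{stabilization} of the given splitting rather than equivalent to it. This does not break the argument (stable equivalence is all the final paragraph needs), but the assertion as written is stronger than the construction delivers and the distinction should be made explicit. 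Second, in the local analysis of starring a 3-simplex $\sigma$ at an interior point $v$, the counts are off: one adds one new vertex and four new edges to $K$ (raising the genus of $\cN(K)$ by $4-1=3$), while the dual 1-skeleton replaces one dual vertex by four and adds six dual edges (raising the genus of $\cN(K^*)$ by $6-3=3$). So the move is a stabilization of index 3, not an attachment of ``a small handlebody of genus one'' paired with ``one new vertex and one new edge'' on the dual side as you state. The mismatch is harmless for the theorem, since any positive-index stabilization suffices, but a correct write-up must exhibit the genus changes matching on both sides and identify the summand with a genus-3 standard splitting of $\s{3}$; the bookkeeping you rightly flag as the delicate point is exactly where your sketch currently miscounts.
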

\begin{remark} \label{R:inequivalent Heegaard splittings} \rm  We distinguish two types of candidates for inequivalent minimum genus Heegaard splittings of a 3-manifold. The first (we call it {\it ordinary}) is always present: two splittings which differ in the choice of ordering of the two handlebodies, i.e. $N\cup \bar{N}$ in one case and $\bar{N}\cup N$ in the other. Two ordinary Heegaard splittings  may or may not be equivalent.  The second are {\it all examples which are not ordinary}, e.g. the `horizontal' and `vertical' Heegaard splittings of certain Seifert fibered spaces \cite{M-S}.  In view of the fact that Theorem~\ref{T:RS}
was proved in 1933, it seems remarkable that the following situation exists, as we write in 2008: 
\begin{itemize}
\item The only examples of inequivalent minimal genus Heegaard 
splittings of genus $g$ of the same 3-manifold which can be proved to require more than one stabilization before they become equivalent are ordinary examples;  
\item The discovery of the first ordinary examples which can be proved to require more than one stabilization was made in 2008  \cite{HTT}.   
\item While non-ordinary examples have been known for some time, at this writing there is no known pair which do not not become equivalent after a single stabilization. For example, the inequivalent minimal genus Heegaard splittings of Seifert fiber spaces which were studied in \cite{M-S} were proved 
in \cite{Schultens1996} to become equivalent after a single stabilization.   
\end{itemize} Note that ordinary  examples can be ruled out by a small change in the definition of equivalence, although we have chosen not to do so, because the situation as regards ordinary examples is still far from being understood.  $||$
\end{remark}
Keeping Remark \ref{R:inequivalent Heegaard splittings} in mind, we have several classical problems 
about Heegaard splittings:
\bi
\item How many stabilizations are needed before two inequivalent Heegaard splittings of a 3-manifold become equivalent, 
as they must because of Theorem~\ref{T:RS}?  Is there a uniform bound, which is independent of the choice of $W$ and 
of the Heegaard surface $\partial N = \partial\bar{N}$ in $W$?
\item Can we use {\it stabilized} Heegaard splittings to find topological invariants of 3-manifolds?
\ei
An example of a 3-manifold invariant which was discovered with the help of Heegaard splittings is Casson's invariant \cite{AM}.  

A Heegaard splitting of a 3-manifold $W$ is said to have {\it minimal genus} (or simply to be {\it minimal}) if there do not exist 
splittings of $W$ which have smaller genus.  Our second problem involves Heegaard splittings which are not stabilized.  Since it 
can happen that a Heegaard splitting of a 3-manifold $W$ is non-minimal in genus, but is not the stabilization of a Heegaard splitting 
of smaller genus (a complication which we wish to avoid), we assume from now on that wherever we consider {\it unstabilized} 
Heegaard splittings, we assume the genus to be minimal over all Heegaard splittings of the particular manifold.  This brings us to 
another problem:
\bi
\item Can we find invariants of {\it unstable} Heegaard splittings, and so reach a better understanding of the 
classification of Heegaard splittings?  
\ei 
Surprisingly, such invariants are very hard to come by, and little is known. 
%%%%%%%%%%%%%%%%%%%%%%%%%%%%%%%%%%%%%%%%%%%%%%%%%%%
%%%%%%%%%%%%%%%%%%%%%%%%%%%%%%%%%%%%%%%%%%%%%%%%%%% 
\subsection{Symplectic Heegaard splittings} 
 \label{SS:symplectic Heegaard splittings}
%%%%%%%%%%%%%%%%%%%%%%%%%%%%%%%%%%%%%%%%%%%%%%%%%%%
%%%%%%%%%%%%%%%%%%%%%%%%%%%%%%%%%%%%%%%%%%%%%%%%%%%

We begin by setting up notation that will be used throughout this paper.  We will use a standard model for a symplectic space and for 
the symplectic group $\Sp(2g,\Z)$.  Let $N_g$ be a handlebody.  Then $H_1(\partial N_g)$ is a free abelian group of rank $2g$. 
Thinking of it as a vector space, the free abelian group $H_1(N_g;\Z)$ is a subspace.  We choose as basis elements for the 
former the ordered array of homology classes of the loops $a_1,\dots,a_g, b_1,\dots,b_g$ which are depicted in 
Figure 1.  With our choices, the images of the $a_i$ under the inclusion map $\partial N_g \to N_g$ are a basis for 
$H_1(N_g;\Z)$.   
\begin{figure}[htpb!]
\centerline{\includegraphics[scale=1.2] {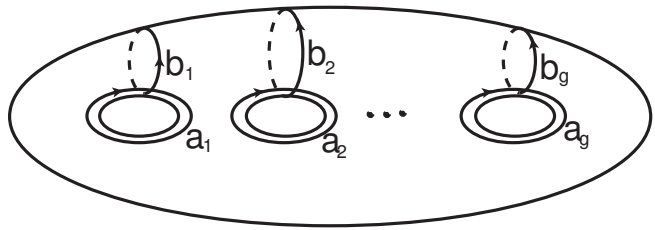}}
\caption{Curves representing a canonical basis for $H_1(\partial N_g)$}
\end{figure}
The algebraic intersection pairing $(\cdot,\cdot)$ defines a symplectic form on $H_1(\partial N_g;\Z)$, making it into a 
symplectic space.  The matrix of intersection numbers for our canonical basis is 
$\cJ = \bmattwotwo{0_g}{\cI_g}{-\cI_g}{0_g}$, where $0_g$ and $\cI_g$ are the $g \times g$ zero and identity matrices.  
\begin{definition} 
\label{D:symplectic constraints}
$\Sp(2g,\Z)$ is the group of all $2g\times 2g$ matrices $\cH = \bmattwotwo{\mathcal R}{\mathcal P}{\mathcal S}{\mathcal Q}$ over $\Z$ which satisfy
\begin{equation} 
\label{E:symplectic constraints-1}
\hat{\cH} \ \cJ \ \cH = \cJ
\end{equation}
where $\hat{\cH}$ denotes the transpose of $\cH$.
Hence
$\cH\in \Sp(2g,\Z)$ if and only if its $g \times g$ blocks $\cR, \cP, \cS, \cQ$ satisfy
\begin{equation}
\label{E:symplectic constraints-2}
 \hat{\cR} \cS, \hat{\cP} \cQ, \cR \hat{\cP} \text{ and $\cS \hat{\cQ}$ are symmetric,  and } 
\hat{\cR} \cQ - \hat{\cS} \cP =  \cR \hat{\cQ} - \cP \hat{\cS} = \mathcal I. 
\end{equation}   
Note that $\cH \in \Sp(2g,\Z)$ if and only if $\hat{\cH} \in \Sp(2g,\Z)$.  $\|$
\end{definition}
\begin{lemma}
\label{L:mapping class group maps onto Sp}
The group $\Gamma_g$ (i.e. the image of the mapping class group under $\rho^2$) coincides with $\Sp(2g,\Z)$.
\end{lemma}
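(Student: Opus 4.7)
The plan is to prove the two inclusions $\Gamma_g \subseteq \Sp(2g,\Z)$ and $\Sp(2g,\Z) \subseteq \Gamma_g$ separately.

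The forward inclusion is a formal consequence of the fact that every element of $\tilde{\Gamma}_g$ is represented by an orientation-preserving diffeomorphism of $M_g$, and such a diffeomorphism preserves the algebraic intersection pairing $(\cdot,\cdot)$ on $H_1(M_g;\Z)$. With respect to the symplectic basis of Figure 1, this says exactly that $\hat{\cH}\, \cJ \, \cH = \cJ$ for $\cH = \rho^2(\phi)$, which is the defining condition (\ref{E:symplectic constraints-1}).

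For surjectivity I would invoke the fact that $\Sp(2g,\Z)$ is generated by symplectic transvections $T_v(x) = x + (x,v)\,v$ as $v$ ranges over primitive vectors in $H_1(M_g;\Z)$, and then realize each such transvection as a Dehn twist. Concretely: if $\gamma$ is a non-separating simple closed curve on $M_g$ whose homology class is $v$, then a direct local computation (examining how the twist acts on an arc meeting $\gamma$ transversely) shows that the Dehn twist $T_\gamma$ induces $T_v$ on $H_1(M_g;\Z)$. The standard curves of Figure 1 already realize $v = a_i$ and $v = b_i$; for a general primitive class $v$ one either uses the classical fact that the mapping class group acts transitively on isotopy classes of non-separating simple closed curves (so every primitive class is so realized), or one produces such a curve directly in coordinates from the $a_i, b_i$ system.

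The main obstacle is the generation of $\Sp(2g,\Z)$ by primitive transvections. The cleanest approach is induction on $g$: given $\cH \in \Sp(2g,\Z)$, use the fact that the first column of $\cH$ is a primitive vector and apply a sequence of transvections to bring $\cH a_1$ to $a_1$; then, using the symplectic relations in (\ref{E:symplectic constraints-2}), further transvections bring $\cH b_1$ to $b_1$, after which $\cH$ decomposes as a direct sum with a symplectic matrix acting on the span of $a_2,\dots,a_g, b_2,\dots,b_g$. The base case $g=1$ is the classical generation of $\Sp(2,\Z) = \SL(2,\Z)$ by $\mattwotwo{1}{1}{0}{1}$ and $\mattwotwo{1}{0}{1}{1}$, which are precisely the transvections along $a_1$ and $b_1$. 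An equivalent route is to pick any of the standard finite generating sets of $\Sp(2g,\Z)$ (block matrices built from $\GL(g,\Z)$ and symmetric $g \times g$ blocks) and realize each generator by explicit Dehn twists about the $a_i$, $b_i$, and auxiliary curves joining consecutive handles.
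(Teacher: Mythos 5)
Your proof is correct and follows the same two-step strategy as the paper: diffeomorphisms preserve the intersection form (giving $\Gamma_g \subseteq \Sp(2g,\Z)$), and surjectivity follows from generation of $\Sp(2g,\Z)$ by symplectic transvections, each realized by a Dehn twist. The paper simply cites the transvection-generation fact as classical (with a pointer to Humphries), whereas you sketch a proof of it by induction on $g$; this added detail is welcome but not a departure in approach.
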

\begin{proof} The fact that elements of $\Gamma_g$ satisfy the constraints in (\ref{E:symplectic constraints-2}) comes from the fact 
that topological mappings preserve algebraic intersection numbers.  The fact that {\it every} symplectic matrix is in the image of 
$\rho^2$ can be proven by combining the classical fact that $\Sp(2g,\Z)$ is generated by symplectic transvections with the
fact that every such symplectic transvection is the image of a Dehn twist.  This fact was used by Humphries, in his 
famous paper \cite{Humphries}, to find a lower bound on the number of Dehn twists needed to generate the mapping class group.  
He used the known fact that $\Gamma_g$ cannot be generated by fewer than $2g+1$ transvections.  
\end{proof}
\begin{lemma}
\label{L:the subgroup Lambda}  Let $\Lambda_g$ be the subgroup of matrices in $\Gamma_g$ which are induced by topological 
mappings of $\partial N_g$ which extend to homeomorphisms of $N_g$ (the so-called {\it handlebody subgroup}).
Then  $\Lambda_g$ coincides with the subgroup of all elements in $\Gamma_g$ with a $g \times g$ block of zeros in the upper right corner. 
 \end{lemma}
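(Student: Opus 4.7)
The plan is to prove the two containments separately, with the easier containment coming from a naturality argument and the harder one from finding enough handlebody homeomorphisms to realize a set of generators.

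For the easier direction, suppose $\phi \in \text{Diff}^+(\partial N_g)$ extends to a homeomorphism $\tilde{\phi}$ of $N_g$. Naturality of $H_1$ applied to the inclusion $\partial N_g \hookrightarrow N_g$ gives a commutative square whose horizontal arrows are the map induced by inclusion. The kernel of this inclusion-induced map is precisely the Lagrangian $L := \langle [a_1],\dots,[a_g]\rangle$, since the $a_i$ bound disks in $N_g$ while the $b_i$ map to a basis of $H_1(N_g)$. Thus $\phi_*(L) = L$, and writing the matrix of $\phi_*$ in block form with respect to the ordered basis $(a_1,\dots,a_g,b_1,\dots,b_g)$ (and using the transpose convention that makes $\cJ$ come out as in Definition~\ref{D:symplectic constraints}) forces the upper-right $g \times g$ block to vanish.

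For the reverse direction, let $H < \Sp(2g,\Z)$ denote the subgroup of matrices with zero upper-right block. Using the symplectic constraints~(\ref{E:symplectic constraints-2}), every element of $H$ has the form $\bmattwotwo{\cR}{0}{\cS}{\hat{\cR}^{-1}}$ with $\cR \in GL(g,\Z)$ and $\hat{\cR}\cS$ symmetric, and factors uniquely as a product
\[
\bmattwotwo{\cR}{0}{0}{\hat{\cR}^{-1}}\bmattwotwo{\cI_g}{0}{\cT}{\cI_g},
\]
where $\cT$ is symmetric. It therefore suffices to realize the two factor subgroups by homeomorphisms of $N_g$. The first factor is handled by observing that $GL(g,\Z)$ is generated by elementary transvections and coordinate permutations, each of which is induced on $H_1(N_g) \cong \Z^g$ by a standard handle-slide or handle-swap homeomorphism of $N_g$ (both extending automorphisms of the free group $\pi_1(N_g) = F_g$). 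The second factor is generated by the elementary symmetric matrices $E_{ii}$ and $E_{ij}+E_{ji}$ ($i\neq j$); the first is realized by the Dehn twist about the meridian $a_i$ (which bounds a disk in $N_g$ and hence extends), and the second by the Dehn twist about an embedded simple closed curve on $\partial N_g$ that is homologous to $a_i+a_j$ and bounds a disk in $N_g$, since a twist about such a disk-bounding curve automatically extends across $N_g$.

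The main obstacle is this second (surjectivity) step: one must exhibit concrete handlebody homeomorphisms realizing a chosen set of generators of $H$ and verify that each acts on $H_1(\partial N_g)$ as advertised. The routine verifications reduce to computing $\phi_*(b_k) = b_k + (b_k \cdot c)\,[c]$ for each Dehn twist about a disk-bounding curve $c$ and tracking the action of handle moves on the $a_i$ and $b_i$; in particular the symmetry of $\cT$ is forced by the fact that such curves $c$ are null-homologous in $N_g$, so the twist is trivial on $H_1(N_g)$ and only adjusts the $b$-part by a multiple of its intersection with $[c] \in L$.
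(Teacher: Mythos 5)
Your overall strategy matches the paper's: necessity comes from the fact that an extendable homeomorphism must preserve the kernel of $H_1(\partial N_g)\to H_1(N_g)$, and sufficiency comes from realizing a generating set of the handlebody subgroup by explicit homeomorphisms of $N_g$. The paper simply cites Newman for the generators and Birman \cite{Birman1975} for the explicit lifts; you have unfolded that citation into a concrete argument (handle-slides and swaps for the $\GL(g,\Z)$-part, twists about meridians for the unipotent part), which is the right elaboration.

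Two small slips are worth fixing. First, your conventions for $a_i$ and $b_i$ are the reverse of the paper's: with the choices made in \S\ref{SS:symplectic Heegaard splittings}, the classes $[a_i]$ form a basis for $H_1(N_g;\Z)$ and the kernel of the inclusion is $\langle b_1,\dots,b_g\rangle$, not $\langle a_1,\dots,a_g\rangle$. With the paper's labelling and the standard column-vector convention, preserving the kernel immediately kills the block $\cP$ (the upper-right $g\times g$ block), with no need for a transpose; your ``transpose convention'' remark is papering over this swap, and it would be cleaner to simply align with the paper's basis. Second, the Dehn twist about a meridian in the class $a_i+a_j$ acts on $H_1$ by adding $E_{ii}+E_{ij}+E_{ji}+E_{jj}$ to the relevant off-diagonal block, not $E_{ij}+E_{ji}$ alone; you recover $E_{ij}+E_{ji}$ only after composing with the inverses of the twists realizing $E_{ii}$ and $E_{jj}$. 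Neither issue affects the validity of the approach, but both should be repaired in a final write-up.
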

 \begin{proof}
By our choice of a basis for $H_1(\partial N_g;\Z)$, a topologically induced automorphism  of  $H_1(\partial N_g;\Z)$ extends to an automorphism of $H_1(N_g;\Z)$
only if it preserves the kernel of the inclusion-induced homomorphism $H_1(\partial N_g)\to H_1(N_g)$, i.e. the subgroup generated by $b_1,\dots,b_g$.  Sufficiency is proved by finding generators for the group $\Lambda_g$, given in \cite{Newman}, and showing that each comes from a topological mapping on $\partial N_g$ which extends to a homeomorphism of $N_g$. Explicit lifts are given in \cite{Birman1975}.
\end{proof}

In $\S$~\ref{SS:Heegaard splittings of 3-manifolds}, we saw that every closed orientable 3-manifold admits Heegaard splittings.  Let us now choose coordinates to make this more explicit.  Let $N = N_g$ be a standard model for an oriented handlebody of genus $g$, and 
let $\bar{N} = \phi(N)$ be a copy of $N$, where $\phi$ is a fixed orientation-reversing homeomorphism. 
(Note that representative diffeomorphisms are always required to be orientation-preserving.)  Choosing any element $\tilde{h} \in {\rm Diff}^+(\partial N_g)$, we may then construct  a 3-manifold $W$ as the disjoint union of $N_g$ and $\bar{N_g}$, glued together by the rule $\phi \circ \tilde{h} (x) = x, \ x \in \partial N_g$. To stress the role of $\tilde{h}$ we will write $W = N_g \cup_{\phi \circ \tilde{h}} \bar{N_g}$.  With these conventions, if we choose $\tilde{h}$ to be the identity map, the manifold $W$ will be the connect sum of $g$ copies of $\s{2} \times \s{1}$.  The mapping class group $\tilde{\Gamma}_g$ now means $\pi_0{\rm Diff}^+(\partial N_g)$.

Now let $\tilde{\Lambda} = \tilde{\Lambda}_g$ denote the subgroup of $\tilde{\Gamma}_g$ consisting of mapping classes which have a representative which extends to a homeomorphism of $N_g$.    Note that every map of $\partial N_g$ which is isotopic to the identity extends, hence if one representative extends then so does every other representative, so $\tilde{\Lambda}_g$ is well-defined.

\begin{proposition} 
\label{P:Heegaard splittings and double cosets}
Equivalence classes of genus $g$ Heegaard splittings of 3-manifolds are in 1-1 correspondence with double cosets in the sequence of groups $\tilde{\Gamma}_gÊ\ \mod  \tilde{\Lambda}_g$.
\end{proposition}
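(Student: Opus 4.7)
The plan is to build the bijection by the explicit construction
$\tilde h \mapsto W_{\tilde h} := N_g \cup_{\phi\circ\tilde h}\bar N_g$
introduced in $\S$\ref{SS:symplectic Heegaard splittings}, and then to verify that two gluing classes yield equivalent splittings (in the sense of Definition \ref{D:equivalent Heegaard splittings}) if and only if they lie in the same $\tilde\Lambda_g$-$\tilde\Lambda_g$ double coset of $\tilde\Gamma_g$.

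First I would check that the assignment $\tilde h \mapsto W_{\tilde h}$ is well-defined on isotopy classes: an isotopy on $\partial N_g$ is extended across a collar neighborhood in $N_g$ to produce a homeomorphism between the two resulting 3-manifolds. Surjectivity onto equivalence classes of genus-$g$ splittings is Proposition \ref{P:every W admits a Heegaard splitting} together with arbitrary identifications of the two handlebodies of a given splitting with the standard model (the second identification taken to respect orientations via $\phi$): the transition homeomorphism on the Heegaard surface is the required gluing class.

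The heart of the argument is the double-coset calculation. For the ``only if'' direction, suppose $F\colon W_{\tilde h}\to W_{\tilde h'}$ is an equivalence with handlebody restrictions $f\colon N_g \to N_g$ and $\bar f\colon \bar N_g\to\bar N_g$. Tracking a boundary point $x\in \partial N_g$, which is identified with $\phi(h(x))\in\partial\bar N_g$ in $W_{\tilde h}$ and with $\phi(h'(f(x)))$ in $W_{\tilde h'}$, forces the identity
$$
\bar f \circ \phi \circ h \;=\; \phi\circ h'\circ f \qquad\text{on }\partial N_g.
$$
Set $\tilde\nu := [f|_{\partial N_g}]$ and $\tilde\mu := [\phi^{-1}\circ \bar f\circ \phi|_{\partial N_g}]$. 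Both lie in $\tilde\Lambda_g$: the first is tautologically the boundary restriction of a homeomorphism of $N_g$, and the second is orientation-preserving (since $\phi$ is orientation-reversing and $\bar f$ is orientation-preserving) and extends to $N_g$ via $\phi^{-1}\hat{\bar f}\phi$, where $\hat{\bar f}$ is the ambient extension of $\bar f$. The displayed identity then rearranges to $\tilde h' = \tilde\mu\,\tilde h\,\tilde\nu^{-1}$ in $\tilde\Gamma_g$, as required. For the ``if'' direction, reverse the construction: given $\tilde\mu,\tilde\nu\in\tilde\Lambda_g$ with extensions $\hat\mu,\hat\nu$ to $N_g$, define $F$ to equal $\hat\nu$ on $N_g$ and $\phi\hat\mu\phi^{-1}$ on $\bar N_g$, and use the same boundary identity to see that $F$ descends to a well-defined equivalence $W_{\tilde h}\to W_{\tilde\mu\tilde h\tilde\nu^{-1}}$.

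The main obstacle is the orientation bookkeeping around $\phi$: one needs to confirm that conjugation by $\phi$ carries the handlebody mapping-class group of $\bar N_g$ isomorphically onto $\tilde\Lambda_g$ as defined for $N_g$ in Lemma \ref{L:the subgroup Lambda}, so that $\tilde\mu$ genuinely belongs to $\tilde\Lambda_g$ rather than to some conjugate. A smaller subtlety is that Definition \ref{D:equivalent Heegaard splittings} calls for an honest homeomorphism while the double-coset data only determine gluing maps up to isotopy; but since any isotopy on $\partial N_g$ extends across a collar in either handlebody, this passage from representatives to isotopy classes is harmless and produces the stated 1--1 correspondence.
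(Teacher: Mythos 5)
Your proposal is correct and follows essentially the same route as the paper's proof: both use the boundary identity $\bar f\circ\phi\circ\tilde h=\phi\circ\tilde h'\circ f$ arising from an equivalence $F$, observe that $f|_{\partial N_g}$ and $\phi^{-1}\bar f\phi|_{\partial N_g}$ lie in $\tilde\Lambda_g$ to get the double-coset relation, and reverse the construction for the converse. Your extra care about well-definedness on isotopy classes and the fact that $\phi$-conjugation lands the handlebody group of $\bar N_g$ in $\tilde\Lambda_g$ is sound but not a different argument, merely bookkeeping the paper leaves implicit.
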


\begin{proof}
Each Heegaard splitting of a 3-manifold determines a (non-unique) $\tilde{h} \in \tilde{\Gamma}_g$ for some $g$, and each 
$\tilde{h} \in \tilde{\Gamma}_g$ determines a 3-manifold $W = N_g \cup_{\phi \circ \tilde{h}} \bar{N_g}$.  Suppose 
$N_g \cup_{\phi\circ \tilde{h}} \bar{N_g}$ and $N_g' \cup_{\phi\circ \tilde{h'}} \bar{N_g}'$ are equivalent splittings 
of a 3-manifold $W$.  Then there is an equivalence $F$ which restricts to equivalences $f, \bar{f}$ on $N_g, \bar{N_g}$ and 
then to $f_0 = f \vert_{\partial N_g}, \bar{f}_0 = \bar{f} \vert_{\partial \bar{N_g}}$.  There is thus a commutative diagram
$$
\begin{CD}
\partial N_g @>\tilde{h}>> \partial N_g @>\phi>> \partial \bar{N_g} \\
@VVf_0V & & @VV\bar{f}_0V \\
\partial N_g @>\tilde{h'}>> \partial N_g @>\phi>> \partial \bar{N_g}
\end{CD}
$$
Then $\tilde{h'} f_0 = \phi^{-1} \bar{f}_0 \phi \tilde{h}$, hence $\tilde{h'} \in \tilde{\Lambda} \tilde{h} \tilde{\Lambda}$. Conversely, if $\tilde{h'} \in \tilde{\Lambda}\tilde{h} \tilde{\Lambda}$ then $\tilde{h'} f_0 = \phi^{-1} \bar{f}_0 \phi \tilde{h}$ for some $f_0, \phi^{-1} \bar{f}_0 \phi \in \tilde{\Lambda}$. Let $f, \phi^{-1} \bar{f} \phi$ be an extension of $f_0, \phi^{-1} f_0 \phi$ to $N_g$. Define $F \vert_{N_g} = f$, $F\vert_{\bar{N_g}} = \bar{f}$. 
\end{proof}
For convenience, we will not distinguish between the diffeomorphism $\tilde{h}$ and the mapping class it determines in $\tilde{\Gamma}_g$. 

\begin{corollary} \label{C:alg-RS}
Let $W = N_g \cup_{\phi \circ \tilde{h}} \bar{N}_g$ and let $W' = N_{g'} \cup_{\phi \circ\tilde{h'}} \bar{N_{g'}}$.  Let $\tilde{s}$ 
be any choice of gluing map for a genus 1 splitting of $\s{3}$.  Then $W$ is homeomorphic to $W'$ if and only if there are 
integers $k, k'$ with $g+k = g'+k'$ so that $\tilde{h}\#_k \tilde{s}$ is in the same double coset of 
$\tilde{\Gamma}_{g+k} \ {\rm mod} \ \tilde{\Lambda}_{g+k}$ as $\tilde{h'}\#_{k'} \tilde{s}$.
\end{corollary}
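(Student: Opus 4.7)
The plan is to derive this corollary as the algebraic translation of the Reidemeister--Singer theorem (Theorem~\ref{T:RS}) via Proposition~\ref{P:Heegaard splittings and double cosets}. First I would verify the geometric meaning of the connect-sum operation on gluing maps: namely, if $\tilde{h}$ is a gluing map for a Heegaard splitting of $W$ and $\tilde{s}$ is a gluing map for a genus~$1$ splitting of $\s{3}$, then $\tilde{h}\#_k \tilde{s}$ is a gluing map for the $k$-fold stabilization of that splitting, viewed as a Heegaard splitting of the connect sum $W\#\s{3}\#\cdots\#\s{3} \cong W$. This is just the description of stabilization given in $\S$\ref{SS:Heegaard splittings of 3-manifolds}, translated into the language of gluing maps; the choice of $\tilde{s}$ is irrelevant by Theorem~\ref{T:Waldhausen on S3}, which ensures that all genus~$1$ splittings of $\s{3}$ are equivalent and hence (by Proposition~\ref{P:Heegaard splittings and double cosets}) determine the same double coset in $\tilde{\Gamma}_1 \mod \tilde{\Lambda}_1$.

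For the forward direction, assume $W \cong W'$. Then Theorem~\ref{T:RS} produces integers $k, k'$ with $g+k = g'+k'$ such that the $k$-fold stabilization of $N_g \cup_{\phi\circ\tilde{h}} \bar{N}_g$ and the $k'$-fold stabilization of $N_{g'}\cup_{\phi\circ\tilde{h'}}\bar{N}_{g'}$ are equivalent Heegaard splittings of the common manifold. By the previous paragraph these stabilized splittings have gluing maps $\tilde{h}\#_k \tilde{s}$ and $\tilde{h'}\#_{k'}\tilde{s}$, and by Proposition~\ref{P:Heegaard splittings and double cosets} their equivalence is exactly the statement that these gluing maps represent the same double coset in $\tilde{\Gamma}_{g+k} \mod \tilde{\Lambda}_{g+k}$.

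For the converse, suppose $\tilde{h}\#_k\tilde{s}$ and $\tilde{h'}\#_{k'}\tilde{s}$ lie in the same double coset of $\tilde{\Gamma}_{g+k} \mod \tilde{\Lambda}_{g+k}$. Applying Proposition~\ref{P:Heegaard splittings and double cosets} in the opposite direction, these two gluing maps yield equivalent Heegaard splittings and therefore (by Definition~\ref{D:equivalent Heegaard splittings}) determine homeomorphic 3-manifolds. But as noted above these 3-manifolds are $W\#(\#^k \s{3}) \cong W$ and $W'\#(\#^{k'}\s{3}) \cong W'$ respectively, so $W \cong W'$.

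The only real subtlety, and the step I would take most care over, is the well-definedness of the symbol $\tilde{h}\#_k\tilde{s}$ as a double coset: the connect-sum operation on diffeomorphisms depends a priori on a choice of discs in the two Heegaard surfaces at which to perform the sum, as well as on the choice of $\tilde{s}$ itself. Different choices of the discs differ by an element of $\tilde{\Lambda}_{g+k}$ (they can be isotoped into each other through diffeomorphisms of the handlebodies), and different choices of $\tilde{s}$ give gluing maps which already lie in the same double coset of $\tilde{\Gamma}_1 \mod \tilde{\Lambda}_1$ by Theorem~\ref{T:Waldhausen on S3}; both ambiguities therefore disappear after passage to the double coset in $\tilde{\Gamma}_{g+k} \mod \tilde{\Lambda}_{g+k}$, and the argument above goes through unchanged.
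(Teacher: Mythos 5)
Your proof is correct and follows the same route the paper intends: the paper's own proof is the one-line observation that the corollary follows directly from Theorem~\ref{T:RS}, implicitly combined with Proposition~\ref{P:Heegaard splittings and double cosets} and the discussion of stabilization (including the use of Theorem~\ref{T:Waldhausen on S3} for independence of the choice of $\tilde{s}$), all of which you have simply spelled out in detail. Your extra care about well-definedness of $\tilde{h}\#_k\tilde{s}$ is a fair elaboration of what the paper leaves implicit, not a different argument.
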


\begin{proof}  This follows directly from Theorem~\ref{T:RS}.  \end{proof}

\begin{corollary}\label{C:topological invariants of 3-manifolds} 
Let $W$ be a closed, orientable 3-dimensional manifold which is defined by any Heegaard splitting of genus $g$ with Heegaard gluing 
map $\tilde{h}$.  Then invariants of the stable double coset of $\tilde{h}$ in $\tilde{\Gamma}_g$ are topological invariants of 
the 3-manifold $W$. 
\end{corollary}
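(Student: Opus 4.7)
The plan is to observe that this corollary is essentially a tautological reformulation of Corollary~\ref{C:alg-RS}, requiring only that we interpret the phrase \emph{invariant of the stable double coset} carefully. I would first make that phrase precise: an invariant of the stable double coset of $\tilde{h}$ is a function $I$ defined on $\bigsqcup_{g \geq 0} \tilde{\Gamma}_g$ which takes equal values on any two elements $\tilde{h} \in \tilde{\Gamma}_g$ and $\tilde{h}' \in \tilde{\Gamma}_{g'}$ whenever there exist integers $k, k' \geq 0$ with $g+k = g'+k'$ such that $\tilde{h}\#_k\tilde{s}$ and $\tilde{h}'\#_{k'}\tilde{s}$ lie in the same double coset of $\tilde{\Gamma}_{g+k}$ modulo $\tilde{\Lambda}_{g+k}$, where $\tilde{s}$ is as in Corollary~\ref{C:alg-RS}.

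With that definition in hand, the argument is immediate. Suppose $W$ is presented by a gluing map $\tilde{h} \in \tilde{\Gamma}_g$ and $W'$ is presented by a gluing map $\tilde{h}' \in \tilde{\Gamma}_{g'}$, and suppose further that $W$ and $W'$ are homeomorphic as closed oriented $3$-manifolds. By Corollary~\ref{C:alg-RS}, there exist nonnegative integers $k, k'$ with $g+k = g'+k'$ such that $\tilde{h}\#_k\tilde{s}$ and $\tilde{h}'\#_{k'}\tilde{s}$ represent the same double coset in $\tilde{\Gamma}_{g+k} \mod \tilde{\Lambda}_{g+k}$. Hence $\tilde{h}$ and $\tilde{h}'$ are stably double-coset equivalent in the sense above, so $I(\tilde{h}) = I(\tilde{h}')$ for any invariant $I$ of the stable double coset.

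Thus $I$ depends only on the homeomorphism class of $W$, which is the assertion of the corollary. There is no real obstacle: the content of the statement is entirely carried by Corollary~\ref{C:alg-RS} (and hence ultimately by Theorem~\ref{T:RS}), and the corollary is included chiefly to record the consequence that stable-double-coset invariants in $\tilde{\Gamma}_g$ furnish a natural source of $3$-manifold invariants, motivating the subsequent study of the symplectic quotient $\Gamma_g = \Sp(2g,\Z)$ and its handlebody subgroup $\Lambda_g$.
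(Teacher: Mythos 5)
Your argument is correct and follows essentially the same route as the paper, which also deduces the corollary directly from Corollary~\ref{C:alg-RS} (together with Propositions~\ref{P:every W admits a Heegaard splitting} and~\ref{P:Heegaard splittings and double cosets}); your added precision in defining what an invariant of the stable double coset means is a reasonable expansion of what the paper leaves implicit. No gap.
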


\begin{proof} 
This is a direct consequence of Proposition~\ref{P:every W admits a Heegaard splitting},
Proposition~\ref{P:Heegaard splittings and double cosets}, and 
Corollary~\ref{C:alg-RS}.
\end{proof}

We pass to the action of $\tilde{\Gamma}_g$ on $\pi_1(\partial N_g)/[\pi_1(\partial N_g),\pi_1(\partial N_g)]$, 
i.e.\ to the representation $ \rho^2:\tilde{\Gamma}_g\to\Gamma_g$.  What information might we expect to detect 
about Heegaard splittings from the image $\rho^2(\tilde{h})$ of our gluing map $\tilde{h}$ in $\Gamma_g$?  

\begin{definition}
\label{D:stabilization of HHS}
A {\it stabilization of index $k$ of $\cH$} is the image of $\cH\in \Gamma_g$ under the embedding $\Gamma_g \to \Gamma_{g+k}$ defined by bordering $\cR, \cP, \cS, \cQ$ according to the rule 
$$ \cR \mapsto 0_k \oplus \cR, \quad 
\cP \mapsto \mathcal I_k \oplus \cP, \quad
\cS \mapsto - \mathcal I_k \oplus \cS, \quad
\cQ \mapsto 0_k \oplus \cQ. $$
This is a particular way of taking the direct sum of $\cH\in\Gamma_g$ with the matrix $\cJ\in\Gamma_1$, which is the image under 
$\rho^2$ of a Heegaard gluing map that defines $\s{2}$.

Define $\cH, \cH' \in \Gamma_g$ to be {\it equivalent} ($\cH \simeq \cH')$ if $\cH' \in \Lambda_g \cH\Lambda$ and 
{\it stably equivalent} $(\cH \simeq_s \cH')$ if $\cH$ and $\cH'$ have equivalent stabilizations for some index 
$k \geqslant 0$. Equivalence classes are then double cosets in $\Gamma_g \ \mod \Lambda_g$ and 
stable equivalence classes are double cosets in $\Gamma_{g+k}$ modulo $\Lambda_{g+k}$.

A {\it stabilized} symplectic Heegaard splitting is the union of all stabilizations of the double coset $\Lambda_g \cH \Lambda_g$.  $\|$
\end{definition}

This brings us to the main topic of this article.   Choose any $\tilde{h}\in\tilde{\Gamma}_g$ and use it to construct 
a 3-manifold $W$ as above.   Let $\cH$ be the symplectic matrix that is induced by the 
action of $h = \rho^2(\tilde{h})$. 
\begin{definition}
\label{D:HHS}
A {\it symplectic Heegaard splitting} of the 3-manifold $W = N_g \cup_{\phi\circ\tilde{h}} \bar{N_g}$ is the 
double coset $\Lambda_g \cH \Lambda_g \subset \Sp(2g,\Z)$, together with the double cosets of all 
stabilizations of $\cH$.  A symplectic Heegaard splitting
is {\it minimal} if it is not the stabilization of a symplectic Heegaard splitting of lower genus which is in the same double coset. $\|$
\end{definition}

%%%%%%%%%%%%%%%%%%%%%%%%%%%%%%%%%%%%%%%%%%%%%%%%%%
%%%%%%%%%%%%%%%%%%%%%%%%%%%%%%%%%%%%%%%%%%%%%%%%%%
\subsection{Survey of the literature} 
\label{SS:survey of the literature}
%%%%%%%%%%%%%%%%%%%%%%%%%%%%%%%%%%%%%%%%%%%%%%%%%%
%%%%%%%%%%%%%%%%%%%%%%%%%%%%%%%%%%%%%%%%%%%%%%%%%%

The earliest investigation of Heegaard splittings were the proofs, by Singer \cite{Si} and Reidemeister \cite{R1}  that all Heegaard splittings of an arbitrary 3-manifold are stably equivalent.  Shortly after the publication of \cite{R1} Reidemeister asked about invariants of 3-manifolds that can be determined from a Heegaard splittings. His invariants are given in the paper \cite{R2}. He proves by an example (the Lens spaces) that the invariants he discovered distinguish manifolds which have the same fundamental group $\pi_1(W)$, and so are independent of the rank and torsion coefficients of $W$.
Reidemeister's invariants are determined from the action of a Heegaard gluing map on $H_1(W;\mathbb Z)$.   We will explain exactly what he proved at the end of $\S$\ref{SS:Reidemeister's invariants}.

Essentially simultaneously and independently of Reidemeister's work, Seifert \cite{Sei} introduced the concept of a linking form on a 3-manifold whose homology group has a torsion subgroup $T$, and studied the special case when $T$ has no 2-torsion, obtaining a complete set of invariants for linked abelian groups in this special case.  His very new idea was that linking numbers could be defined not just in homology spheres, but also in 3-manifolds whose $\mathbb Z$-homology group has torsion. 
Let $W$ be a closed, oriented 3-manifold and suppose that the torsion subgroup $T$ of $H_1 (W; \Z)$ is non-trivial. Let $a, b$ be simple closed curves in $W$ which represent elements of $T$ of order $\alpha, \beta$ respectively. Since $\alpha a, \beta b$ are homologous to zero they bound surfaces $A, B \subset W$. Let $A\cdot b$ denote the algebraic intersection number of $A$ with $b$, similarly define $B\cdot a$. The {\it linking number} $\lambda(a,b)$ of $a$ with $b$ is the natural number 
$$
\lambda(a,b) = \frac{1}{\alpha} A\cdot b = \frac{1}{\beta}a\cdot B.
$$
Seifert's invariants are defined in terms of an array of integer determinants associated to  the $p$-primary cyclic summands of $T$. The invariant depends upon whether each determinant in the array is or is not a quadratic residue mod $p^k$.  
His work is, however, restricted to the case when there is no 2-torsion.  In the appendix to \cite{Sei}, and also at the end of \cite{R2}, both Seifert and Reidemeister noted that their invariants are in fact closely related, although neither makes that precise.   Both \cite{R2} and \cite{Sei}  are, at this writing very well known but it takes some work to pin down  the precise relationship so that one can move comfortably between them.  See  $\S$\ref{SS:Reidemeister's invariants}.

In \cite{Bu} Burger reduced the problem of classifying linked $p$-groups ($p\geqslant 2$) to the classification of symmetric bilinear forms over $\Z_{p^n}$. His procedure, together with Minkowski's work on quadratic forms \cite{Min} gives a complete set of invariants for the case $p=2$, but they are inconveniently cumbersome. Our contribution here is to reduce Burger's invariants to a simple and useful set. 
Most of what we do is probably obtainable from Burger's work together with the work of O'Meara \cite{OMA1}; however, our presentation is unified and part of a systematic study, hence it may be more useful than the two references \cite{Bu} and \cite{OMA1}. We note that Kawauchi and Kojima \cite{KK} {\it also} studied linked abelian groups with 2-torsion. They obtained a solution of the problem which is similar to ours, however, their goal was different and the intersection between their paper and ours is small.

Invariants of Heegaard splittings, rather than of the manifold itself, were first studied in the context of symplectic Heegaard splittings, in \cite{Birman1975}. Later, the work in \cite{Birman1975} was further investigated in \cite{MS}, from a slightly different perspective, with two motivations behind their work.  The first is that they thought that linking forms in 3-manifold might give more information than intersection forms on a Heegaard surface, but that is not the case.   Second, they thought that, because a finite abelian group can be decomposed as a direct sum of cyclic groups of prime power order, whereas in \cite{Birman1975} the decomposition was as a direct sum of a (in general smaller) set of cyclic groups which are not of prime power order, that perhaps there were invariants of unstabilized Heegaard splittings that were missed in \cite{Birman1975}. The main result in \cite{MS} is that, with one small exception in the case when there is 2-torsion, the Heegaard splitting invariants in \cite{Birman1975} cannot be improved.

See \cite{L-M} for an invariant of Heegaard splittings which is related in an interesting way to our work in this paper.  The relationship will be discussed in $\S$\ref{S:a remark on higher invariants}  of this paper. 

%%%%%%%%%%%%%%%%%%%%%%%%%%%%%%%%%%%%%%%%%%%%%%
%%%%%%%%%%%%%%%%%%%%%%%%%%%%%%%%%%%%%%%%%%%%%%
\subsection{Six problems about symplectic Heegaard splittings}
\label{SS:6 problems}
%%%%%%%%%%%%%%%%%%%%%%%%%%%%%%%%%%%%%%%%%%%%%%
%%%%%%%%%%%%%%%%%%%%%%%%%%%%%%%%%%%%%%%%%%%%%% 

In this article we will consider six problems about symplectic Heegaard splittings, giving complete solutions for the first five and a partial solution for the sixth:

{\bf Problem 1:}
Find a complete set of invariants for stabilized symplectic Heegaard splittings.  \\
The full solution is in Corollary~\ref{C:stably isomorphic iff linked quotients}, which asserts the well-known result that a complete set of invariants are the rank of $H_1(W;\Z)$, its torsion coefficients, and the complete set of linking invariants.   

{\bf Problem 2:}
Knowing the invariants which are given in the solution to Problem 1 above, the next step is to learn how to compute them.  Problem 2 asks for a constructive procedure for computing the invariants in Problem 1 for particular $\cH \in \Gamma$.  
The easy part of this, i.e. the computation of invariants which determine $H_1(W;\Z)$,  is given in Theorem~\ref{T:partial normal form}.  The hard part is in the analysis of the linking invariants associated to the torsion subgroup of $H_1(W;\Z)$. See $\S$6.2 for the case when $p$ is odd and $\S$6.3 for the case where there is 2-torsion.  

{\bf Problem 3:}  Determine whether there is a bound on the stabilization index of a 
symplectic Heegaard splitting.    We will prove that there is a uniform bound, and it is 1.   
See  Corollary~\ref{C:problem on stabilization index}. 

{\bf Problem 4:}
Find a complete set of invariants which characterize minimal (unstabilized) symplectic 
Heegaard splittings and learn how to compute them.   In Theorem~\ref{T:unstabilizedHeegaardpairs} we will prove that the only invariant is a strengthened form of the invariant which was discovered in \cite{Birman1975}, using very different methods.   Example~\ref{Example:inequivalent unstabilized splittings} shows that we have, indeed, found an invariant which is stronger than the one in \cite{Birman1975}.

{\bf Problem 5:}
Count the number of equivalence classes of minimal (unstabilized) symplectic Heegaard 
splittings.  The answer is given in Theorem~\ref{theorem:isomorphismcount}.

{\bf Problem 6:} This problem asks for a normal form which allows one to choose a unique representative for the collection of matrices in an  unstabilized  double coset in $\Gamma_g$ (mod $\Lambda_g$).   We were only able to give a partial solution to this problem.   In $\S$\ref{SS:remarks on problem 6} we  explain the difficulty.

In $\S$\ref{S:a remark on higher invariants} we go a little bit beyond the main goal of this paper, and consider whether the work in $\S$\ref{S:presentation theory for finitely generated abelian groups} of this paper can be generalized to the higher order terms in the Johnson-Morita filtration.  
As we shall see, the approach generalizes, but it does not yield anything new.    

%%%%%%%%%%%%%%%%%%%%%%%%%%%%%%%%%%%%%%%%%%%%%%
%%%%%%%%%%%%%%%%%%%%%%%%%%%%%%%%%%%%%%%%%%%%%%
%%%%%%%%%%%%%%%%%%%%%%%%%%%%%%%%%%%%%%%%%%%%%%
\section{Symplectic matrices : a partial normalization}
\label{S:symplectic matrices:a partial normalization}

Our task in this section is the proof of Theorem~\ref{T:partial normal form}, which gives a partial 
solution to Problem 2 and tells us how to recognize when a symplectic Heegaard splitting is stabilized.  

%%%%%%%%%%%%%%%%%%%%%%%%%%%%%%%%%%%%%%%%%%%%%%%%%%%%%%%%%%%%%%%%%%%%%%%%%%%%%%%%%%%%%%%%%%%%%%%%%%%%%%
\subsection{Preliminaries}
\label{SS:preliminaries} 
%%%%%%%%%%%%%%%%%%%%%%%%%%%%%%%%%%%%%%%%%%%%%%%%%%%%%%%%%%%%%%%%%%%%%%%%%%%%%%%%%%%%%%%%%%%%%%%%%%%%%%
We follow the notation that we set up $\S$\ref{S:introduction}.  Let $\tilde{h}$ be the gluing map for a Heegaard splitting of a 3-manifold $W$.  We wish to study the double coset $\Lambda_g h \Lambda_g \subset \Gamma$.  For that it will be helpful to learn  a little bit more about the subgroup $\Lambda_g$.  Recall that,  by Lemma ~\ref{L:the subgroup Lambda}, the group $\Lambda_g$ is the subgroup of elements in $\Gamma_g$ with a $g \times g$ block of zeros in the upper right corner. 
\begin{lemma} 
\label{L:structure of Lambda}
The group $\Lambda_g$ is the semi-direct product of its normal subgroup
$$
\Omega = \left\{ \left. \bmattwotwo{\mathcal I}{0}{\cZ}{\mathcal I} \ \right| \ \cZ \text{ symmetric } \right\}
\text{and its subgroup}  \ \ 
\Sigma = \left\{ \left. \bmattwotwo{\cA}{0}{0}{\hat{\cA}^{-1}} \ \right| \ \cA \text{ unimodular } \right\}.
$$
\end{lemma}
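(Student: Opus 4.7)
The plan is to use the symplectic constraints from Definition~\ref{D:symplectic constraints} to nail down the precise form of a general element of $\Lambda_g$, and then factor it explicitly as a product of an element of $\Omega$ and an element of $\Sigma$.

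First I would apply Lemma~\ref{L:the subgroup Lambda}, writing a general element as $\cH = \bmattwotwo{\cR}{0}{\cS}{\cQ}$ and feeding $\cP = 0$ into (\ref{E:symplectic constraints-2}). The identity $\hat{\cR}\cQ - \hat{\cS}\cP = \cI$ collapses to $\hat{\cR}\cQ = \cI$, which forces $\cR$ to be unimodular and $\cQ = \hat{\cR}^{-1}$. The symmetry of $\hat{\cR}\cS$ (equivalently of $\cS\hat{\cQ}$) is the only remaining constraint on $\cS$. A direct check then shows that $\Omega$ (take $\cR=\cI$, $\cS = \cZ$) and $\Sigma$ (take $\cR = \cA$, $\cS = 0$) both lie in $\Lambda_g$.

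Next I would produce the factorization. Given $\cH = \bmattwotwo{\cR}{0}{\cS}{\hat{\cR}^{-1}} \in \Lambda_g$, I set $\cZ := \cS\cR^{-1}$ and $\cA := \cR$. The matrix product $\bmattwotwo{\cI}{0}{\cZ}{\cI}\bmattwotwo{\cA}{0}{0}{\hat{\cA}^{-1}}$ visibly equals $\cH$, so existence boils down to checking $\cZ$ is symmetric; this is immediate from $\hat{\cR}\cS = \hat{\cS}\cR$ by pre- and post-multiplying by $\hat{\cR}^{-1}$ and $\cR^{-1}$ respectively. Uniqueness is routine: comparing upper-left blocks forces $\cA$, and then the lower-left block forces $\cZ$. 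In particular $\Omega \cap \Sigma = \{\cI\}$, so $\Lambda_g$ is the internal product $\Omega\Sigma$ with trivial intersection.

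Finally I would verify that $\Omega$ is normal. Since every element of $\Lambda_g$ is already written as $\omega\sigma$ with $\omega\in\Omega$, $\sigma\in\Sigma$, and since $\Omega$ is abelian, it suffices to show $\sigma\omega\sigma^{-1}\in\Omega$ for $\sigma\in\Sigma$, $\omega\in\Omega$. A one-line conjugation computation gives
\[
\bmattwotwo{\cA}{0}{0}{\hat{\cA}^{-1}}\bmattwotwo{\cI}{0}{\cZ}{\cI}\bmattwotwo{\cA^{-1}}{0}{0}{\hat{\cA}} = \bmattwotwo{\cI}{0}{\hat{\cA}^{-1}\cZ\cA^{-1}}{\cI},
\]
and $\hat{\cA}^{-1}\cZ\cA^{-1}$ is symmetric because $\cZ$ is. This completes the semidirect product description.

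I expect no real obstacle here; the only mildly delicate step is translating the symplectic constraint $\hat{\cR}\cS$ symmetric into the symmetry of $\cZ = \cS\cR^{-1}$, which is the one spot where the algebra has to be done carefully rather than by inspection.
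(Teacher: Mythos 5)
Your proposal is correct and follows essentially the same route as the paper: use the symplectic constraints with $\cP = 0$ to force $\cQ = \hat{\cR}^{-1}$ and $\cR$ unimodular, factor explicitly into an element of $\Omega$ times an element of $\Sigma$ with the symmetry of $\cZ$ coming from (\ref{E:symplectic constraints-2}), and check normality of $\Omega$ by the one-line conjugation computation. No gaps; the only difference is that your write-up spells out the uniqueness of the factorization and the trivial intersection more explicitly than the paper does.
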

\begin{proof}
Since a general matrix  $\mattwotwo{\cA}{0}{\cC}{\cD} \in  \Lambda_g$ is symplectic, it follows from (\ref{E:symplectic constraints-2}) that $\hat{\cA} \cD = \mathcal I$, hence $\hat{\cA} = \cD^{-1}$, so $\cA \in \GL(g,\mathbb{Z})$.  Since $\mattwotwo{\cA}{0}{0}{\cA^{-1}} \in \Lambda_g$, it follows that the most general matrix in $\Lambda_g$ has the form:
$$
\bmattwotwo{\cA}{0}{\cC}{\cA^{-1}} = 
\bmattwotwo{\cA}{0}{0}{\cA^{-1}} \bmattwotwo{\mathcal I}{0}{\cZ}{0} =
\bmattwotwo{\mathcal I}{0}{\hat{\cA}^{-1} \cZ \cA^{-1}}{\mathcal I} \bmattwotwo{\cA}{0}{0}{\hat{\cA}^{-1}}
,$$  
with $\cZ = \cA \cC$.  But then (by (\ref{E:symplectic constraints-2}) again) $\cZ$ must be symmetric.  A simple calculation reveals that  the conjugate of any element in $\Omega$ by an element in $\Lambda$ is in $\Omega$. The semi-direct product structure follows from the fact that both $\Sigma$ and $\Omega$ embed naturally in $\Lambda$, and that they generate $\Lambda$. 
\end{proof}

In several places in this article it will be necessary to pass between the two canonical ways of decomposing a finite abelian group $T$ into cyclic summands.  We record here the following well-known theorem:
\begin{theorem}[The fundamental theorem for finitely generated abelian groups]
\label {T:fundamental theorem for f.g. abelian groups} 
Let $G$ be a finitely generated abelian group. Then the following hold:
\begin{enumerate}[{\rm (i)}]
\item $G$ is a direct sum of $r$ infinite cyclic groups and a finite abelian group $T$. The group $T$ is a direct sum of $t$ finite cyclic subgroups $T^{(1)} \oplus \cdots \oplus T^{(t)}$, where $T^{(i)}$ has order $\tau_i$.  Each $\tau_i$ divides $\tau_{i+1}, 1\leq i \leq t-1.$    The integers $r, t, \tau_1,\dots,\tau_t$  are a complete set of invariants of the isomorphism class of $G$.   

Let $p_1,\dots,p_k$ be the prime divisors of $\tau_t$.  Then  each integer  $\tau_i, \ 1\leq i\leq t$ has a decomposition as a product of primes:
\begin{equation} \label{E:tau-i as a product of primes}
\tau_i = p_1^{e_{i,1}} p_2^{e_{i,2}} \cdots p_k^{e_{i,k}}, \quad 0\leqslant e_{1,d} \leqslant e_{2,d} \leqslant \cdots \leqslant e_{t,d}, \ \  {\rm for \ \ each} \ \ 1\leq d\leq k.
\end{equation}
\item 
$T$ is also a direct sum of $p$-primary groups $T(p_1) \oplus \cdots \oplus T(p_k)$.  Here each $T(p_d)$ decomposes in a unique way as a direct sum of cyclic groups, each of which has order a power of $p_d$. 
Focusing on one such prime $p_d, \ 1\leq d\leq k$, the group $T(p_d)$  is a sum of cyclic groups of orders $p_d^{e_{1,d}}, p_d^{e_{2,d}},\dots,p_d^{e_{t,d}}$, where the powers $e_{i,d}$ that occur are not necessarily distinct. That is, we have: 
\begin{equation}\label{E:inequality on prime powers}
e_{1,d}=e_{2,d}=\cdots=e_{{t_1},d}< e_{{t_1}+1,d} =\cdots = e_{{t_2},d} < \cdots < e_{{t_r}+1,d} = e_{{t_r}+2,d} =\cdots = e_{{t_{r+1}},d}.
\end{equation}
\item  Let $y_i$ be a generator of the cyclic group of order $\tau_i$ in (i) above.   Let $g_{i,d}$ be a generator of the cyclic group of order $p_d^{e_{i,d}}$ in (ii) above.  Note that there may be more than one group with this order.  Then the generators $g_{i,d}$ and  $y_i$, where $1\leq i \leq t$ and $1\leq d \leq k$ are related by:
\begin{equation}
\label{E:gid from y_i}
g_{i,d} = \Bigg( {\tau_i \over (p_d^{e_{i,d}})} \Bigg) y_i = (p_1^{e_{i,1}} p_2^{e_{i,2}} \cdots 
p_{d-1}^{e_{i,d-1}}p_{d+1}^{e_{i, d+1}}\cdots p_k^{e_{i,k}})y_i.
\end{equation}
\end{enumerate}
\end{theorem}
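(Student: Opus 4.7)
Since Theorem~\ref{T:fundamental theorem for f.g. abelian groups} is a classical result, the plan is to follow the standard proof via Smith Normal Form, with the principal subtlety being uniqueness of the invariants.

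First, I would represent $G$ as the cokernel of an integer matrix $M : \Z^m \to \Z^n$, using that $G$ is finitely generated and that subgroups of $\Z^n$ are themselves free of rank at most $n$. The Smith Normal Form theorem then provides unimodular matrices $U, V$ over $\Z$ with $UMV = \diag(d_1, \ldots, d_s, 0, \ldots, 0)$ and $d_1 \mid d_2 \mid \cdots \mid d_s$. Changing basis accordingly, the cokernel becomes $\bigoplus_i \Z/d_i\Z \oplus \Z^{n-s}$; the free rank $r$ of part (i) is $n-s$, and the invariant factors $\tau_1, \ldots, \tau_t$ are those $d_i$ with $d_i > 1$ (the entries equal to $1$ contribute trivial summands). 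The Smith Normal Form algorithm is constructive, proceeding by repeated Euclidean reduction on rows and columns.

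Part (ii) follows by applying the Chinese Remainder Theorem to each cyclic summand: since $\tau_i = p_1^{e_{i,1}} \cdots p_k^{e_{i,k}}$ is a product of pairwise coprime prime powers, $\Z/\tau_i\Z \cong \bigoplus_{d} \Z/p_d^{e_{i,d}}\Z$. Regrouping these summands across $i$ by the prime $p_d$ gives $T(p_d) = \bigoplus_i \Z/p_d^{e_{i,d}}\Z$, and the inequalities~(\ref{E:inequality on prime powers}) follow directly from the divisibility chain $\tau_i \mid \tau_{i+1}$. For part (iii), I would track a generator $y_i$ of $T^{(i)}$ through the CRT isomorphism: the element $(\tau_i/p_d^{e_{i,d}})\,y_i$ is divisible by $p_{d'}^{e_{i,d'}}$ for every $d' \neq d$, so it vanishes in those summands, while in the $d$-th summand the coefficient is a unit modulo $p_d^{e_{i,d}}$; hence it may serve as $g_{i,d}$, and expanding the coefficient as a product of prime powers yields the explicit formula~(\ref{E:gid from y_i}).

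The main obstacle is uniqueness: one must show that $r$, $t$, $\tau_1, \ldots, \tau_t$ and the multiset of prime-power exponents $e_{i,d}$ are genuine invariants of the isomorphism class of $G$, not artifacts of the chosen presentation. The cleanest approach is to compute them functorially. The rank is $r = \dim_\Q(G \otimes \Q)$, manifestly an isomorphism invariant. For each prime $p = p_d$, the multiset of exponents $\{e_{i,d} : 1 \leq i \leq t\}$ is determined by the sequence of $\F_p$-dimensions $\dim_{\F_p}(p^{k}G/p^{k+1}G)$ for $k \geq 0$, each of which is again manifestly isomorphism invariant; uniqueness of the $\tau_i$ then follows by reassembling them from the prime-power exponents via the CRT formula of part~(iii).
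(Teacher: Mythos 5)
The paper does not actually prove this statement; it is introduced as a ``well-known theorem'' and is used as background, with only Corollary~\ref{C:Smith normal form} (Smith normal form) singled out and attributed to \cite{Newman}. Your proposal fills in the standard proof correctly: Smith normal form for the existence of the invariant-factor decomposition in part (i), the Chinese Remainder Theorem for the prime-power regrouping in parts (ii) and (iii), and a functorial computation ($r = \dim_\Q(G\otimes\Q)$ and, for each prime $p$, the sequence $\dim_{\F_p}(p^kG/p^{k+1}G)$) for uniqueness. One small point worth making explicit in the uniqueness step: $\dim_{\F_p}(p^k G/p^{k+1}G)$ equals $r + \#\{i : e_{i,d} > k\}$ because the free summands each contribute one $\F_p$-dimension at every $k$, so you need to have determined $r$ first (as you do) before the prime-power exponents can be read off; with that noted, the argument is complete and matches the standard treatment the paper is implicitly relying on.
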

The following corollary  to statement (i) of Theorem~\ref{T:fundamental theorem for f.g. abelian groups}   allows
us to transform a presentation matrix for a finitely generated abelian group into a particularly simple form.
\begin{corollary}[{Smith normal form, see, e.g., \cite[Theorem II.9]{Newman}}]
\label{C:Smith normal form}
Let $\cP$ be any $g \times g$ integer matrix.  Then there exist $\cU,\cV \in \GL(g,\Z)$ so that
$\cU \cP \cV = \diag(1,\dots,1,\tau_1,\dots,\tau_t,0,\dots,0)$, where the $\tau_i$ are nonnegative integers  which are different from 1 and satisfying $\tau_i | \tau_{i+1}$ for
all $1 \leq i < g$.  The diagonal matrix is called the \underline{Smith normal form} of $\cP$.  Additionally,
the Smith normal form of a matrix is unique, so that in particular the torsion free rank $r$ (the number of zeros in the diagonal) and the torsion rank $t$  are unique. The number of $1's$ is the index of stabilization of the symplectic Heegaard splitting, which can vary.
\end{corollary}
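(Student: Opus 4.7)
The plan is to prove existence by a constructive reduction algorithm using only row and column operations over $\Z$, and then prove uniqueness by identifying the diagonal entries with ratios of invariants (the determinantal divisors) attached to $\cP$.

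For existence I would induct on $g$, with the $g=1$ case immediate. At each stage of the induction, the goal is to produce a matrix whose first row and column have the form $(s, 0, \ldots, 0)$ and whose $(1,1)$-entry $s$ divides every entry of the remaining $(g-1) \times (g-1)$ block. First, by a permutation of rows and columns (realized by multiplication on the left and right by permutation matrices in $\GL(g,\Z)$), place a nonzero entry of smallest absolute value in the $(1,1)$ slot; call it $s$. Using the division algorithm, write each other entry of row $1$ as $qs + r$ with $|r| < |s|$ and subtract $q$ times column $1$ from the appropriate column (an elementary column operation, hence right multiplication by an element of $\GL(g,\Z)$). Do likewise for column $1$. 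If any remainder is nonzero, move it into the $(1,1)$ position and repeat; since $|s|$ strictly decreases, the loop terminates with zeros throughout the first row and column except at $(1,1)$. If the resulting $s$ fails to divide some entry $p_{ij}$ in the lower block, add row $i$ to row $1$ and restart the clearing procedure; again $|s|$ strictly decreases, so after finitely many iterations $s$ divides all remaining entries. Apply the inductive hypothesis to the $(g-1) \times (g-1)$ block. The divisibility chain $\tau_i \mid \tau_{i+1}$ follows because at each inductive step the pivot divides every entry below and to the right. Finally, reorder the resulting diagonal so that the $1$'s come first, then the $\tau_i$ in increasing order of divisibility, then the $0$'s; this reordering is effected by simultaneous row and column permutations.

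For uniqueness I would appeal to the classical invariance of the \emph{determinantal divisors}. For $1 \leq k \leq g$, let $d_k(\cP)$ denote the nonnegative generator of the ideal in $\Z$ generated by all $k \times k$ minors of $\cP$, with the convention $d_0 = 1$. The Cauchy--Binet formula implies that $d_k(\cU \cP \cV) = d_k(\cP)$ whenever $\cU, \cV \in \GL(g,\Z)$, since minors of a product are $\Z$-linear combinations of minors of the factors whose coefficients are themselves minors of unimodular matrices. Evaluating $d_k$ on the diagonal form, one finds $d_k = s_1 s_2 \cdots s_k$ (interpreted as $0$ once $k$ exceeds the number of nonzero diagonal entries), where $s_1, \ldots, s_g$ are the diagonal entries in order. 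Hence $s_k = d_k(\cP)/d_{k-1}(\cP)$ whenever the denominator is nonzero, and the number of zero entries equals $g - \mathrm{rank}(\cP)$. Both sides of this formula depend only on $\cP$, so the multiset of diagonal entries, and in particular the counts of $1$'s, the list $\tau_1, \ldots, \tau_t$, and the number $r$ of $0$'s, are invariants of $\cP$.

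The main obstacle is purely bookkeeping: making sure that each reduction step in the existence argument is realized by left or right multiplication by an element of $\GL(g,\Z)$ (rather than by an operation that only produces an integer matrix in a weaker sense), and that the termination arguments are organized so that $|s|$ genuinely decreases at every iteration. Both concerns are handled by recording explicitly that row and column transpositions, addition of an integer multiple of one row (column) to another, and sign changes are each given by multiplication by an elementary unimodular matrix, and that the algorithm never leaves the class of such operations.
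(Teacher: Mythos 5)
Your argument is correct, and it is the standard textbook proof of Smith normal form: existence by an inductive pivot-reduction algorithm (where termination is secured by the strict decrease of the pivot's absolute value), and uniqueness by showing the determinantal divisors $d_k$ are invariant under unimodular left and right multiplication and recover the diagonal entries as $d_k/d_{k-1}$. The paper, however, does not prove this corollary at all; it simply cites it as a known fact (Newman, \emph{Integral Matrices}, Theorem II.9), so there is no internal proof to compare against. One small point worth noting: the paper's statement contains the condition $\tau_i \mid \tau_{i+1}$ for $1 \le i < g$, which should read $1 \le i < t$ (only the nontrivial torsion coefficients are chained), and the final sentence about the index of stabilization is an interpretive remark specific to the Heegaard-splitting context rather than part of the algebraic theorem, so you were right not to try to prove it.
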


%%%%%%%%%%%%%%%%%%%%%%%%%%%%%%%%%%%%%%%%%%%%%%%%%%%%%%%%%%%%%%%%%%%%%%%%%%%%%%%%%%%%%%%%%%%%%%%%%%%%%%
\subsection{A partial normal form}
\label{SS:a partial normal form}
%%%%%%%%%%%%%%%%%%%%%%%%%%%%%%%%%%%%%%%%%%%%%%%%%%%%%%%%%%%%%%%%%%%%%%%%%%%%%%%%%%%%%%%%%%%%%%%%%%%%%%
 
%
\begin{theorem}
\label{T:partial normal form}
Let 
\begin{equation}
\cH = \rho^2(\tilde{h}) = \bmattwotwo{\cR}{\cP}{\cS}{\cQ}
\end{equation}
\label{E:symplectic matrix cH}
 be the symplectic matrix associated to a given Heegaard 
splitting of a 3-manifold $W$, where $\tilde{h}$ is the Heegaard gluing map.  Then:
\begin{enumerate}
\item [{\rm (i)}] The $g$-dimensional matrix $P$ is a relation matrix for $H = H_1(W;\mathbb Z)$.  This is true, independent of the choice of $\cH$ in its double coset modulo $\Lambda_g$. Different choices correspond to different choices of basis for $H$.
\item [{\rm (ii)}] The double coset  $\Lambda_g\cH\Lambda_g$ has a 
representative:
\begin{equation}
\label{E:partial normal form for cH}
\cH' =
\left(
\begin{array}{ccc|ccc}
0 & 0 & 0 & \mathcal I & 0 & 0 \\
0 & \cR^{(2)} & 0 & 0 & \cP^{(2)} & 0 \\
0 & 0 & \mathcal I & 0 & 0 & 0 \\
\hline
- \mathcal I & 0 & 0 & 0 & 0 & 0 \\
0 & \cS^{(2)} & 0 & 0 & \cQ^{(2)} & 0 \\
0 & 0 & 0 & 0 & 0 & \mathcal I
\end{array}
\right),
\end{equation}
where $\cP^{(2)} = Diag`(\tau_1,\ldots,\tau_t)$ 
with the $\tau_i$ positive integers satisfying $\tau_i | \tau_{i+1}$ for $1 \leq i < t$.  
In this representation the submatrix 
\begin{equation}
\label{E:R2,P2,S2,Q2}
\cH^{(2)} = \left(\begin{array}{c|c}
\cR^{(2)} & \cP^{(2)} \\
\hline
\cS^{(2)} & \cQ^{(2)}
\end{array}
\right)
\end{equation}
is symplectic.  
\item  [{\rm (iii)}] The $t\times t$ matrix $\cP^{(2)}$ is a relation matrix for the torsion subgroup $T$ of $H$, which is a direct sum of cyclic groups of orders $\tau_1,\dots,\tau_t$.
The number $r$ of zeros in the lower part of the diagonal of $P^{(1)} = {\rm Diag}(1,\dots,1,\tau_1,\dots,\tau_t, 0,\dots 0)$ is the free rank of $H$ and the number of $1's$ is the index of stabilization of the splitting. In particular, a symplectic Heegaard splitting with defining matrix $\cH\in {\rm Sp}(2g,\mathbb{Z})$ is unstabilized precisely when  the diagonal matrix $\cP^{(1)}$ contains no unit entries.
\item  [{\rm (iv)}]  We may further assume that every entry $q_{ij}\in \mathcal Q^{(2)}$ and every entry $r_{ij}\in \mathcal R^{(2)}$ is constrained as follows. Assume that $i\leq j$. Then:

$$  0 \leq q_{ji} < \tau_j, \ \ \ q_{ij} = (\tau_j/\tau_i)q_{ji}, \ \ \ {\rm and} \ \ \ 0 \leq r_{ij} < \tau_i, \ \ \ 
r_{ji} = (\tau_j/\tau_i) r_{ij}.$$    

\end{enumerate}
\end{theorem}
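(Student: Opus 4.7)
The plan is to prove (i) by a direct calculation of $H_1(W;\Z)$, and then to establish (ii)--(iv) by progressively normalizing $\cH$ within its double coset $\Lambda_g \cH \Lambda_g$, exploiting the semidirect decomposition $\Lambda_g = \Omega \rtimes \Sigma$ from Lemma~\ref{L:structure of Lambda}. For (i), I would view $W = N_g \cup_{\phi \circ \tilde h} \bar N_g$ via the handle decomposition with one $0$-handle, $g$ $1$-handles assembling to $N_g$, $g$ $2$-handles of $\bar N_g$, and one $3$-handle. The $1$-handles contribute generators $a_1,\ldots,a_g$ of $H_1(N_g)$, while each $2$-handle of $\bar N_g$ is attached along a meridian identified after the gluing with $h(b_i)$ on $\partial N_g$. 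Since the $b_j$'s die in $H_1(N_g)$, the image of $h(b_i)$ there is $\sum_j p_{ji}\, a_j$, so the columns of $\cP$ form a presentation matrix for $H_1(W;\Z)$. Different choices of $\cH$ within the double coset correspond to unimodular row and column operations on $\cP$ (via the left and right $\Sigma$-actions) together with $\Omega$-moves that leave $\cP$ fixed, so the isomorphism class of $H_1(W;\Z)$ is well-defined on the double coset.

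For (ii)--(iii), the first step is to apply $\Sigma$ on both sides: left multiplication by $\mattwotwo{\cA}{0}{0}{\hat\cA^{-1}}$ and right multiplication by $\mattwotwo{\cB}{0}{0}{\hat\cB^{-1}}$ transform $\cP \mapsto \cA\cP\hat\cB^{-1}$, so by Corollary~\ref{C:Smith normal form} we may assume $\cP = D := \diag(1,\ldots,1,\tau_1,\ldots,\tau_t,0,\ldots,0)$. Split the indices into blocks $I_1, I_2, I_3$ of sizes $s, t, r$ matching the three diagonal regimes, and write $T := \diag(\tau_1,\ldots,\tau_t)$ for the $I_2\times I_2$ sub-block of $D$. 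A block-by-block analysis of the symplectic identities (``$\cR D$ symmetric'', ``$D\cQ$ symmetric'', ``$\hat\cR\cS$ symmetric'', and $\hat\cR\cQ - \hat\cS D = \cI$) forces the cross-blocks of $\cR$, $\cS$, $\cQ$ between $I_3$ and $I_1 \cup I_2$ to vanish and determines $Q_{33}$ as the inverse transpose of $R_{33}$. A further $\Sigma$-move trivial on $I_1 \cup I_2$ and equal to $R_{33}^{-1}$ on $I_3$ preserves $D$ (which is zero on $I_3$) and normalizes $R_{33}$ to $\cI$, whence $Q_{33}=\cI$. Next I would switch to $\Omega$: right multiplication by $\mattwotwo{\cI}{0}{\cZ_2}{\cI}$ replaces $\cR$ by $\cR + D\cZ_2$ (leaving $\cP = D$ fixed), and symmetrically the left $\Omega$-action modifies $\cQ$ by $\cZ_1 D$. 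Choosing symmetric $\cZ_2$ with $(\cZ_2)_{11} = -R_{11}$ kills $R_{11}$; taking $(\cZ_2)_{12} = -R_{12}$ kills $R_{12}$, and the symmetry ``$\cR D$ symmetric'' then forces $R_{21}$ to vanish as well; analogous left $\Omega$-moves kill $Q_{11}, Q_{12}, Q_{21}$. Now $\hat\cR\cQ = 0$ off the $I_3$ block, and the identity $\hat\cR\cQ - \hat\cS D = \cI$ forces $S_{11} = -\cI$ together with the vanishing of the off-diagonal blocks of $\cS$. This produces the block form (ii); reading off the diagonal of $\cP^{(1)} = \cI_s \oplus T \oplus 0_r$ then yields (iii), with $s$ equal to the index of stabilization.

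For (iv), one final round of $\Omega$-moves remains available, using $\cZ_1, \cZ_2$ supported in the $I_2 \times I_2$ block with arbitrary symmetric entries. These add $\tau_i z_{ij}$ to the entries $r_{ij}$ of $\cR^{(2)}$; the symmetry $z_{ji} = z_{ij}$ simultaneously adds $\tau_j z_{ij} = (\tau_j/\tau_i)\,\tau_i z_{ij}$ to $r_{ji}$, exactly compatible with the forced relation $r_{ji} = (\tau_j/\tau_i)\, r_{ij}$. Thus for $i \le j$ we may reduce $r_{ij}$ into $[0, \tau_i)$; an analogous argument reduces $q_{ji}$ modulo $\tau_j$. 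The main obstacle throughout is bookkeeping: each $\Sigma$- and $\Omega$-move must be verified to preserve the Smith form of $\cP$ and not to undo earlier normalizations. This dictates the order of the argument --- Smith form for $\cP$, then $I_3$-block $\Sigma$-move, then $\Omega$-moves on $\cR$ and $\cQ$, finally the mod-$\tau$ reductions --- and the symplectic constraints supply exactly the divisibilities needed to make the chosen $\Omega$-moves integral.
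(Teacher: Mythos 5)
Your parts (i), (iii) and (iv) are fine: the handle-decomposition computation of $H_1(W;\Z)$ is a legitimate substitute for the paper's Mayer--Vietoris argument, and your treatment of the mod-$\tau_i$ reductions in (iv) is essentially the paper's. The gap is in the middle of (ii). After Smith-normalizing $\cP$ to $\diag(\cI_s,\cP^{(2)},0_r)$, you assert that the symplectic identities force \emph{all} cross-blocks of $\cR,\cS,\cQ$ between the index block $I_3$ and $I_1\cup I_2$ to vanish. That is false. Writing the blocks as $\cR_{ij},\cQ_{ij},\cS_{ij}$ with $i,j\in\{1,2,3\}$ as in the paper, the conditions ``$\cR\hat\cP$ symmetric'' and ``$\hat\cP\cQ$ symmetric'' give only $\cR_{31}=\cR_{32}=0$ and $\cQ_{13}=\cQ_{23}=0$ (together with $\widehat{\cR_{33}}\cQ_{33}=\cI$, so your $R_{33}^{-1}$ move is fine); the complementary blocks $\cR_{13},\cR_{23},\cQ_{31},\cQ_{32}$, most of the $\cS$ cross-blocks, and $\cS_{33}$ are \emph{not} constrained -- indeed, multiplying the target normal form $\cH'$ by suitable elements of $\Lambda_g$ on either side produces matrices in the same double coset with $\cP$ still in Smith form and these blocks nonzero. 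Moreover, the moves you allow yourself cannot remove them: a right $\Omega$-move changes $\cR$ by $\cP\cZ_2$, so it alters $\cR_{23}$ only by multiples of the $\tau_i$ (and leaves the third block row of $\cR$ untouched), and dually a left $\Omega$-move alters $\cQ_{32}$ only modulo the $\tau_i$. So your sequence of moves does not reach the block form (ii), and the subsequent deduction ``$\hat\cR\cQ=0$ off the $I_3$ block, hence $\cS$ has the stated shape'' rests on the unproved vanishing; even granting it, $\cS_{33}$ is only forced to be symmetric, not zero, and needs one more $\Omega$-move.

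There are two ways to close the gap. One is the paper's: after deriving exactly the relations that \emph{are} forced ($\cQ_{13}=\cQ_{23}=\cR_{31}=\cR_{32}=0$, $\cP^{(2)}\cQ_{21}=\hat\cQ_{12}$, $\cR_{12}\cP^{(2)}=\hat\cR_{21}$, symmetry of $\cR_{11},\cQ_{11}$, etc.), one does not try to kill the remaining cross-blocks by elementary moves at all; instead one writes down the candidate matrix $\cM^{(2)}$ with those blocks replaced by $0$ and the $I_3$-corners by $\cI$, checks it is symplectic, and verifies that $(\cM^{(2)})^{-1}\cH^{(1)}$ has a zero upper-right $g\times g$ block, hence lies in $\Lambda_g$; the troublesome blocks are thereby absorbed into the $\Lambda_g$ factor. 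Alternatively, you can stay with explicit moves, but you must enlarge your toolkit beyond Smith normalization and $\Omega$: use block-unipotent $\Sigma$-pairs $(\cU,\cV)$ with $\cU\cP^{(1)}=\cP^{(1)}\hat\cV$ (the group $G$ of the paper's Lemma on the improved normal form). For instance, a left factor $\cU=\cI+E_{13}$ or $\cI+E_{23}$ preserves $\cP^{(1)}$ (its third block column is zero) and, once $\cR_{31}=\cR_{32}=0$ and $\cR_{33}=\cI$, shifts $\cR_{13}$ and $\cR_{23}$ by arbitrary integer blocks, so they can be killed; dual right factors kill $\cQ_{31},\cQ_{32}$, and a final $\Omega$-move kills $\cS_{33}$. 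With that addition and some care about the order of operations (so later moves do not reintroduce blocks already cleared), your outline becomes a correct, more computational alternative to the paper's argument.
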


\begin{proof}

\underline{Proof of (i).}  Apply the Mayer-Vietoris sequence to the decomposition of the 3-manifold 
$W$ that arises through the Heegaard splitting $N_g \cup_{\phi\circ\tilde{h}} \Nbar_g$. 

\underline{Proof of (ii).}  The proof is a fun exercise in manipulating symplectic matrices, but without lots of care the proof will not be very efficient. 

In view of Lemma~\ref{L:structure of Lambda}, the most general element in the double coset of $\cH = \mattwotwo{\cR}{\cP}{\cS}{\cQ}$ has the form
\begin{equation}
\label{E:most general element in Lambda H Lambda}
\cM = 
\bmattwotwo{\mathcal I}{0}{\cZ_1}{\mathcal I} 
\bmattwotwo{\cU}{0}{0}{\hat{\cU}^{-1}} 
\bmattwotwo{\cR}{\cP}{\cS}{\cQ}
\bmattwotwo{\hat{\cV}^{-1}}{0}{0}{\cV} 
\bmattwotwo{\mathcal I}{0}{\cZ_2}{\mathcal I}
= 
\bmattwotwo{*}{\cU \cP \cV}{*}{*}
\end{equation}
where $\cU, \cV$ are arbitrary matrices in GL(2,$\mathbb{Z})$.

Choose $\cU, \cV \in \GL(t, \Z)$ so that 
$$\cP^{(1)} = (I\oplus\cU\oplus I)( \cP)(I\oplus\cV\oplus I)= \diag (1, 1, \dots, 1, \tau_1, \tau_2, \dots, \tau_t, 0, \dots, 0)\in \GL(g,\Z).$$    
By Corollary \ref{C:Smith normal form}, this is always possible.  Let $\cP^{(2)} = \diag(\tau_1, \tau_2, \dots, \tau_t) \in \GL(t,\Z).$ 
Using (\ref{E:most general element in Lambda H Lambda}). we have shown that $\cH$ is in the same double coset as 
\begin{equation} 
\label{E:The matrix H(1)}
\cH^{(1)}  =  \bmattwotwo{\cR^{(1)}}{\cP^{(1)}}{\cS^{(1)}}{\cQ^{(1)}} = 
\left(
\begin{array}{ccc|ccc}
\cR_{11} & \cR_{12} & \cR_{13} & \mathcal I & 0 & 0 \\
\cR_{21} & \cR_{22} & \cR_{23} & 0 & \cP^{(2)} & 0 \\
\cR_{31} & \cR_{32} & \cR_{33} & 0 & 0 & 0 \\
\hline
\cS_{11} & \cS_{12} & \cS_{13} & \cQ_{11} & \cQ_{12} & \cQ_{13} \\
\cS_{21} & \cS_{22} & \cS_{23} & \cQ_{21} & \cQ_{22} & \cQ_{23} \\
\cS_{31} & \cS_{32} & \cS_{33} & \cQ_{31} & \cQ_{32} & \cQ_{33} 
\end{array}
\right)
\end{equation}
This is the first step in our partial normal form. 

It will be convenient to write $\cH^{(1)}$ in several different ways in block form. The first one is the block decomposition in (\ref{E:The matrix H(1)}). In each of the other cases, given below, the main decomposition is into square $g \times g$ blocks, and these blocks will not be further decomposed (although much later they will be modified):
\begin{equation}
\label{E:alternative decompositions of H(1)}
\cH^{(1)} = \left(
\begin{array}{cc|cc}
\cA_{11} & \cA_{12} & \cB_{11} & 0 \\
\cA_{21} & \cA_{22} & 0 & 0 \\
\hline
\cC_{11} & \cC_{12} & \cD_{11} & \cD_{12} \\
\cC_{21} & \cC_{22} & \cD_{21} & \cD_{22}
\end{array}
\right)
\ = \
\left(
\begin{array}{c|c}
\cA & \cB \\
\hline
\cC & \cD
\end{array}
\right)
\end{equation}
where
$$
\cA_{11} = 
\left(
\begin{array}{cc}
\cR_{11} & \cR_{12} \\
\cR_{21} & \cR^{(2)}
\end{array}
\right),
\quad
\cB_{11} = 
\left(
\begin{array}{cc}
\mathcal I & 0 \\
0 & \cP^{(2)}
\end{array}
\right),
\quad 
\cC_{11} = 
\left(
\begin{array}{cc}
\cS_{11} & \cS_{12} \\
\cS_{21} & \cS^{(2)}
\end{array}
\right), \dots
$$
$$
\cA_{12} = 
\left(
\begin{array}{c}
\cR_{13} \\
\cR_{23}
\end{array}
\right),
\quad
\cA_{21} = 
\left(
\begin{array}{cc}
\cR_{31} & \cR_{32}
\end{array}
\right),
\quad
\cA_{22} = \left( \cR_{33} \right), \dots
$$
In general, the blocks $\cR_{ij}, \cA_{ij}, \dots$ are {\it not} square, however $\cR^{(2)}, \cS^{(2)}, \cP^{(2)}, \cQ^{(2)}$ are square $t \times t$ matrices.

Now $\cH^{(1)} \in \Gamma_g$, hence its $g \times g$ block satisfy the conditions (\ref{E:symplectic constraints-1}) and (\ref{E:symplectic constraints-2}). Working with the decomposition of $\cH^{(1)}$ into the block form  given in (\ref{E:The matrix H(1)}), one sees that because of the special form of $\cB = \mattwotwo{\cB_{11}}{0}{0}{0}$,  the $2(g - r) \times 2 (g - r)$ matrix  $\mattwotwo{\cA_{11}}{\cB_{11}}{\cC_{11}}{\cD_{11}}$
{\it also} satisfies (\ref{E:symplectic constraints-1}) and (\ref{E:symplectic constraints-2}), now with respect to its $(g - r) \times (g - r)$ block. From there it follows (using Definition~ \ref{D:symplectic constraints}) that the matrix given in (\ref{E:The matrix H(1)}) is in the group $\Gamma_{2(g-r)}$. One may then verify without difficulty that the augmented matrix
\begin{equation} 
\label{E:the matrix H(2)}
\cM^{(2)} = 
\left(
\begin{array}{cc|cc}
\cA_{11} & 0 & \cB_{11} & 0 \\
0 & \mathcal I & 0 & 0 \\
\hline
\cC_{11} & 0 & \cD_{11} & 0 \\
0 & 0 & 0 & \mathcal I
\end{array}
\right)
\ = \
\left(
\begin{array}{ccc|ccc}
\cR_{11} & \cR_{12} & 0 & \mathcal I & 0 & 0 \\
\cR_{21} & \cR^{(2)} & 0 & 0 & \cP^{(2)} & 0 \\
0 & 0 & \mathcal I & 0 & 0 & 0 \\
\hline
\cS_{11} & \cS_{12} & 0 & \cQ_{11} & \cQ_{12} & 0 \\
\cS_{21} & \cS^{(2)} & 0 & \cQ_{21} & \cQ^{(2)} & 0 \\
0 & 0 & 0 & 0 & 0 & \mathcal I 
\end{array}
\right),
\end{equation}
which has dimension $2g$ again, {\it also} satisfies the conditions (\ref{E:symplectic constraints-1}) and (\ref{E:symplectic constraints-2}), now with respect to its $g \times g$ block, and so $\cH^{(2)}$ is in $\Gamma_g$.

We will need further information about $\cH^{(1)}$ and $\cM^{(2)}$. Returning to (\ref{E:The matrix H(1)}), and using the right decomposition of $\cH^{(1)}$, we now verify that conditions (\ref{E:symplectic constraints-1}) and (\ref{E:symplectic constraints-2}) imply the following relations between the subblocks:
\begin{eqnarray}
& & \cQ_{13} = \cQ_{23} = \cR_{31} = \cR_{32} = 0 \label{equation:normalform11} \\
& & \cP^{(2)} \cQ_{21} = \hat{\cQ}_{12} \label{equation:normalform12} \\
& & \cR_{12} \cP^{(2)} = \hat{\cR}_{21} \label{equation:normalform13} \\
& & \cP^{(2)} \cQ^{(2)} \text{ symmetric} \label{equation:normalform14} \\
& & \cR^{(2)} \cP^{(2)} \text{ symmetric} \label{equation:normalform15} \\
& & \cQ_{11}, \cR_{11} \text{ symmetric} \label{equation:normalform16}
\end{eqnarray}
Now observe that if ${\mattwotwo{\cA}{\cB}{\cC}{\cD}} \in \Gamma_g$ then (\ref{E:symplectic constraints-1}) and (\ref{E:symplectic constraints-2}) imply that
\begin{equation}Ê
\label{E:inverse of a symplectic matrix}
{\bmattwotwo{\cA}{\cB}{\cC}{\cD}}^{-1} = {\bmattwotwo{\hat{\cD}}{-\hat{\cB}}{-\hat{\cC}}{\hat{\cA}}}.
\end{equation} 

Using equation (\ref{E:inverse of a symplectic matrix}) to compute $(\cM^{(2)})^{-1}$, and making use of the conditions in (\ref{equation:normalform11})-(\ref{equation:normalform16}), one may then verify that the product matrix $(\cM^{(2)})^{-1} \cH^{(1)}$ has a $g \times g$ block of zeros in the upper right corner. But then $(\cM^{(2)})^{-1} \cH^{(1)} \in \Lambda_g$, hence $\cM^{(2)}$ and $ \cH^{(1)}$ are in the same double coset.

Further normalizations are now possible. Since $\cR_{11}$ and $\cQ_{11}$ are symmetric (by the symplectic constraints (\ref{E:symplectic constraints-2})) the following matrices are in the subgroup $\Omega \subset \Lambda_g$ defined in Lemma \ref{L:structure of Lambda}:
$$
\cN_1 = 
\left(
\begin{array}{ccc|ccc}
\mathcal I & 0 & 0 & 0 & 0 & 0 \\
0 & \mathcal I & 0 & 0 & 0 & 0 \\
0 & 0 & \mathcal I & 0 & 0 & 0 \\
\hline
- \cQ_{11} & - \hat{\cQ}_{21} & 0 & \mathcal I & 0 & 0 \\
- \cQ_{21} & 0 & 0 & 0 & \mathcal I & 0 \\
0 & 0 & 0 & 0 & 0 & \mathcal I
\end{array}
\right)
\qquad \qquad
\cN_2 = 
\left(
\begin{array}{ccc|ccc}
\mathcal I & 0 & 0 & 0 & 0 & 0 \\
0 & \mathcal I & 0 & 0 & 0 & 0 \\
0 & 0 & \mathcal I & 0 & 0 & 0 \\
\hline
- \cR_{11} & - \cR_{21} & 0 & \mathcal I & 0 & 0 \\
- \hat{\cR}_{12} & 0 & 0 & 0 & \mathcal I & 0 \\
0 & 0 & 0 & 0 & 0 & \mathcal I
\end{array}
\right)
$$
Computing, we find that
$$
\cN_1 \cM^{(2)} \cN_2 = 
\left(
\begin{array}{ccc|ccc}
0 & 0 & 0 & \mathcal I & 0 & 0 \\
0 & \cR^{(2)} & 0 & 0 & \cP^{(2)} & 0 \\
0 & 0 & \mathcal I & 0 & 0 & 0 \\
\hline
* & * & 0 & 0 & 0 & 0 \\
* & \cS^{(2)} & 0 & 0 & \cQ^{(2)} & 0 \\
0 & 0 & 0 & 0 & 0 & 0
\end{array}
\right).
$$
Since this matrix is in $\Gamma_g$, its entries satisfy the conditions (\ref{E:symplectic constraints-1}) and (\ref{E:symplectic constraints-2}). An easy check shows that the lower left $g \times g$ box necessarily agrees with the entries in the matrix defined in the statement of Theorem \ref{T:partial normal form}. Thus $\cH^{(2)} = \cN_1 \cM^{(2)} \cN_2$ is in the same double coset as $\cM^{(2)}$, $\cH^{(1)}$ and $\cH$.   This completes the proof of (ii).

\underline {Proof of (iii).}  In (i) we saw that in the partial normal form the matrix $\mathcal I_{g-r-t} \oplus \mathcal P^{(2)} \oplus 0_r$ is a relation matrix for $H$.  Since $H$ is a finitely generated abelian group, it is a direct sum of  $t$ cyclic groups of order $\tau_1,\dots,\tau_t$ and $r$ infinite cyclic groups and $g-r-t$ trivial groups. The $g-r-t$ trivial groups indicate that the symplectic Heegaard splitting has been stabilized $g-r-t$ times.  That is, (iii) is true.

\underline{Proof of (iv).}  We consider additional changes in the submatrix $\cH^{(2)}$ which leave the $\mathcal P^{(2)}$-block unchanged.  Note that by Lemma~\ref{L:structure of Lambda}, any
 changes in the double coset of $\cH^{(2)}$ in $\Gamma_t$ can be lifted canonically to corresponding changes in the double coset of $\cH'$ in $\Gamma_g$, and therefore it suffices to consider modifications to the double coset of $\cH^{(2)}$ in $\Gamma_t$.   To simplify notation for the remainder of this proof, we 
 set $\cH^{(2)} = \mattwotwo{\mathcal R}{\mathcal P}{\mathcal S}{\mathcal Q}$ 

Choose any element $\mattwotwo{\cI_t}{0_t}{\cZ}{\cI_t} $ in the subgroup $\Omega_t$ of ${\rm Sp}(2t,\mathbb Z)$. Then:

$$  \bmattwotwo{\cI_t}{0_t}{\cZ}{\cI_t} \bmattwotwo{\cR}{\cP}{\cS}{\cQ}  = 
 \bmattwotwo{\cR}{\cP}{\star}{\cQ + \cZ P}, \ \ \ 
 \bmattwotwo{\cR}{\cP}{\cS}{\cQ} \bmattwotwo{\mathcal I_t}{0_t}{\cZ}{\mathcal I_t} =  \bmattwotwo{\cR+ \cP\cZ}{\cP}{\star}{\cQ} $$
 
Let $\cQ = (q_{ij}), \ \cR = (r_{ij}), \cZ = (z_{ij})$.  Then $\cZ \cP = (\tau_j z_{ij})$ and $\cP\cZ = (\tau_i z_{ij})$.
Therefore we may perform multiplications as above so that if $i \leq j$ then $0 \leq q_{ji} < \tau_j$ and $0 \leq r_{ij} < \tau_i$.   
The fact that $\cH^{(2)}$ is symplectic shows that $\cP \cQ$ and $\cR \cP$ are symmetric.   Therefore $q_{ij} = (\tau_j/\tau_i)q_{ji}$, $r_{ji} = (\tau_j/\tau_i) r_{ij}$.   Thus the matrices $\cQ$ and $\cR$ are completely determined once we fix the entries $q_{ji}$ and $r_{ij}$ which satisfy  $i\leq j$. This completes the proof of (iv), and so of Theorem~\ref{T:partial normal form}.
\end{proof}  

\begin{remark}
\label{R:4 double cosets}
As noted in $\S$\ref{SS:Heegaard splittings of 3-manifolds},  we made two choices when we defined equivalence of Heegaard splittings: the choice of one of the two handlebodies as the preferred one, and the choice of a preferred orientation on the 3-manifold $W$.   When we allow for all possible choices, we see that the symplectic matrix $\cH$ of 
Theorem~\ref{T:partial normal form}  is replaced by 4 possible symplectic matrices, related by the operations of taking the transpose and the inverse and the inverse of the transpose: 
$$
\bmattwotwo{\cR^{(2)}}{\cP^{(2)}}{\cS^{(2)}}{\cQ^{(2)}},\quad
\bmattwotwo{\hat{\cR}^{(2)}}{\hat{\cS}^{(2)}}{\hat{\cP}^{(2)}}{\hat{\cQ}^{(2)}},\quad
 \bmattwotwo{\hat{\cQ}^{(2)}}{-\hat{\cP}^{(2)}}{-\hat{\cS}^{(2)}}{\hat{\cR}^{(2)}},\quad
  \bmattwotwo{\cQ^{(2)}}{-\cS^{(2)}}{-\cP^{(2)}}{\cR^{(2)}}.
$$ 
Any one of the four could equally well have been chosen as a representative of the Heegaard splitting.  These four matrices may or may not be in the same double coset.  $\|$
\end{remark}

%%%%%%%%%%%%%%%%%%%%%%%%%%%%%%%%%%%%%%%%%%%%%%%%%%%%%%%%%%%%%%%%%%%%%%%%%%%%%%%%%%%%%%%%%%%%%%%%%%%%%%
\subsection{Uniqueness questions}
\label{SS:uniqueness questions}
%%%%%%%%%%%%%%%%%%%%%%%%%%%%%%%%%%%%%%%%%%%%%%%%%%%%%%%%%%%%%%%%%%%%%%%%%%%%%%%%%%%%%%%%%%%%%%%%%%%%%%

There is a source of non-uniqueness  in the partial normal form of Theorem~\ref{T:partial normal form}. It lies in the fact that further normalizations are possible after those in (iv) of Theorem~\ref{T:partial normal form}, but they are difficult to understand.   By Lemma~\ref{L:structure of Lambda}, we know that $\Lambda_t$ is the semi-direct product of the normal subgroup $\Omega_t$ and the subgroup $\Sigma_t$ that were defined there. We already determined how left and right multiplication by elements in $\Omega_t$ change $\cH'$ in the proof of part (iv) of Theorem~\ref{T:partial normal form}.  We now investigate further changes, using left (resp right) multiplication by matrices in $\Sigma_t.$  

\begin{lemma}
\label{L:improved normal form}
Assume that $\cP^{(2)}= {\rm Diag}(\tau_1,\dots,\tau_t)$ is fixed, and that $\cP^{(2)}\cQ^{(2)} = \hat{\cQ}^{(2)}\cP^{(2)}$. \ Then there is a well-defined subgroup $G$ of  $ \Sigma_t\times \Sigma_t$,  determined  by the condition that  there exist matrices $\cU,\cV\in {\rm GL}(t,\mathbb Z)$ such that $\cU\cP^{(2)} = \cP^{(2)} \cV.$   Equivalently, there exist symplectic matrices $U,V \in \Sigma_t$ such that $(U, V)\in G   \Longleftrightarrow $
\begin{equation}
\label{E:fiddling with P}
U \bmattwotwo{\cR^{(2)}}{\cP^{(2)}}{\cS^{(2)}}{\cQ^{(2)}}V =  \bmattwotwo{\cU}{0}{0}{\hat{\cU}^{-1}}  \bmattwotwo{\cR^{(2)}}{\cP^{(2)}}{\cS^{(2)}}{\cQ^{(2)}}\bmattwotwo{\cV}{0}{0}{\hat{\cV}^{-1}} =   
\bmattwotwo{\star}{\cP^{(2)}}{\star}{\hat{\cU}^{-1}\cQ^{(2)}\hat{\cV}^{-1}}.
\end{equation} 

For later use, we also have that if $(\cP^{(2)})^{-1}$ is the diagonal matrix whose $i^{\rm th}$ entry is the rational number 
$1/\tau_i$, then $\cQ^{(2)}(\cP^{(2)})^{-1}$ will be replaced by $\hat{\cU}^{-1}(\cQ^{(2)}(\cP^{(2)})^{-1})\cU^{-1}.$ 
\end{lemma}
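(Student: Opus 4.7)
The plan is to verify three things: that $G$ is well-defined as a subgroup of $\Sigma_t\times\Sigma_t$, that membership in $G$ is equivalent to the block identity (\ref{E:fiddling with P}), and that the induced action on the matrix $\cQ^{(2)}(\cP^{(2)})^{-1}$ is the claimed conjugation.

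First, I would carry out the block product directly. With $U = \bmattwotwo{\cU}{0}{0}{\hat{\cU}^{-1}}$ and $V = \bmattwotwo{\cV}{0}{0}{\hat{\cV}^{-1}}$ in $\Sigma_t$, one computes
\[
U\,\bmattwotwo{\cR^{(2)}}{\cP^{(2)}}{\cS^{(2)}}{\cQ^{(2)}}\,V
\;=\; \bmattwotwo{\cU\cR^{(2)}\cV}{\cU\cP^{(2)}\hat{\cV}^{-1}}{\hat{\cU}^{-1}\cS^{(2)}\cV}{\hat{\cU}^{-1}\cQ^{(2)}\hat{\cV}^{-1}}.
\]
The top-right block equals $\cP^{(2)}$ precisely when $\cU\cP^{(2)} = \cP^{(2)}\hat{\cV}$, which is the condition displayed in the lemma (after renaming $\hat{\cV}$ as $\cV$). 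This identifies the pairs $(U,V)$ admissible in (\ref{E:fiddling with P}) and simultaneously produces the new lower-right block $\hat{\cU}^{-1}\cQ^{(2)}\hat{\cV}^{-1}$.

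Second, I would check that the set of such pairs forms a subgroup. Closure follows from $(\cU_1\cU_2)\cP^{(2)} = \cU_1\cP^{(2)}\cV_2 = \cP^{(2)}(\cV_1\cV_2)$, and multiplying $\cU\cP^{(2)} = \cP^{(2)}\cV$ by $\cU^{-1}$ on the left and $\cV^{-1}$ on the right yields $\cU^{-1}\cP^{(2)} = \cP^{(2)}\cV^{-1}$. Since the map $\Sigma_t\hookrightarrow \GL(t,\Z)$ sending a symplectic block to its upper-left corner is a group isomorphism, these conditions lift canonically to a well-defined subgroup $G\subset \Sigma_t\times\Sigma_t$.

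Third, for the action on $\cQ^{(2)}(\cP^{(2)})^{-1}$, I would rearrange $\cU\cP^{(2)} = \cP^{(2)}\hat{\cV}$ to $\hat{\cV}^{-1}(\cP^{(2)})^{-1} = (\cP^{(2)})^{-1}\cU^{-1}$, which is permissible over $\Q$ since $\cP^{(2)}$ is an invertible diagonal matrix of nonzero integers. Right-multiplying the new $\cQ$-block by $(\cP^{(2)})^{-1}$ and substituting then gives
\[
\hat{\cU}^{-1}\cQ^{(2)}\hat{\cV}^{-1}(\cP^{(2)})^{-1}
\;=\; \hat{\cU}^{-1}\bigl(\cQ^{(2)}(\cP^{(2)})^{-1}\bigr)\cU^{-1},
\]
as required. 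The hypothesis $\cP^{(2)}\cQ^{(2)} = \hat{\cQ}^{(2)}\cP^{(2)}$ is not invoked in the literal manipulation; its background role is to ensure that $\cQ^{(2)}(\cP^{(2)})^{-1}$ represents a symmetric $\Q/\Z$-valued bilinear form on the torsion quotient, so that the conjugation above is the correct notion of change of basis for that form. No substantive obstacle arises; the argument is essentially a bookkeeping exercise, and the only real care required is the consistent placement of transposes and inverses.
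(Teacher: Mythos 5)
Your proposal is correct and follows essentially the same route as the paper: the paper's proof likewise treats the block identity (\ref{E:fiddling with P}) as a direct computation, verifies closure and inverses via $\cU_1\cU_2\cP^{(2)} = \cP^{(2)}\hat{\cV}_1\hat{\cV}_2$ and $(\cP^{(2)})^{-1}\cU_1^{-1} = \hat{\cV}_1^{-1}(\cP^{(2)})^{-1}$, and declares the transformation of $\cQ^{(2)}(\cP^{(2)})^{-1}$ immediate. You merely spell out the ``simple calculation'' and the conjugation step explicitly, and your remark about renaming $\hat{\cV}$ as $\cV$ correctly resolves a notational wobble already present in the paper's own statement and proof.
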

 
\begin{proof}  The statement in (\ref{E:fiddling with P}) is a simple calculation.  We need to prove that it determines a group.  Suppose that $(U_1,V_1), \ (U_2,V_2) \in G$.  Then $\cU_i\cP^{(2)} = \cP^{(2)}\cV_i$ for $i=1,2$, so $\cU_1\cU_2 \cP^{(2)} = \cU_1\cP^{(2)}\hat{\cV}_2 = \cP^{(2)}\hat{\cV}_1\hat{\cV}_2$. Therefore  
$(U_1U_2, V_1V_2) \in G$.  Also, $(\cP^{(2)})^{-1} \cU_1^{-1} = \hat{\cV_1}^{-1}(\cP^{(2)})^{-1} $, which implies that $(U_1^{-1}, V_1^{-1})\in G$, so $G$ is a group. 
It is immediate that $\cP^{(2)}$ remains unchanged and that $\cQ^{(2)}(\cP^{(2)})^{-1}$ changes in the stated way.
\end{proof} 
\begin{remark} The condition $\cU\cP^{(2)} = \cP^{(2)}\hat{\cV}$ means that $\cU$ is restricted to $t \times t$ unimodular matrices which satisfy the condition: $\cU = (u_{ij})$, where $u_{ji}$ is divisible by 
$\tau_j/\tau_i$ whenever $j < i$.  There are no restrictions on $u_{ij}$ when $j\geq i$ other than that the determinant $|u_{ij}| = \pm 1$.   $\|$
\end{remark}

\begin{remark} \label{R:no unique double coset rep}  We were unable to find a nice way to choose $U$ and $V$ so as to obtain a unique representative of the double coset of a symplectic Heegaard splitting, in the case when $H$ is not torsion-free.   The reason will become clear in $\S$\ref{S:the classification problem for linked abelian groups}: invariants of the matrix $\cQ^{(2)}(\cP^{(2)})^{-1}$, and so also a normal form, depend crucially on whether or not there is 2-torsion in the torsion subgroup $T$ of $H$, and so a general rule cannot be easily stated.  See also the discussion in $\S$\ref{SS:remarks on problem 6}.      $\|$
\end{remark}

\newpage
 %%%%%%%%%%%%%%%%%%%%%%%%%%%%%%%%%%%%%%%%%%%%%%
%%%%%%%%%%%%%%%%%%%%%%%%%%%%%%%%%%%%%%%%%%%%%%
%%%%%%%%%%%%%%%%%%%%%%%%%%%%%%%%%%%%%%%%%%%%%%
 \section{Presentation theory for finitely generated abelian groups}
 \label{S:presentation theory for finitely generated abelian groups}
 %%%%%%%%%%%%%%%%%%%%%%%%%%%%%%%%%%%%%%%%%%%%%%
%%%%%%%%%%%%%%%%%%%%%%%%%%%%%%%%%%%%%%%%%%%%%%
%%%%%%%%%%%%%%%%%%%%%%%%%%%%%%%%%%%%%%%%%%%%%%
 
We are ready to begin the main work in this article.  In Section~\ref{S:introduction} we described the topological motivation that underlies the work in this paper, namely we were interested in understanding all topological invariants of a 3-manifold $W$ and of its Heegaard splittings that might arise through symplectic Heegaard splittings.  In Theorem~\ref{T:partial normal form} we saw that the matrix associated to a symplectic Heegaard splitting gives a natural presentation of  $H_1(W;\Z)$. Therefore it is natural to begin our work by investigating the theory of presentations of abelian groups.  Our goal is to understand, fully, that part of the obstruction to stable equivalence coming from a symplectic Heegaard splitting.    

We begin by introducing the two concepts of isomorphism and equivalence of presentations. The rank of a presentation is defined and a concept of stabilizing a presentation (thereby increasing the rank) is introduced. Most of this is aimed at Theorem \ref{T:h lifts to the presentation level}, which gives necessary and sufficient conditions for an automorphism of $H$ to lift to an automorphism of the free group of a presentation. Theorem \ref{T:h lifts to the presentation level} implies Corollaries \ref{C:cor 1 of T:h lifts to the presentation level}, \ref{C:cor 2 of T:h lifts to the presentation level} and \ref{C:cor 3 of T:h lifts to the presentation level}, which assert (in various ways) that any two presentations of the same finitely generated abelian group which are of non-minimal equal rank are equivalent. From this it follows that at most a single stabilization is required to remove any obstruction to equivalence between two presentations.  This does not
solve Problem 3, but it is a first step in the direction of this problem's solution.
We remark that, by contrast, the usual proof of Tietze's Theorem on equivalence of two particular presentations of an arbitrary finitely 
generated but in general non-abelian group shows that presentations of rank $r, r'$ become equivalent after 
stabilizations of index $r', r$, respectively \cite{MKS}.

 The latter half of the section focuses on presentations of minimal rank of a finitely generated abelian group. An ``orientation" and a ``volume" on $H$ are defined. The determinant of an endomorphism $h$ of abelian groups, and hence of a presentation $\pi$ of an abelian group, is introduced. The key result is Theorem \ref{T:equivalence of minimal presentations}, which gives necessary and sufficient conditions for an isomorphism between two {\it minimal} presentations to lift to the presentation level. Corollary \ref{C:complete invariant of equivalence of minimal presentations} follows: two minimal presentations of $H$ are equivalent if and only if they have the same volume on $H$. The section closes with two examples which illustrate the application of Theorem \ref{T:equivalence of minimal presentations} and Corollary \ref{C:complete invariant of equivalence of minimal presentations} to explicit group presentations.

In \S \ref{S:the classification problem for minimal symplectic Heegaard splittings}, we will apply the notion of ``volume'' to obtain invariants of Heegaard splittings.  It
turns out that associated to a symplectic Heegaard splitting is a natural presentation of the first homology group, and
thus an induced volume.  We will use the interplay between this volume and a linking form on the first homology group
to find invariants of Heegaard splittings. 
%%%%%%%%%%%%%%%%%%%%%%%%%%%%%%%%%%%%%%%%%%%%%%
%%%%%%%%%%%%%%%%%%%%%%%%%%%%%%%%%%%%%%%%%%%%%%
\subsection{Equivalence classes of not-necessarily minimal presentations} 
\label{SS:equivalence classes of not-necessarily minimal presentations}
%%%%%%%%%%%%%%%%%%%%%%%%%%%%%%%%%%%%%%%%%%%%%%
%%%%%%%%%%%%%%%%%%%%%%%%%%%%%%%%%%%%%%%%%%%%%%
We begin our work with several definitions which may seem unnecessary and even pedantic; however, extra care now will help to make 
what follows later seem natural and appropriate.

\begin{definition} \label{D:free pair}
 A \emph{free pair} is a pair of groups $(\F, R)$ with $R \subset \F$ and $\F$ free abelian and finitely generated; its 
\emph{quotient} is $\F/R$. If $H$ is a finitely generated abelian group, a \emph{presentation} of $H$ is a surjection 
$\pi: \F \to H$, with $\F$ again a finitely generated free abelian group. The \emph{rank} of a free pair and of a 
presentation is the rank of $\F$.  Direct sums of these objects are defined in the obvious way.
The {\it index k stabilization of a free pair} $(F,R)$ is the free pair $(F\oplus \Z^k, R\oplus \Z^k)$, and the 
{\it index $k$ stabilization of a presentation} $\pi:F\to H$ is the presentation $\pi\oplus 0:F\oplus \Z^k \to H$.   $\|$
\end{definition} 
  
\begin{definition}\label{D:isomorphism and equivalence of presentations}
An \emph{isomorphism of free pairs} $(\F, R), (\F', R')$ is an isomorphism $f : \F \to \F'$ such that $f(R) = R'$.  
An \emph{isomorphism of presentations} is a commutative diagram
 $$
 \begin{CD}
 \F @>\pi>> H \\
 @VfVV @VVhV \\
 \F' @>\pi'>> H'
 \end{CD}
 $$
with $f, h$ isomorphisms.  If $H = H'$, then we have the stronger notion of an \emph{equivalence of presentations}, which is a 
commutative diagram
 $$
 \begin{diagram}
 \node{\F} \arrow[2]{s,l}{f} \arrow{see,t}{\pi} \\
 \node[3]{H.} \\
 \node{\F'} \arrow{nee,b}{\pi'}
 \end{diagram}
 $$
with $f$ an isomorphism.  Two free pairs (resp. two presentations) are \emph{stably isomorphic} (resp. 
\emph{stably equivalent}) if they have isomorphic 
(resp. equivalent) stabilizations.  If $\pi: \F \to H$, $\pi': \F' \to H$ are both of minimal rank and stably equivalent, then we define 
the \emph{stabilization index} of $\pi, \pi'$ to be the smallest index $k$ such that $\pi, \pi'$ have equivalent stabilizations of 
index $k$.   $\|$
\end{definition}  

\begin{example}
\label{Ex:inequivalent minimal presentations-1} 
To see that equivalence and isomorphism of presentations are distinct concepts, let $F = \Z$ and let $H = \Z_5$ with 
$\pi : \Z \to \Z_5$ defined by $\pi(1) = 1$ and $\pi' : \Z \to \Z_5$ defined by $\pi'(1) = 2$.  Then $\pi$ and $\pi'$ are 
isomorphic because the automorphism $h: \Z_5 \to \Z_5$ defined by $h(1)=2$ lifts to the identity automorphism of $\Z$. 
However, it is easy to see that $\pi$ and $\pi'$ are not equivalent. $\|$
\end{example}

The standard `elementary divisor theorem' concerning presentations of abelian groups may be phrased as follows:
 
\begin{proposition} \label{P:structure of abelian groups}
Two free pairs are isomorphic if and only if they have the same rank and isomorphic quotients.  For any pair $(\F, R)$ 
there is a basis $f_i$ of $\F$ and integers $m_i$ so that $\{ m_i f_i \ | \ m_i \not= 0\}$ is a basis for $R$. 
The $f_i$ and $m_i$ may be chosen so that $m_i \vert m_{i+1}$ for all $i$.
\end{proposition}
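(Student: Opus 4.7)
The plan is to deduce both statements from the Smith normal form result recorded as Corollary~\ref{C:Smith normal form}, combined with the uniqueness part of Theorem~\ref{T:fundamental theorem for f.g. abelian groups}.

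First I would dispose of the second, structural statement. Given a free pair $(\F, R)$, fix any basis $e_1,\dots,e_r$ of $\F$ and any finite generating set $v_1,\dots,v_s$ of $R$ (which exists because $R$ is a subgroup of a finitely generated free abelian group, hence itself finitely generated). Expressing the $v_j$ in the basis $\{e_i\}$ yields an integer matrix $\cP$ whose column span is $R$; by padding with zero columns we may arrange $s=r$. Corollary~\ref{C:Smith normal form} furnishes $\cU,\cV\in\GL(r,\Z)$ with $\cU\cP\cV=\diag(m_1,\dots,m_r)$ and $m_i\mid m_{i+1}$. The matrix $\cU^{-1}$ specifies a new basis $f_1,\dots,f_r$ of $\F$, and $\cV$ specifies a new set of generators for $R$; in these coordinates $R$ is generated by $\{m_i f_i\}$, and the zero entries drop out to leave a basis of $R$. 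This is exactly the claim.

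For the first statement, the forward implication is immediate from the definition of isomorphism of free pairs. For the reverse, suppose $(\F,R)$ and $(\F',R')$ both have rank $r$ and isomorphic quotients. Apply the construction just described to each, obtaining bases $f_1,\dots,f_r$ of $\F$ with invariant factors $m_1\mid\cdots\mid m_r$, and bases $f_1',\dots,f_r'$ of $\F'$ with invariant factors $m_1'\mid\cdots\mid m_r'$. Then
\[
\F/R \;\cong\; \bigoplus_{i=1}^{r} \Z/m_i\Z, \qquad \F'/R' \;\cong\; \bigoplus_{i=1}^{r} \Z/m_i'\Z,
\]
where a zero invariant factor contributes a $\Z$ summand and a unit invariant factor contributes a trivial summand. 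By the uniqueness part of Theorem~\ref{T:fundamental theorem for f.g. abelian groups}, the torsion-free rank and the sequence of invariant factors greater than $1$ are determined by the isomorphism class of the quotient. Since $\F/R\cong \F'/R'$ and both sequences have the same total length $r$, the divisibility ordering forces $m_i=\pm m_i'$ for each $i$. The map $\F\to\F'$ defined by $f_i\mapsto \varepsilon_i f_i'$, with $\varepsilon_i\in\{\pm 1\}$ chosen so that $\varepsilon_i m_i=m_i'$, is then an isomorphism of abelian groups that carries the generators $m_i f_i$ of $R$ to the generators $m_i' f_i'$ of $R'$, yielding the required isomorphism of free pairs.

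The main obstacle is the bookkeeping for units and zeros: one must be careful that indices $i$ with $m_i=1$ (contributing trivial summands) and with $m_i=0$ (contributing free summands) are matched correctly across the two pairs, and it is precisely the hypothesis that $\F$ and $\F'$ have the same rank, together with the divisibility-ordered form provided by Smith normal form, that pins down this matching. Once the invariant factors are lined up, the isomorphism writes itself down, and no further input is needed.
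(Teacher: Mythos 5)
Your proof is correct and follows exactly the route the paper intends: the proposition is stated there without proof as the standard elementary divisor theorem, and your argument derives it from the paper's own Smith normal form (Corollary~\ref{C:Smith normal form}) plus the uniqueness of the free rank and torsion coefficients in Theorem~\ref{T:fundamental theorem for f.g. abelian groups}, which is the standard proof. The only cosmetic point is that padding the relation matrix to a square one presumes your chosen generating set of $R$ has at most $\rk \F$ elements; this is immediate if you instead take a basis of $R$ (a subgroup of a free abelian group of rank $r$ is free of rank at most $r$) or invoke the rectangular version of Smith normal form.
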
 
 
Since we can always stabilize two pairs to the same rank, two pairs are stably isomorphic if and only if their quotients are isomorphic. 

We now investigate equivalence classes of presentations of a finitely generated abelian group $H$.
\begin{definition}
If $H$ is a finitely generated abelian group, its {\it rank} is, equivalently,
\begin{itemize}
\item the minimal number of infinite and finite cyclic direct summands required to construct $H$
\item the minimal rank of a presentation of $H$
\item the number of torsion coefficients of the torsion subgroup $T$ of $H$ plus the rank of $H/T$.
\end{itemize}
A presentation of minimal rank is simply called a minimal presentation.   $\|$
\end{definition}
 
\begin{lemma} 
\label{L:standardizing non-minimal presentations-1}
Every non-minimal presentation $\F\xrightarrow{\pi} H$ is equivalent to a presentation of the form 
$\F' \oplus \Z \xrightarrow{\pi' \oplus 0} H$, where $\pi'$ is a presentation of $H$.  Every presentation of 
$H$ is a stabilization of a minimal one.
\end{lemma}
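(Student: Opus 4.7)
The plan is to invoke the elementary divisor theorem (Proposition \ref{P:structure of abelian groups}) to split off a $\Z$-summand of $F$ that already lies in the kernel of $\pi$.

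Let $R = \ker\pi$. Apply Proposition \ref{P:structure of abelian groups} to the pair $(F,R)$ to get a basis $f_1,\ldots,f_n$ of $F$ and integers $m_1\mid m_2\mid\cdots\mid m_n$ (some possibly zero) such that the nonzero $m_i f_i$ form a basis for $R$. Then $H\cong F/R\cong\bigoplus_{i}\Z/m_i\Z$, where $m_i=0$ gives a $\Z$ summand and $m_i=\pm1$ gives a trivial summand. Consequently, the rank of $H$ equals the number of indices $i$ with $m_i\neq\pm1$, while the rank of $F$ is $n$. Since $\pi$ is non-minimal, $n$ strictly exceeds this count, so at least one $m_i=\pm1$; by the divisibility chain this forces $m_1=\pm1$, and in particular $f_1\in R$.

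Set $F' = \langle f_2,\ldots,f_n\rangle$ and $R' = R\cap F'$, so that $F = F'\oplus\Z f_1$ and, since $m_1 f_1 = \pm f_1$ is one of the basis elements of $R$, also $R = R'\oplus\Z f_1$. Let $\pi'$ be the restriction $\pi|_{F'}:F'\to H$; surjectivity of $\pi$ together with $\pi(f_1)=0$ gives that $\pi'$ is surjective with kernel $R'$, so $\pi'$ is a presentation of $H$. Under the direct sum decomposition $F=F'\oplus\Z f_1$, the map $\pi$ is literally $\pi'\oplus 0$, so the identity of $H$ together with the identification $F\cong F'\oplus\Z$ sending $f_1$ to the generator of the new $\Z$ summand is an equivalence of presentations between $\pi$ and $\pi'\oplus 0$. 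This proves the first assertion.

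For the second assertion, iterate: if $\pi'$ is itself non-minimal, apply the first assertion to $\pi'$, and so on. Each step decreases the rank of the free group by exactly one, so after finitely many steps one arrives at a minimal presentation $\pi_0:F_0\to H$, and the original $\pi$ is equivalent to an iterated stabilization $\pi_0\oplus 0:F_0\oplus\Z^k\to H$.

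I expect no serious obstacle here; the one point to handle cleanly is making sure the word ``equivalence'' is used correctly, i.e.\ that the isomorphism $F\to F'\oplus\Z$ we produce actually lifts the identity on $H$, which is immediate from the construction since $\pi$ and $\pi'\oplus 0$ agree on the basis $f_1,\ldots,f_n$.
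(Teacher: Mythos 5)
Your proof is correct, but it takes a different route from the paper's. You apply the adapted-basis half of Proposition \ref{P:structure of abelian groups} directly to the pair $(\F,\ker\pi)$, count invariant factors to see that non-minimality forces a unit elementary divisor $m_1=\pm1$, and hence split off a basis vector $f_1\in\ker\pi$, so that $\F=\F'\oplus\Z f_1$ and $\pi$ is literally $\pi'\oplus 0$ with $\pi'=\pi|_{\F'}$; the equivalence is then the identity on $H$. The paper instead argues abstractly: it first observes that $H$ admits a presentation $j:\F'\to H$ of rank $\rk\F-1$ (stabilize a minimal one -- this is where non-minimality enters), then invokes the other half of Proposition \ref{P:structure of abelian groups} (free pairs of equal rank with isomorphic quotients are isomorphic) to get an isomorphism of presentations $(f,h)$ between $\pi$ and $j\oplus 0$, and finally replaces $j\oplus 0$ by $h^{-1}\circ(j\oplus 0)$ to upgrade the isomorphism to an equivalence, taking $\pi'=h^{-1}\circ j$. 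Your version is more constructive: it exhibits $\F'$ and $\pi'$ inside $\F$ itself, makes visible exactly why non-minimality is needed (a unit invariant factor must occur), and needs no post hoc correction by an automorphism of $H$; the paper's version is shorter once the classification of free pairs is available, since it avoids the rank/invariant-factor count entirely. Both handle the second assertion by the same induction on rank, and both implicitly read ``is a stabilization of'' as ``is equivalent to a stabilization of,'' so there is no gap on that point either.
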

 
\begin{proof}
Clearly, by stabilizing, $H$ has a presentation of every rank $\geqslant$ rank $H$.  Let $\F' \xrightarrow{j} H$ be a 
presentation of rank $(\rk \F - 1)$.  By Proposition \ref{P:structure of abelian groups}, the stabilization of $j$
is isomorphic to $\F\xrightarrow{\pi} H$, say by a diagram of the form
$$
\begin{CD}
\F @>\pi>> H \\
@VfVV @VVhV \\
\F' \oplus \Z @>{j \oplus 0}>> H
\end{CD}
$$
Hence
$$
\begin{diagram}
\node{\F} \arrow[2]{s,l}{f} \arrow{see,t}{\pi} \\
\node[3]{H} \\
\node{\F'Ê\oplus \Z} \arrow{nee,b}{h^{-1} \circ (j \oplus 0)}
\end{diagram}
$$
is an equivalence, as desired.  By induction, we conclude that every presentation of $H$ is a stabilization of a minimal one.
\end{proof}
 
In the next few lemmas, we will show that all presentations of $H$ are stably equivalent and that the index of stabilization 
required is at most one.
 
\begin{lemma} 
\label {L:standardizing non-minimal presentations-2}
Let $n$ be the rank of  $\rk(H/T)$ and let $T = \tor(H)$. Then any presentation of $H$ is equivalent to one of the form $\F \oplus \Z^n \xrightarrow{\pi} H$, where $\pi |_{F}$ is a presentation of $T$  and $\pi |_{\Z^n}$ is injective.
\end{lemma}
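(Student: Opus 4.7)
The plan is to apply the elementary divisor theorem to the free pair $(F,\ker\pi)$, split the resulting basis of $F$ into a ``torsion-relevant'' piece and a ``free'' piece, and then exhibit the required equivalence by a simple basis identification.

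First I would invoke Proposition~\ref{P:structure of abelian groups} applied to $(F,\ker\pi)$ to obtain a basis $f_1,\dots,f_m$ of $F$ and nonnegative integers $m_i$ with $m_i\mid m_{i+1}$ such that $\{m_i f_i : m_i\neq 0\}$ is a basis for $\ker\pi$. Partitioning the indices into $I_0 = \{i : m_i\neq 0\}$ and $I_n = \{i : m_i=0\}$, I set $F_0 = \bigoplus_{i\in I_0}\Z f_i$ and $F_n = \bigoplus_{i\in I_n}\Z f_i$, so that $F = F_0\oplus F_n$ and $\ker\pi \subseteq F_0$. Comparing the evident isomorphism $F/\ker\pi \cong \bigoplus_{i\in I_0}\Z/m_i \oplus \Z^{|I_n|}$ with the given decomposition $H = T\oplus\Z^n$, uniqueness of the free rank forces $|I_n| = n$, so $F_n \cong \Z^n$.

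Next I would verify the two properties required of the reorganized presentation. Injectivity of $\pi|_{F_n}$ is immediate from $\ker\pi \cap F_n = 0$. For $\pi|_{F_0}$, each generator $\pi(f_i)$ with $i\in I_0$ satisfies $m_i\pi(f_i)=0$, so $\pi(F_0)\subseteq T$; conversely, if $h\in T$ is written $h=\pi(x_0)+\pi(x_n)$ with $x_0\in F_0$, $x_n\in F_n$, then $\pi(x_n) = h - \pi(x_0)$ is torsion yet lies in the torsion-free subgroup $\pi(F_n)\cong F_n$, so $\pi(x_n)=0$ and $h\in\pi(F_0)$. Thus $\pi|_{F_0}:F_0\to T$ is a presentation of $T$.

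Finally I would fix any isomorphism $\phi: F_n \xrightarrow{\cong} \Z^n$ and define $\pi' = \pi|_{F_0} \oplus \bigl(\pi|_{F_n}\circ\phi^{-1}\bigr) : F_0\oplus\Z^n \to H$. The isomorphism $\mathrm{id}_{F_0}\oplus\phi : F \to F_0\oplus\Z^n$ then makes the equivalence triangle over $H$ commute, exhibiting $\pi \simeq \pi'$, which is of the form asserted in the lemma. I do not expect any substantive obstacle: the entire argument amounts to the observation that, in the Smith normal form of $\pi$, the zero elementary divisors contribute the free summand $\Z^n$ of $H$ while the nonzero ones assemble into a presentation of $T$.
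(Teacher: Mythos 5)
Your proof is correct. It differs from the paper's in an instructive way. The paper proves this lemma by the same strategy as the preceding Lemma~\ref{L:standardizing non-minimal presentations-1}: first produce, abstractly, a presentation of the desired form and of the same rank as the given one; then invoke Proposition~\ref{P:structure of abelian groups} to conclude that the two free pairs (and hence the two presentations) are \emph{isomorphic}; and finally postcompose the nice presentation by $h^{-1}$, where $h$ is the resulting automorphism of $H$, to upgrade the isomorphism to an \emph{equivalence}. You instead apply Proposition~\ref{P:structure of abelian groups} directly to the given pair $(F,\ker\pi)$, split the adapted (Smith) basis according to zero versus nonzero elementary divisors, and verify the required properties of $\pi|_{F_0}$ and $\pi|_{F_n}$ in place; the equivalence is then essentially an identity in adapted coordinates, with no auxiliary presentation and no detour through isomorphism-of-presentations. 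Both arguments are ultimately powered by the elementary divisor theorem, but yours is more explicit and self-contained, whereas the paper's packaging is designed to be reusable (the same ``existence plus isomorphism-to-equivalence'' template is what drives Lemma~\ref{L:standardizing non-minimal presentations-1} as well). One small stylistic remark: you might note explicitly that $\pi(F_n)$ is a complement to $T$ in $H$ (which follows from your injectivity argument plus the observation that a torsion element of $\pi(F_n)$ must vanish), since this stronger direct-sum structure is how the lemma is actually used in Lemma~\ref{L:standardizing non-minimal presentations-3}.
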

 
\begin{proof}
A presentation of the required type certainly exists and may be of any rank $\geqslant \rk H$.  The proof that any presentation
is equivalent to one of this form is similar to the proof of Lemma~\ref {L:standardizing non-minimal presentations-1}.
\end{proof}
 
\begin{lemma} \label{L:standardizing non-minimal presentations-3}
Let $h$ be an automorphism of $H$ which acts trivially on $T$.  Then for any presentation $\pi: \F \to H$ there is an automorphism 
$f$ of $\F$ so that 
$$
\begin{CD}
\F @>{\pi}>> H \\ 
@VfVV @VVhV \\
\F @>{\pi}>> H
\end{CD}
$$
commutes.
\end{lemma}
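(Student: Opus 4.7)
The plan is to reduce to a standard form via Lemma~\ref{L:standardizing non-minimal presentations-2}, then build $f$ block-by-block using the fact that a finitely generated abelian group splits over its torsion subgroup.

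First I would use Lemma~\ref{L:standardizing non-minimal presentations-2} to replace $\pi$ (up to equivalence) by a presentation of the form $\pi = \pi_T \oplus \pi_N : F' \oplus \Z^n \to H$, where $\pi_T$ surjects onto $T$ and $\pi_N$ is injective. Since $\Z^n$ is torsion-free, $\pi_N(\Z^n)$ meets $T$ trivially; comparing ranks, $H = T \oplus \pi_N(\Z^n)$ as abelian groups, and $\pi_N$ is an isomorphism of $\Z^n$ onto this complement. Any equivalence of presentations carries lifts of $h$ to lifts of $h$, so proving the lemma in this normalized form is enough.

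Next I would define $f$ block-wise. On $F'$ set $f|_{F'} = \mathrm{id}_{F'}$; this is compatible with $\pi$ because $\pi(F') \subset T$ and $h$ restricts to the identity on $T$. On the $\Z^n$ summand, choose a basis $e_1,\dots,e_n$ and, using the splitting $H = T \oplus \pi_N(\Z^n)$, write
\begin{equation*}
h(\pi(e_i)) = t_i + y_i, \qquad t_i \in T,\ y_i \in \pi_N(\Z^n).
\end{equation*}
Lift each $t_i$ to some $a_i \in F'$ via the surjection $\pi_T$ and each $y_i$ to the unique $b_i \in \Z^n$ with $\pi_N(b_i) = y_i$. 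Set $f(e_i) := a_i + b_i$ and extend linearly. Commutativity of the diagram then follows by construction.

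To see that $f$ is an automorphism of $F' \oplus \Z^n$, note that in the basis $F' \cup \{e_1,\dots,e_n\}$ the map $f$ is block upper-triangular with $\mathrm{id}_{F'}$ in one block and, in the other, the map $e_i \mapsto b_i$ on $\Z^n$. The latter corresponds under the isomorphism $\pi_N$ to the map $\pi_N(\Z^n) \to \pi_N(\Z^n)$ induced by the automorphism $\bar h : H/T \to H/T$, so it is itself an automorphism; hence so is $f$.

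The main obstacle in the argument is purely organizational: one must be careful to pick the right lifts and to realize that the off-diagonal contribution $t_i$ (the non-trivial part of $h$ relative to the splitting) lands in $T = \pi(F')$ and therefore really can be absorbed into the $F'$-component of $f(e_i)$ without destroying invertibility. Once the normalization from Lemma~\ref{L:standardizing non-minimal presentations-2} is in hand and the splitting $H = T \oplus H/T$ is used, the construction and all verifications are straightforward.
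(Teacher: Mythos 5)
Your proof is correct and takes essentially the same approach as the paper: both first normalize $\pi$ via Lemma~\ref{L:standardizing non-minimal presentations-2}, then observe that $h$ is block upper-triangular (identity on $T$, an automorphism on $H/T$, plus an off-diagonal map into $T$), lift that off-diagonal map through $\pi_T$, and conclude that the resulting block upper-triangular $f$ is an automorphism. Your basis-level computation of $f(e_i) = a_i + b_i$ is just a concrete unpacking of the paper's matrix $\left(\begin{smallmatrix} 1 & \bar{B} \\ 0 & C \end{smallmatrix}\right)$.
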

\begin{proof}
By Lemma~\ref{L:standardizing non-minimal presentations-2}, our presentation is equivalent to the direct sum of presentations 
$\F_0 \to T$ and $\Z^n \xrightarrow{id} \Z^n$, where $H$ has been decomposed as $T \oplus \Z^n$.  Representing elements of $H$ by
column vectors $\vectortwoone{t}{z}$ with $t \in T$, $z \in \Z^n$, any automorphism $h$ of $H$ must be of the form
$$\vectortwoone{t}{z} \mapsto \mattwotwo{A}{B}{0}{C} \vectortwoone{t}{z},$$
where $A : T \to T$ and $C : \Z^n \to \Z^n$ are automorphisms and $B : \Z^n \to T$ is a homomorphism; by hypothesis, $A=1$.  If 
we lift $B$ to a homomorphism $\bar{B} : \Z^n \to \F_0$, then the endomorphism $\mattwotwo{1}{\bar{B}}{0}{C}$ of $\F_0 \oplus \Z^n$ 
is an automorphism which clearly induces $\mattwotwo{1}{B}{0}{C}$ on $T\oplus \Z^n = H$, as desired.
\end{proof}
  
\begin{definition}
In the situation of Lemma \ref{L:standardizing non-minimal presentations-3}, we say that $f$ {\it lifts} $h$. $\|$
\end{definition}
 
\begin{corollary} \label{C:lifting-1}
Using the notation of the proof of Lemma \ref{L:standardizing non-minimal presentations-3}, if $h$ is any 
automorphism of $H$ so that $h|_T$ lifts to $\F_0$, then $h$ lifts to $\F$.
\end{corollary}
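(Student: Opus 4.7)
The plan is to imitate exactly the block-matrix construction from the proof of Lemma~\ref{L:standardizing non-minimal presentations-3}, only now refusing to assume that the action on $T$ is trivial, and instead using the hypothesized lift on the torsion part.

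More concretely, I would first reduce to the standardized form guaranteed by Lemma~\ref{L:standardizing non-minimal presentations-2}: writing $H = T \oplus \Z^n$ and $\F = \F_0 \oplus \Z^n$, where $\pi$ restricted to $\F_0$ is a presentation $\pi_0$ of $T$ and $\pi$ restricted to $\Z^n$ is the identity. Then, as in the preceding proof, any automorphism $h$ of $H$ takes the block form
$$
h = \bmattwotwo{A}{B}{0}{C},
$$
with $A\in\Aut(T)$, $C\in\Aut(\Z^n)$, and $B:\Z^n\to T$ a homomorphism. By hypothesis, $A=h|_T$ admits a lift, i.e.\ an automorphism $\bar A$ of $\F_0$ with $\pi_0\circ\bar A = A\circ\pi_0$. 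Since $\Z^n$ is free abelian, the homomorphism $B:\Z^n\to T$ can be lifted (without any further assumption) through the surjection $\pi_0$ to a homomorphism $\bar B:\Z^n\to\F_0$ with $\pi_0\circ\bar B = B$.

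I would then propose the candidate lift
$$
f = \bmattwotwo{\bar A}{\bar B}{0}{C} : \F_0\oplus\Z^n \longrightarrow \F_0\oplus\Z^n.
$$
The first thing to check is that $f$ is genuinely an automorphism of $\F$. Because $f$ is block upper triangular and both $\bar A$ and $C$ are automorphisms of their respective free abelian summands, $f$ has a block upper triangular inverse, so this is essentially immediate. The second thing to check is that the diagram
$$
\begin{CD}
\F_0\oplus\Z^n @>f>> \F_0\oplus\Z^n \\
@V{\pi_0\oplus\mathrm{id}}VV @VV{\pi_0\oplus\mathrm{id}}V \\
T\oplus\Z^n @>h>> T\oplus\Z^n
\end{CD}
$$
commutes, which reduces to a direct computation on each block: the $(1,1)$ block gives $\pi_0\bar A = A\pi_0$ (the lift property of $\bar A$); the $(1,2)$ block gives $\pi_0\bar B = B$ (the lift property of $\bar B$); the $(2,1)$ and $(2,2)$ blocks are trivial.

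There is really no main obstacle here; the corollary is a mild generalization of the previous lemma, where the hypothesis that $A=1$ is used only to produce a lift of $A$ to an automorphism of $\F_0$, which is supplied directly in the present statement. The one point to emphasize is that the lift of $B$ requires no hypothesis beyond the freeness of $\Z^n$, so separating the torsion-free part from the torsion part of the obstruction is exactly what makes the argument go through.
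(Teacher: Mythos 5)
Your proposal is correct, and it is essentially the paper's argument in different packaging: the paper factors $h=(h g^{-1})\circ g$ with $g=(h|_T)\oplus 1_{\Z^n}$, lifts $g$ using the hypothesis, lifts $h g^{-1}$ (which is trivial on $T$) by Lemma~\ref{L:standardizing non-minimal presentations-3}, and composes the lifts, which produces exactly your block automorphism $\bmattwotwo{\bar A}{\bar B}{0}{C}$. The only difference is that you re-run the block-matrix computation directly rather than citing the previous lemma as a black box.
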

\begin{proof}
Let $g = (h |_T) \oplus 1_{\Z^n}$.  Clearly $g$ lifts to $\F_0 \oplus \Z^n$.  Hence $h \circ g^{-1} = 1$ on $T$, so
$h g^{-1}$ also lifts.  We conclude $(h g^{-1}) \circ g$ lifts, as desired.
\end{proof}

\begin{definition} Let $f : \F \to \F$ be an endomorphism. Since $\F$ is free abelian, we may represent $f$ by a matrix with respect to any basis for $\F$. We define the {\it determinant} of $\F$ to be the determinant of any such matrix. Clearly $\det f$ is well-defined, up to sign, independent of the choice of basis.  $\|$
\end{definition}

\begin{lemma} \label{L:lifting-1}
Let $T$ be an abelian $p$-group for some prime $p$.  Let $\pi : \F \to T$ be a presentation and let $h : T \to T$ be an automorphism. 
Then there is an endomorphism (which we are not claiming is an automorphism) $f : \F \to \F$ lifting $h$ so that $p$ does not 
divide $\det f$.
\end{lemma}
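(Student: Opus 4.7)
My plan is to construct $f$ explicitly by choosing a basis of $\F$ adapted to the presentation, lifting the matrix of $h$ entry by entry to integers, and then checking the determinant by reducing modulo $p$.

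First, I would apply Proposition~\ref{P:structure of abelian groups} to the pair $(\F, \ker\pi)$ to obtain a basis $f_1,\ldots,f_n$ of $\F$ and non-negative integers $m_1,\ldots,m_n$ (with the usual divisibility) such that $\ker\pi$ is spanned by the $m_i f_i$ with $m_i\neq 0$. Since $T$ is a $p$-group, each nonzero $m_i$ is a power of $p$, so after reordering I may write $m_i = p^{a_i}$ with $a_1,\ldots,a_k \geqslant 1$ and $m_{k+1}=\cdots=m_n=1$ (the latter corresponding to $f_i\in\ker\pi$, i.e.\ to the ``stabilization'' part of the presentation). Setting $e_i := \pi(f_i)$ for $i\leqslant k$ then gives a direct sum decomposition $T = \bigoplus_{i=1}^{k}\langle e_i\rangle$ with $e_i$ of order $p^{a_i}$, realizing the structure theorem on the quotient side.

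Next, write $h(e_i) = \sum_{j=1}^{k} c_{ij} e_j$, choosing any integer representatives $c_{ij}\in\Z$ of the coefficients (well-defined only mod $p^{a_j}$). Define $f : \F \to \F$ by
\[
f(f_i) = \sum_{j=1}^{k} c_{ij}\, f_j \quad (i\leqslant k), \qquad f(f_i) = f_i \quad (i>k).
\]
A direct check on each basis element shows $\pi\circ f = h\circ\pi$, so $f$ lifts $h$ in the sense of Lemma~\ref{L:standardizing non-minimal presentations-3}. The matrix of $f$ in the basis $f_1,\ldots,f_n$ is block-diagonal with blocks $C=(c_{ij})_{1\leqslant i,j\leqslant k}$ and $I_{n-k}$, so $\det f = \det C$.

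Finally, the determinant computation mod $p$ is the crux. Reducing the entries of $C$ modulo $p$ yields the matrix $\bar{C}$ of the induced map $\bar h$ on $T/pT$ with respect to the basis $\bar e_1,\ldots,\bar e_k$ (each summand $\Z/p^{a_i}$ with $a_i\geqslant 1$ contributes one $\Z/p$ to $T/pT$). Since $h$ is an automorphism of $T$ and preserves $pT$, the induced map $\bar h$ is an automorphism of the $\F_p$-vector space $T/pT$; thus $\det\bar C \neq 0$ in $\Z/p$, i.e.\ $p\nmid\det C = \det f$. The ``main obstacle'' is really only notational: setting up the basis so that both the lifting and the mod-$p$ reduction are transparent; once that bookkeeping is in place, everything else is a direct verification, and the key conceptual point is simply that any automorphism of $T$ descends to an automorphism, and hence a nonzero determinant, on $T/pT$.
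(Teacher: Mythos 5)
Your proof is correct and rests on the same key point as the paper's own argument: the chosen lift of $h$ reduces mod $p$ to the automorphism that $h$ induces on the $\mathbb{F}_p$-vector space $T/pT$, so its determinant is a unit mod $p$. The only difference is bookkeeping --- you build the lift explicitly from an elementary-divisor (Smith) basis adapted to $\ker\pi$ and lift the coefficient matrix of $h$ entrywise, whereas the paper first splits the presentation as minimal-plus-stabilization via Lemma~\ref{L:standardizing non-minimal presentations-1}, takes an arbitrary lift $f_0$ on the minimal part, and identifies $\F_0/p\F_0$ with $T/pT$ using minimality.
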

\begin{proof}
Since $F$ is a {\em free} abelian group, it is easy to construct an endomorphism $f$ of $F$ that lifts $h$, so the key point 
is to construct one so that $p$ does not divide $\det (f)$.   

By Lemma \ref{L:standardizing non-minimal presentations-1}, the presentation $\pi$ is equivalent to
$\pi_0 \oplus 0^k : \F_0 \oplus \Z^k \to T$ with $\pi_0$ minimal (here possibly $k=0$).  Choose $f_0 \in \End \F_0$
so that $\pi_0 f_0 = h \pi_0$, and let $f = f_0 \oplus \id_{\mathbb Z^k}$.  By construction $f$ lifts
$h$, and we claim that $p$ does not divide $\det f$.

Consider the canonical map $T \to T/pT$ and the composite $\pi_p : \F_0 \to T \to T/pT$.  Since $T$ is an abelian $p$-group and 
since $\pi_0$ is minimal, we have $\rk T/pT = \rk T = \rk \F_0$.  Hence $\ker \pi_p = p \F_0$.  Now $h$ induces an automorphism 
$h_p$ of $T/pT$ and we have
$$
\begin{CD}
\F_0 / p \F_0 @>{\cong}>> T/pT \\
@V{f_p}VV                 @V{h_p}VV \\
\F_0 / p \F_0 @>{\cong}>> T/pT
\end{CD}
$$
Hence $f_p$ must be an isomorphism.  This implies that $p$ does not divide $\det f = \det f_0$, for $p$ divides $\det f_0$ 
if and only if the induced map on $\F_0 / p \F_0$ is \emph{not} an isomorphism.
\end{proof}

\begin{lemma} \label{L:lifting-2}
Let $T$ be any finite abelian group, $\pi : \F \to T$ be a presentation of $T$, and $h : T \to T$ be any automorphism. 
Then there is an endomorphism (which we are again not claiming is an automorphism) $f : \F \to \F$ lifting $h$ so 
that $(\det f, | T |) = 1$.
\end{lemma}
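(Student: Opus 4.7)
The plan is to leverage Lemma~\ref{L:lifting-1} one prime at a time and then glue the resulting endomorphisms together using the Chinese Remainder Theorem. More precisely, I would first produce, for each prime $p$ dividing $|T|$, a lift $f^{(p)}:F\to F$ of $h$ whose determinant is a unit mod $p$, and then build the desired $f$ as an integer combination $f=\sum_p c_p f^{(p)}$ with carefully chosen coefficients.

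For the per-prime step, decompose $T=T(p)\oplus T_p'$, where $T(p)$ is the $p$-primary component and $T_p'=\bigoplus_{q\neq p}T(q)$. Composing $\pi$ with the projection $T\to T(p)$ gives a presentation $\pi_p:F\to T(p)$ of the $p$-group $T(p)$, so Lemma~\ref{L:lifting-1} provides $g_p:F\to F$ with $\pi_p g_p=(h|_{T(p)})\pi_p$ and $p\nmid\det g_p$. This $g_p$ lifts $h$ only on the $T(p)$-component; the error $h\pi-\pi g_p$ takes values in $T_p'$. The key observation is that since $p$ acts invertibly on $T_p'$, one has $T_p'=pT_p'\subset pT=\pi(pF)$, so the error can be lifted to a homomorphism $\delta:F\to pF$. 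Setting $f^{(p)}=g_p+\delta$ then yields a genuine lift of $h$ with $f^{(p)}\equiv g_p\pmod{p}$, hence $p\nmid\det f^{(p)}$.

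For the CRT combination, write $\exp(T)=\prod_p p^{a_p}$ and use CRT to pick integers $c_p$, one per prime $p$ dividing $|T|$, with $c_p\equiv 1\pmod{p^{a_p}}$ and $c_p\equiv 0\pmod{q^{a_q}}$ for all other primes $q$ dividing $|T|$. Then $\sum_p c_p\equiv 1\pmod{\exp(T)}$. Define $f=\sum_p c_p f^{(p)}$. Since $(\sum c_p-1)$ annihilates $T$, we have $\pi f=(\sum c_p)h\pi=h\pi$, so $f$ lifts $h$. For each prime $p\mid|T|$, the congruences on the $c_p$'s force $f\equiv f^{(p)}\pmod{p}$, whence $\det f\equiv\det f^{(p)}\not\equiv 0\pmod{p}$. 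Therefore $(\det f,|T|)=1$, as required.

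The main technical point, and the only place where anything subtle happens, is the construction of $\delta$ in the per-prime step: one has to arrange not merely that $f^{(p)}$ is a lift of $h$, but that its correction to $g_p$ is divisible by $p$, so that the gain $p\nmid\det g_p$ from Lemma~\ref{L:lifting-1} is preserved after the correction. The containment $T_p'\subset pT$ coming from $p$ being a unit on $T_p'$ is exactly what makes this possible; without it the per-prime estimates and the CRT packaging would not fit together.
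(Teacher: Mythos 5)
Your proof is correct, and while it shares the paper's broad outline (apply Lemma \ref{L:lifting-1} one prime at a time, then glue with the Chinese Remainder Theorem), the implementation is genuinely different at both stages. The paper works per prime with the quotient $T_p = T/p^kT$, where $p^k$ is the largest power of $p$ dividing $|T|$, so it never chooses a primary splitting of $T$; the resulting $f_p$ lifts only the induced automorphism $h_p$ of $T_p$, and the gluing is done by applying CRT \emph{entrywise} to matrices, producing one $f$ with $f \equiv f_p \pmod{p^k}$ for every $p$. This forces a closing argument (via $\ker \pi = \bigcap_p \ker \pi_p$, and the fact that an automorphism of $T$ inducing $h_p$ on every $T_p$ must be $h$) to show the assembled $f$ actually lifts $h$. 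You instead use the primary summand $T(p)$ and upgrade the output of Lemma \ref{L:lifting-1} to an honest lift $f^{(p)}$ of $h$ on all of $T$ before gluing: your key observation that the error lands in $T_p' = pT_p' \subset pT = \pi(p\F)$, together with the freeness of $\F$, produces a correction $\delta$ with image in $p\F$, so the determinant is unchanged mod $p$. Because each $f^{(p)}$ already lifts $h$, your CRT step can be done with scalar coefficients, $f = \sum_p c_p f^{(p)}$ with $\sum_p c_p \equiv 1 \pmod{\exp(T)}$ and $c_q \equiv 0 \pmod{p}$ for $q \neq p$, and no final verification is required. In short, the paper's route avoids the primary decomposition and the correction step at the cost of a separate ``$f$ really induces $h$'' argument; yours front-loads that work into the per-prime step and makes the gluing automatic. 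Both arguments are complete.
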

\begin{proof}
Let $p$ be a prime divisor of $|T|$.  Suppose that $p^k$ is the highest power of $p$ which divides $|T|$.  Then $T_p := T/p^k T$ 
is an abelian $p$-group isomorphic to the $p$-component of $T$, and $h$ induces an automorphism $h_p$ of $T_p$.  By the previous lemma, 
there is an endomorphism $f_p$ of $\F$ with $p \not | \det f_p$ so that 
$$
\begin{CD}
\F @>{\pi_p}>> T_p \\
@V{f_p}VV @VV{h_p}V \\
\F @>{\pi_p}>> T_p
\end{CD}
$$
commutes. Fixing a basis of $\F$ and representing $f_p$ as a matrix, we note that any matrix congruent to $f_p \ \mod p^k$ also 
induces $h_p$ on $T_p$.  By the Chinese remainder theorem, there is a single matrix $f$ so that $f \equiv f_p \ \mod p^k$ for 
all primes $p$ which divide $|T|$, that is to say, a single endomorphism $f$ of $\F$ inducing $h_p$ on $T_p$ for each such prime.  Since 
$\det f \equiv \det f_p \ \mod p^k$, we have $p \not | \det f$ for all such $p$, \ie $(\det f, |T|) = 1$. 
 
It remains to prove that $f$ induces $h$ on $T$.  Since $T_p = T/p^k T$ is a $p$-group isomorphic to the $p$-component of $T$, it 
follows that the kernel of $\pi : \F \to T$ is precisely
$$ \ker\pi =  \bigcap_{p \text{ divides } |T|} \ker (\pi_p : \F \to T_p).$$
Hence
$$
f(\ker \pi) = f \left( \bigcap_p \ker \pi_pÊ\right) \subset \bigcap_p f(\ker \pi_p) = \bigcap_p \ker \pi_p = \ker \pi.
$$
This shows that $f$ induces {\it some} automorphism of $T$.  But this automorphism induces $h_p$ on $T_p$ for every $p$, so it must be $h$.
\end{proof}
 
\begin{lemma} \label{L:lifting-3}
Let $T$ be a torsion group and let $h \in \Aut (T)$.  Then for any non-minimal presentation $\pi : \F \to T$ 
there is some $f \in \Aut(\F)$ which lifts $h$.
\end{lemma}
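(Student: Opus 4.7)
My plan is to factor $h$ into elementary automorphisms of $T$ and lift each factor individually, using the extra $\Z$-summand provided by non-minimality only for the hardest type. By Lemma~\ref{L:standardizing non-minimal presentations-1} I may reduce to the case $\F = \F' \oplus \Z e$ with $\pi(e) = 0$, and extend later by the identity on any further extra summands.

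Writing $T = \bigoplus_{i=1}^m \Z/n_i\Z$, a Smith-normal-form reduction shows that $\Aut(T)$ is generated by permutations of equal-order summands, by transvections $\bar e_i \mapsto \bar e_i + a \bar e_j$ with $a$ chosen to satisfy the divisibility needed to make the map well-defined on $T$, and by unit multiplications $\bar e_i \mapsto u \bar e_i$ for $u \in (\Z/n_i)^\times$. The first two families lift trivially to $\Aut(\F')$ via the obvious permutation or shear matrix of determinant $\pm 1$, and I then extend them by the identity on $\Z e$.

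The delicate case is unit multiplication, which typically does \emph{not} lift to $\Aut(\F')$ when $\F'$ is minimal. Given such a $u$, I pick any integer $U \equiv u \pmod{n_i}$; since $\gcd(U, n_i) = 1$, B\'ezout produces $c, \delta \in \Z$ with $U\delta - n_i c = 1$. I then define the lift to act on the rank-two subspace spanned by $e_i$ and $e$ by the block $\mattwotwo{U}{n_i}{c}{\delta}$, and as the identity on all remaining basis vectors of $\F'$ (and on any further extra summands). The determinant is $1$, so this map lies in $\Aut(\F)$, and a direct check using $\pi(e) = 0$ and $U \equiv u \pmod{n_i}$ shows that it descends to multiplication by $u$ on $\bar e_i$ and to the identity on the other summands of $T$. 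Composing the lifts of a factorization $h = h_1 \cdots h_\ell$ into these generators then yields the desired $f \in \Aut(\F)$.

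The main obstacle is justifying the generator decomposition of $\Aut(T)$ when the invariant factors $n_i$ are not all equal; I would prove this by reducing a matrix representative of $h$ to diagonal form through elementary row and column operations over $\Z$, with the divisibility relations among the $n_i$ dictating which operations are permissible. An alternative route more closely aligned with the preceding lemmas is to apply Lemma~\ref{L:lifting-2} to obtain an endomorphism lift $f_0$ of $h$ with $(\det f_0, |T|) = 1$, and then to modify $f_0$ within the affine coset $f_0 + \Hom(\F, \ker\pi)$; the extra $\Z e$ allows one to drive the determinant to $\pm 1$ by an analogous B\'ezout-style computation, using that the achievable determinants form the subgroup $\gcd(\det f_0, \text{entries of }\mathrm{adj})\cdot\Z$ and that this gcd can be made $1$ by a careful initial choice of lift.
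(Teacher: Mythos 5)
Your main route is correct, and it is genuinely different from the paper's proof. You factor $h$ into elementary automorphisms of $T$ (permutations of equal-order summands, constrained transvections, unit multiplications) and lift each generator explicitly: the shear lifts do preserve $\ker\pi$ precisely because of the divisibility condition on $a$, and your Bézout block $\mattwotwo{U}{n_i}{c}{\delta}$ on the span of $e_i$ and the extra vector $e$ with $\pi(e)=0$ is a determinant-one automorphism of $\F$ inducing $\bar e_i \mapsto u\bar e_i$, so the only place non-minimality enters is isolated exactly where it is needed. The paper instead avoids all structure theory of $\Aut(T)$: it takes the endomorphism lift $f$ of Lemma \ref{L:lifting-2} with $(\det f,|T|)=1$, uses the extra $\Z$-summand to multiply the determinant into $1 \ \mod m$ via a scalar $\delta$, and then invokes surjectivity of $\SL(r,\Z)\to\SL(r,\Z_m)$ together with $m\F \subseteq \ker\pi$ to replace $f\oplus\delta$ by a genuine automorphism congruent to it mod $m$, which therefore still lifts $h$. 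Your approach buys explicitness (concrete matrix lifts generator by generator); the paper's buys brevity, since it needs no generating set for $\Aut(T)$.

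The one substantive debt in your write-up is the generation statement itself: that every automorphism of a finite abelian group is a product of your three types of elementary automorphisms. This is true, and your sketch (constrained row/column reduction) does go through, but it is not a one-liner: after reducing to $p$-groups by the primary decomposition, one needs the criterion that a matrix with the divisibility constraints defines an automorphism iff its diagonal blocks (grouped by equal exponent) are invertible mod $p$, the fact that $\GL_k(\Z/p^e)$ is generated by elementary and diagonal matrices (a local-ring Gaussian elimination), and then the observation that the divisibility hypothesis on an off-block entry is exactly the divisibility required of the transvection that clears it. That is a real lemma to prove, comparable in weight to the paper's whole argument. Also, your closing "alternative route" is not sound as stated: the determinants obtainable by perturbing $f_0$ within $f_0 + \Hom(\F,\ker\pi)$ do not form a subgroup described by a gcd of adjugate entries; the correct mechanism is the one the paper uses, namely arranging $\det \equiv 1 \ \mod m$ with the extra summand and then applying $\SL(r,\Z)\twoheadrightarrow\SL(r,\Z_m)$, the perturbation being by $\Hom(\F,m\F)\subseteq\Hom(\F,\ker\pi)$.
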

\begin{proof}  
Since $\pi$ is non-minimal, by Lemma~\ref{L:standardizing non-minimal presentations-1} we may assume that 
$\pi = \pi' \oplus 0 : \F' \oplus \Z \to T$.  By Lemma \ref{L:lifting-2}, we may lift $h$ to $f \in \End \F'$ with $(\det f, m) = 1$, 
where $m = |T|$. Choose an integer $\delta$ such that $\delta \cdot \det f \equiv 1 \ \mod m$.  Let $d$ denote the 
endomorphism of $\Z$ defined by $d(1) = \delta$.  Then $f \oplus \delta \in \End (\F' \oplus \Z)$ and 
$\det (f \oplus \delta) \equiv 1 \ \mod m$.  Since the canonical homomorphism $\SL(r,\Z) \to \SL(r, \Z_m)$ is surjective, we may lift 
$f \oplus \delta$ to $f' \in \Aut (\F' \oplus \Z)$.  More precisely, we can find some $f' \in {\rm Aut}(F' \oplus \Z)$ which
(when considered as a matrix over $\Z$) is equal to $f \oplus \delta \ \mod m$.  Since $f \oplus \delta$ lifts $h$, the diagram
$$
\begin{CD}
\F' \oplus \Z @>{\pi \oplus 0}>> T \\
@V{f \oplus \delta}VV @VV{h}V \\
\F' \oplus \Z @>{\pi \oplus 0}>> T
\end{CD}
$$
commutes.  Now $m (\F' \oplus \Z) \subset \ker (\pi \oplus 0)$, since $m x = 0$ for all $x \in T$.  Note that by construction, 
$f' \equiv f \oplus \delta \ \mod m$, \ie for each $x \in \F' \oplus \Z$ there is a 
$y \in \F' \oplus \Z$ such that $f'(x) = (f \oplus \delta)(x) + m y$. Hence $f'$ also lifts $h$.
\end{proof}
 
\begin{theorem} \label{T:h lifts to the presentation level}
If $h$ is any automorphism of $H$ and if $\pi : \F \to H$ is any non-minimal presentation of $H$, then $h$ lifts to $F$.
\end{theorem}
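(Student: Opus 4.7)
The proof is essentially an assembly of the preceding lemmas, so my plan is to reduce the general statement to the torsion case that was handled in Lemma \ref{L:lifting-3}.

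First, I would standardize the presentation. By Lemma \ref{L:standardizing non-minimal presentations-2}, the given presentation $\pi : \F \to H$ is equivalent to one of the form $\pi' \colon \F_0 \oplus \Z^n \to H$, where $n = \rk(H/T)$, the restriction $\pi'|_{\F_0}$ is a presentation of $T = \tor(H)$, and $\pi'|_{\Z^n}$ is injective. Since equivalence of presentations transports lifts of $h$ back and forth, it suffices to exhibit a lift of $h$ along $\pi'$. The crucial observation at this stage is that non-minimality is preserved under this standardization: $\rk \F > \rk H = \rk T + n$ forces $\rk \F_0 > \rk T$, so $\pi'|_{\F_0}$ is itself a non-minimal presentation of the torsion group $T$.

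Next, I would lift $h|_T$. Since $h \in \Aut(H)$ preserves the torsion subgroup $T$ setwise, the restriction $h|_T$ is an automorphism of the finite abelian group $T$. Because $\pi'|_{\F_0}$ is a non-minimal presentation of $T$, Lemma \ref{L:lifting-3} furnishes an automorphism $f_0 \in \Aut(\F_0)$ with $\pi'|_{\F_0}\circ f_0 = h|_T \circ \pi'|_{\F_0}$.

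Finally, I would promote this to a lift of all of $h$. Corollary \ref{C:lifting-1} says precisely that once $h|_T$ lifts to an automorphism of $\F_0$, the full automorphism $h$ lifts to an automorphism $f'$ of $\F_0 \oplus \Z^n$ along $\pi'$. Composing with the equivalence between $\pi$ and $\pi'$ yields the desired automorphism $f$ of $\F$ with $\pi \circ f = h \circ \pi$.

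The only place where real work is hidden is the reduction to the non-minimal torsion case in Lemma \ref{L:lifting-3}, which is where the hypothesis that $\pi$ is non-minimal is actually consumed (it is what buys the extra $\Z$-summand needed to adjust a lift on the nose to an \emph{automorphism} of $\F$). All remaining steps are formal bookkeeping using the prior lemmas, so I anticipate no further obstacle.
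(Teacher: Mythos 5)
Your proof is correct and follows essentially the same route as the paper: standardize via Lemma~\ref{L:standardizing non-minimal presentations-2}, observe non-minimality passes to the torsion part, lift $h|_T$ via Lemma~\ref{L:lifting-3}, and promote via Corollary~\ref{C:lifting-1}. The only (minor) difference is that you explicitly note that presentation equivalence transports lifts, a step the paper's proof elides by writing as though $\F$ is literally decomposed rather than replaced by an equivalent presentation.
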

\begin{proof}
By Lemma~\ref{L:standardizing non-minimal presentations-2}, we may decompose $\F$ as $\F = \F_0 \oplus \Z^n$, where 
$\pi |_{\F_0} : \F_0 \to T$ is a presentation and $\pi |_{\Z^n} = 1 : \Z^n \to \Z^n$.  The non-minimality of $\F$ 
then implies the non-minimality of $\F_0$.  Hence by Lemma \ref{L:lifting-3}, we may lift $h |_T$ to $\F_0$.  Corollary~\ref{C:lifting-1}
then implies that $h$ lifts to $\F$, as desired.
\end{proof}
 
The following three results will be important later. They are immediate consequences of Theorem \ref{T:h lifts to the presentation level}.
 
\begin{corollary} \label{C:cor 1 of T:h lifts to the presentation level}
If $h : H \to H'$ is an isomorphism and $\pi : \F \to H$, $\pi' : \F' \to H'$ are non-minimal of equal rank, then $h$ lifts to a presentation isomorphism.
\end{corollary}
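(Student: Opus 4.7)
The plan is to bootstrap from Theorem~\ref{T:h lifts to the presentation level}, which handles the case $H = H'$ with the same presentation on both sides, to the more general case of two possibly distinct non-minimal presentations of isomorphic groups. The key additional input is Proposition~\ref{P:structure of abelian groups}, which lets us produce \emph{some} isomorphism of free pairs between $(\F,\ker\pi)$ and $(\F',\ker\pi')$, even if it does not a priori cover the given $h$.

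First, since $\pi$ and $\pi'$ have equal rank and their quotients are respectively $H$ and $H'$, which are isomorphic via $h$, Proposition~\ref{P:structure of abelian groups} gives an isomorphism $g : \F \to \F'$ with $g(\ker\pi) = \ker\pi'$. This $g$ descends to an isomorphism $\bar g : H \to H'$ on quotients, so that $\pi' \circ g = \bar g \circ \pi$. In general $\bar g \neq h$, but the composition $\bar g^{-1} \circ h$ is an automorphism of $H$.

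Next, I would invoke Theorem~\ref{T:h lifts to the presentation level} applied to $\bar g^{-1} \circ h \in \Aut(H)$ and the non-minimal presentation $\pi : \F \to H$. This produces an automorphism $\tilde f : \F \to \F$ such that $\pi \circ \tilde f = (\bar g^{-1} \circ h) \circ \pi$. I then set $f := g \circ \tilde f : \F \to \F'$. This $f$ is a composition of isomorphisms, hence an isomorphism, and a direct verification shows
\[
\pi' \circ f = \pi' \circ g \circ \tilde f = \bar g \circ \pi \circ \tilde f = \bar g \circ (\bar g^{-1} \circ h) \circ \pi = h \circ \pi,
\]
so $f$ lifts $h$ as required.

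There is essentially no obstacle here: the only subtle point is that Theorem~\ref{T:h lifts to the presentation level} requires a non-minimal presentation of the \emph{source} to lift an automorphism, and this hypothesis is exactly what the corollary provides for $\pi$. The argument does not require $\pi'$ to be non-minimal, though of course equal rank together with non-minimality of $\pi$ forces $\pi'$ to be non-minimal as well.
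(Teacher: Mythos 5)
Your argument is correct and is exactly the deduction the paper has in mind when it calls this corollary an immediate consequence of Theorem~\ref{T:h lifts to the presentation level}: use Proposition~\ref{P:structure of abelian groups} to get some isomorphism of the free pairs, then correct by lifting the automorphism $\bar g^{-1}\circ h$ of $H$. The same reduction is spelled out by the authors themselves at the start of the proof of Theorem~\ref{T:equivalence of minimal presentations}, so nothing further is needed.
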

 
\begin{corollary} \label{C:cor 2 of T:h lifts to the presentation level}
All presentations of $H$ are stably equivalent, and any of two presentations of non-minimal, equal rank are equivalent.
\end{corollary}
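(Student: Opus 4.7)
The plan is to derive both assertions directly from Theorem \ref{T:h lifts to the presentation level}, routed through Corollary \ref{C:cor 1 of T:h lifts to the presentation level}. Neither assertion requires any genuinely new construction; everything substantive has already been done in the lifting lemmas culminating in the theorem.

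For the second assertion, let $\pi : \F \to H$ and $\pi' : \F' \to H$ be two non-minimal presentations of the same rank. I would apply Corollary \ref{C:cor 1 of T:h lifts to the presentation level} with $H' = H$ and with the chosen isomorphism $h : H \to H$ equal to $\mathrm{id}_H$. This yields an isomorphism $f : \F \to \F'$ such that $\pi' \circ f = \mathrm{id}_H \circ \pi = \pi$. That is precisely the commutative triangle in Definition \ref{D:isomorphism and equivalence of presentations} defining an equivalence, so $\pi$ and $\pi'$ are equivalent.

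For the first assertion, let $\pi : \F \to H$ and $\pi' : \F' \to H$ be arbitrary presentations, of ranks $r$ and $r'$ respectively. I would choose any integer $k > \max(r, r')$, stabilize $\pi$ to index $k-r$, and stabilize $\pi'$ to index $k-r'$; both stabilized presentations then have rank $k$. Since $k > \max(r,r') \geq \mathrm{rank}(H)$, both are of non-minimal rank. The second assertion, just proved, applies to the pair of stabilizations and exhibits them as equivalent, so $\pi$ and $\pi'$ are stably equivalent.

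The main obstacle, such as it is, has already been surmounted in Theorem \ref{T:h lifts to the presentation level}: the real work lay in constructing, for an arbitrary automorphism $h$ of $H$ and an arbitrary non-minimal presentation, a lift that is actually an automorphism of $\F$ (rather than merely an endomorphism), and this is what the corollary repackages. Here we need only make one canonical choice, $h = \mathrm{id}_H$, and combine it with stabilization to kill any mismatch between the source ranks.
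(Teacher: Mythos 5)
Your proposal is correct and matches the paper's intent: the paper simply declares this corollary an immediate consequence of Theorem \ref{T:h lifts to the presentation level}, and your route (apply Corollary \ref{C:cor 1 of T:h lifts to the presentation level} with $h=\mathrm{id}_H$ to get equivalence of equal-rank non-minimal presentations, then stabilize two arbitrary presentations to a common rank exceeding $\mathrm{rank}(H)$) is exactly the intended argument. No gaps; the only point worth noting, which you do handle, is that the common stabilized rank must strictly exceed $\mathrm{rank}(H)$ so that both stabilizations are non-minimal.
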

 
\begin{corollary} \label{C:cor 3 of T:h lifts to the presentation level}
If $\pi : \F \to H$, $\pi' : \F \to H$ are two minimal presentations, then $\pi, \pi'$ have stabilization index 0 or 1.
\end{corollary}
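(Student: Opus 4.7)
The plan is to deduce this immediately from Corollary \ref{C:cor 2 of T:h lifts to the presentation level} and the definition of stabilization index. Let $r$ denote the minimal rank of a presentation of $H$, so both $\pi$ and $\pi'$ have domain of rank $r$. A stabilization of index $k \geq 0$ of $\pi$ (respectively $\pi'$) is a presentation of $H$ of rank $r+k$; for $k \geq 1$ this is by definition a non-minimal presentation.

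First I would apply Corollary \ref{C:cor 2 of T:h lifts to the presentation level} to the index-$1$ stabilizations $\pi \oplus 0: \F \oplus \Z \to H$ and $\pi' \oplus 0: \F \oplus \Z \to H$. These are two presentations of the same group $H$ of equal, non-minimal rank $r+1$, so the corollary gives that they are equivalent. Hence $\pi$ and $\pi'$ admit equivalent stabilizations of index $1$, which shows in particular that they are stably equivalent and that their stabilization index is well-defined and bounded above by $1$.

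Finally, since the stabilization index is by Definition~\ref{D:isomorphism and equivalence of presentations} the \emph{smallest} nonnegative integer $k$ for which the index-$k$ stabilizations of $\pi$ and $\pi'$ are equivalent, and since we have exhibited such an integer $k \leq 1$, the stabilization index is either $0$ or $1$, as claimed. The value $0$ occurs precisely when $\pi$ and $\pi'$ are already equivalent, and the value $1$ occurs otherwise; Example~\ref{Ex:inequivalent minimal presentations-1} shows that the case of stabilization index $1$ genuinely arises. There is no real obstacle here: the work has already been done in proving Theorem~\ref{T:h lifts to the presentation level} and its corollaries, and this statement is essentially a reformulation in terms of the stabilization index.
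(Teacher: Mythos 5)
Your proof is correct and follows exactly the route the paper intends: the paper states this corollary (together with Corollaries \ref{C:cor 1 of T:h lifts to the presentation level} and \ref{C:cor 2 of T:h lifts to the presentation level}) as an immediate consequence of Theorem \ref{T:h lifts to the presentation level}, and your argument—stabilize both minimal presentations once to obtain presentations of equal non-minimal rank, apply Corollary \ref{C:cor 2 of T:h lifts to the presentation level} to get equivalence, and conclude the stabilization index is at most $1$—is precisely that deduction spelled out.
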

 
\begin{example} \label{Ex:inequivalent minimal presentations-2} 
To illustrate Corollary~\ref{C:cor 3 of T:h lifts to the presentation level}, recall Example~\ref{Ex:inequivalent minimal presentations-1}.
Two rank 1 presentations $\pi, \pi'$ of $H = \Z_5$ were defined by $\pi (1) = 1$ and $\pi' (1) = 2$.  These are obviously inequivalent. 
We claim that they have equivalent index 1 stabilizations $\pi \oplus 0, \pi' \oplus 0$, \ie there exists some $f$
so that the following diagram commutes:
  $$
 \begin{diagram}
 \node{\Z \oplus \Z} \arrow[2]{s,l}{f} \arrow{see,t}{\pi \oplus 0} \\
 \node[3]{H.} \\
 \node{\Z \oplus \Z} \arrow{nee,b}{\pi' \oplus 0}
 \end{diagram}
 $$
For example, we may define $f$ by 
$$\vectortwoone{z_1}{z_2} \mapsto \mattwotwo{3}{5}{1}{2} \vectortwoone{z_1}{z_2 } = \vectortwoone{3z_1+5z_2}{z_1+2z_2 }$$ 
We then have
$$(\pi'\oplus 0) \circ f(z_1,z_2) = 6z_1 + 10z_2 \equiv z_1\ (\mod 5) = (\pi\oplus 0)(z_1,z_2).\ \ \|$$
\end{example}

%%%%%%%%%%%%%%%%%%%%%%%%%%%%%%%%%%%%%%%%%%%%%%%
%%%%%%%%%%%%%%%%%%%%%%%%%%%%%%%%%%%%%%%%%%%%%%
\subsection{Equivalence classes of minimal presentations}
\label{SS:equivalence classes of minimal presentations}
%%%%%%%%%%%%%%%%%%%%%%%%%%%%%%%%%%%%%%%%%%%%%%
%%%%%%%%%%%%%%%%%%%%%%%%%%%%%%%%%%%%%%%%%%%%%%
We continue our study of presentations of finitely generated abelian groups by investigating equivalence classes of 
{\it minimal} presentations of finitely generated abelian groups. The main results are 
Theorem~\ref{T:equivalence of minimal presentations} and Corollary~\ref{C:complete invariant of equivalence of minimal presentations}, 
which give a complete invariant of equivalence of minimal presentations of $H$.

First we recall the definition of the exterior powers of an abelian group $H$.  From the $k^{\Th}$ tensor power 
$H^k = H\otimes \cdots \otimes H$ we form a quotient by dividing out by the subgroup generated by all 
$x_1 \otimes \cdots \otimes x_k$ in which two $x_i$'s are equal. This quotient is the {\it $k^{\Th}$ exterior power of $H$}, 
denoted by $\Lambda^k H$.  The image of an arbitrary tensor product $x_1 \otimes \cdots \otimes x_k$ in $\Lambda^k H$ is denoted 
by $x_1 \wedge \cdots \wedge x_k$, and we have the usual law 
$$x_1 \wedge \cdots x_i \wedge x_{i+1} \cdots \wedge x_k = - x_1 \wedge \cdots x_{i+1} \wedge x_i \cdots \wedge x_k.$$ 
Also as usual, $x_1 \wedge \cdots \wedge x_k = 0$ if any $x_i$ is a linear combination of the other terms, which implies 
that $\Lambda^k H = 0$ if $k> \rk H$.

\begin{lemma} \label{L:wedge_r-1}
Let $r = \rk H$ and $\tau$ be the smallest elementary divisor of the torsion subgroup $T$ of $H$.  If $T=0$, we put 
$\tau = 0$.  Then $\Lambda^r H$ is cyclic of order $\tau$ if $T \not= 0$, whereas if $T = 0$, then $\Lambda^r H$ is infinite cyclic. 
If $x_1, \dots, x_r$ generate $H$, then $x_1 \wedge \cdots \wedge x_r$ generates $\Lambda^r H$.
\end{lemma}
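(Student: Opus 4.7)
My plan is to reduce the computation to a tensor product of cyclic groups, using the fundamental theorem and the standard decomposition of exterior powers, and to obtain the generation statement by functoriality.

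First I would apply Theorem~\ref{T:fundamental theorem for f.g. abelian groups}(i) to write $H = C_1 \oplus \cdots \oplus C_r$ as a direct sum of $r$ cyclic summands, where the first $t$ are $\Z/\tau_1, \ldots, \Z/\tau_t$ (when $T \neq 0$) and the remaining $n = r - t$ are copies of $\Z$. The standard natural isomorphism
$$\Lambda^*(A \oplus B) \cong \Lambda^*(A) \otimes \Lambda^*(B),$$
iterated over the $r$ summands, expresses $\Lambda^r H$ as a direct sum indexed by ways of distributing total exterior degree $r$ among the $r$ factors. The crucial vanishing is that for any cyclic group $C$ with generator $c$, every pure wedge in $\Lambda^j C$ is an integer multiple of $c \wedge \cdots \wedge c = 0$, so $\Lambda^j C = 0$ for $j \geq 2$. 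Hence only the summand in which each $C_i$ contributes exactly degree one survives, giving $\Lambda^r H \cong C_1 \otimes \cdots \otimes C_r$, with $e_1 \wedge \cdots \wedge e_r$ (for $e_i$ a generator of $C_i$) corresponding to the elementary tensor of the generators.

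I would then simplify this tensor product using $\Z \otimes A \cong A$ and $\Z/a \otimes \Z/b \cong \Z/\gcd(a,b)$. If $T = 0$ this yields $\Z^{\otimes r} \cong \Z$, infinite cyclic; if $T \neq 0$, after absorbing the $\Z$-factors the divisibility chain $\tau_1 \mid \tau_2 \mid \cdots \mid \tau_t$ collapses the iterated gcd to $\tau_1 = \tau$, so $\Lambda^r H \cong \Z/\tau$. In either case the element $e_1 \wedge \cdots \wedge e_r$ maps to a generator.

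For the final assertion, given generators $x_1, \ldots, x_r$ of $H$, the map $\pi \colon \Z^r \to H$ sending standard basis vectors $f_1, \ldots, f_r$ to $x_1, \ldots, x_r$ is surjective. Since any pure wedge $b_1 \wedge \cdots \wedge b_k$ in $\Lambda^k H$ is the image of a pure wedge of lifts of the $b_i$, the induced map $\Lambda^r \pi \colon \Lambda^r \Z^r \to \Lambda^r H$ is surjective. But $\Lambda^r \Z^r \cong \Z$ is generated by $f_1 \wedge \cdots \wedge f_r$, so its image $x_1 \wedge \cdots \wedge x_r$ generates $\Lambda^r H$. The proof is pure multilinear algebra over $\Z$, and I do not foresee a genuine obstacle; the only point requiring a small remark is that the exterior-power decomposition of a direct sum is valid for arbitrary abelian groups, which follows from the universal property of $\Lambda^*$ as the free graded-commutative algebra generated in degree one.
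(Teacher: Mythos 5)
Your proof is correct, but it takes a genuinely different route from the paper's. You compute $\Lambda^r H$ abstractly first: decomposing $H$ into $r$ cyclic summands and invoking the natural isomorphism $\Lambda^{\ast}(A\oplus B)\cong \Lambda^{\ast}(A)\otimes\Lambda^{\ast}(B)$ (valid for modules over any commutative ring), you kill every term involving $\Lambda^j$ of a cyclic group with $j\geq 2$, are left with $C_1\otimes\cdots\otimes C_r\cong \Z_{\tau}$ (or $\Z$ when $T=0$), and then deduce the generation statement by functoriality from a surjection $\Z^r\to H$. The paper works in the opposite order and with bare hands: it first proves the generation statement directly from the identity $y_1\wedge\cdots\wedge y_r=\det(\alpha_{ij})\,(x_1\wedge\cdots\wedge x_r)$, which gives cyclicity and the upper bound $|\Lambda^r H|\leq\tau$ from $\tau x_1=0$, and it gets the lower bound from an explicit surjection $d:\Lambda^r H\to\Z_\tau$ sending $y_1\wedge\cdots\wedge y_r$ to $\det(\alpha_{ij})\bmod\tau$, after checking that $d$ is well defined. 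Your argument buys brevity and avoids that well-definedness check, at the cost of importing the K\"unneth-type decomposition of exterior powers of a direct sum; the paper's construction is more elementary and its determinant-mod-$\tau$ map is not a throwaway, since it prefigures the determinant of a homomorphism with respect to orientations used throughout \S\ref{SS:equivalence classes of minimal presentations}. One small caveat about your closing remark: over $\Z$ the exterior algebra is not the free graded-commutative algebra on degree-one generators (the relation $x\wedge x=0$ is strictly stronger than anticommutativity when $2$ is not invertible), but the direct-sum decomposition you rely on holds for arbitrary modules over a commutative ring, so this imprecision does not affect the argument.
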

\begin{proof}
The case when $T=0$ is well known, so we assume that $\tau > 0$.  We prove the last statement first.  Now, $\Lambda^r H$ is 
generated by all $y_1 \wedge \cdots \wedge y_r$ as the $y_i$ range over $H$.  But let $y_i = \sum_{j=1}^r \alpha_{ij} x_j$ 
for integers $\alpha_{ij}$.  A straightforward check shows that
$y_1 \wedge \cdots \wedge y_r = \det (\alpha_{ij}) (x_1 \wedge \cdots \wedge x_r)$. 

Thus $\Lambda^r H$ is cyclic. Elementary divisor theory tells us that $H$ is the direct sum of $r$ cyclic groups $\Z_{\tau_i}$, 
where the $\tau_i$ are the elementary divisors ($\Z_0$ means $\Z$ here).  If these cyclic summands have generators 
$x_i$, with $x_1$, say, of order $\tau = \tau_1$, then $\theta = x_1 \wedge \cdots \wedge x_r$ generates $\Lambda^r H$, and 
$\tau \theta = (\tau x_1) \wedge x_2 \wedge \cdots \wedge x_r = 0$ since $\tau x_1 = 0$.  Thus $| \Lambda^r H | \leqslant \tau$. 

We must therefore prove that $| \Lambda^r H| \geqslant \tau$.  Consider the map $d : H^r \to \Z_\tau$ defined by 
$$d(y_1\otimes \cdots \otimes y_r) = \det (\alpha_{ij}) \ \mod \tau,$$ 
where $y_i = \sum_j \alpha_{ij} x_j$.  This is well defined, since if for some $i$ we had $\sum_j \alpha_{ij} x_j = 0$ in $H$, 
then we must have $\alpha_{ij} \equiv 0 \ \mod \tau$ for all $j$, and hence $\det( \alpha_{ij}) \equiv 0 \ \mod \tau$.  
The map $d$ is also clearly onto (let $y_i = x_i$).  Finally, $d$ kills all terms having two $y_i$'s equal, 
so it induces a map of $\Lambda^r H$ onto $\Z_\tau$.
\end{proof}

\begin{definition}\label{D:orientation and volume of H}
If $H$ has rank $r$, an {\it orientation} of $H$ is a selection of a generator $\theta$ of $\Lambda^r H$. A {\it volume} of $H$ is 
a pair $\pm \theta$ of orientations of $H$. \ie an orientation of $H$, determined up to sign.  Observe that a free abelian 
group of rank $r$ has $\Lambda^r \simeq \Z$ and hence two orientations and only one volume, but if $H$ has torsion, 
then it will in general have many volumes. $\|$
\end{definition}

If $f: H \to H'$ is a homomorphism between groups of the same rank $r$, then $f$ induces a homomorphism $\Lambda^r f : \Lambda^r H \to  \Lambda^r H'$ in the standard way; we will write simply $f$ for $ \Lambda^r f$.

\begin{lemma} \label{L:orientation-1}
Assume that $H$ and $H'$ both have rank $r$ and that $f : H \to H'$ is surjective.  Let $\tau$ and $\tau'$ be the smallest
elementary divisors of $H$ and $H'$, respectively.  Then $\tau' \mid \tau$ and if $\theta$ is any orientation of $H$, then
$f(\theta)$ is an orientation of $H'$.
\end{lemma}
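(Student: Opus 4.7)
The plan is to reduce both assertions to Lemma~\ref{L:wedge_r-1}, which already tells us that $\Lambda^r H$ is cyclic of order $\tau$ (infinite cyclic if $\tau=0$) and similarly for $H'$, and which also characterizes generators as the wedge of any generating set.

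First I would pick generators $x_1,\dots,x_r$ of $H$ with $\theta = x_1\wedge\cdots\wedge x_r$; up to sign this covers every orientation, so it suffices to treat this case. Since $f$ is surjective, $f(x_1),\dots,f(x_r)$ generate $H'$, and $H'$ has rank $r$ by hypothesis, so Lemma~\ref{L:wedge_r-1} applies to give that $f(x_1)\wedge\cdots\wedge f(x_r)$ generates $\Lambda^r H'$. But by naturality of the exterior power, this element is exactly $f(\theta)$, which proves the second assertion.

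For the divisibility $\tau'\mid \tau$, I would simply observe that in $\Lambda^r H$ we have $\tau\theta = 0$, hence
\[
\tau\, f(\theta) \;=\; f(\tau\theta) \;=\; 0
\]
in $\Lambda^r H'$. Since $f(\theta)$ is a generator of $\Lambda^r H'$, which is cyclic of order $\tau'$, its order is exactly $\tau'$, and therefore $\tau'\mid \tau$. The case $\tau=0$ is trivial because every integer divides $0$; note that if $\tau=0$ then $H$ is torsion-free of rank $r$, and this forces $\tau'=0$ as well, for otherwise $\tau f(\theta)=0$ with $\tau>0$ would contradict $f(\theta)$ being a generator of an infinite cyclic group.

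There is no real obstacle here; the one point to be careful about is to verify that a generating set of $r$ elements for a rank-$r$ group really does give a generator of the top exterior power, which is precisely the content of Lemma~\ref{L:wedge_r-1} and which makes the whole argument a two-line consequence of naturality.
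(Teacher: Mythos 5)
Your proof follows the same line as the paper's---both reduce everything to Lemma~\ref{L:wedge_r-1}---but there is an unjustified step at the start and a false claim at the end.

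The opening assertion, that ``up to sign'' every orientation $\theta$ of $H$ can be realized as $x_1\wedge\cdots\wedge x_r$ for some generating set $\{x_i\}$, is a \emph{converse} of the final statement of Lemma~\ref{L:wedge_r-1}, which only says that the wedge of a generating set is a generator of $\Lambda^r H$. The converse is true (and even holds without the sign ambiguity: take a standard generating set $y_1,\dots,y_r$ with $y_1$ of order $\tau$; then $\theta = u\,(y_1\wedge\cdots\wedge y_r)$ for some unit $u\in\Z_\tau$, and $\{uy_1,y_2,\dots,y_r\}$ is still a generating set achieving $\theta$), but you need to say so. The paper sidesteps the issue: it fixes an \emph{arbitrary} generating set, sets $\varphi = x_1\wedge\cdots\wedge x_r$, writes $\theta = m\varphi$ with $m$ a unit of $\Z_\tau$, and then observes that $f(\varphi) = \varphi'$ generates $\Lambda^r H'$ (by Lemma~\ref{L:wedge_r-1}), that the induced map $\Lambda^r H\to\Lambda^r H'$ is therefore surjective and hence $\tau'\mid\tau$, and that consequently $m$ is still a unit of $\Z_{\tau'}$, so $f(\theta)=m\varphi'$ is a generator. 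Either fix works; the paper's is shorter since it never needs the converse.

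Separately, the parenthetical claim that $\tau=0$ forces $\tau'=0$ is false. Take $H=\Z^2$, $H'=\Z\oplus\Z_5$, and $f(a,b)=(a,\,b\bmod 5)$: both have rank $2$, $f$ is onto, yet $\tau=0$ while $\tau'=5$. (Note that $\tau'\mid 0$ still holds, as it must.) The surjection does force the implication in the \emph{opposite} direction, $\tau'=0\Rightarrow\tau=0$, which is closer to what your ``for otherwise'' clause is gesturing at, but neither implication is needed here, because every integer divides $0$ is already the whole argument when $\tau=0$. Delete the aside.
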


\begin{proof}
Let $x_1, \dots, x_r$ generate $H$, so $\varphi = x_1 \wedge \cdots \wedge x_r$ generates $\Lambda^r H$.  There is thus some generator
$m$ of $\Z_\tau$ so that $\theta = m \varphi$.  But since $f$ is onto, $H'$ is generated by $f(x_1),\ldots,f(x_r)$ and 
$\varphi' = f(x_1) \wedge \cdots \wedge f(x_r) = f(\varphi)$ generates $\Lambda^r H'$.  This shows that 
$f : \Lambda^r H \to \Lambda^r H'$ is also surjective and hence that $\tau' \mid \tau$.  We conclude that $m$ is also 
a generator of $\Z_{\tau'}$, and hence that $\theta' = f(\theta) = m f (\varphi) = m \varphi'$ generates $\Lambda^r H'$.
\end{proof}

If $H, H'$ have specific orientations $\theta, \theta'$ and $f: H \to H'$ is a homomorphism, then since 
$\theta'$ generates $\Lambda^r H'$ we have $f(\theta) = m \theta'$ for a unique $m \in \Z_{\tau'}$.  We call $m$ 
the {\it determinant} of $f$ (with respect to the orientations $\theta, \theta'$) and write $f(\theta) = \det f \cdot \theta'$. 
If $H, H'$ have only {\it volumes} specified, then $\det f$ is determined up to sign.
If, however, $H = H'$ and $\theta = \theta'$, then $\det f$ is independent of $\theta$; in fact, 
$f: \Lambda^r H \to \Lambda^r H'$ is just multiplication by $\det f \in \Z_\tau$.
Thus endomorphisms of $H$ have a well-defined determinant, and it is easy to see that this definition 
is the classical one when $H$ is free.  More generally, we have:

\begin{lemma} \label{L:orientation-2}
Suppose 
 $$
 \begin{CD}
 \F @>\pi>> H \\
 @VfVV @VVhV \\
 \F' @>\pi'>> H'
 \end{CD}
 $$
commutes, where $\pi, \pi'$ are presentations and all groups have the same rank. Then if $\varphi, \varphi'$ are orientations of $\F, \F'$ 
inducing orientations $\theta, \theta'$ of $H, H'$, we have $\det h \equiv \det f \ \mod \tau'$.  If no orientations are specified, 
then we measure $\det h$ with respect to the canonical induced volumes, and the above congruence holds up to sign.
\end{lemma}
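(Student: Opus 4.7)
The plan is to apply the functor $\Lambda^r$ to the entire commutative square and then chase the orientation $\varphi$ of $\F$ around the diagram, using the definitions of determinant to translate each arrow into multiplication by a scalar.

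More concretely, applying $\Lambda^r$ to the given commutative diagram yields a new commutative square
$$
\begin{CD}
\Lambda^r \F @>\pi>> \Lambda^r H \\
@VfVV @VVhV \\
\Lambda^r \F' @>\pi'>> \Lambda^r H'.
\end{CD}
$$
By Lemma \ref{L:wedge_r-1}, $\Lambda^r \F$ and $\Lambda^r \F'$ are infinite cyclic (generated by $\varphi$ and $\varphi'$), while $\Lambda^r H$ and $\Lambda^r H'$ are cyclic of orders $\tau$ and $\tau'$ respectively, generated by $\theta=\pi(\varphi)$ and $\theta'=\pi'(\varphi')$ via the hypothesis that the specified orientations are induced by those of the free groups. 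Lemma \ref{L:orientation-1} (applied to $\pi$ and $\pi'$, which are onto between groups of equal rank) makes this induction of orientations well-defined.

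Now chase $\varphi \in \Lambda^r \F$ around the square. The right-then-down path gives
$$h(\pi(\varphi)) \;=\; h(\theta) \;=\; (\det h)\,\theta',$$
by the definition of $\det h$ with respect to $\theta,\theta'$. The down-then-right path gives
$$\pi'(f(\varphi)) \;=\; \pi'\bigl((\det f)\,\varphi'\bigr) \;=\; (\det f)\,\pi'(\varphi') \;=\; (\det f)\,\theta',$$
using that $\pi'$ is a homomorphism and that $\det f \in \Z$ since $\F,\F'$ are free. Commutativity of the square forces these two expressions to coincide in $\Lambda^r H' \cong \Z_{\tau'}$, yielding
$$(\det h)\,\theta' \;=\; (\det f)\,\theta' \quad\text{in } \Lambda^r H',$$
so $\det h \equiv \det f \pmod{\tau'}$, as claimed.

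For the unoriented version, observe that specifying only volumes means each of $\varphi,\varphi'$ is determined up to sign, hence the induced $\theta,\theta'$ are determined up to sign, and both $\det f$ and $\det h$ are well-defined only up to sign. The diagram chase above is unchanged except that each equality now carries an independent sign ambiguity, so the resulting congruence $\det h \equiv \det f \pmod{\tau'}$ holds up to sign. There is no real obstacle in this argument; the only thing to verify carefully is that Lemma \ref{L:wedge_r-1} and Lemma \ref{L:orientation-1} together guarantee that $\pi(\varphi), \pi'(\varphi')$ really are generators of the relevant cyclic groups, so that the scalar $\det h$ is unambiguously read off from the image of $\theta$.
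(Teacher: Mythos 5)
Your proof is correct and follows essentially the same route as the paper: both arguments chase the orientation $\varphi$ through the square at the level of $\Lambda^r$, comparing $h(\pi(\varphi)) = (\det h)\,\theta'$ with $\pi'(f(\varphi)) = (\det f)\,\theta'$ and reading off the congruence modulo $\tau'$. Your extra verification via Lemma \ref{L:wedge_r-1} and Lemma \ref{L:orientation-1} that $\theta,\theta'$ generate the relevant cyclic groups is implicit in the paper's proof and does no harm.
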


\begin{proof}
Observe that $\det f \cdot \varphi' = f(\varphi)$ and that
$$\det h \cdot \theta' = h(\theta) = h \pi(\varphi) = \pi' (\det f\cdot \varphi') \equiv_{\mod \tau'} \det f \cdot \pi' (\varphi') 
= \det f \cdot \theta'.$$ 
The final statement is obvious.
\end{proof}

Note that if $H = H'$, $\F = \F'$ and $\pi = \pi'$, then $\det f$, $\det h$ and the congruence are independent of the orientations.

The following two lemmas show that $\det$ behaves like the classical determinant.

\begin{lemma} \label{L:properties of det(f)-1}
If $f : (H_1,\theta_1) \rightarrow (H_2,\theta_2)$ and $g : (H_2, \theta_2) \rightarrow (H_3, \theta_3)$ 
are homomorphisms of oriented groups so that all the $H_i$ have the same rank, then if $\tau_3$ is the smallest elementary
divisor of $H_3$ we have $\det (g f) \equiv (\det g) \cdot (\det f)\ \mod \tau_3$.
\end{lemma}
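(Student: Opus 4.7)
The plan is to exploit the functoriality of the $r$-th exterior power construction together with the defining relations for the determinants. By definition we have $f(\theta_1) = \det f \cdot \theta_2$ in $\Lambda^r H_2$ and $g(\theta_2) = \det g \cdot \theta_3$ in $\Lambda^r H_3$. Since $\Lambda^r$ is a functor, $g \circ f$ induces the homomorphism $\Lambda^r g \circ \Lambda^r f$ on top exterior powers, so the computation I would carry out is simply
\[
\det(gf) \cdot \theta_3 \;=\; (gf)(\theta_1) \;=\; g\bigl(f(\theta_1)\bigr) \;=\; g\bigl(\det f \cdot \theta_2\bigr) \;=\; \det f \cdot g(\theta_2) \;=\; \det f \cdot \det g \cdot \theta_3.
\]
Since $\theta_3$ generates $\Lambda^r H_3 \cong \Z_{\tau_3}$, this identity in $\Lambda^r H_3$ gives exactly $\det(gf) \equiv \det f \cdot \det g \pmod{\tau_3}$.

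The one point requiring care is that $\det f$ is only well-defined modulo $\tau_2$, so I should check that the expression $g(\det f \cdot \theta_2)$ is unambiguous as an element of $\Lambda^r H_3$. This is the step I expect to be the main (minor) obstacle, but it is essentially automatic: replacing $\det f$ by $\det f + k\tau_2$ for some integer $k$ changes $g(\det f \cdot \theta_2)$ by $g(k\tau_2 \cdot \theta_2)$, and since $\Lambda^r g$ is a homomorphism of abelian groups with $\tau_2 \theta_2 = 0$ in $\Lambda^r H_2$, this correction vanishes. Hence the middle equality in the displayed chain is legitimate, and the integer $\det f \cdot \det g$ is well-defined modulo $\tau_3$ (this uses that any group homomorphism from $\Z_{\tau_2}$ into $\Z_{\tau_3}$ factors through multiplication by an integer whose class mod $\tau_3$ is well-defined).

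Note that no surjectivity hypothesis on $f$ or $g$ is needed; the proof uses only that the determinants are defined via the cyclic groups $\Lambda^r H_i$ and that $\Lambda^r$ is a functor. This matches the flavor of Lemma~\ref{L:orientation-2}, whose proof used essentially the same device of chasing an orientation element through a commutative diagram and reading off the determinant as the induced multiplication on the top exterior power.
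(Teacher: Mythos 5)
Your proof is correct and is essentially the paper's own argument: the same one-line chain $\det(gf)\cdot\theta_3 = gf(\theta_1) = g(\det f\cdot\theta_2) = \det f\cdot\det g\cdot\theta_3$ read off in the cyclic group $\Lambda^r H_3$. Your extra remark on well-definedness (that $\tau_2\theta_2 = 0$ kills any ambiguity in the integer representative of $\det f$) is a sound but minor elaboration of what the paper leaves implicit.
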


\begin{proof}
We calculate:
$$\det(g f) \cdot \theta_3 = g f (Ê\theta_1) = g (\det f \cdot \theta_2) = \det f \cdot g (\theta_2) = \det f \cdot \det g \cdot \theta_3.$$
\end{proof}

\begin{lemma} \label{L:properties of det(f)-2}
Let $\F$ and $G$ be abelian groups with $\F$ free, and let $h$ be an endomorphism of $\F \oplus G$ so that $h(G) < G$.  Let
$g = h|_G$ and let $f$ be the map on $\F = \frac{\F \oplus G}{G}$ induced by $h$.  Then $\det h = \det f \cdot \det g \ \mod \tau$, 
where $\tau$ is the smallest elementary divisor of $G$ (and hence of $\F \oplus G$).  In particular, if $h$ is an automorphism, 
then $\det h = \pm \det g$.
\end{lemma}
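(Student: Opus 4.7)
The plan is to compute the action of $h$ on a carefully chosen generator of $\Lambda^{p+q}(F \oplus G)$, where $p = \rk F$ and $q = \rk G$. Using Lemma~\ref{L:wedge_r-1}, I would pick a basis $x_1,\dots,x_p$ of $F$ and generators $y_1,\dots,y_q$ of $G$ so that $\omega := x_1 \wedge \cdots \wedge x_p \wedge y_1 \wedge \cdots \wedge y_q$ generates $\Lambda^{p+q}(F \oplus G)$, a cyclic group of order $\tau$. Using the given direct sum decomposition, write $h(x_i) = \tilde f(x_i) + c_i$ with $\tilde f(x_i) \in F$ and $c_i \in G$; under the canonical identification $F \cong (F \oplus G)/G$, the endomorphism $\tilde f$ represents the induced map $f$ acting on the free abelian group $F$, so $\det \tilde f$ equals $\det f$ in the classical sense. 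The hypothesis $h(G)\subset G$ gives $h(y_j) = g(y_j) \in G$.

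I would then expand
$$h(\omega) = \bigwedge_{i=1}^{p}\bigl(\tilde f(x_i) + c_i\bigr) \wedge \bigwedge_{j=1}^{q} g(y_j)$$
by multilinearity. The ``main term'' in which every $c_i$ is discarded equals $(\det f)(\det g)\,\omega \pmod{\tau}$: the factor $\bigwedge_i \tilde f(x_i) = (\det f)\, x_1 \wedge \cdots \wedge x_p$ by the classical determinant formula on the free group $F$, and $\bigwedge_j g(y_j) = (\det g)\, y_1 \wedge \cdots \wedge y_q$ by the definition of $\det g$ on $\Lambda^q G$. The remaining cross terms each contain at least $q+1$ wedge factors lying in $G$; since the $y_j$ generate $G$, multilinearity rewrites the $G$-portion of such a term as a $\Z$-linear combination of wedges involving at least $q+1$ elements drawn from $\{y_1,\dots,y_q\}$, and the pigeonhole principle forces a repeated $y_l$, so each such wedge vanishes. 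This vanishing of cross terms is the one step that needs care, but it reduces to a counting argument once every element of $G$ is written as a $\Z$-combination of the $y_j$. The conclusion is $\det h \equiv \det f \cdot \det g \pmod{\tau}$.

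For the concluding ``in particular'' assertion, if $h$ is an automorphism then the $h$-equivariant short exact sequence $0 \to G \to F \oplus G \to F \to 0$ forces $g \in \Aut(G)$ and $f \in \Aut(F)$. Since $F$ is free abelian, $\det f = \pm 1$, so $\det h \equiv \pm \det g \pmod{\tau}$ as claimed.
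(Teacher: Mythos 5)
Your proposal is correct and follows essentially the same route as the paper: both compute the action of $h$ on the top exterior power generator $x_1 \wedge \cdots \wedge x_p \wedge y_1 \wedge \cdots \wedge y_q$, identify the main term as $\det f \cdot \det g$, and kill the cross terms because their $G$-components are $\Z$-combinations of the $y_j$'s and so force a repeated wedge factor. The only cosmetic difference is that the paper factors out $\det g$ from the $g(y_j)$'s before discarding cross terms, and justifies $\det f = \pm 1$ in the automorphism case without spelling out the exact-sequence argument you give.
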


\begin{proof}
Let $m = \rk \F$ and $n = \rk G$; then $m+n = \rk \F \oplus G$ holds because $\F$ is free.  Let $x_1, \dots, x_m$ and 
$y_1, \dots, y_n$ be minimal sets of generators of $\F$ and $G$; their union is then a minimal set of generators of $\F \oplus G$. 
By hypothesis, $h(y_i) = g(y_i)$; also, $h(x_i) = f(x_i) + e(x_i)$ is the direct sum decomposition of $h(x_i)$, where 
$e$ is some homomorphism $\F \to G$. Hence 
$$
\begin{array}{l}
 \det h \cdot (x_1 \wedge \cdots \wedge x_m) \wedge (y_1 \wedge \cdots \wedge y_n) \\
 \qquad \qquad = \quad \left(f(x_1) + e(x_1)\right) \wedge \cdots \wedge \left(f(x_m)+e(x_m)\right) \wedge \left(g(y_1) \wedge \cdots \wedge g(y_n)\right) \\
 \qquad \qquad = \quad  \det g \cdot \left(f(x_1) + e(x_1)\right) \wedge \cdots \wedge \left(f(x_m)+e(x_m)\right) \wedge \left(y_1 \wedge \cdots \wedge y_n\right). 
\end{array}
$$
But since $e(x_i)$ is a linear combination of the $y_i$'s, the above reduces to just 
 $$
 \det g \cdot (f(x_1) \wedge \cdots \wedge f(x_m)) \wedge (y_1 \wedge \cdots \wedge y_n) = \det f \cdot \det g \cdot (x_1 \wedge \cdots \wedge y_n)
 $$
as desired. If $h$ is an automorphism, then $\det f$ must be $\pm 1$, proving the last statement.
\end{proof}

Suppose now that $\F \xrightarrow{\pi} H$ is a minimal presentation, so that $\rk \F = \rk H = r$.  Let $\pm \varphi$ be
the unique volume on $\F$, and let $\pm \theta \in \wedge^r H$ be $\pm \pi(\varphi')$; we call the volume $\pm \theta$ the 
{\it volume of} (or {\it induced by}) {\it the presentation $\pi$}.  Suppose now that
$\F \xrightarrow{\pi'} H$ is an equivalent presentation, \ie there exists a diagram
$$
 \begin{diagram}
 \node{\F} \arrow[2]{s,l}{f} \arrow{see,t}{\pi} \\
 \node[3]{H} \\
 \node{\F'} \arrow{nee,b}{\pi'}
 \end{diagram}
 $$
with $f$ an isomorphism.  If $\pm \theta'$ is the volume induced by $\pi'$, we then have
$$\pm \theta = \pi( \pm \varphi) = \pi' f (\pm \varphi) = \pi' (\pm \det f \cdot \varphi') = \pm \det f \cdot \theta'.$$ 
But the fact that $f$ is an isomorphism implies that $\det f = \pm 1$, and hence, {\it equivalent presentations have the same volume}. 
This argument generalizes in the obvious way to prove the necessity in the following:
 
\begin{theorem} \label{T:equivalence of minimal presentations}
Let $\pi: \F \to H$, $\pi':\F' \to H'$ be two minimal presentations with volumes $\theta, \theta'$ and let 
$h : H \to H'$ be an isomorphism. Then $h$ lifts to an isomorphism $f : \F \to \F'$ if and only if 
$h(\pm \theta) =  \pm \theta'$; that is, if and only if $\det h = \pm 1 \ \mod \tau$ where $\tau$ is the
smallest elementary divisor of $H \cong H'$.
 \end{theorem}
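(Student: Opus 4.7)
The necessity direction is essentially settled in the paragraph preceding the theorem: any isomorphism $f:F\to F'$ lifting $h$ has $\det f = \pm 1$ because $F$ and $F'$ are free abelian groups of the same rank, and then Lemma~\ref{L:orientation-2} gives $\det h \equiv \det f \equiv \pm 1 \ \mod \tau$, which is precisely the condition $h(\pm\theta) = \pm\theta'$.  All of the work is in the converse, and that is what I focus on below.

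For sufficiency I would first reduce to the case where $H$ is torsion.  Writing $H = T\oplus\Z^n$ and $H' = T'\oplus\Z^n$, and (in the spirit of Lemma~\ref{L:standardizing non-minimal presentations-2}) picking minimal presentations $F = F_0\oplus\Z^n$, $F' = F_0'\oplus\Z^n$ compatible with these splittings, one may, after multiplying $h$ by a liftable automorphism as in Corollary~\ref{C:lifting-1}, assume $h = h_T \oplus h_{\mathrm{free}}$.  The free-part summand lifts trivially as an isomorphism with $\det = \pm 1$, so the problem reduces to lifting $h_T : T \to T'$ to an isomorphism $F_0 \to F_0'$, still under $\det h_T \equiv \pm 1 \ \mod \tau$.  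At this stage Lemma~\ref{L:lifting-2} produces an initial lift $f_0$ with $(\det f_0,\,|T|)=1$.  In Smith-normal bases where $\ker\pi = \bigoplus_j \tau_j\Z e_j$ and $\ker\pi' = \bigoplus_j \tau_j\Z e_j'$, the matrix $A$ of $f_0$ satisfies $a_{ji}\equiv m_{ji}\ \mod \tau_j$ row-by-row, and every other lift of $h_T$ has the form $A+G$ with $G_{ji}\in \tau_j\Z$.  Since $\det A \equiv \pm 1\ \mod \tau$ by hypothesis, the real task is to choose $G$ with $\det(A+G) = \pm 1$ exactly.

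The heart of the argument, and the main obstacle, is this last step.  A modification of row $j$ shifts $\det A$ by $\tau_j$ times an integer combination of the row-$j$ cofactors $C_{j1},\dots,C_{jr}$, and the key observation is that $(\det A,\,|T|)=1$ forces $\gcd(C_{j1},\dots,C_{jr})$ to be coprime to $|T|$ for every $j$: if a prime $p\mid|T|$ divided every $C_{ji}$, then deleting row $j$ from $A$ would give an $(r{-}1)\times r$ matrix of $\mathbb{F}_p$-rank less than $r-1$, forcing $p\mid \det A$.  Consequently, for each prime $p\mid|T|$ one may use modifications of those rows $j$ with $p\nmid \tau_j$ to adjust $\det A \ \mod p$ freely, while rows $j$ with $p\mid \tau_j$ are already automatically correct mod $p$ by hypothesis; together this yields $\det \equiv \pm 1 \ \mod |T|$.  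A final argument in the spirit of Lemma~\ref{L:lifting-3}---using the Chinese Remainder Theorem and the surjectivity of $\SL_r(\Z)\to \SL_r(\Z/m)$---then promotes this residue-class adjustment to a single integer matrix $G$ producing $\det(A+G) = \pm 1$ exactly.  Controlling the nonlinear cross terms that arise when several rows are modified simultaneously and patching local corrections at each prime into one global $G$ is where the bulk of the delicate number-theoretic bookkeeping lives.
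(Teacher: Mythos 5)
Your reduction to the torsion case is reasonable, your initial lift $f_0$ via Lemma~\ref{L:lifting-2} is fine, and the cofactor observation---that $\gcd(C_{j1},\dots,C_{jr})$ is coprime to $|T|$ for every row $j$ because $(\det A,|T|)=1$---is correct and in principle usable. But the last step has a genuine gap. You explain how to adjust $\det A$ modulo each prime $p\mid|T|$ and then assert ``together this yields $\det\equiv\pm1\ \mod |T|$,'' which would require controlling $\det$ modulo the full power of $p$ dividing $|T|$, not merely modulo $p$. Worse, the phrase ``rows $j$ with $p\mid\tau_j$ are already automatically correct mod $p$'' conceals precisely the hard case: if $p\mid\tau$, then $p\mid\tau_j$ for every $j$, so no row is available in the way you describe, and the hypothesis $\det A\equiv\pm1\ \mod \tau$ gives control only modulo the $p$-part of $\tau$, which when $r>1$ is a proper divisor of the $p$-part of $|T|=\prod_j\tau_j$. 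A row-modification argument can be rescued---modify only the single row whose $\tau_j$ has the smallest $p$-adic valuation (so there are no cross-terms) to fix things modulo the $p$-part of $|T|$, then patch the per-prime corrections by the Chinese Remainder Theorem using the fact that $\det$ is a polynomial in the entries of the matrix---but that is exactly the ``delicate number-theoretic bookkeeping'' you gesture at without carrying out.

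The paper avoids all of this with one clean move, which is worth absorbing. After producing $f_0$ with $(\det f_0, m)=1$ (where $m=|T|$) and noting that $\det f_0\equiv\pm1\ \mod \tau$, it chooses a single integer $k$ with $k\cdot\det f_0\equiv\pm1\ \mod m$ (same sign); the hypothesis automatically forces $k\equiv 1\ \mod \tau$. Working in a Smith-normal basis in which $\pi(e_1)$ has order exactly $\tau$, the diagonal map $f_1=\diag(k,1,\dots,1)$ induces the \emph{identity} on $H$ precisely because $k\equiv 1\ \mod \tau$, so $f_0f_1$ is still a lift of $h$, and now $\det(f_0f_1)=k\det f_0\equiv\pm1\ \mod m$---globally, for all primes at once, with no cofactor analysis, no prime-by-prime patching, and no cross-terms to control. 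The final appeal to the surjectivity of $\SL(r,\Z)\to\SL(r,\Z_m)$ is then as in your sketch.
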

 
\begin{proof}
We first claim that it suffices to consider the special case when the two presentations are identical.  Indeed,
by Proposition \ref{P:structure of abelian groups} the two presentations are isomorphic; \ie there exists
a commutative diagram
$$\begin{CD}
\F       @>{\pi}>>  H\\
@V{f'}VV            @VV{h'}V\\
\F'      @>{\pi'}>> H'
\end{CD}$$
with both $f'$ and $h'$ isomorphisms.  Since $f'$ is an isomorphism, the map $h'$ has determinant $1$ and hence so does
$h \circ h'^{-1}$.  If we could lift the automorphism $h \circ h'^{-1}$ to an automorphism $f''$ of $F'$, then
$f:=f'' \circ f':\F \rightarrow F'$ would be the desired lift of $h$.
     
Hence let $h$ be an automorphism of $H$ with $\det h = \pm 1$ and let $\F \xrightarrow{\pi} H$ be any minimal presentation. 
The proof proceeds just as the proof of Theorem \ref{T:h lifts to the presentation level}: if $T$ is the torsion subgroup of $H$ and 
$\F_0 = \pi^{-1} (T)$ and $h_0 = h |_T$, then it still suffices to lift $h_0$ to $\F_0$.  Since $H$ is the direct sum of $T$
and a free abelian group, the presentation $\F_0 \to T$ is also minimal. Furthermore, the conditions of Lemma 
\ref{L:properties of det(f)-2} hold here, so $\det h_0 = \pm 1$ also.  Thus it suffices to prove the theorem when 
$H = T$ is a torsion group.
 
Let $|H| = m$.  By Lemma \ref{L:lifting-1} we may lift $h$ to an endomorphism $f_0$ of $\F$ such that $(\det f_0, m) = 1$; by 
Lemma \ref{L:lifting-3}, it follows that $\det f_0 \equiv \det h \equiv \pm 1 \ \mod \tau$.  Choose $k$ such that 
$k \cdot \det f_0 \equiv \pm 1 \ \mod m$, where the sign here is to be the same as the one above, so that $k \equiv 1 \ \mod \tau$. 
We choose a basis $e_1, \dots, e_r$ of $\F$ (as in Proposition \ref{P:structure of abelian groups}) so that $H$ is the direct sum 
of the cyclic subgroups generated by $x_i = \pi (e_i)$ and $x_1$ has order $\tau = m_1$.  The endomorphism $f_1$ of $\F$ defined by 
$e_1 \mapsto ke_1$, $e_i \mapsto e_i$ for all $i>1$ clearly induces the identity map on $H$, since $k \equiv 1 \ \mod \tau$. 
Hence $f_0 f_1$ still induces $h$ on $H$, and its determinant is now $\det f_0 \cdot \det f_1 \equiv k \det f_0 \equiv \pm 1 \ \mod m$. 
Just as in the proof of Lemma \ref{L:lifting-3}, we conclude that there exists an {\it isomorphism} $f$ of $\F$, with determinant $\pm 1$ 
(same sign!) such that $f \equiv f_0 f_1 \ \mod m$ (we are here using the fact that $\GL (r, \Z)$ maps onto all elements of 
$\GL (r, \Z_m)$ with determinant $\pm 1$).  As in Lemma \ref{L:lifting-3}, $f$ still induces $h$ on $H$, and we are done.
 \end{proof}
 
 Lifting the identity automorphism gives:
 
 \begin{corollary} \label{C:complete invariant of equivalence of minimal presentations}
 Two minimal presentations of $H$ are equivalent if and only if they induce the same volume on $H$.
 \end{corollary}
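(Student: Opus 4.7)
The plan is to deduce this corollary as a direct specialization of Theorem~\ref{T:equivalence of minimal presentations}, taking the isomorphism $h:H \to H$ to be the identity. Concretely, two minimal presentations $\pi : F \to H$ and $\pi' : F' \to H$ are, by definition, equivalent precisely when the identity map $h = \id_H$ lifts through some isomorphism $f : F \to F'$ making the triangle commute. So the task reduces to characterizing when such a lift exists, and this is exactly what Theorem~\ref{T:equivalence of minimal presentations} supplies.

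First I would invoke Theorem~\ref{T:equivalence of minimal presentations} with $H' = H$, $h = \id_H$, and volumes $\pm\theta$ (induced by $\pi$) and $\pm\theta'$ (induced by $\pi'$). The theorem asserts that $\id_H$ lifts to an isomorphism $F \to F'$ if and only if $\id_H(\pm\theta) = \pm\theta'$. Since $\id_H$ acts as the identity on $\Lambda^r H$, this condition simplifies to $\pm\theta = \pm\theta'$, which is precisely the statement that $\pi$ and $\pi'$ induce the same volume on $H$. (Alternatively, one notes that $\det(\id_H) = 1 \equiv \pm 1 \pmod{\tau}$, so the determinant criterion in Theorem~\ref{T:equivalence of minimal presentations} is automatically satisfied; the content is entirely in the volume-matching condition.)

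There is no real obstacle here — the corollary is essentially a rephrasing of Theorem~\ref{T:equivalence of minimal presentations} in the special case $h = \id$. The only thing worth being careful about is the sign ambiguity: because a minimal presentation determines only a pair $\pm\theta$ (not a single orientation), ``same volume'' genuinely means equality as unordered pairs, and this matches the ``$\pm$'' freedom allowed in lifting by Theorem~\ref{T:equivalence of minimal presentations}. With that bookkeeping observed, the equivalence follows in one line from the theorem.
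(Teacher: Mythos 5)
Your argument is correct and matches the paper's own proof, which consists of exactly the remark that one lifts the identity automorphism via Theorem~\ref{T:equivalence of minimal presentations}. The extra observation about sign bookkeeping is accurate but already implicit in the statement of the theorem.
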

 
 Here are some examples to show that calculations can actually be done with this machinery.
 
 \begin{example} \label{Ex:inequivalent minimal presentations-3}
In Example~\ref{Ex:inequivalent minimal presentations-1} we gave an example of inequivalent minimal presentations, namely $\Z \xrightarrow{1\mapsto 1} \Z_5$ and $\Z \xrightarrow{1 \mapsto 2} \Z_5$. Now observe that $r=1$, $\tau = 5$, $\Lambda^1 \Z_5 = \Z_5$; $\theta = \pm1$, $\theta' = 2 (\pm 1) = \pm 2 \not\equiv \pm 1 \ \mod 5$. $\|$ 
 \end{example}

 \begin{example} \label{Ex:inequivalent minimal presentations-4}
 Let $H = \Z_{2^{n-1}} \oplus \Z_{2^n}^2$ with standard generators $e_1, e_2, e_3$ and standard presentation $\Z^3 \to H$ taking $(1,0,0) \mapsto e_1$, etc. Let the second presentation be given by
 $$
(1,0,0) \mapsto e_1 + 2 e_2 - 2 e_3 
\quad ; \quad
(0,1,0) \mapsto e_1 + e_2
\quad ; \quad
(0,0,1) \mapsto e_1 - e_3 
 $$
 (it is easily seen that this map is onto $H$). The former volume is $\pm e_1 \wedge e_2 \wedge e_3$, the latter 
 $$
 \pm (e_1 + 2 e_2 - 2 e_3) \wedge (e_1 + e_2) \wedge (e_1 - e_3) = \pm \theta \cdot \det 
\left(\begin{array}{ccr}
 1 & 2 & -2 \\
 1 & 1 & 0 \\
 1 & 0 & -1 
 \end{array}\right)
 = \pm 3 \theta.
 $$
 Since $\tau = 2^{n-1}$ here, the presentations are equivalent if and only if $\pm 3 \equiv \pm 1 \ \mod 2^{n-1}$, \ie if and only if $n \leqslant 3$ (the signs on 3 and 1 are independent). $\|$
 \end{example}

%%%%%%%%%%%%%%%%%%%%%%%%%%%%%%%%%%%%%%%%%%%%%%
%%%%%%%%%%%%%%%%%%%%%%%%%%%%%%%%%%%%%%%%%%%%%%
%%%%%%%%%%%%%%%%%%%%%%%%%%%%%%%%%%%%%%%%%%%%%%
\newpage
  \section{Symplectic spaces, Heegaard pairs and symplectic Heegaard splittings} 
  \label{S:symplectic spaces, Heegaard pairs and symplectic Heegaard splittings}
 %%%%%%%%%%%%%%%%%%%%%%%%%%%%%%%%%%%%%%%%%%%%%%
%%%%%%%%%%%%%%%%%%%%%%%%%%%%%%%%%%%%%%%%%%%%%%
%%%%%%%%%%%%%%%%%%%%%%%%%%%%%%%%%%%%%%%%%%%%%%

As we noted at the start of the previous section, when a 3-manifold $W$ is defined by a Heegaard splitting, then we have, in a natural way, a presentation of 
$H_1(W;\Z)$. In fact we have more, because there is also a natural symplectic form associated to the presentation.   
In this section, our goal is to begin to broaden the concept of a presentation by placing additional structure on the free group of the presentation, and then to extend the results of Section~\ref{S:presentation theory for finitely generated abelian groups} to include the symplectic structure.
With that goal in mind we introduce symplectic spaces and their lagrangian subspaces, leading to the concept of a {\it Heegaard pair}.  There are equivalence relations on Heegaard pairs analogous to those on free pairs $(F,R)$. Just as we stabilized free pairs by taking their direct sums with $(\Z^k, \Z^k)$, we will see that there is an analogous concept of stabilization of Heegaard pairs, only now we need direct sums with a {\it standard} Heegaard pair.   At the end of the section (see Theorem~\ref{T:double cosets and Heegaard pairs}) we will relate our Heegaard pairs to the symplectic Heegaard splittings that were introduced in $\S$\ref{S:introduction}.

%%%%%%%%%%%%%%%%%%%%%%%%%%%%%%%%%%%%%%%%%%%%%%%%
%%%%%%%%%%%%%%%%%%%%%%%%%%%%%%%%%%%%%%%%%%%%%%%%
\subsection{Symplectic spaces and Heegaard pairs}
\label{SS:symplectic spaces and Heegaard pairs}
%%%%%%%%%%%%%%%%%%%%%%%%%%%%%%%%%%%%%%%%%%%%%%%%
%%%%%%%%%%%%%%%%%%%%%%%%%%%%%%%%%%%%%%%%%%%%%%%%

To begin, we reinterpret the free group $F$ of Definition~\ref{D:free pair}, introducing new notation, ideas and structure in the process.
\begin{definition}\label{D:symplectic space}
A {\it symplectic space} is a finitely generated free abelian group $V$ which is endowed with a non-singular antisymmetric bilinear pairing, written here as a dot product. {\it Non-singular} means that for each homomorphism $\alpha : V \to \Z$ there is an $x_\alpha \in V$ (necessarily unique) such that $\alpha (y) = x_\alpha \cdot y \ (\forall y \in V$). A {\it symplectic} or {\it $\Sp$-basis} for $V$ is a basis $\{Êa_i, b_i ; 1 \leqslant i \leqslant g \}$ such that $a_i \cdot a_j = b_i \cdot b_j = 0, a_i \cdot b_j = \delta_{ij}, 1 \leqslant i,j \leqslant g$. Every symplectic space has such a basis, and so is of even rank, say $2g$. As our {\it standard model} of a rank $2g$ symplectic space we have $X_g = \Z^{2g}$ with basis $\{a_1, \dots, a_g, b_1, \dots, b_g\}$ the $2g$ unit vectors, given in order. An isomorphism $V \to V$ which is form-preserving is a {\it symplectic isomorphism}. The group of all symplectic isomorphisms of $V$ is denoted $\Sp(V)$.   $\|$
\end{definition}

\begin{definition}\label{D:subspace of symplectic space}
Let $B \subset V$ be a subset of a symplectic space $V$ and define $B^\perp = \{Êv \in V \ | \ v \cdot b = 0 \  (\forall b \in B)\}$. 
\begin{itemize}
\item A subspace $B \subset V$ is {\it symplectic} if, equivalently,
\begin{enumerate}[a)]
\item the symplectic form restricted to $B$ is non-singular, or
\item $V = B \oplus B^\perp$.
\end{enumerate}
\item A subspace $B \subset V$ is {\it isotropic} if, equivalently,
\begin{enumerate}[a)]
\item $x \cdot y = 0$ for all $x, y \in B$, or
\item $B \subset B^\perp$.
\end{enumerate}
\item A subspace $B \subset V$ is {\it lagrangian} if, equivalently,
\begin{enumerate}[a)]
\item $B$ is maximal isotropic, or
\item $B = B^\perp$, or
\item $B$ is isotropic, a direct summand of $V$, and $\rk B = \frac{1}{2} \rk V$.
\end{enumerate}
\end{itemize}
We shall omit the proof that these various conditions are indeed equivalent.   $\|$
\end{definition}

Our next definition is motivated by the material in $\S$\ref{SS:Heegaard splittings of 3-manifolds}, where we defined symplectic Heegaard splittings.  We will see very soon that our current definitions lead to the identical concept.
\begin{definition}
A {\it Heegaard pair} is a triplet $(V ; B, \Bbar)$ consisting of a symplectic space $V$ and an ordered pair $B, \Bbar$ of lagrangian subspaces. The {\it genus} of the pair is $\rk B = \rk \Bbar = \frac{1}{2} \rk V$. An {\it isomorphism} of Heegaard pairs $(V_i; B_i, \Bbar_i), i = 1,2$ is a symplectic isomorphism $f : V_1 \to V_2$ such that $f(B_1) = B_2, f(\Bbar_1) = \Bbar_2$.  $\|$
\end{definition}

We now want to define a concept of ``stabilization" for Heegaard pairs. If $V$ has $\Sp$-basis $\{a_i, b_i \ | \ i = 1, \dots, g \}$, then the $a_i$'s (and also the $b_i$'s) generate a lagrangian subspace. These two subspaces $A, B$ have the following properties:
\begin{enumerate}[(a)]
\item $AÊ\oplus B = V$
\item the symplectic form induces a dual pairing of $A$ and $B$, \ie $a_i \cdot a_j = b_i \cdot b_j = 0$, $a_i \cdot b_j = \delta_{ij}$, $1 \leqslant i, j \leqslant g$.
\end{enumerate}
Any pair of lagrangian subspaces of $V$ satisfying these two properties with respect to some basis 
will be called a {\it dual pair}, and either space will be called the {\it dual complement} of the other.

If $X_g$ is the standard model for a symplectic space, then the lagrangian subspaces $E_g$ spanned by $a_1, \dots, a_g$ and $F_g$ spanned by $f_1, \dots, f_g$ are a dual pair. We will refer to $(X_g ; E_g, F_g)$ as the {\it standard Heegaard pair}. Note that in an arbitrary Heegaard pair $(V ; B, \Bbar)$ the lagrangian subspaces $B, \Bbar$ need not be dual complements.

If $V_1$ and $V_2$ are symplectic spaces, then $V_1 \oplus V_2$ has an obvious symplectic structure, and $V_1$ and $V_2$ are 
$\Sp$-subspaces of $V_1 \oplus V_2$ with $V_1 = V_2^\perp$ and $V_2 = V_1^\perp$. This induces a natural direct sum construction for Heegaard pairs, with $(V_1 ; B_1, \Bbar_1) \oplus (V_2 ; B_2, \Bbar_2) = (V_1 \oplus V_2 ; B_1 \oplus B_2, \Bbar_1 \oplus \Bbar_2)$. The {\it stabilization of index $k$} of a Heegaard pair is its direct sum with the standard Heegaard pair $(X_k ; E_k, F_k)$ of genus $k$. Two Heegaard pairs $(V_i; B_i, \Bbar_i),$ $i = 1,2$, are then {\it stably isomorphic} if they have isomorphic stabilizations.

These concepts will soon be related to topological ideas. First, however, we will show that stable isomorphism classes and isomorphism classes of Heegaard pairs are in 1-1 correspondence with stable double cosets and double cosets in the symplectic modular group $\Gamma$, with respect to its subgroup $\Lambda$.

Note that if $A, B$ is a dual pair of $V$ and $\cU : B \to B$ is a linear automorphism, then the adjoint map $(\cU ^*)^{-1}$ is an isomorphism of $A = B^*$. Moreover $(\cU^*)^{-1} \oplus \cU$ is a symplectic automorphism of $V = A \oplus B$.

\begin{lemma} \label{lemma:3.1}
If $B \subset V$ is lagrangian, $A \subset V$ is isotropic and $A \oplus B = V$, then $A$ is lagrangian and $A, B$ is a dual pair of $V$. Every lagrangian subspace has a dual complement.
\end{lemma}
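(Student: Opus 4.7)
The lemma has two halves; I would prove them in order, using the first to shortcut the second.

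For the first half, suppose $B$ is lagrangian, $A$ is isotropic, and $V = A \oplus B$. That $A$ is lagrangian is immediate from characterization (c) in Definition~\ref{D:subspace of symplectic space}: $A$ is already isotropic and a direct summand, and the decomposition $V = A \oplus B$ forces $\rk A = \rk V - \rk B = g$. To exhibit a dual pair, the key step is to show that $\mu : A \to \Hom(B,\Z)$, $a \mapsto (b \mapsto a\cdot b)$, is an isomorphism. Injectivity is immediate, since $\ker\mu = A\cap B^\perp = A\cap B = 0$. For surjectivity, given $\beta \in \Hom(B,\Z)$, extend $\beta$ to $\tilde\beta : V \to \Z$ by declaring $\tilde\beta|_A = 0$, and use the non-singularity of the form to find $x \in V$ with $\tilde\beta(y) = x \cdot y$ for all $y$; writing $x = a_0 + b_0$ and testing against $a \in A$, the isotropy of $A$ kills the $a_0$-contribution, so $\tilde\beta|_A = 0$ forces $b_0 \in A^\perp = A$, hence $b_0 = 0$ and $\mu(a_0) = \beta$. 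With $\mu$ an isomorphism, any basis $b_1,\dots,b_g$ of $B$ has, through $\mu^{-1}$ of its dual basis, a matching basis $a_1,\dots,a_g$ of $A$ with $a_i \cdot b_j = \delta_{ij}$; the isotropy of $A$ and $B$ supplies $a_i\cdot a_j = b_i\cdot b_j = 0$, completing the dual pair.

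For the second half, given a lagrangian $B$ I would construct an isotropic complement $A$ in two stages. The same non-singularity argument, applied in the dual direction, shows that $V/B \to \Hom(B,\Z)$, $[v] \mapsto (b\mapsto v\cdot b)$, is an isomorphism (well-defined because $B = B^\perp$). Fixing a basis $b_1,\dots,b_g$ of $B$ and its dual basis $b_1^*,\dots,b_g^*$ in $\Hom(B,\Z)$, lift each $b_i^*$ to some $\hat a_i \in V$; the cosets $[\hat a_i]$ form a basis of $V/B$, so $A_0 := \langle \hat a_1,\dots,\hat a_g\rangle$ is a direct complement of $B$ satisfying $\hat a_i \cdot b_j = \delta_{ij}$, though generally not isotropic. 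Set $c_{ij} := \hat a_i \cdot \hat a_j$ and $a_i := \hat a_i + \sum_j \lambda_{ij} b_j$. Direct expansion using $b_k\cdot b_l = 0$ gives
\begin{equation*}
a_i \cdot a_j \;=\; c_{ij} + \lambda_{ji} - \lambda_{ij}, \qquad a_i \cdot b_j \;=\; \delta_{ij},
\end{equation*}
so it suffices to solve $\lambda_{ij} - \lambda_{ji} = c_{ij}$ over $\Z$. This is possible precisely because $c$ is antisymmetric with vanishing diagonal: take $\lambda_{ij} = c_{ij}$ for $i<j$ and $\lambda_{ij} = 0$ otherwise. The resulting $A := \langle a_1,\dots,a_g\rangle$ is isotropic and still complements $B$, so by the first half $(A,B)$ is a dual pair.

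The one delicate point is the $\Z$-solvability of $\lambda_{ij}-\lambda_{ji}=c_{ij}$, which hinges on the antisymmetric integer form having vanishing diagonal (something that would fail in characteristic $2$); everything else is a direct unpacking of the definitions combined with the standard fact that an injective (equivalently, surjective) homomorphism between free abelian groups of the same finite rank is automatically an isomorphism.
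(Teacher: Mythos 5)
Your proof is correct.

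The first half follows essentially the same strategy as the paper: identify $A$ as lagrangian via the rank count, then realize any functional on $B$ by an element of $V$ via non-singularity and use maximal isotropy of $A$ to push that element into $A$. The paper phrases it as "$x\cdot A = 0$ and $A$ maximal isotropic, so $x \in A$"; you instead decompose $x = a_0 + b_0$ and conclude $b_0 = 0$, but the content is the same.

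The second half is where you genuinely diverge from the paper. The paper runs a Gram--Schmidt-style induction: it constructs $a_1, \dots, a_g$ one at a time, at each step choosing a homomorphism $f_i : V \to \Z$ that is dual to $b_i$ and vanishes on the previously constructed $a_j$'s and the other $b_j$'s, realizing it via the form, and verifying at the end that the resulting direct summand has full rank. You instead construct an arbitrary free direct complement $A_0$ of $B$ adapted to the dual pairing (using the isomorphism $V/B \cong \Hom(B,\Z)$), compute its self-pairing matrix $c$, and then \emph{correct} $A_0$ by a $B$-valued shear $\hat a_i \mapsto \hat a_i + \sum_j \lambda_{ij} b_j$ chosen so that the antisymmetric defect $c$ is killed. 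Both arguments are standard; yours is more global and makes explicit the one place where arithmetic of the base ring matters (the solvability of $\lambda_{ij}-\lambda_{ji}=c_{ij}$, which is automatic over $\Z$ since antisymmetry of the form forces $c_{ii}=0$), while the paper's iterative version avoids any matrix bookkeeping but buries the same fact in its inductive choice of functionals. Either approach is a valid substitute for the other.
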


\begin{proof}
Since $A$ is an isotropic direct summand of $V$ and $\rk A = \rk V - \rk B = \frac{1}{2} \rk V$, it follows that $A$ is lagrangian. 
Let now $f : B \to \Z$ be linear, and extend it to $\alpha : V \to \Z$ by setting $\alpha (A) = 0$.  Then $\alpha (v) = x \cdot v$ 
for some $x \in V$.  Since $x \cdot A = 0$ and $A$ is maximal isotropic, we must have $x \in A$, showing that $A, B$ are dually paired 
and hence a dual pair of $V$.  To prove the second statement, let $b_i$ be a basis of $B$.  Since $B$ is lagrangian it 
is a direct summand of $V$, so we may choose a homomorphism $f_1 : V \to \Z$ such that $f_1(b_1) = 1$ and $f_1(b_i) = 0$ for $i>1$. 
Let $a_1 \in V$ be such that $a_1 \cdot v = f_1 (v)$ for all $v$.  Clearly the subgroup generated by $a_1$ and $B$ is still a 
direct summand of $V$, so choose $f_2 : V \to \Z$ such that $f_2 (b_2) = 1$, $f_2(a_1) = f_2(b_i) = 0 \ (i \not= 2)$ and 
$a_2 \in V$ realizing this map.  Continuing in this way, we get finally $a_1, \dots, a_g$ such that $a_i, b_i$ satisfy the laws of a 
symplectic basis and generate a direct summand of $V$.  This direct summand has the same rank as $V$, so it equals $V$, and the 
group $A$ generated by the $a_i$'s is then a dual complement of $B$.
\end{proof}

\begin{proposition} \label{proposition:3.2}
Let $A, B$ be a dual splitting and $b_i$ a basis of $B$. If $a_i$ is the dual basis of $A$ defined by $a_i \cdot b_j = \delta_{ij}$, 
then $a_i, b_i$ is a symplectic basis of $V$.
\end{proposition}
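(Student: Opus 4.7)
The plan is to verify the three defining equations of a symplectic basis directly, once we have established that the dual basis $\{a_i\}$ is well-defined and actually forms a basis of $A$. There is very little to do beyond unpacking the definitions.

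First I would observe that since $A, B$ is a dual pair, the symplectic form restricts to a perfect $\Z$-bilinear pairing $A \times B \to \Z$ (this is part (b) of the definition of a dual pair, combined with the fact that both $A$ and $B$ are direct summands of $V$ of rank $g = \tfrac{1}{2}\rk V$). Given the basis $\{b_i\}$ of $B$, this perfect pairing identifies $A$ with $\Hom(B,\Z)$, so there exists a unique tuple $(a_1,\ldots,a_g)$ of elements of $A$ satisfying $a_i \cdot b_j = \delta_{ij}$, and this tuple is automatically a $\Z$-basis of $A$ (it is the dual basis under the identification $A \cong B^*$). Hence $\{a_1,\ldots,a_g,b_1,\ldots,b_g\}$ is a $\Z$-basis of $V = A \oplus B$.

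It remains to verify the three symplectic basis relations. The relation $a_i \cdot b_j = \delta_{ij}$ holds by construction. The relation $b_i \cdot b_j = 0$ for all $i,j$ holds because $B$ is lagrangian, hence isotropic (Definition~\ref{D:subspace of symplectic space}). The relation $a_i \cdot a_j = 0$ for all $i,j$ holds because $A$, being a lagrangian complement to $B$ by Lemma~\ref{lemma:3.1}, is itself lagrangian and so isotropic. These are exactly the conditions in Definition~\ref{D:symplectic space} for $\{a_i,b_i\}$ to be a symplectic basis.

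There is no real obstacle here: the content of the proposition is that a dual pair really does behave the way the name suggests, and once one knows $A$ and $B$ are individually isotropic and pair perfectly with each other, the symplectic basis axioms are immediate. The only subtlety worth flagging, which I would mention in a single sentence, is why the $a_i$ exist in $A$ rather than merely in $V$ — this uses the perfect pairing between $A$ and $B$ guaranteed by the dual pair hypothesis, and not just the non-singularity of the symplectic form on $V$.
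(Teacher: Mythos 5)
Your proof is correct; in fact the paper states Proposition~\ref{proposition:3.2} without any proof, treating it as immediate from the definitions, and your verification is exactly the intended argument: the perfect pairing between $A$ and $B$ (part of the dual-pair hypothesis) produces the dual basis $a_i$ inside $A$ and makes it a basis, while isotropy of the two lagrangians supplies $a_i\cdot a_j = b_i\cdot b_j = 0$. The one small remark is that you need not invoke Lemma~\ref{lemma:3.1} for $A$ being lagrangian, since that is already built into the definition of a dual pair.
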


\begin{corollary} \label{corollary:3.3}
If $A, B$ and $A', B'$ are two dual pairs of $V$, then there is an $f \in \Sp(V)$ such that $f(A) = A'$, $f(B) = B'$.
\end{corollary}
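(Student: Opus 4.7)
The plan is to deduce this corollary directly from Proposition~\ref{proposition:3.2}, which tells us that every dual pair arises from a symplectic basis. The strategy is to build explicit symplectic bases adapted to each of the two dual pairs and then to transport one to the other via the induced symplectic isomorphism.

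First I would choose an arbitrary basis $b_1,\dots,b_g$ of $B$. By Proposition~\ref{proposition:3.2}, there is then a unique dual basis $a_1,\dots,a_g$ of $A$ with $a_i \cdot b_j = \delta_{ij}$, and $\{a_i,b_i\}$ is a symplectic basis of $V$. I would perform the same construction on $(A',B')$: pick a basis $b_1',\dots,b_g'$ of $B'$ and let $a_1',\dots,a_g'$ be its dual basis in $A'$, again producing a symplectic basis $\{a_i',b_i'\}$ of $V$. (The fact that $B$ and $B'$ have the same rank follows from both being lagrangian in $V$, hence of rank $\tfrac12\rk V$.)

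Next I would define $f : V \to V$ to be the unique $\Z$-linear map sending $a_i \mapsto a_i'$ and $b_i \mapsto b_i'$ for $1 \leqslant i \leqslant g$. Since $f$ carries a symplectic basis to a symplectic basis, it preserves the symplectic form on generators, hence on all of $V$, and is a $\Z$-linear bijection; thus $f \in \Sp(V)$. By construction $f(B)$ is the span of the $b_i'$, which is $B'$, and $f(A)$ is the span of the $a_i'$, which is $A'$.

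There is essentially no obstacle here: all of the substantive content has been absorbed into Lemma~\ref{lemma:3.1} (existence of dual complements) and Proposition~\ref{proposition:3.2} (dual pairs yield symplectic bases). The only point that merits a sentence of comment in the write-up is the well-definedness of the dual basis $a_i$, and the observation that $f$ is automatically a $\Z$-module isomorphism because it sends one $\Z$-basis of $V$ to another.
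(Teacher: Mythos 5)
Your argument is correct and is exactly the paper's proof: choose symplectic bases adapted to each dual pair (which is what Proposition~\ref{proposition:3.2} provides) and map one to the other; the map is symplectic because it sends a symplectic basis to a symplectic basis. The extra detail you give on how the bases arise and on well-definedness is a fuller exposition of the same two-line argument, not a different route.
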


\begin{proof}
Choose symplectic bases $a_i, b_i$ adapted to $A, B$ and $a_i', b_i'$ adapted to $A', B'$.  
Then the map defined by $a_i \mapsto a_i'$ and $b_i \mapsto b_i'$ is symplectic.
\end{proof}

\begin{corollary} \label{corollary:3.4}
If $B, B'$ are lagrangian, there is an $f \in \Sp(V)$ such that $f(B) = B'$.
\end{corollary}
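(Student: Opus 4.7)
The plan is to deduce this immediately from the two preceding results, Lemma~\ref{lemma:3.1} and Corollary~\ref{corollary:3.3}. The point is that Corollary~\ref{corollary:3.3} already handles the case of moving one dual pair to another, and Lemma~\ref{lemma:3.1} guarantees that every lagrangian subspace sits inside some dual pair, so the statement about a single lagrangian subspace reduces to the dual-pair statement.

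Concretely, I would argue as follows. By the second half of Lemma~\ref{lemma:3.1}, $B$ admits a dual complement, i.e.\ there exists a lagrangian $A \subset V$ such that $(A,B)$ is a dual pair of $V$. Similarly, $B'$ admits a dual complement $A'$ such that $(A',B')$ is a dual pair of $V$. Now Corollary~\ref{corollary:3.3} applies to the two dual pairs $(A,B)$ and $(A',B')$, yielding a symplectic automorphism $f \in \Sp(V)$ with $f(A) = A'$ and $f(B) = B'$. Forgetting the information about $A$ and $A'$, this $f$ is precisely the map asserted to exist by the corollary.

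Since the whole argument is a one-line invocation of results already stated, there is essentially no obstacle; the only thing to be careful about is that Lemma~\ref{lemma:3.1} does give an \emph{arbitrary} lagrangian subspace a dual complement (not just those appearing as half of a prescribed symplectic basis). That is explicitly the content of the second assertion of Lemma~\ref{lemma:3.1}, so no additional work is required.
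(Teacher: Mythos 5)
Your proposal is correct and is essentially identical to the paper's own proof: the paper likewise invokes Lemma~\ref{lemma:3.1} to produce dual complements $A$, $A'$ of $B$, $B'$ and then applies Corollary~\ref{corollary:3.3} to the two dual pairs. Nothing further is needed.
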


\begin{proof}
By Lemma \ref{lemma:3.1}, $B$ and $B'$ have dual complements $A$ and $A'$. By Corollary \ref{corollary:3.3} we may find $f \in \Sp(V)$ such that $f(B) = B'$.
\end{proof}

%%%%%%%%%%%%%%%%%%%%%%%%%%%%%%%%%%%%%%%%%%%%%%%%
%%%%%%%%%%%%%%%%%%%%%%%%%%%%%%%%%%%%%%%%%%%%%%%%
\subsection{Heegaard pairs and symplectic Heegaard splittings}
\label{SS:Heegaard pairs and symplectic Heegaard splittings}
%%%%%%%%%%%%%%%%%%%%%%%%%%%%%%%%%%%%%%%%%%%%%%%%
%%%%%%%%%%%%%%%%%%%%%%%%%%%%%%%%%%%%%%%%%%%%%%%%

We are now ready to relate our work on Heegaard pairs to the double cosets introduced in $\S$\ref{S:introduction}.  We follow notation used there.    

\begin{theorem} \label{T:double cosets and Heegaard pairs}   The following hold:
\begin{enumerate}
\item [{\rm (1)}] Isomorphism classes of Heegaard pairs are in 1-1 correspondence with double cosets in $\Gamma \ \mod \Lambda$. Stable isomorphism classes of Heegaard pairs are in 1-1 correspondence with stable double cosets in $\Gamma \ \mod \Lambda$.
\item  [{\rm (2)}] Let $j:\partial N_g\to N_g, \ \bar{j}: \partial N_g\to \bar{N_g}$, and let $j _*, \bar{j}_*$ be the induced actions on homology. \\ Then the triplet $(H_1 (M; \Z) ; \ker j_*, \ker \bar{j}_*)$ is a Heegaard pair.
\item [{\rm (3)}] Every Heegaard pair is topologically induced as the Heegaard pair associated to a topological Heegaard splitting of some 3-manifold. Moreover, equivalence classes and stable equivalence classes of Heegaard pairs are topologically induced by equivalence classes and stable equivalence classes of Heegaard splittings.  
\end{enumerate}
\end{theorem}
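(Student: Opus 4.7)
The plan is to address the three statements in the order (2), (1), (3), since (2) supplies the motivating topological example, (1) sets up the algebraic dictionary, and (3) feeds that dictionary back to topology.

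For part (2), the paper's conventions (Figure~1 and the surrounding text) already give that $j_*$ sends each $a_i$ to a basis element of $H_1(N_g;\Z)$ and kills each $b_i$, so by a rank count $\ker j_*=\langle b_1,\ldots,b_g\rangle$. Since the $b_i$'s are pairwise disjoint simple closed curves on $\partial N_g$, they have vanishing algebraic intersection, hence $\ker j_*$ is isotropic of rank $g$. Because $H_1(N_g;\Z)$ is torsion-free, the short exact sequence $0\to\ker j_*\to H_1(\partial N_g;\Z)\to H_1(N_g;\Z)\to 0$ splits, so $\ker j_*$ is a direct summand and therefore lagrangian by Definition~\ref{D:subspace of symplectic space}. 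The identical argument applies to $\bar j_*$, and non-singularity of the intersection form on the closed surface is Poincar\'e duality, so $(H_1(M;\Z);\ker j_*,\ker\bar j_*)$ is a Heegaard pair.

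For part (1), given a Heegaard pair $(V;B,\bar B)$ of genus $g$, I would first invoke Corollary~\ref{corollary:3.4} to choose a symplectic isomorphism $\psi:V\to X_g$ with $\psi(B)=F_g$, and then apply the same corollary again to choose $\cH\in\Gamma_g$ with $\cH(F_g)=\psi(\bar B)$. Lemma~\ref{L:the subgroup Lambda} identifies the stabilizer of $F_g$ in $\Gamma_g$ with $\Lambda_g$, so $\psi$ is determined up to left multiplication by $\Lambda_g$ and $\cH$ up to right multiplication by $\Lambda_g$; thus $(V;B,\bar B)$ determines a well-defined double coset $\Lambda_g\cH\Lambda_g$. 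The inverse assignment sends $\cH$ to $(X_g;F_g,\cH(F_g))$, and isomorphic Heegaard pairs manifestly yield the same double coset. The stable statement then requires showing that direct sum with the standard genus-$1$ pair $(X_1;E_1,F_1)$ modifies $\cH$ exactly by the stabilization formula of Definition~\ref{D:stabilization of HHS}.

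For part (3), given an arbitrary Heegaard pair, I would use part (1) to extract a double coset $\Lambda_g\cH\Lambda_g$, lift a representative $\cH$ to some $\tilde h\in\tilde\Gamma_g$ via the surjectivity of $\rho^2$ in Lemma~\ref{L:mapping class group maps onto Sp}, and form the $3$-manifold $W=N_g\cup_{\phi\circ\tilde h}\bar N_g$. The Mayer-Vietoris computation already used in the proof of Theorem~\ref{T:partial normal form}(i) identifies $H_1(\partial N_g;\Z)$ symplectically with $X_g$, and under this identification $\ker j_*=F_g$ while $\ker\bar j_*=\cH(F_g)$, reproducing the given Heegaard pair. Passage from isomorphism classes to equivalence classes of Heegaard pairs then follows by combining part (1) with Proposition~\ref{P:Heegaard splittings and double cosets}, and the stable statement follows similarly via Corollary~\ref{C:alg-RS} together with the stable part of (1). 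The hard part will be the bookkeeping for the stabilization compatibility in part (1): one must verify that the asymmetric-looking formula $\cR\mapsto 0_k\oplus\cR$, $\cP\mapsto\mathcal I_k\oplus\cP$, $\cS\mapsto-\mathcal I_k\oplus\cS$, $\cQ\mapsto 0_k\oplus\cQ$ of Definition~\ref{D:stabilization of HHS} really describes the matrix of the direct-sum Heegaard pair $(V\oplus X_k;B\oplus F_k,\bar B\oplus E_k)$ written in the ordered standard basis, with signs matching the orientation conventions built into that definition.
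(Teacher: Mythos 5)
Your proposal is correct and follows essentially the same route as the paper: normalize the first lagrangian to $F_g$ using transitivity of $\Sp$ on lagrangians, identify the stabilizer of $F_g$ in $\Gamma_g$ with $\Lambda_g$ to get the double-coset bijection, verify isotropy plus a rank/summand count for the kernels in (2), and realize any pair topologically by lifting a representative through the surjection $\rho^2$ and gluing handlebodies, with equivalence and stable equivalence handled via Proposition~\ref{P:Heegaard splittings and double cosets} and Corollary~\ref{C:alg-RS}. The stabilization bookkeeping you flag at the end is treated just as briefly in the paper, and the apparent ordering discrepancy is harmless since $(X_k;F_k,E_k)\cong(X_k;E_k,F_k)$ via the symplectic map $a_i\mapsto b_i$, $b_i\mapsto -a_i$.
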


\begin{proof}  We begin with assertion (1). 
Let $(V ; B, \Bbar)$ be a Heegaard pair of genus $g$. Then by Corollary \ref{corollary:3.3} we may find a symplectic isomorphism $f:V \to X_g$ such that $f(B) = F_g$. Putting $\Fbar_g = f( \Bbar)$, we then have $(V ; B, \Bbar)$ isomorphic to $(X_g ; F_g, \Fbar_g)$. If $f' : (V, B) \to (X_g, F_g)$ is another choice, with $f'(\Bbar) = \Fbar_g'$, then $f' f^{-1} (F_g) = F_g$, hence $f' f^{-1} \in \Lambda$. Then we see that the isomorphism classes of genus $g$ Heegaard pairs correspond to equivalence classes of lagrangian subspaces $\Fbar_g \subset X_g$, with $\Fbar_g, \Fbar_g'$ equivalent if there is a map $m \in \Lambda$ such that $m(\Fbar_g) = \Fbar_g'$. Now, we have seen that there is a map $h \in \Gamma$ such that $\Fbar_g = h(F_g)$, and $h_1 (F_g) = h_2(F_g)$ if and only if $h_2 = h_1 f$ for some $f \in \Lambda$. Then each $\Fbar_g$ can be represented by an element $h \in \Gamma$ and $h, h'$ give equivalent subspaces $\Fbar_g = h(F_g)$, $\Fbar_g' = h' (F_g)$ if and only if there are $f_1, f_2 \in \Lambda$ such that $h' = f_1 h f_2$. The set of all $f_1 h f_2$, $f_i \in \Lambda$, is a double coset of $\Gamma \ \mod \Lambda$. Then the isomorphism classes of Heegaard pairs of genus $g$ are in 1-1 correspondence with the double cosets of $\Gamma \ \mod \Lambda$.

Direct sums and stabilizations of Heegaard pairs corresponds to a topological construction. If $(W_i ; N_i, \Nbar_i)$ ($i=1,2$) are 
Heegaard splittings, their connected sum $(W_1 \# W_2 ; N_1 \# N_2, \Nbar_1 \# \Nbar_2)$ is a Heegaard splitting whose abelianization to a 
Heegaard pair is readily identifiable as the direct sum of the Heegaard pairs associated to the summands.  Moreover, if $(\s{3} ; Y_k, \Ybar_k)$ is a standard Heegaard splitting of genus $k$ for $\s{3}$, its Heegaard pair may be identified with the standard 
Heegaard pair $(X_k ; E_k, F_k)$ of index $k$.  This stabilization of Heegaard pairs is induced by the topological construction $(W ; N, \Nbar) \to (W \# \s{3} ; N \# Y_k, \Nbar \# \Ybar_k)$. 

In an entirely analogous manner to the proof just given for (1), stable isomorphism classes of 
Heegaard pairs correspond to stable double cosets in $\Gamma \ \mod \Lambda$.

Proof of (2): There is a natural symplectic structure on the free abelian group $H_1 (M; \Z)$, with the bilinear pairing defined by intersection numbers of closed curves which represent elements of $H_1 (M; \Z)$ on $M$. 
In fact, as claimed in (2) above, the triplet $(H_1 (M; \Z) ; \ker j_*, \ker \bar{j}_*)$ is a Heegaard pair.
To see this, let $B = \ker j_*$. Since $x \cdot y = 0$ $\forall x, y \in B$, the subspace $B$ is isotropic. Also, $\rk B = \frac{1}{2} \rk H_1 (M ; \Z) = \genus N = \genus M$. Hence $B$ is lagrangian. Similarly, $\Bbar$ is lagrangian.  Therefore the assertion is true.

Proof of (3):  It remains to show that every Heegaard pair is topologically induced as the Heegaard pair associated to a topological 
Heegaard splitting of some 3-manifold, and also that  equivalence classes and stable equivalence classes of Heegaard pairs are 
topologically induced by equivalence classes and stable equivalence classes of Heegaard splittings.  To see this, let $(V; B, \Bbar)$ be a Heegaard pair. By Theorem \ref{T:double cosets and Heegaard pairs} we may without loss of generality assume that $(V; B, \Bbar)$ is $(X_g ; F_g, \Fbar_g)$. Choose a standard basis for $H_1 (M; \Z)$, with representative curves as illustrated in Figure 1.   We may without loss of generality take one of these (say $w_1, \dots, w_g $) to be standard and cut $M$ open along $w_1, \dots, w_g$ to a sphere with $2g$ boundary components $w_i, \bar{w}_i \ (i = 1, \dots, g)$. Choose $2g$ additional curves $V_1, \dots, V_g, W_1, \dots, W_g$ on $M$ such that each pair $w_i, \ W_i$ is a canceling pair of handles, \ie $w_i \cdot W_i = 1$ point, $w_i \cdot W_j = W_i \cdot W_j = \emptyset$ if $i \not= j$, and similarly for the $V_i$'s. Then the matrices of algebraic intersection numbers 
$$
\big\| | v_i \cdot w_j | \big\| \ , \quad \big\| | v_i \cdot W_j | \big\|
$$
uniquely determine a symplectic Heegaard splitting. 
This gives a natural symplectic isomorphism from $H_1(M; \Z)$ to $X_g$.  Also, since $M$ is pictured in Figure 1 as the boundary of a handlebody $N$, our map sends $H_1 (N;\Z)$ to $F_g$.   By Corollary \ref{corollary:3.4} we may find $h \in \Gamma$ such that $h(F_g) = \Fbar_g$. By \cite{Burkhardt1890} each $h \in \Gamma$ is topologically induced by a homeomorphism $\tilde{h} : M \to M$. Let $\Nbar$ be a copy of $N$, and let $W$ be the disjoint union of $N$ and $\Nbar$, identified along $\partial N = M$ and $\partial \Nbar = M$ by the map $\tilde{h}$. Then $(W ; N, \Nbar)$ is a Heegaard splitting of $W$ which induces the Heegaard pair $(V; B, \Bbar)$.

In an entirely analogous manner, the correspondence between (stable) isomorphism classes of Heegaard pairs and symplectic Heegaard splittings may be established, using the method of proof of Theorem \ref{T:double cosets and Heegaard pairs} and the essential fact that each $h \in \Lambda$ is topologically induced by a homeomorphism $\tilde{h} : M \to M$.  
\end{proof}

%%%%%%%%%%%%%%%%%%%%%%%%%%%%%%%%%%%%%%%%%%%%%%%%%
%%%%%%%%%%%%%%%%%%%%%%%%%%%%%%%%%%%%%%%%%%%%%%%%%
%%%%%%%%%%%%%%%%%%%%%%%%%%%%%%%%%%%%%%%%%%%%%%%%%
\newpage
  \section{Heegaard pairs and their linked abelian groups} 
  \label{S:Heegaard pairs and their linked abelian groups} 
%%%%%%%%%%%%%%%%%%%%%%%%%%%%%%%%%%%%%%%%%%%%%%%%%
%%%%%%%%%%%%%%%%%%%%%%%%%%%%%%%%%%%%%%%%%%%%%%%%%
%%%%%%%%%%%%%%%%%%%%%%%%%%%%%%%%%%%%%%%%%%%%%%%%%

In this section we meet linked groups for the first time in our investigations of Heegaard pairs. 
We show that the problem of classifying stable isomorphism classes of Heegaard pairs reduces to the problem of classifying linked abelian groups. 
This is accomplished in Theorem \ref{T:linking isomorphism lifts to Heegaard isomorphism} and Corollary \ref{C:stably isomorphic iff linked quotients}.  In Theorem \ref{T:a linked group of rank r has a Heegaard presentation of genus r} and Corollary \ref{C:stabilize} we consider the question: how many stabilizations are needed to obtain equivalence of minimal, stably equivalent Heegaard pairs? Corollary \ref{C:problem on stabilization index} asserts that a single stabilization suffices, generalizing the results of Theorem \ref{T:h lifts to the presentation level} and Corollary \ref{C:cor 3 of T:h lifts to the presentation level}.  This solves Problem 3. 

The final part of the section contains partial results about classifying Heegaard pairs of minimal rank.  Theorem \ref{T:the volume again} 
is a first step.  The complete solution to that problem will be given, later, in 
Theorem~\ref{T:unstabilizedHeegaardpairs}. 

We will not be able to address the issue of computing the linking invariants in this section.  Later, after we have learned more, we will develop a set of computable invariants for both stable and unstable double cosets.
%%%%%%%%%%%%%%%%%%%%%%%%%%%%%%%%%%%%%%%%%%%%%%%%%%
%%%%%%%%%%%%%%%%%%%%%%%%%%%%%%%%%%%%%%%%%%%%%%%%%%%
\subsection{The quotient group of a Heegaard pair and its natural linking form.  
Solution to Problem 1}
\label{SS:the quotient group of a Heegaard pair}
%%%%%%%%%%%%%%%%%%%%%%%%%%%%%%%%%%%%%%%%%%%%%%%%%%%
%%%%%%%%%%%%%%%%%%%%%%%%%%%%%%%%%%%%%%%%%%%%%%%%%%
We now introduce the concept of the quotient group of a Heegaard pair.  We will prove (see 
Theorem~\ref{T:quotient group of a Heegaard pair has a linking form}) that the quotient group of a Heegaard pair has a natural non-singular linking form.  This leads us to the concept of a `linked abelian group'.  In Corollary~\ref{C:stably isomorphic implies linked quotients} we show that, as a consequence of Theorem~\ref{T:quotient group of a Heegaard pair has a linking form}, the linked abelian group that is associated to a Heegaard pair is an invariant of its stable isomorphism class.  Corollary~\ref{C:stably isomorphic iff linked quotients} solves Problem 1.

\begin{lemma} \label{L:quotient-1}
 Let $(V; B, \Bbar)$ be a Heegaard pair and let $C = \{ x \in V \ | \ x \cdot (B + \Bbar) = 0\}$. Then $C = B \cap \Bbar$.
\end{lemma}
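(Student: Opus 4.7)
The plan is to unpack the definition of $C$ as a symplectic annihilator and then apply the lagrangian hypothesis. Specifically, a vector $x \in V$ satisfies $x \cdot (B + \Bbar) = 0$ if and only if $x \cdot b = 0$ for every $b \in B$ \emph{and} $x \cdot \bar b = 0$ for every $\bar b \in \Bbar$. Using the notation from Definition~\ref{D:subspace of symplectic space}, this simply says
\[
C = (B + \Bbar)^{\perp} = B^{\perp} \cap \Bbar^{\perp}.
\]

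The second step is to invoke the lagrangian hypothesis: by condition (b) of the ``lagrangian'' bullet in Definition~\ref{D:subspace of symplectic space}, both $B = B^{\perp}$ and $\Bbar = \Bbar^{\perp}$. Substituting these identities into the display above immediately yields $C = B \cap \Bbar$, which is the desired equality.

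There is no real obstacle here; the only subtle point is that $C = (B + \Bbar)^{\perp}$ equals $B^{\perp} \cap \Bbar^{\perp}$, which is a tautology for perpendicularity with respect to any bilinear pairing (a vector annihilates a sum of subspaces iff it annihilates each summand). No nondegeneracy of the form on $B + \Bbar$ is needed, so the proof is essentially a one-line calculation once the definitions are stacked correctly.
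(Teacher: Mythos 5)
Your proof is correct and is essentially the paper's own argument: both unpack $x \cdot (B+\Bbar)=0$ as $x\cdot B = 0$ and $x \cdot \Bbar = 0$, then use the lagrangian property of $B$ and $\Bbar$ (you cite $B=B^{\perp}$, the paper cites maximal isotropy — equivalent conditions from the same definition) to conclude $C = B \cap \Bbar$. No gaps; nothing further needed.
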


\begin{proof}
We have $x \in C$ if and only if $x \cdot (B + \Bbar) = 0$, which is true if and only if $x \cdot B = 0$ and $x \cdot \Bbar = 0$. 
Since $B, \Bbar$ are maximally isotropic, this is true if and only if $x \in B$ and $x \in \Bbar$.
\end{proof}

This lemma implies that, for lagrangian subspaces $B, \Bbar, B', \Bbar'$, if $B + \Bbar = B' + \Bbar'$, then 
$B \cap \Bbar = B' \cap \Bbar'$.

\begin{lemma} \label{L:quotient-2}
Every Heegaard pair $(V ; B, \Bbar)$ is a direct sum of Heegaard pairs of the form $(V_1; C, C) \oplus (V_2; D, \bar{D})$ where $C = B \cap \Bbar$ and $D \cap \bar{D} = 0$.
\end{lemma}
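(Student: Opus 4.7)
The plan is to split $B$ and $\Bbar$ along their intersection $C$, then show that the complementary pieces live in a symplectic subspace that is orthogonal to a symplectic subspace containing $C$.

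First I would pick, by the direct-summand property of lagrangians, subgroups $D \subset B$ and $\bar D \subset \Bbar$ so that $B = C \oplus D$ and $\Bbar = C \oplus \bar D$. If $x \in D \cap \bar D$, then $x \in B \cap \Bbar = C$, but $x \in D$ forces $x \in D \cap C = 0$, so $D \cap \bar D = 0$. A similar argument shows $C \cap (D + \bar D) = 0$: an equation $c = d + \bar d$ gives $\bar d = c - d \in \Bbar \cap B = C$, hence $\bar d \in \bar D \cap C = 0$, and then $d = c \in D \cap C = 0$. Setting $V_2 := D + \bar D$, the sum $C \oplus V_2$ sits inside $C^\perp = B + \Bbar$ (using Lemma~\ref{L:quotient-1}), and a rank count ($k + 2(g-k) = 2g-k = \rk C^\perp$) plus the saturated inclusion $V_2 \hookrightarrow C^\perp/C$ gives $C^\perp = C \oplus V_2$.

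Next I would argue that $V_2$ is a \emph{symplectic} subspace of $V$. Since $C \subset C^\perp$, the form on $V$ descends to a non-singular symplectic form on the quotient $\bar V := C^\perp/C$, and the images $\bar B, \bar \Bbar$ are lagrangians in $\bar V$ with $\bar B \cap \bar \Bbar = 0$ and $\bar B + \bar \Bbar = \bar V$; this makes them a dual pair and identifies the form on $\bar V$ as the hyperbolic pairing between them. Because $V_2$ maps isomorphically onto $\bar V$, the restriction of the form to $V_2$ is non-singular, so $V_2$ is symplectic. Moreover, $D$ and $\bar D$ are then lagrangians in $V_2$ with $D \cap \bar D = 0$, giving us the desired second summand $(V_2; D, \bar D)$.

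Now set $V_1 := V_2^\perp$. Since $V_2$ is symplectic, $V = V_1 \oplus V_2$ is a symplectic direct sum decomposition, and $C \subset V_1$ because $V_2 \subset C^\perp$. It remains to show that $(V_1; C, C)$ is a Heegaard pair, i.e.\ that $C$ is lagrangian in $V_1$. The rank is right: $\rk V_1 = 2k$ and $\rk C = k$, and $C$ is isotropic. To see $C$ is a direct summand of $V_1$, suppose $x \in V_1$ with $x \cdot C = 0$. Then $x \cdot V_2 = 0$ by orthogonality, so $x \cdot (C \oplus V_2) = x \cdot C^\perp = 0$, which forces $x \in (C^\perp)^\perp = C$ (using Lemma~\ref{L:quotient-1} again). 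This shows $C^\perp \cap V_1 = C$, and routinely implies $V_1/C$ is torsion-free, so $C$ is a lagrangian in $V_1$.

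The main obstacle will be making sure the arguments are clean over $\Z$ rather than over a field: in particular, checking that the relevant inclusions $C \oplus V_2 = C^\perp$ and $V_2 \cong C^\perp/C$ are equalities and not merely finite-index inclusions, and then deducing that $C$ is a genuine direct summand (not just a finite-index subgroup) of $V_1$. Each of these reduces, as outlined, to a rank count combined with the identity $C = (C^\perp)^\perp$ from Lemma~\ref{L:quotient-1}.
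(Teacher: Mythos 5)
Your construction of $V_2$ is where the argument breaks. You take $D, \bar D$ to be arbitrary complements of $C$ in $B$, $\Bbar$, set $V_2 = D + \bar D$, and then try to show $V_2$ is a symplectic subspace by passing to $\bar V = C^\perp / C$. The crucial step is your claim that $\bar B + \bar\Bbar = \bar V$, i.e.\ that $B + \Bbar = C^\perp$, and equivalently that the inclusion $V_2 \hookrightarrow C^\perp/C$ is saturated. Neither of these holds in general. Lemma~\ref{L:quotient-1} gives $(B+\Bbar)^\perp = C$, so $C^\perp$ is the \emph{saturation} of $B + \Bbar$ in $V$; the quotient $C^\perp / (B+\Bbar)$ is precisely the torsion subgroup of $H = V/(B+\Bbar)$. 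Thus $B + \Bbar = C^\perp$ and $C \oplus V_2 = C^\perp$ both fail whenever $H$ has torsion --- which is exactly the case of interest in this paper.

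A minimal counterexample: take $V = \Z^2$ with symplectic basis $a,b$, and $B = \langle a\rangle$, $\Bbar = \langle 2a + 3b\rangle$. Then $C = 0$, $D = B$, $\bar D = \Bbar$, so $V_2 = D + \bar D = \langle a, 3b\rangle$, which has index $3$ in $V = C^\perp$ and on which the form has Gram determinant $9$, so it is not a symplectic subspace. The point is that $D$ and $\bar D$ can fail to pair unimodularly with each other even when $D \cap \bar D = 0$; the torsion of $H$ measures exactly this failure. So you cannot choose $\bar D$ arbitrarily and then recover $V_2$ from it.

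The paper's proof sidesteps this by building $V_2$ \emph{before} $\bar D$: it chooses a dual complement $B^*$ of $B$, splits it dually as $C^* \oplus D^*$, and sets $V_2 = D \oplus D^*$, which is manifestly symplectic by construction. One then shows $\Bbar \subset C \oplus V_2$ (by checking the $C^*$-component of any $\bar b \in \Bbar$ vanishes), and finally \emph{defines} $\bar D := \Bbar \cap V_2$, which is automatically lagrangian in $V_2$. In that construction $D + \bar D$ is typically a proper finite-index subgroup of $V_2$, and the discrepancy is what produces the torsion of $H$. To repair your argument you would need to replace the arbitrary choice of $\bar D$ with this more careful construction of $V_2$.
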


\begin{proof}
Since $B/C = B / (B \cap \Bbar) \cong (B + \Bbar) / \Bbar \subset V/ \Bbar$, the group $B/C$ is free and thus $C$ is a direct summand of 
$B$.  Thus $B = C \oplus D$ for some subgroup $D$ of $B$.  Let now $B^*$ be a dual complement of $B$ in $V$; the splitting 
$C \oplus D$ of $B$ then induces, dually, a splitting $C^* \oplus D^*$ of $B^*$, where $C \perp D^*$ and $D \perp C^*$ and where 
$C$ and $C^*$ are dually paired (by the symplectic form) and likewise $D, D^*$.  Thus $V_1 = C \oplus C^*$ and $V_2 = D \oplus D^*$ are 
{\it symplectic} subspaces of $V$, with $V_1 = V_2^\perp$ and $V = V_1 \oplus V_2 = C \oplus C^* \oplus D \oplus D^*$.  We claim 
that $\Bbar \subset C \oplus D \oplus D^*$.  Indeed, express $\bar{b} \in \Bbar$ as $\bar{b} = c + c^* + d + d^*$ with $c \in C$, etc. 
Since $\Bbar \supset C$ and $\Bbar$ is isotropic, we have $\bar{b} \cdot c' = 0$ for all $c' \in C$, \ie 
$(c+c^*+d+d^*)\cdot c' = c^* \cdot c'= 0$ for all $c' \in C$. But $C, C^*$ are dually paired, so $c^*$ must be zero.
 
Hence we have $C \oplus D \oplus D^* \supset \Bbar \supset C$.  But this implies that $\Bbar = C \oplus \bar{D}$, where 
$\bar{D} = \Bbar \cap (D \oplus D^*) = \Bbar \cap V_2= V_2$.  We have now shown that:
\begin{enumerate}[a)]
\item $V = V_1 \oplus V_2$
\item $B = C \oplus D$ with $C = B \cap \Bbar \subset V_1$, $D \subset V_2$
\item $\Bbar = C \oplus \bar{D}$ with $\bar{D} \subset V_2$
\end{enumerate}

Note that $C = B \cap \Bbar = (C \oplus D) \cap (C \cap \bar{D}) = C \oplus (D \cap \bar{D})$, so $D \cap \bar{D} = 0$.  To 
finish the proof, it suffices then to show that $(V_1; C, C)$ and $(V_2 ; D, \bar{D})$ are Heegaard pairs.  The former is trivially 
so since $V_1 = C \oplus C^*$, and for the same reason, $D$ is lagrangian in $V_2$.  We must then show that $\bar{D}$ is lagrangian 
in $V_2$.  It is certainly isotropic, since $\Bbar = C \oplus \bar{D}$ is so.  But let $x \in V_2$ be such that $x \cdot \bar{D} = 0$. 
We also have $x \cdot V_1 = 0$ since $V_1 \perp V_2$ and hence $x\cdot \Bbar = x \cdot (C \oplus \bar{D}) = 0$.  Since $\Bbar$ is 
maximally isotropic, $x \in \Bbar$ and hence $\Bbar \cap V_2 = \bar{D}$, showing $\bar{D}$ to be maximally isotropic in $V_2$.
\end{proof}

\begin{lemma} \label{L:quotient-3}
Let $(V; B, \Bbar)$ and $(V; B, \Bbar')$ be two Heegaard pairs such that $B + \Bbar = B + \Bbar'$. If the first is split as in the previous lemma, then the second has a splitting of the form $(V_1; C, C) \oplus (V_2; D, \bar{D}')$, where $D \cap \bar{D}' = 0$ and $D + \bar{D}' = D + \bar{D}$.
\end{lemma}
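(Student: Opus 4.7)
The plan is to re-run the proof of Lemma \ref{L:quotient-2} for the second pair, but reusing the ambient internal splitting produced by the first pair. Concretely, from Lemma \ref{L:quotient-2} applied to $(V;B,\Bbar)$ we have already fixed $C = B\cap\Bbar$, a complement $D$ with $B = C\oplus D$, a dual complement $B^* = C^*\oplus D^*$ of $B$ in $V$, and the symplectic splitting $V = V_1\oplus V_2$ with $V_1 = C\oplus C^*$ and $V_2 = D\oplus D^*$. None of these objects depends on $\Bbar$, only on $B$ and on the chosen decomposition $B = C\oplus D$, so they are available for use with $\Bbar'$ as well.

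First I would apply Lemma \ref{L:quotient-1} (the remark that immediately follows it) to the hypothesis $B+\Bbar = B+\Bbar'$ to conclude that $B\cap\Bbar' = B\cap\Bbar = C$. In particular $C\subset\Bbar'$, which is exactly the property of $\Bbar$ used at the start of the proof of Lemma \ref{L:quotient-2}.

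Next I would reproduce the argument from Lemma \ref{L:quotient-2} verbatim with $\Bbar'$ in place of $\Bbar$: any $\bar b'\in\Bbar'$ expands uniquely as $c+c^*+d+d^*$, and isotropy of $\Bbar'$ together with $C\subset\Bbar'$ forces $\bar b'\cdot c' = c^*\cdot c' = 0$ for all $c'\in C$, so $c^*=0$ by the duality of $C$ and $C^*$. Hence $\Bbar'\subset C\oplus D\oplus D^*$, and setting $\bar D' := \Bbar'\cap V_2$ gives $\Bbar' = C\oplus\bar D'$. The same isotropy/maximality argument used for $\bar D$ in Lemma \ref{L:quotient-2} shows that $\bar D'$ is lagrangian in $V_2$, so $(V_1;C,C)\oplus(V_2;D,\bar D')$ is a legitimate Heegaard pair splitting of $(V;B,\Bbar')$.

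It remains to check the two assertions about $\bar D'$. For intersection: $D\cap\bar D'\subset(C\oplus D)\cap(C\oplus\bar D') = B\cap\Bbar' = C$, and since $D\cap C = 0$ we get $D\cap\bar D' = 0$. For sum: the hypothesis gives $C+D+\bar D = B+\Bbar = B+\Bbar' = C+D+\bar D'$; intersecting both sides with $V_2$ and using $V_1\cap V_2 = 0$ with $C\subset V_1$ and $D,\bar D,\bar D'\subset V_2$ yields $D+\bar D = D+\bar D'$. The only step requiring attention, and what I would call the main obstacle, is confirming that the internal splitting manufactured from the first pair really does accommodate the second pair without modification; once one notices that every ingredient in the proof of Lemma \ref{L:quotient-2} other than ``$C\subset\Bbar$ and $\Bbar$ isotropic'' depends only on $B$, the argument transfers mechanically.
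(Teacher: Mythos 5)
Your proposal is correct and follows essentially the same route as the paper: invoke Lemma \ref{L:quotient-1} to see that $C = B\cap\Bbar = B\cap\Bbar'$, reuse the splitting data of Lemma \ref{L:quotient-2} (which depends only on $B$ and $C$), and then verify $D\cap\bar D' = 0$ and $D+\bar D' = (B+\Bbar')\cap V_2 = (B+\Bbar)\cap V_2 = D+\bar D$. You merely spell out in more detail the steps the paper leaves implicit ("the second pair has a splitting satisfying all the requirements except possibly the last").
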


\begin{proof}
By Lemma \ref{L:quotient-1}, the fact that $B + \Bbar = B + \Bbar'$ implies that $C = B \cap \Bbar = B \cap \Bbar^\prime$. Examining the 
construction of Lemma \ref{L:quotient-2}, we see that since $B$ and $C$ are the same for both pairs, we may choose $B^*$ and $D$ the same, 
and hence $C^*$, $D^*$ and $V_1 = C \oplus C^*$, $V_2 = D \oplus D^*$ will also be the same.  Thus the second pair has a splitting 
satisfying all the requirements except possibly the last.  But we have 
$D + \bar{D}' = (B + \Bbar') \cap V_2 = (B + \Bbar) \cap V_2 = D + \bar{D}$.
\end{proof}

We define the {\it quotient} of a Heegaard pair $(V; B, \Bbar)$ to be the group $H = V / (B + \Bbar)$.  Clearly, isomorphic Heegaard 
pairs have isomorphic quotients.  Furthermore, stabilization does not change this quotient either, since 
$$(V \oplus X_k)/ [(B \oplus E_k) + (\Bbar \oplus F_k)] = (V \oplus X_k) / [(B + \Bbar) \oplus X_k] \cong V / (B + \Bbar).$$ 
Thus the isomorphism class of the quotient is an invariant of stable isomorphism classes of pairs.  We cannot conclude, however, that 
two Heegaard pairs are stably isomorphic if they have isomorphic quotients; there are further invariants.  To pursue these, we need 
the following concepts.

\begin{definition}
\label{D:linking form on T}
If $T$ is a finite abelian group, a {\it linking form} on $T$  is a symmetric bilinear $\Q/\Z$-valued form on $T$, where $\Q/\Z$ is the group of rationals $\mod ~ 1$. More generally, a linking form on any finitely generated abelian group $H$ means a linking form on its torsion subgroup $T$. A linking form $\lambda$ is {\it non-singular} if for every homomorphism $\varphi : T \to \Q/\Z$, there is a (necessarily unique) $x \in T$ such that $\varphi (y) = \lambda (x, y)$ for all $y \in T$. A group $H$ will be called a {\it linked group} if its torsion subgroup is endowed with a non-singular linking form.   $\|$
\end{definition}

\begin{theorem} \label{T:quotient group of a Heegaard pair has a linking form}
The quotient group of a Heegaard pair has a natural non-singular linking form.
\end{theorem}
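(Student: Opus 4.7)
The plan is to construct the linking form directly on $V/(B+\Bbar)$ by a lift-and-divide recipe, and to reduce its non-singularity to the direct-sum decomposition of Lemma~\ref{L:quotient-2}. Let $H = V/(B+\Bbar)$ and let $T \subset H$ be its torsion subgroup. For a torsion class $[x] \in T$, lift to $\tilde x \in V$ and choose $n > 0$ with $n\tilde x \in B+\Bbar$, writing $n\tilde x = b + \bar{b}$ with $b \in B$, $\bar{b} \in \Bbar$. For $[y] \in T$ with lift $\tilde y$, I would define
$$\lambda([x],[y]) = \tfrac{1}{n}\, b\cdot \tilde y \pmod{\Z} \ \in \ \Q/\Z.$$
The well-definedness is the part that requires the most care, since there are four independent sources of ambiguity.

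First, the decomposition $n\tilde x = b + \bar{b}$ is unique only up to replacing $b$ by $b+c$ with $c \in C := B\cap \Bbar$; however, if $[y]$ has torsion order dividing $m$, then $m(c\cdot \tilde y) = c\cdot m\tilde y = 0$, since $c$ lies in both lagrangians and $m\tilde y \in B + \Bbar$, forcing the integer $c\cdot\tilde y$ itself to vanish and absorbing the ambiguity. Second, replacing $\tilde y$ by $\tilde y + v_1 + v_2$ ($v_1 \in B$, $v_2 \in \Bbar$) changes $b\cdot\tilde y$ by $b\cdot v_2 = (n\tilde x - \bar{b})\cdot v_2 = n\tilde x\cdot v_2 \in n\Z$ (using $b\cdot v_1 = \bar{b}\cdot v_2 = 0$), which is killed modulo $\Z$ after dividing by $n$. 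Third, replacing $\tilde x$ by $\tilde x + w_1 + w_2$ shifts $b$ by $nw_1$, altering $\lambda$ by the integer $w_1\cdot\tilde y$; and different choices of $n$ agree by scaling $(b,\bar{b})$. Symmetry comes from the expansion
$$nm(\tilde x\cdot \tilde y) \ =\  (b + \bar{b})\cdot(b_y + \bar{b}_y) \ =\  b\cdot \bar{b}_y + \bar{b}\cdot b_y,$$
in which $b\cdot b_y$ and $\bar{b}\cdot\bar{b}_y$ vanish by lagrangianness. Since $b\cdot\tilde y = (b\cdot \bar{b}_y)/m$, one may rewrite $\lambda([x],[y]) \equiv (b\cdot\bar{b}_y)/(nm) \pmod{\Z}$ and symmetrically $\lambda([y],[x]) \equiv (b_y\cdot\bar{b})/(nm)$; the identity above together with antisymmetry of the symplectic pairing then gives $\lambda([x],[y]) \equiv \lambda([y],[x]) \pmod{\Z}$. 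Bilinearity is immediate from the definition.

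The main obstacle is non-singularity, and here I would invoke Lemma~\ref{L:quotient-2} to replace $(V;B,\Bbar)$ by $(V_1;C,C)\oplus(V_2;D,\bar{D})$ with $D\cap\bar{D} = 0$. The quotient splits as $V_1/C \oplus V_2/(D\oplus\bar{D})$, with $V_1/C$ free and $V_2/(D\oplus\bar{D})$ finite (since $D$ and $\bar{D}$ each have half-rank in $V_2$), so $T = V_2/(D\oplus\bar{D})$ and it suffices to prove non-singularity there. Represent $[x]\in T$ by $\tilde x \in V_2$ with the now-unique decomposition $n\tilde x = d + \bar{d}$, and suppose $\lambda([x],[y]) = 0$ for all $[y] \in T$. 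Then $\tfrac{1}{n}d\cdot\tilde y \in \Z$ for every $\tilde y \in V_2$; non-singularity of the symplectic form on $V_2$ together with the fact that $D$ is a direct summand of $V_2$ forces $d \in nV_2 \cap D = nD$, say $d = nd'$. The symmetric argument, or writing $\bar{d} = n(\tilde x - d')$ and using that $V_2/(D\oplus \bar{D})$ is finite, gives $\bar{d} = n\bar{d}'$ with $\bar{d}'\in \bar{D}$, whence $\tilde x = d' + \bar{d}' \in D\oplus\bar{D}$ and $[x] = 0$, completing the argument.
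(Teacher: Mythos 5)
Your construction, the four well-definedness checks, the symmetry argument via $nm(\tilde x\cdot\tilde y)=(b+\bar b)\cdot(b_y+\bar b_y)$, and the reduction of non-singularity to $V_2$ via Lemma~\ref{L:quotient-2} all mirror the paper's proof essentially step for step, and the argument is correct. One small imprecision: in the last step, finiteness of $V_2/(D\oplus\bar D)$ is not the reason $\tilde x - d' \in \bar D$ --- the relevant fact is that $\bar D$ is lagrangian, hence a direct summand, so $V_2/\bar D$ is torsion-free (equivalently, as the paper argues, $\bar d\cdot\bar D = 0$ forces $\tilde x-d'\in\bar D^\perp = \bar D$); your alternative ``symmetric argument'' does close the gap, so the proof stands.
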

\begin{proof}
Let the pair be $(V; B, \Bbar)$, the quotient be $H$, and its torsion subgroup be $T$.  Consider $x, y \in T$ and suppose that $m x = 0$. 
Lift $x, y$ to $u, v \in V$; then $m x = 0$ implies that $m u \in B + \Bbar$, say $m u = b + \bar{b}$.  Define $\lambda (x, y)$ to be 
$\frac{1}{m}(b \cdot v) \ \mod ~ 1$.  Note that if $n y = 0$ and hence $n v \in B + \Bbar$, say $n v = c + \bar{c}$ ($c \in B, \bar{c} \in \Bbar$) then we have 
$$\lambda (x, y) \equiv \frac{1}{m} (b \cdot v) \equiv \frac{1}{mn} (b \cdot n v) \equiv \frac{1}{mn} b \cdot (c + \bar{c}) \equiv \frac{1}{mn} b \cdot \bar{c} \ \mod ~ 1,$$ 
which gives a more symmetric definition of $\lambda (x, y)$.  We now verify the necessary facts about $\lambda$.
\begin{enumerate}[a)]
\item Independent of the choice of $b, \bar{b}$: if $b + \bar{b} = b' + \bar{b'}$, then we have $b' = b + \delta$, $\bar{b}' = \bar{b} - \delta$, with $\delta \in B \cap \Bbar$. But then $\frac{1}{mn} b' \cdot \bar{c} = \frac{1}{mn} ( b \cdot \bar{c} + \delta \cdot \bar{c}) = \frac{1}{mn} b \cdot \bar{c}$, since $\delta \cdot \bar{c} = 0$. Similarly, $\lambda(x,y)$ is independent of the choice of $c, \bar{c}$.
\item Independent of the lifting $u, v$: a different lifting $u'$ satisfies 
$u' = u + b_1 + \bar{b}_1$, so 
$$m u' = m u + m (b_1 + b_1') = (b + m b_1) + (\bar{b} + m \bar{b}_1),$$ 
and hence 
$$\lambda (x, y) \equiv \frac{1}{m} b \cdot v \equiv \frac{1}{m} (b + m b_1) \cdot v \ \mod ~ 1.$$ 
Similarly, $\lambda (x, y)$ does not depend on the lifting $v$ of $y$.
\item Independent of the choice of $m$: if $m' x =0$ also, with $m' u = b' + \bar{b}'$, then $m m' u = m' b + m' \bar{b} = m b' + m \bar{b}'$ and 
$$\frac{1}{m'} (b' \cdot v) \equiv \frac{1}{m m'} ( m b' \cdot v) \equiv_\text{ by a) } \frac{1}{m m'} (m' b \cdot v) \equiv \frac{1}{m} (b \cdot v) \ \mod ~ 1.$$
\item Bilinearity and symmetry: the first follows immediately from a). Then for symmetry, we have $\lambda (x, y) = \frac{1}{m n} (b \cdot \bar{c})$ and $\lambda(y,x) = \frac{1}{mn} (c \cdot \bar{b})$. But 
$$mn (u \cdot v) = (m u) \cdot (n v) = (b + \bar{b}) \cdot (c + \bar{c}) = b \cdot \bar{c} + \bar{b} \cdot c = b \cdot \bar{c} - c \cdot \bar{b} \equiv 0 \ \mod mn,$$ 
so $\frac{1}{m n} (b \cdot \bar{c}) \equiv \frac{1}{m n} (c \cdot \bar{b}) \ \mod ~ 1$.
\item Non-singularity: this is equivalent to the statement that $\lambda (x,y) \equiv 0 \ \mod ~ 1$ for all $y \in T$ implies that $x = 0$ in $T$.

Suppose then $x \in T$ and $\lambda (x,y) \equiv 0$ for all $y \in T$, and let $u$ be a lifting of $x$ to $V$. Now by Lemma \ref{L:quotient-2}, our Heegaard pair is a direct sum $(V_1; C, C) \oplus (V_2; D, \bar{D})$ with $C = B \cap \Bbar$ and $D \cap \bar{D} = 0$. Since the quotient $V_1 / (C + C)$ is {\it free}, $V_2$ projects onto $T$ and so the lifting of any torsion element may always be chosen in $V_2$. If $E$ is a dual complement of $D$ in $V_2$, then in fact the projection $V \to H$ will take $E$ onto $T$. Thus we may assume that $u \in E$. If $m x = 0$ in $T$ then $m u = d + \bar{d}$ for some $d \in D$, $\bar{d} \in \bar{D}$. The hypothesis that $\lambda (x, y) \equiv 0 \ \mod ~ 1$, all $y \in T$ is equivalent to $d \cdot v \equiv 0 \ \mod m$ for all $v \in V_2$. Since $V_2$ is symplectic, this implies that $d$ is {\it divisible by $m$} in $V_2$, that is, $d = m d'$ for some $d' \in V_2$. Now clearly $d' \cdot D = 0$, and hence $d' \in D$ since $D$ is maximally isotropic. Thus we have $m u = m d' + \bar{d}$, $\bar{d} = m (u - d')$ and we conclude similarly that $u - d' \in \bar{D}$, say $u-d' = \bar{d}'$. Thus $u = d' + \bar{d}' \in D + \bar{D}$, which implies that $x = 0$.  
\end{enumerate} 
 \end{proof}
\begin{remark} 
Note that the {\it maximal} isotropic nature of $B, \Bbar$ was used only in proving e); the weaker assumption 
that they are only isotropic still suffices to prove a)--d) and thus construct a natural linking on $H$.  $\|$
\end{remark}

\begin{lemma} \label{L:linking form-1}
Let $B \subset V$ be lagrangian, let $\Bbar$ be isotropic of rank $ \frac{1}{2} \rk V$, and suppose that $B \cap \Bbar = 0$. Then $\Bbar$ is lagrangian if and only if the induced linking form $\lambda$ on $H$ is non-singular.
\end{lemma}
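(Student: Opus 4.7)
The plan is to handle the two directions separately, with the forward direction immediate. If $\Bbar$ is lagrangian, the triple $(V;B,\Bbar)$ is a Heegaard pair, so Theorem~\ref{T:quotient group of a Heegaard pair has a linking form} (whose non-singularity assertion is proved assuming both $B$ and $\Bbar$ are lagrangian) applies directly to yield $\lambda$ non-singular.

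For the reverse direction I would prove the contrapositive: assume $\Bbar$ is isotropic of rank $g = \tfrac{1}{2}\rk V$ but not lagrangian, and exhibit a non-zero $\xi \in T$ with $\lambda(\xi,\eta) = 0$ for all $\eta \in T$. Since $\Bbar$ is not maximal isotropic, $\Bbar \subsetneq \Bbar^\perp$; both groups have rank $g$ (the rank of the perp of an isotropic $g$-subspace in a non-degenerate symplectic $2g$-space, computed over $\Q$). The first main step is to produce $y \in \Bbar^\perp \setminus (B+\Bbar)$, so that its class $\xi$ in $H = V/(B+\Bbar) = T$ is non-zero. If no such $y$ existed, then $\Bbar^\perp \subseteq B+\Bbar$; writing any $y \in \Bbar^\perp$ as $y = b + \bar b$ with $b \in B$, $\bar b \in \Bbar$ forces $b = y - \bar b \in \Bbar^\perp$, so $\Bbar^\perp = (B \cap \Bbar^\perp) + \Bbar$. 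Because $B \cap \Bbar = 0$ by hypothesis, this sum is direct, and comparing ranks gives $\rk(B \cap \Bbar^\perp) = 0$, hence $B \cap \Bbar^\perp = 0$ (as $B$ is free); this forces $\Bbar^\perp = \Bbar$, a contradiction.

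The second main step is to verify that $\lambda(\xi,\eta) = 0$ for every $\eta \in T$. Choose $m$ with $my = b + \bar b \in B + \Bbar$ and, for $\eta$ lifted by $v \in V$, choose $n$ with $nv = c + \bar c \in B + \Bbar$; the symmetric form of the definition of $\lambda$ (see the proof of Theorem~\ref{T:quotient group of a Heegaard pair has a linking form}) gives $\lambda(\xi,\eta) \equiv \tfrac{1}{mn}(b \cdot \bar c) \pmod 1$. Substituting $b = my - \bar b$ yields $b \cdot \bar c = m(y \cdot \bar c) - \bar b \cdot \bar c$, and both summands vanish: $y \cdot \bar c = 0$ because $y \in \Bbar^\perp$ and $\bar c \in \Bbar$, while $\bar b \cdot \bar c = 0$ because $\Bbar$ is isotropic. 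Thus $\lambda(\xi,\cdot) \equiv 0$ with $\xi \neq 0$, contradicting non-singularity. The main obstacle is the rank-counting step that locates $y$ outside $B+\Bbar$; once that is established, the linking computation is immediate.
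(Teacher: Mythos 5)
Your proof is correct, but it takes a genuinely different route from the paper's. The paper argues the reverse implication directly: since $\Bbar$ is already isotropic of rank $g$, it is lagrangian iff it is a direct summand of $V$, which is in turn equivalent to $V/\Bbar$ being torsion-free. So the paper picks an arbitrary $u$ with $m u \in \Bbar$ and uses the decomposition $m u = 0 + m u$ in $B + \Bbar$ to see immediately that $\lambda(\bar u,\cdot) \equiv 0$; non-singularity then gives $\bar u = 0$, so $u \in B + \Bbar$, and a final short computation (using $B \cap \Bbar = 0$ and freeness of $B$) shows $u \in \Bbar$. You instead prove the contrapositive via the characterization $\Bbar = \Bbar^\perp$: you use a rank count ($\rk \Bbar^\perp = g$, $B \cap \Bbar = 0$, freeness of $B$) to exhibit a witness $y \in \Bbar^\perp \setminus (B + \Bbar)$ whose image is a nonzero radical vector for $\lambda$. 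Both proofs hinge on the same observation that elements of $\Bbar^\perp$ map to the radical of $\lambda$ -- and indeed, since $\Bbar^\perp$ is precisely the saturation of $\Bbar$ in $V$, the paper's ``$u$ with $m u \in \Bbar$'' and your ``$y \in \Bbar^\perp$'' pick out exactly the same elements -- but your argument is structural (a dimension count to locate the witness) whereas the paper's is element-by-element and avoids the rank lemma entirely. The paper's version is a bit shorter; yours makes the geometric picture ($\Bbar \subsetneq \Bbar^\perp$ produces a radical element) more explicit.
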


\begin{proof}
We have already proved the necessity, so suppose that $\lambda$ is non-singular. By Definition~\ref{D:subspace of symplectic space}, we need 
only show that $\Bbar$ is a direct summand of $V$.  This is equivalent to showing that $V/\Bbar$ is torsion free, \ie that for $u \in V$, if $m u \in \Bbar$ for some nonzero $m$, then $u \in \Bbar$.  If then $mu \in \Bbar$ and $x$ is the image of $u$ in $H$, then $mx = 0$ in $H$, so $x$ is in the torsion group $T$.  But then $u$ lifts $x$ and $mu$ decomposes in $B + \Bbar$ as $0 + mu$. Hence if $y \in T$ is lifted to $v \in V$, we get $\lambda (x,y) = \frac{1}{m} (0 \cdot v) \equiv 0\ \mod ~ 1$, \ie $\lambda (x,y) \equiv 0$ for all 
$y \in T$.  By hypothesis, we have $x = 0$ in $T$, and hence $u \in B + \Bbar$, say $u = b + \bar{b}$.  Since $m u = mb + m \bar{b}$ is 
in $\Bbar$, $m b$ is also in $\Bbar$. But it is also in $B$, so must be zero, \ie $b=0$. Thus $u = \bar{b} \in \Bbar$.
\end{proof}

We have shown that the quotient of a Heegaard pair has the structure of a linked group in a natural way; clearly, isomorphic Heegaard pairs have isomorphic {\it linked} quotients: the Heegaard isomorphism induces an isomorphism on the quotients which preserves the linking. Let us see how the linked quotient behaves under stabilization.

\begin{lemma} \label{L:linking form-2}
The linked quotient of a stabilization of $(V; B, \Bbar)$ is canonically isomorphic to the unstabilized quotient.
\end{lemma}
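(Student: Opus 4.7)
The plan is to unwind both the quotient computation and the definition of the linking form on a stabilization, and check they match on the nose.

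First I would identify the underlying abelian group of the linked quotient of the stabilized pair. This was actually done already in the discussion immediately preceding Theorem~\ref{T:quotient group of a Heegaard pair has a linking form}: since $E_k + F_k = X_k$, we have
\[
(B \oplus E_k) + (\bar{B}\oplus F_k) \;=\; (B+\bar{B})\oplus X_k,
\]
so the quotient of the stabilized pair is $(V\oplus X_k)/[(B+\bar{B})\oplus X_k]$, which is canonically isomorphic to $V/(B+\bar{B})$ via the map induced by the inclusion $V\hookrightarrow V\oplus X_k$. This canonical isomorphism of abelian groups restricts to an isomorphism of torsion subgroups.

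Second I would check that this isomorphism preserves the linking form. Given torsion elements $x,y$ of $V/(B+\bar{B})$, lift them to $u,v\in V$, pick $m\ne 0$ with $mx=0$, and write $mu=b+\bar{b}$ with $b\in B$, $\bar{b}\in\bar{B}$, so that $\lambda(x,y)\equiv \tfrac{1}{m}(b\cdot v)\pmod{1}$ by the construction in the proof of Theorem~\ref{T:quotient group of a Heegaard pair has a linking form}. Under the canonical identification, the elements $x,y$ are represented by $(u,0),(v,0)\in V\oplus X_k$, and the decomposition $m(u,0)=(b,0)+(\bar{b},0)$ lies in $(B\oplus E_k)+(\bar{B}\oplus F_k)$. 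Because the symplectic form on $V\oplus X_k$ is the orthogonal direct sum of the symplectic forms on $V$ and $X_k$, we have $(b,0)\cdot(v,0)=b\cdot v$. Hence the stabilized linking form evaluates to $\tfrac{1}{m}(b\cdot v)\equiv \lambda(x,y)\pmod{1}$, exactly as desired.

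Putting the two steps together gives the canonical isomorphism of linked groups. There is essentially no obstacle here — everything follows from the fact that the pieces $E_k,F_k$ are complementary lagrangians in the summand $X_k$, together with the orthogonality of the direct sum of symplectic forms, so the lifts $b+\bar b$ used to define $\lambda$ can be taken inside $V$ and the computation in $V\oplus X_k$ reduces to the computation in $V$.
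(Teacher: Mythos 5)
Your proof is correct and follows essentially the same route as the paper: identify the quotients via the inclusion $V\hookrightarrow V\oplus X_k$, then observe that lifts and their $B+\Bbar$-decompositions can be chosen inside $V\oplus 0$, so the linking computations coincide. You are slightly more explicit than the paper about invoking the orthogonality of the symplectic form on $V\oplus X_k$, which is a welcome clarification but not a different argument.
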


\begin{proof}
The canonical isomorphism of the quotients is induced by the inclusion $V \hookrightarrow V \oplus X_k$, and we identify the two quotients in this way. To see that the linking defined by the two pairs are equal, let $x, y \in T$. Their liftings $u, v$ in $V \oplus X_k$ may be chosen to lie in $V \oplus 0$, since $X_k$ projects to $0$ in the quotient, and the splitting of $m u$ may then be chosen to be $(b,0) + (\bar{b}, 0)$. The stabilized linking number, defined thus, is then obviously the same as the unstabilized one.
\end{proof}

\begin{corollary} \label{C:stably isomorphic implies linked quotients}
The linked abelian group is an invariant of the stable isomorphism class of a Heegaard pair.
\end{corollary}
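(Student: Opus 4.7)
The plan is to assemble this corollary from two ingredients already in hand: the fact (observed just before Lemma~\ref{L:linking form-2}) that an isomorphism of Heegaard pairs induces an isomorphism of their linked quotients, together with Lemma~\ref{L:linking form-2} itself, which identifies the linked quotient of a stabilization with that of the original pair.

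First I would unpack the definition. Two Heegaard pairs $(V_1;B_1,\bar{B}_1)$ and $(V_2;B_2,\bar{B}_2)$ are stably isomorphic when, for some $k_1,k_2 \geqslant 0$, their stabilizations
\[
(V_1;B_1,\bar{B}_1) \oplus (X_{k_1};E_{k_1},F_{k_1})
\quad \text{and} \quad
(V_2;B_2,\bar{B}_2) \oplus (X_{k_2};E_{k_2},F_{k_2})
\]
are isomorphic as Heegaard pairs. Denote the linked quotient of a pair $\mathcal{P}$ by $H(\mathcal{P})$, regarded as an abelian group together with its natural linking form supplied by Theorem~\ref{T:quotient group of a Heegaard pair has a linking form}.

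Next I would apply the two ingredients in sequence. The isomorphism of the two stabilized pairs induces, by functoriality of the quotient construction and the fact that symplectic isomorphisms preserve the dot product and hence the linking form defined in the proof of Theorem~\ref{T:quotient group of a Heegaard pair has a linking form}, an isomorphism of linked abelian groups between the linked quotients of the two stabilized pairs. On the other hand, Lemma~\ref{L:linking form-2} furnishes canonical isomorphisms of linked groups between each stabilized linked quotient and the corresponding unstabilized one. Composing these three isomorphisms produces an isomorphism of linked groups $H((V_1;B_1,\bar{B}_1)) \cong H((V_2;B_2,\bar{B}_2))$, which is the desired conclusion.

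There is essentially no obstacle: the substantive content lies in Theorem~\ref{T:quotient group of a Heegaard pair has a linking form} and Lemma~\ref{L:linking form-2}, and the corollary is a formal consequence. The only thing one should be careful about is to note explicitly that the quotient construction is a functor from Heegaard pairs (with isomorphisms) to linked abelian groups (with linking-preserving isomorphisms), which is immediate from the fact that every ingredient used to define $\lambda(x,y)$ — the dot product, the subspaces $B$ and $\bar{B}$, and the lifts — is carried along by a Heegaard isomorphism.
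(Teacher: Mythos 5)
Your proposal is correct and matches the paper's reasoning: the corollary is drawn as an immediate consequence of the remark preceding Lemma~\ref{L:linking form-2} (Heegaard isomorphisms induce linking-preserving isomorphisms of the quotients) together with Lemma~\ref{L:linking form-2} itself, exactly as you compose them.
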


The remainder of this section is devoted to strengthening Corollary~\ref{C:stably isomorphic implies linked quotients} by showing that, 
in fact, two Heegaard pairs are stably isomorphic if and only if their linked quotients are isomorphic (see 
Corollary~\ref{C:stably isomorphic iff linked quotients}). It is easily verified that two linked groups are link-isomorphic if and only if they have link-isomorphic torsion groups and, mod their torsion groups, the same (free) rank.  

\begin{lemma} \label{L:linking form-3}
Let $(V; B, \Bbar)$ be a Heegaard pair with $B \cap \Bbar = 0$, and let $A$ be a dual complement of $B$. If $\bar{A}$ is the direct projection of $\Bbar$ into $A$, then there is a symplectic basis $a_i, b_i$ of $V$ ($a_i \in A$, $b_i \in B$) such that:
\begin{enumerate}[a)]
\item $m_i a_i$ is a basis for $\bar{A}$, for some integers $m_i \not= 0$;
\item $m_i a_i + \sum_j n_{ij} b_j$ is a basis for $\Bbar$, for some integers $n_{ij}$ such that $n_{ij}/m_i = n_{ji}/m_j$.
\end{enumerate}
\end{lemma}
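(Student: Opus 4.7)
The plan is to combine three ingredients: the elementary divisor theorem applied to $\bar{A}\subset A$, the correspondence between bases of $B$ and dual bases of $A$ from Proposition~\ref{proposition:3.2}, and the isotropy of $\Bbar$ to force the symmetry condition on the $n_{ij}$.

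First I would observe that the direct projection $p: V = A\oplus B \to A$ restricts to an injection $\Bbar \to A$. Injectivity holds because $\ker(p|_{\Bbar}) = \Bbar \cap B = 0$ by hypothesis, and since $\Bbar$ is lagrangian, $\rk \Bbar = g = \rk A$. Thus $\bar{A} = p(\Bbar)$ is a subgroup of $A$ of full rank. Applying the elementary divisor theorem (Proposition~\ref{P:structure of abelian groups}) to the inclusion $\bar{A}\hookrightarrow A$, there exist a basis $a_1,\dots,a_g$ of $A$ and nonzero integers $m_1,\dots,m_g$ such that $m_1 a_1,\dots,m_g a_g$ is a basis of $\bar{A}$. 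This establishes (a).

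Next, let $b_1,\dots,b_g$ be the dual basis of $B$ determined by $a_i\cdot b_j = \delta_{ij}$. By Proposition~\ref{proposition:3.2}, $\{a_i,b_i\}$ is a symplectic basis of $V$. Since the projection $p|_{\Bbar}$ sends some basis $\bar{b}_1,\dots,\bar{b}_g$ of $\Bbar$ onto $m_1 a_1,\dots,m_g a_g$, we may write
\[
\bar{b}_i = m_i a_i + \sum_j n_{ij} b_j
\]
for unique integers $n_{ij}$, simply because $\bar{b}_i - m_i a_i \in \ker p = B$ and expands uniquely in the basis $b_j$.

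Finally, to obtain the symmetry $n_{ij}/m_i = n_{ji}/m_j$, I would exploit that $\Bbar$ is isotropic, so $\bar{b}_i\cdot \bar{b}_j = 0$. Using $a_i\cdot a_k = 0$, $b_j\cdot b_l = 0$, and $a_i\cdot b_l = \delta_{il}$, a direct expansion yields
\[
0 = \bar{b}_i\cdot \bar{b}_j = m_i n_{ji} - m_j n_{ij},
\]
which is exactly the required relation. I do not anticipate any serious obstacle here: the one subtle point is verifying injectivity of the projection $p|_{\Bbar}$ (so that the Smith form of $\bar{A}\subset A$ actually pulls back to a basis of $\Bbar$ rather than only giving a basis of $\bar{A}$), but this follows at once from $B\cap\Bbar = 0$ together with the rank equality forced by $\Bbar$ being lagrangian.
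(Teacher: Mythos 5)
Your proposal is correct and follows essentially the same path as the paper's proof: injectivity of the projection $p|_{\Bbar}$ from $B\cap\Bbar=0$ and the lagrangian rank count, the elementary divisor theorem to find $a_i$ with $m_i a_i$ a basis of $\bar{A}$, the dual basis $b_i$ via Proposition~\ref{proposition:3.2}, and isotropy of $\Bbar$ yielding $m_i n_{ji} - m_j n_{ij} = 0$. No gaps.
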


\begin{remark}
Note that the hypothesis $B \cap \Bbar = 0$ is equivalent to the fact that the rank of $B+\Bbar$ is equal to the rank of $V$, \ie that 
the quotient is {\it finite}.  $\|$
\end{remark}

\begin{proof}
Since $B \cap \Bbar = 0$, the projection of $\Bbar$ into $A$ is 1-1, \ie 
$$\rk \bar{A} = \rk \Bbar = \frac{1}{2} \rk V = \rk A.$$ 
By Proposition \ref{P:structure of abelian groups}, there is a basis $a_i$ of $A$ such that $m_i a_i$ is a basis of $\bar{A}$, and $m_i \not= 0$ because $\rk \bar{A} = \rk A$. Let $b_i$ be the dual basis of $B = A^*$; then $a_i, b_i$ is a symplectic basis of $V$. The inverse of the projection $\Bbar \to \bar{A}$ takes $m_i a_i$ into a basis of $\Bbar$, which must then be of the form $\bar{b}_i = m_i a_i + \sum_k n_{ik} b_k$ for $n_{ij} \in \Z$. But $\bar{b}_i \cdot \bar{b}_j = 0$ for all $j$, \ie $m_i n_{ji} - m_j n_{ij} = 0$ for all $i,j$.
\end{proof}

\begin{lemma} \label{L:linking form-4}
Let $(V; B, \Bbar)$ and $(V; B, \Bbar')$ be two Heegaard pairs such that $B + \Bbar = B + \Bbar'$ and $B \cap \Bbar = B \cap \Bbar' = 0$. Then the linkings $\lambda, \lambda'$ induced on the common quotient $H$ are identical if and only if there is an $f \in \Sp(V)$ such that $f(B) = B$, $f(\Bbar) = \Bbar'$, and such that the automorphism $h$ of $H$ induced by $f$ is the identity map.
\end{lemma}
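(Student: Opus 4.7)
The plan is to reduce the statement to an explicit coordinate computation using the normal form provided by Lemma \ref{L:linking form-3}, and then exhibit the required $f$ directly as a symplectic shear.

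First I would dispose of necessity, which is routine: if $f \in \Sp(V)$ satisfies the stated conditions, then for any $x, y \in H$ with lifts $u, v \in V$ and a decomposition $mu = b + \bar b$ with $b \in B$, $\bar b \in \Bbar$, the element $f(u)$ lifts $x$ for the second pair (since $f$ induces the identity on $H$) and $mf(u) = f(b) + f(\bar b)$ is its $B \oplus \Bbar'$ decomposition. Because $f$ is symplectic, $f(b) \cdot f(v) = b \cdot v$, and hence $\lambda'(x,y) = \lambda(x,y)$.

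For sufficiency, pick a dual complement $A$ of $B$ and apply Lemma \ref{L:linking form-3} to $(V; B, \Bbar)$. Because $B + \Bbar = B + \Bbar'$ and the direct projection along $B$ is well-defined on $B + \Bbar$, the projections $\bar A$ and $\bar A'$ of $\Bbar$ and $\Bbar'$ into $A$ coincide. Consequently the same symplectic basis $a_i, b_i$ of $V$ and the same integers $m_i$ work for both pairs, yielding bases
\[
\bar b_i = m_i a_i + \sum_j n_{ij} b_j, \qquad \bar b_i' = m_i a_i + \sum_j n_{ij}' b_j
\]
of $\Bbar$ and $\Bbar'$, with $n_{ij}/m_i = n_{ji}/m_j$ and $n_{ij}'/m_i = n_{ji}'/m_j$. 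A direct calculation using the definition of the linking form on $H = V/(B+\Bbar)$, with $x_i$ the class of $a_i$ and the decomposition $m_i a_i = -\sum_j n_{ij} b_j + \bar b_i$, gives $\lambda(x_i, x_j) \equiv n_{ij}/m_i \pmod{1}$, and similarly for $\lambda'$. Thus the hypothesis $\lambda = \lambda'$ is equivalent to the divisibility condition $m_i \mid (n_{ij}' - n_{ij})$ for all $i, j$.

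Now construct $f$ by setting $f(b_i) = b_i$ and $f(a_i) = a_i + \sum_j c_{ij} b_j$, where
\[
c_{ij} = \frac{n_{ij}' - n_{ij}}{m_i} \in \Z.
\]
The two symmetry relations on $n_{ij}$ and $n_{ij}'$ force $c_{ij} = c_{ji}$, so the matrix $(c_{ij})$ is symmetric; a routine check using this symmetry confirms that $f$ preserves the intersection pairing and is therefore symplectic. By construction $f(B) = B$, and $f(\bar b_i) = m_i a_i + \sum_j (n_{ij} + m_i c_{ij}) b_j = \bar b_i'$, so $f(\Bbar) = \Bbar'$. Finally, $f(a_i) - a_i \in B \subset B + \Bbar$ and $f(b_i) = b_i$, so $f$ acts as the identity on $H$.

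The only step requiring any real care is verifying that the shear matrix $(c_{ij})$ is both integral and symmetric; this is precisely what the two simultaneous symmetry relations from Lemma \ref{L:linking form-3} and the equality $\lambda = \lambda'$ deliver, and it is the whole content of the argument. Once this is in place, all the other verifications are mechanical.
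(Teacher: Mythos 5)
Your proof is correct and follows essentially the same route as the paper's: reduce to a common dual complement $A$ via the normal form of Lemma \ref{L:linking form-3}, observe that $\lambda=\lambda'$ forces the symmetric matrix $\bigl((n'_{ij}-n_{ij})/m_i\bigr)$ to be integral, and realize the required map as the corresponding symplectic shear fixing $B$ pointwise on the quotient. The only difference is cosmetic (you also spell out the easy direction, which the paper dismisses as immediate).
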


\begin{proof}
Certainly the condition is sufficient. To prove the necessity, let $A$ be a dual complement of $B$ and let $\bar{A}$, $\bar{A}'$ be
the projections of $\Bbar$, $\Bbar'$ into $A$.  Note that $\bar{A} = A \cap (B + \Bbar)$ and $\bar{A}' = A \cap (B + \Bbar')$. 
Hence $\bar{A} = \bar{A}'$.  As in the previous lemma we choose an $\Sp$-basis $a_i, b_i$ with $m_i a_i$ a basis of $\bar{A} = \bar{A}'$, and corresponding bases 
$$
\bar{b}_i = m_i a_i + \sum_j n_{ij} b_j \text{ of } \Bbar 
\qquad \text{ and } \qquad 
\bar{b}'_i = m_i a_i + \sum_j n'_{ij} b_j \text{ of } \Bbar'.
$$
Let $\beta_{ij} = \frac{n'_{ij}-n_{ij}}{m_i}$; the ``symmetry" conditions of the previous lemma on the $n_{ij}, n'_{ij}$ imply that $\beta_{ij} = \beta_{ji}$. Let $x_i$ be the image of $a_i$ in $H$. Since $m_i a_i \in B + \Bbar$, when calculating $\lambda(x_i,x_k)$ we may
choose the $B$-part of the lift of $x_i$ to be $- \sum_j n_{ij} b_j$, and we get 
$$
\lambda (x_i, x_k) = \frac{1}{m_i} (- \sum n_{ij} b_j) \cdot a_k = \frac{n_{ik}}{m_i}.
$$
Likewise we get $\lambda' (x_i, x_k) = \frac{n'_{ik}}{m_i}$.  By hypothesis, for all $i,k$ we have $\frac{n'_{ik}}{m_i} \equiv \frac{n_{ik}}{m_i} \ \mod ~ 1$, \ie $\beta_{ik} =  \frac{n'_{ik}-n_{ik}}{m_i} \equiv 0 \ \mod ~ 1$; that is, $\beta_{ik}$ is {\it integral} as well as symmetric.  Thus the transformation $f:V \rightarrow V$ which fixes the $b_i$ and takes $a_i$ to $a_i + \sum_j \beta _{ij} b_j$ is easily seen
to be {\it symplectic}.  We have $f(B) = B$ obviously, and 
$$
f(\bar{b}_i) = m_i ( a_i + \sum_j \beta_{ij} b_j) + \sum n_{ij} b_j = m_i a_i + \sum_j (m_i  \beta_{ij} +  n_{ij}) b_j = m_i a_i + \sum_j n'_{ij} b_j = \bar{b}'_i,
$$
so $f(\Bbar) = \Bbar'$. Finally, $h(x_i)$ is the image of $f(a_i) = a_i + \sum_j  \beta_{ij} b_j$, which is just $x_i$; this shows that $h = 1$.
\end{proof}

\begin{lemma} \label{L:linking form-5}
With hypotheses as in the preceding lemma, but omitting the assumption that $B \cap \Bbar = B \cap \Bbar' = 0$, the conclusion remains valid.
\end{lemma}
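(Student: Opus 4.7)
The plan is to reduce the statement to Lemma~\ref{L:linking form-4} by using the direct-sum decompositions supplied by Lemmas~\ref{L:quotient-2} and~\ref{L:quotient-3}. First, observe that the hypothesis $B+\Bbar=B+\Bbar'$ together with Lemma~\ref{L:quotient-1} forces $B\cap\Bbar=B\cap\Bbar'$; call this common intersection $C$. Apply Lemma~\ref{L:quotient-2} to write
\[
(V;B,\Bbar)=(V_1;C,C)\oplus (V_2;D,\bar D),\qquad D\cap\bar D=0,
\]
with $V_1,V_2$ symplectic and $V_1=V_2^\perp$. By Lemma~\ref{L:quotient-3}, with the {\em same} $V_1,V_2$ and the same $D$, we simultaneously have
\[
(V;B,\Bbar')=(V_1;C,C)\oplus(V_2;D,\bar D'),\qquad D\cap\bar D'=0,\qquad D+\bar D'=D+\bar D.
\]

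Next, I would identify how the linkings $\lambda,\lambda'$ on $H=V/(B+\Bbar)=V/(B+\Bbar')$ are visible from the $V_2$-summand. Since $V_1/(C+C)=V_1/C$ is free (being the quotient of $V_1$ by a lagrangian), the torsion subgroup $T\subset H$ is canonically $V_2/(D+\bar D)=V_2/(D+\bar D')$, and liftings of torsion classes can always be chosen in $V_2$. For such a lift $u\in V_2$ with $mu\in B+\Bbar$, the unique $B+\Bbar=C\oplus C\oplus D\oplus\bar D$-decomposition of $mu$ lies entirely in $D\oplus\bar D$; the same applies to $\Bbar'$. Hence the linking forms on $T$ induced by $(V;B,\Bbar)$ and $(V;B,\Bbar')$ coincide with the linking forms induced by the Heegaard pairs $(V_2;D,\bar D)$ and $(V_2;D,\bar D')$ on their common (finite) quotient $T$, respectively.

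Now the two pairs on $V_2$ satisfy the hypotheses of Lemma~\ref{L:linking form-4}: the same lagrangian $D$, the sum $D+\bar D=D+\bar D'$, the trivial intersections $D\cap\bar D=D\cap\bar D'=0$, and equal induced linkings on $T$. Lemma~\ref{L:linking form-4} then produces a symplectic automorphism $f_2\in\Sp(V_2)$ with $f_2(D)=D$, $f_2(\bar D)=\bar D'$, and inducing the identity map on $T$. Set
\[
f=\mathrm{id}_{V_1}\oplus f_2\in\Sp(V).
\]
Then $f(B)=f(C\oplus D)=C\oplus D=B$, $f(\Bbar)=f(C\oplus\bar D)=C\oplus\bar D'=\Bbar'$, and the induced automorphism $h$ of $H$ is the identity on the free summand $V_1/C$ (where $f$ is the identity) and on the torsion summand $T$ (by choice of $f_2$), hence $h=\mathrm{id}_H$. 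Necessity has already been remarked, so this completes the proof.

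The one point requiring care — and the step I would double-check carefully — is the identification of the linkings under the direct-sum decomposition: one must verify that the linking values computed globally in $(V;B,\Bbar)$ truly coincide with those computed in the smaller Heegaard pair $(V_2;D,\bar D)$, and similarly for $\Bbar'$. Once this restriction principle is in hand, the rest is formal assembly of the pieces.
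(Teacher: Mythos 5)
Your proposal is correct and follows essentially the same route as the paper: split both pairs as $(V_1;C,C)\oplus(V_2;D,\bar D)$ and $(V_1;C,C)\oplus(V_2;D,\bar D')$ via Lemmas~\ref{L:quotient-2} and~\ref{L:quotient-3}, observe that the two $V_2$-pairs have the same finite quotient $T$ with the same induced linking (lifts of torsion classes chosen in $V_2$), apply Lemma~\ref{L:linking form-4} there, and extend by $\mathrm{id}_{V_1}$. The only blemish is the harmless notational slip writing $B+\Bbar$ as $C\oplus C\oplus D\oplus\bar D$ rather than $C\oplus D\oplus\bar D$; the restriction principle you flag is exactly the point the paper asserts when it says both $V_2$-pairs define the same linking form, and your justification of it is fine.
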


\begin{proof}
Again we need only prove the necessity. By Lemma \ref{L:quotient-3}, we split $(V; B, \Bbar)$ as $(V_1; C, C) \oplus (V_2; D, \bar{D})$, where $C = B \cap \Bbar = B \cap \Bbar'$, and $(V; B, \Bbar')$ as $(V_1; C, C) \oplus (V_2; D, \bar{D}')$. Since $D + \bar{D} = (B + \Bbar) \cap V_2 = (B + \Bbar') \cap V_2 = D + \bar{D}'$, both the $V_2$ pairs have the same quotient, namely the torsion subgroup $T$, and both these pairs define the same linking form. By the preceding lemma we have a map $f_2 \in \Sp(V_2)$ such that $f_2 (D) = D$, $f_2 (\bar{D}) = \bar{D}'$ and so that 
$$
\begin{diagram}
\node{V_2} \arrow[2]{s,l}{f_2} \arrow{see} \\
\node[3]{T} \\
\node{V_2} \arrow{nee}
\end{diagram}
$$
commutes. Let $f = \id_{V_1} \oplus f_2$; then $f$ satisfies the requirements.
\end{proof}

\begin{lemma} \label{L:linking form-6}
Let $(V; B, \Bbar)$ and $(V'; B', \Bbar')$ be Heegaard pairs of the same genus $g$, and let $h : H \to H'$ be an isomorphism of the quotients (not necessarily linking-preserving).  If $g > \rk H$, then there is a symplectic isomorphism $f : V \to V'$ lifting $h$
such that $f(B) = B'$ and $f(B + \Bbar) = B' + \Bbar'$.
\end{lemma}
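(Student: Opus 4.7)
The plan is to reduce the problem to lifting an isomorphism between non-minimal presentations of abelian groups, so that Corollary~\ref{C:cor 1 of T:h lifts to the presentation level} applies.

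\emph{Step 1: Normalize so that $V=V'$ and $B=B'$.} Since $B$ and $B'$ are lagrangian subspaces of symplectic spaces of the same rank $2g$, Corollary~\ref{corollary:3.4} yields a symplectic isomorphism $\varphi:V\to V'$ with $\varphi(B)=B'$. Replacing $(V';B',\Bbar')$ by its pullback under $\varphi$, we may assume that $V=V'$ and $B=B'$, and we now seek an element $f\in\Sp(V)$ with $f(B)=B$, $f(B+\Bbar)=B+\Bbar'$, and inducing $h$ on quotients. Any such $f$ automatically lies in $\Lambda$ by Lemma~\ref{L:the subgroup Lambda}.

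\emph{Step 2: Pass to the quotient $W=V/B$.} Let $W=V/B$; this is a free abelian group of rank $g$. Let $\bar{A}=(B+\Bbar)/B$ and $\bar{A}'=(B+\Bbar')/B$, so that $W/\bar{A}$ and $W/\bar{A}'$ are canonically identified with $H$ and $H'$ respectively, and the projections $\pi:W\to H$ and $\pi':W\to H'$ are presentations of rank $g$. By hypothesis $g>\rk H=\rk H'$, so \emph{both presentations are non-minimal}. Corollary~\ref{C:cor 1 of T:h lifts to the presentation level} therefore provides an isomorphism $f_0:W\to W$ with $f_0(\bar{A})=\bar{A}'$ such that the diagram
$$
\begin{CD}
W @>\pi>> H \\
@Vf_0VV @VVhV \\
W @>\pi'>> H'
\end{CD}
$$
commutes.

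\emph{Step 3: Lift $f_0$ to a symplectic automorphism of $V$.} Choose any dual complement $A$ of $B$ in $V$ (which exists by Lemma~\ref{lemma:3.1}), so that $V=A\oplus B$ and the projection $V\to W$ restricts to an isomorphism $A\cong W$. Transport $f_0$ to an automorphism of $A$, also denoted $f_0$, and define $f\in\Sigma\subset\Lambda$ by $f=f_0\oplus\hat{f_0}^{-1}$ with respect to the symplectic splitting $V=A\oplus B$ (this is precisely the form of an element of $\Sigma$ in Lemma~\ref{L:structure of Lambda}). Then $f$ is symplectic, $f(B)=B$, and $f$ acts on $W=V/B$ as $f_0$. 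Consequently $f(B+\Bbar)=B+f(\Bbar)$ has image $f_0(\bar{A})=\bar{A}'$ in $W$, giving $f(B+\Bbar)=B+\Bbar'$, and the induced map on $V/(B+\Bbar)\to V/(B+\Bbar')$ is precisely $h$, as required.

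\emph{Main obstacle.} The essential content is concentrated entirely in the non-minimality hypothesis $g>\rk H$, which is exactly what Corollary~\ref{C:cor 1 of T:h lifts to the presentation level} needs; once that is in hand, the lift to $\Lambda$ via the semidirect-product decomposition $\Lambda=\Omega\rtimes\Sigma$ is routine. The only subtle point is verifying that the automorphism produced on the abelian-group side really does extend to a symplectic map preserving $B$, but this is automatic because any $A\in\GL(g,\Z)$ appears as the upper block of some element of $\Sigma$, and no condition on $\Bbar$ beyond its image in $V/B$ is needed for this step.
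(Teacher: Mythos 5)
Your proof is correct and is essentially the paper's argument: the whole content is the non-minimality $g>\rk H$ feeding into Corollary~\ref{C:cor 1 of T:h lifts to the presentation level}, followed by extending the resulting automorphism symplectically via the inverse adjoint on $B$. The paper merely skips your Step 1 normalization, choosing dual complements $A\subset V$, $A'\subset V'$ (rather than passing to $V/B$) and taking $f=p\oplus(p^{*})^{-1}$ for a lift $p:A\to A'$ of $h$, which is the same construction in slightly different packaging.
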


\begin{proof}
Let $A, A'$ be dual complements of $B, B'$.  Then the projections $\pi, \pi'$ map $A, A'$ onto $H, H'$. Since $A, A'$ are free of rank $g > \rk H$, we have non-minimal presentations $A \to H$, $A' \to H'$, and hence by Theorem \ref{T:h lifts to the presentation level} 
there is an isomorphism $p : A \to A'$ lifting $h$. Let $q$ be the adjoint map of $p$ on $B = A^*$ (\ie $q = (p^*)^{-1}$); then $f = p \oplus q$ is a symplectic isomorphism of $A \oplus B = V$ to $A' \oplus B' = V'$. It clearly still lifts $h$, which implies that $f(B + \Bbar) = f (\ker \pi) = \ker \pi' = B' + \Bbar'$. Finally, $f(B) = B'$ by the construction of $f$.
\end{proof}

\begin{theorem} \label{T:linking isomorphism lifts to Heegaard isomorphism}
Let $(V; B, \Bbar)$ and $(V'; B', \Bbar')$ be two Heegaard pairs of genus $g$ and $h : H \to H'$ a link-isomorphism of their linked quotient groups. If $g > \rk H$, then $h$ lifts to a Heegaard isomorphism $j : (V; B, \Bbar) \to (V'; B', \Bbar')$.
\end{theorem}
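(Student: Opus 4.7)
The plan is to build the required Heegaard isomorphism in two stages, using Lemma~\ref{L:linking form-6} to handle the bulk of the work and then Lemma~\ref{L:linking form-5} to ``fine tune'' the second lagrangian. Since $g > \rk H = \rk H'$, Lemma~\ref{L:linking form-6} applies and supplies a symplectic isomorphism $f : V \to V'$ that lifts $h$, satisfies $f(B) = B'$, and satisfies $f(B + \Bbar) = B' + \Bbar'$. At this stage, however, there is no reason that $f(\Bbar) = \Bbar'$, so further work is needed.

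Let $\Bbar'' := f(\Bbar)$. Then $(V'; B', \Bbar'')$ and $(V'; B', \Bbar')$ are both Heegaard pairs, and by construction $B' + \Bbar'' = B' + \Bbar'$, so they share the common quotient group $H'$. The key verification is that they also induce the \emph{same} linking form on $H'$. For $(V'; B', \Bbar')$ this is the given form $\lambda'$. For $(V'; B', \Bbar'')$, note that $f$ itself is a Heegaard isomorphism $(V; B, \Bbar) \to (V'; B', \Bbar'')$; hence the isomorphism $\bar f : H \to H'$ that $f$ induces on the quotients carries the linking form $\lambda$ of the first pair to the linking form of the second. But $\bar f = h$ (since $f$ lifts $h$), and $h$ is by hypothesis a link-isomorphism sending $\lambda$ to $\lambda'$. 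Thus the linking induced on $H'$ by $(V'; B', \Bbar'')$ equals $\lambda'$, as required.

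Once the two pairs are known to induce the same linking form on $H'$, Lemma~\ref{L:linking form-5} furnishes a symplectic automorphism $f' : V' \to V'$ with $f'(B') = B'$ and $f'(\Bbar'') = \Bbar'$ whose induced map on $H'$ is the identity. Setting $j := f' \circ f$ then yields a symplectic isomorphism $V \to V'$ with $j(B) = B'$ and $j(\Bbar) = \Bbar'$; and because $f'$ acts trivially on the quotient, $j$ still lifts $h$. This is the desired Heegaard isomorphism.

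Nothing in this argument presents a genuine obstacle beyond Lemma~\ref{L:linking form-6}, which is where the assumption $g > \rk H$ is actually used (to invoke the presentation-lifting Theorem~\ref{T:h lifts to the presentation level} on the dual complements of $B$ and $B'$); the remainder is bookkeeping. The only subtlety worth flagging is the identification $\bar f = h$: one must record carefully that the isomorphism of quotients induced by $f$ is literally $h$ itself, for otherwise the claim that $(V'; B', \Bbar'')$ and $(V'; B', \Bbar')$ induce the same linking on $H'$ would be false.
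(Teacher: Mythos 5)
Your proof is correct and follows essentially the same route as the paper: lift $h$ to $f$ via Lemma~\ref{L:linking form-6}, observe that the two pairs with common second summand sum induce the same linking, and correct the second lagrangian with the automorphism from Lemma~\ref{L:linking form-5}. The only (immaterial) difference is that you perform the correction on the $V'$ side with $\Bbar''=f(\Bbar)$ and post-compose, whereas the paper pulls back $\Bbar_1=f^{-1}(\Bbar')$ and corrects on the $V$ side before composing.
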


\begin{proof}
Lift $h$ to $f$ as in the previous lemma, and put $\Bbar_1 = f^{-1} (\Bbar')$. Then $(V; B, \Bbar_1)$ is a Heegaard pair and $f$ maps it isomorphically to $(V'; B', \Bbar')$. Note that 
$$B + \Bbar_1 = f^{-1}(B' + \Bbar') = f^{-1} f (B + \Bbar) = B + \Bbar,$$ 
so the quotient of $(V; B, \Bbar_1)$ is also $H$.  Moreover, by construction the linking form on $H$ induced
by $(V,B,\Bbar_1)$ is identical to the linking form induced by $(V,B,\Bbar)$.
By Lemma \ref{L:linking form-5}, there is a map $g \in \Sp(V)$ such that $g(B) = B$, $g(\Bbar) = \Bbar_1$, and $g$ induces the identity map on $H$. Hence the map $f g$ also induces $h : H \to H'$, and $f g (B) = B'$, $f g (\Bbar) = f (\Bbar_1) = \Bbar'$.
\end{proof}

We are now ready to give our solution to Problem 1 of the introduction to this paper. 

\begin{corollary}
 \label{C:stably isomorphic iff linked quotients}
Two Heegaard pairs are stably isomorphic if and only if they have the same torsion free ranks, and their torsion groups are link-isomorphic; that is, if and only if they have isomorphic linked quotients. 
\end{corollary}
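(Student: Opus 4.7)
The plan is to establish the two directions separately, with only the reverse direction requiring real work since the forward direction is already recorded as Corollary~\ref{C:stably isomorphic implies linked quotients}.

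For the forward direction, I would simply observe that Corollary~\ref{C:stably isomorphic implies linked quotients} asserts that the linked abelian quotient is an invariant of the stable isomorphism class. Together with the elementary fact (noted just before Lemma~\ref{L:linking form-3}) that two linked finitely generated abelian groups are link-isomorphic if and only if they have the same torsion-free rank and link-isomorphic torsion subgroups, this gives the ``only if'' part.

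For the reverse direction, suppose $(V_1;B_1,\bar B_1)$ and $(V_2;B_2,\bar B_2)$ have genera $g_1,g_2$ respectively and isomorphic linked quotients $H_1 \cong H_2$. The strategy is to stabilize both pairs up to a common, sufficiently large genus and then apply Theorem~\ref{T:linking isomorphism lifts to Heegaard isomorphism}. Concretely, choose any $k$ large enough that $g_1+g_2+k > \rk H_1$, stabilize the first pair by index $g_2+k$ and the second by $g_1+k$. By Lemma~\ref{L:linking form-2}, stabilization does not change the linked quotient, so the given link-isomorphism $h\colon H_1\to H_2$ persists as a link-isomorphism between the quotients of the two stabilized pairs, which now have equal genus $g := g_1+g_2+k > \rk H_1$.

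Now Theorem~\ref{T:linking isomorphism lifts to Heegaard isomorphism} applies directly: because $g$ exceeds the rank of the common quotient, the link-isomorphism $h$ lifts to a symplectic isomorphism $j$ of the stabilized Heegaard pairs carrying $B_1\oplus E_{g_2+k}$ to $B_2\oplus E_{g_1+k}$ and $\bar B_1\oplus F_{g_2+k}$ to $\bar B_2\oplus F_{g_1+k}$. That is exactly a stable isomorphism of the original pairs, completing the proof. The main obstacle was already overcome in Theorem~\ref{T:linking isomorphism lifts to Heegaard isomorphism}, whose hypothesis $g>\rk H$ is precisely what stabilization is used to guarantee; the present corollary is then essentially bookkeeping, with Lemma~\ref{L:linking form-2} ensuring that the linking data survives stabilization intact.
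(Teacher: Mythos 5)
Your proof is correct and fills in the argument exactly as the paper intends: the "only if" direction is Corollary~\ref{C:stably isomorphic implies linked quotients} (plus Lemma~\ref{L:linking form-2}), and the "if" direction stabilizes both pairs to a common genus exceeding $\rk H$ and invokes Theorem~\ref{T:linking isomorphism lifts to Heegaard isomorphism}, which is precisely the route the preceding lemmas were built to support. No gaps; the bookkeeping with the stabilization indices $g_2+k$ and $g_1+k$ is fine.
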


%%%%%%%%%%%%%%%%%%%%%%%%%%%%%%%%%%%%%%%%%%%%%%%%%%
%%%%%%%%%%%%%%%%%%%%%%%%%%%%%%%%%%%%%%%%%%%%%%%%%%
\subsection{The stabilization index.  Solution to Problem 3}
\label{SS:the stabilization index}
%%%%%%%%%%%%%%%%%%%%%%%%%%%%%%%%%%%%%%%%%%%%%%%%%%
%%%%%%%%%%%%%%%%%%%%%%%%%%%%%%%%%%%%%%%%%%%%%%%%%%

In this subsection we introduce the notion of a Heegaard presentation and define the genus of a Heegaard presentation.  
We return to the concept of the volume of a presentation of an abelian group, relating it now to Heegaard presentations.   
See Theorem~\ref{T:the volume again}.  At the end of this section we give the solution to Problem 3 of the Introduction to this article.
See Corollary~\ref{C:problem on stabilization index}.

\begin{definition}
\label{D:Heegaard presentation and its genus}
Let $H$ be a linked group. A {\it Heegaard presentation} of $H$ consists of a Heegaard pair $(V; B, \Bbar)$ and a surjection $\pi : V \to H$ such that:
\begin{enumerate}[a)]
\item $\ker \pi = B + \Bbar$
\item the linking induced on $H$ by means of $\pi$ is the given linking on $H$. 
\end{enumerate}
The {\it genus} of the presentation is the given genus of the pair.   
We will use the symbol $(V; B, \Bbar; \pi)$ to denote a Heegaard presentation.  $\|$
\end{definition}

\begin{theorem} \label{T:a linked group of rank r has a Heegaard presentation of genus r}
Every linked group $H$ has a Heegaard presentation of genus equal to the rank of $H$.
\end{theorem}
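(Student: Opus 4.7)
The plan is to construct the Heegaard presentation as a direct sum of two pieces, handling the free and torsion parts of $H$ separately. Write $H = T \oplus \Z^s$ where $T$ is the finite torsion subgroup and $s = \rk(H/T)$, and let $t$ be the number of torsion coefficients of $T$, so that $r = \rk H = s + t$. For the free part, take $V_1 = \Z^{2s}$ with its standard symplectic form and let $B_1 = \bar{B}_1 = C$ be the lagrangian spanned by $a_1,\ldots,a_s$; this yields quotient $V_1/C \cong \Z^s$ with trivial linking form, exactly matching the free summand.

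The real work is the torsion summand. Start with a minimal presentation $\pi_0 : A \to T$ where $A = \Z^t$ with basis $a_1,\ldots,a_t$, and $\ker\pi_0$ is generated by $m_i a_i$ with $m_1 \mid m_2 \mid \cdots \mid m_t$. Set $V_2 = A \oplus B_2$ with $B_2 = A^*$ spanned by the dual basis $b_1,\ldots,b_t$, equipped with the symplectic form $a_i\cdot b_j = \delta_{ij}$, $a_i\cdot a_j = b_i\cdot b_j = 0$. Following the normal form in Lemma~\ref{L:linking form-3}, I will look for $\bar{B}_2$ with basis
\[
\bar{b}_i = m_i a_i + \sum_j n_{ij} b_j
\]
for integers $n_{ij}$ to be determined. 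The isotropy requirement $\bar{b}_i \cdot \bar{b}_j = 0$ is equivalent to $m_i n_{ji} = m_j n_{ij}$, and the computation carried out in Lemma~\ref{L:linking form-4} shows that the induced linking on the quotient satisfies $\lambda(x_i,x_j) \equiv n_{ij}/m_i \pmod 1$, where $x_i = \pi_0(a_i)$.

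The crux is therefore to choose the $n_{ij}$ so that both conditions hold \emph{simultaneously} while reproducing the given linking $\lambda_0$ on $T$. Since $m_i x_i = 0$, we have $m_i \lambda_0(x_i,x_j) \equiv 0 \pmod 1$, so there exist integers $n_{ij}$ with $\lambda_0(x_i,x_j) \equiv n_{ij}/m_i \pmod 1$. The symmetry $\lambda_0(x_i,x_j) = \lambda_0(x_j,x_i)$ in $\Q/\Z$ translates to $m_i m_j \mid (n_{ij} m_j - n_{ji} m_i)$. Since replacing $n_{ij}$ by $n_{ij} + k m_i$ leaves $n_{ij}/m_i \bmod 1$ unchanged, I can adjust each pair $(n_{ij}, n_{ji})$ with $i<j$ by a suitable multiple of $m_i$ to enforce the exact equality $n_{ij} m_j = n_{ji} m_i$, so that $\bar{B}_2$ becomes isotropic without disturbing the linking. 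This adjustment step is the one genuine obstacle, and it is settled by this divisibility argument.

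With the $n_{ij}$ so chosen, $\bar{B}_2$ is isotropic of rank $t = \tfrac12\rk V_2$, and since its projection to $A$ has basis $m_i a_i$ with $m_i \neq 0$, we have $B_2 \cap \bar{B}_2 = 0$. The induced linking on the quotient is $\lambda_0$, which is non-singular by hypothesis, so Lemma~\ref{L:linking form-1} promotes $\bar{B}_2$ to a lagrangian subspace. Thus $(V_2; B_2, \bar{B}_2)$ is a Heegaard pair of genus $t$ realizing $T$ as its linked quotient. Taking the direct sum $(V_1 \oplus V_2;\, B_1 \oplus B_2,\, \bar{B}_1 \oplus \bar{B}_2)$ produces a Heegaard pair of genus $s + t = r$, and the composition $V_1 \oplus V_2 \twoheadrightarrow V_1/C \oplus V_2/(B_2 + \bar{B}_2) = \Z^s \oplus T = H$ is the required surjection $\pi$ whose kernel is $B + \bar{B}$ and whose induced linking matches the given one on $H$.
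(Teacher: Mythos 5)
Your proof is correct and takes essentially the same route as the paper: split off the free summand via the standard pair $(X_s;E_s,E_s)$, then over a minimal presentation of $T$ build $\bar{B}$ with basis $\bar{b}_i = m_i a_i + \sum_j n_{ij} b_j$, verify $B\cap\bar{B}=0$ and that the induced linking is $\lambda$, and invoke Lemma~\ref{L:linking form-1} to promote $\bar{B}$ to a lagrangian. The only (cosmetic) difference is in arranging the isotropy condition: the paper chooses symmetric rational representatives $q_{ij}=q_{ji}$ of $\lambda(x_i,x_j)$ and sets $n_{ij}=m_i q_{ij}$, which makes $m_i n_{ji}=m_j n_{ij}$ automatic, whereas you choose arbitrary integer representatives and then correct them by multiples of $m_i$ using the divisibility $m_i m_j \mid (n_{ij}m_j - n_{ji}m_i)$.
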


\begin{proof}
Let $H = F_k \oplus T$, where $F_k$ is free of rank $k$ and $T$ is torsion.  If $(V; B, \Bbar; \pi)$ is a Heegaard presentation of 
$T$ with genus equal to $\rk T$, then taking the direct sum with $(X_k; E_k, E_k; \rho)$ where $\rho: X_k \to F_k$ is a surjection with
kernel $E_k$ gives the required presentation for $H$.  Thus we need only prove the theorem for torsion groups $T$.  Let $V$ be symplectic 
rank of $2 \rk T$ and let $A, B$ be a dual pair in $V$. Let $\pi_A : A \to T$ be a presentation of $T$, which is possible since 
$\rk A = \rk T$. We may, by Proposition \ref{P:structure of abelian groups}, choose a basis $a_i$ of $A$ such that $m_i a_i$ is a basis of $\ker \pi_A$, and $m_i \not= 0$ since $T$ is a torsion group. If $b_i$ is the dual basis of $B$, then $a_i, b_i$ is a symplectic basis of $V$. Let now $x_i = \pi_A(a_i)$; the $x_i$'s generate $T$, and the order of $x_i$ in $T$ is $m_i$.

If now $\lambda$ is the linking form on $T$, choose rational numbers $q_{ij}$ representing $\lambda(x_i, x_j) \ \mod ~ 1$, which, since $\lambda(x_i, x_j) \equiv \lambda (x_j, x_i)$, may be assumed to satisfy $q_{ij} = q_{ji}$. Note that
$$
m_i q_{ij} \equiv m_i \lambda (x_i, x_j) \equiv \lambda (m_ix_i, x_j) \equiv \lambda(0, x_j) \equiv 0 \ \mod ~ 1,
$$
that is, $m_i q_{ij} = n_{ij}$ is {\it integral}.

We now define $\Bbar \subset V$ to be generated by $\bar{b}_i = m_i a_i + \sum_j n_{ij} b_j$. Clearly the map $\pi : a_i \mapsto x_i, b_i \mapsto 0$ is a surjection of $V$ onto $T$, and its kernel is generated by $m_i a_i$ and $b_i$, or just as well by $\bar{b}_i$ and $b_i$. In other words, $\ker \pi = B + \Bbar$.  Observe that $\Bbar$ is isotropic since 
$$
\bar{b}_i \cdot \bar{b}_k = m_i n_{ki} - m_k n_{ik} = m_i m_k q_{ki} - m_k m_i q_{ik} = 0.
$$
Hence we have a linking $\lambda'$ induced on $T$, as in Lemma \ref{L:linking form-1}, by the isotropic pair $B, \Bbar$. An easy calculation shows that $\lambda' = \lambda$, and hence is non-singular by hypothesis. Now $\rk \Bbar$ is obviously $ = \rk A = \frac{1}{2} \rk V$, and $B \cap \Bbar = 0$: for $\sum_i r_i \bar{b}_i \in B$ if and only if $\sum_i r_i m_i a_i = 0$, \ie if and only if $r_i = 0$ all $i$ (since $m_i \not= 0$). We now apply Lemma \ref{L:linking form-1} to conclude that $\Bbar$ is lagrangian and so $(V; B, \Bbar; \pi)$ is a Heegaard presentation of $T$ with genus equal to the rank of $T$.
\end{proof}

Our next goal is to show that a minimal Heegaard pair has a natural volume in the sense of 
\S \ref{SS:equivalence classes of minimal presentations}.

\begin{lemma} \label{L:the volume again}
Let $(V; B, \Bbar)$ be a minimal Heegaard pair of genus $g$ with quotient $H$. Then for any two dual complements $A_i$ of $B$ ($i = 1, 2$) the two presentations $A_i \to H$ induce the same volumes.  We shall call this volume the volume induced by the Heegaard pair.
\end{lemma}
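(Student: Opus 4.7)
The plan is to exhibit an explicit isomorphism $\phi : A_1 \to A_2$ that intertwines the two presentations $\pi_i : A_i \to H$, and then observe that its determinant must be $\pm 1$ so that the induced maps on $\Lambda^g$ agree up to sign.

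First I would use the decomposition $V = A_i \oplus B$ (valid for each $i$ since $A_i, B$ is a dual pair): the projection $V \to V/B$ restricts to an isomorphism $p_i : A_i \xrightarrow{\cong} V/B$. Define $\phi := p_2^{-1} \circ p_1 : A_1 \to A_2$. Concretely, for $a_1 \in A_1$, $\phi(a_1)$ is the unique element of $A_2$ with $a_1 - \phi(a_1) \in B$. Since $B \subset B + \Bbar = \ker \pi$ (where $\pi : V \to H$ is the quotient map), we have $\pi(a_1) = \pi(\phi(a_1))$, \ie $\pi_1 = \pi_2 \circ \phi$. So $\phi$ is an equivalence between the presentations $\pi_1$ and $\pi_2$ of $H$ by free abelian groups.

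Next, because $A_1$ and $A_2$ are free abelian of the same rank $g$ and $\phi$ is an isomorphism, $\det \phi = \pm 1$ (in the classical sense, since both source and target are free). Let $\pm \varphi_i$ denote the canonical volume of the free group $A_i$. By the computation in Lemma~\ref{L:orientation-2} (or equivalently by the definition of $\det \phi$ on $\Lambda^g$), we have $\phi(\pm \varphi_1) = \pm \det\phi \cdot \varphi_2 = \pm \varphi_2$. Applying $\pi_2$ and using $\pi_2 \circ \phi = \pi_1$ gives
\[
\pi_1(\pm \varphi_1) \;=\; \pi_2(\phi(\pm \varphi_1)) \;=\; \pi_2(\pm \varphi_2),
\]
so the two volumes induced on $H$ coincide.

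The only conceptual point to check carefully is that $\phi$ really is an isomorphism of free abelian groups (not merely a $\Q$-isomorphism), and that its determinant is computed in the classical sense; but this is immediate from the fact that each $p_i$ is an isomorphism of free abelian groups of the same finite rank $g$. No step here requires minimality beyond ensuring $\rk A_i = g = \rk H$ so that the presentations $\pi_i$ are minimal and thus have well-defined (signed) volumes in the sense of \S\ref{SS:equivalence classes of minimal presentations}. I do not anticipate any serious obstacle: the argument is essentially the observation that dual complements of $B$ are canonically identified via projection to $V/B$, and this identification descends to $H$.
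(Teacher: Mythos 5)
Your proof is correct and is essentially the paper's own argument: your map $\phi = p_2^{-1}\circ p_1$ (sending $a_1$ to the unique element of $A_2$ with $a_1 - \phi(a_1)\in B$) is exactly the paper's isomorphism $j:A_1\to A_2$ obtained by projecting along $B$ in the splitting $V = A_2\oplus B$, and the commutativity with the quotient map plus the fact that equivalent minimal presentations induce the same volume finishes it in the same way. Nothing further is needed.
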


\begin{proof}
The direct sum projection of $V = A_2 \oplus B$ onto $A_2$ gives a map $j : A_1 \to A_2$, and $j$ is an {\it isomorphism}. Clearly,
$$
\begin{diagram}
\node{A_1} \arrow[2]{s,l}{j} \arrow{see} \\
\node[3]{H} \\
\node{A_2} \arrow{nee}
\end{diagram}
$$
commutes, so the presentations are equivalent and have the same volume.
\end{proof}

We can strengthen Theorem~\ref{T:a linked group of rank r has a Heegaard presentation of genus r} in the
minimal volume case.

\begin{theorem} \label{T:the volume again}
Let $(V_i; B_i, \Bbar_i)$, ($i= 1,2$) be minimal Heegaard pairs of genus $g$ with quotients $H_i$ and induced volumes $\pm \theta_i$,
and let $h : H_1 \to H_2$ be a linking isomorphism.  Then $h$ lifts to a Heegaard isomorphism if and only if $h$ is also volume preserving.
\end{theorem}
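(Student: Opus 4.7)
The plan is to treat the two directions separately, with the forward (necessity) direction following quickly from Lemma~\ref{L:orientation-2} and the reverse (sufficiency) direction requiring a three-step construction: lift $h$ on dual complements using Theorem~\ref{T:equivalence of minimal presentations}, extend symplectically, then correct the image of $\Bbar_1$ by an element of $\Sp(V_1)$ provided by Lemma~\ref{L:linking form-5}.

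\textbf{Necessity.} Suppose $f : (V_1; B_1, \Bbar_1) \to (V_2; B_2, \Bbar_2)$ is a Heegaard isomorphism lifting $h$. Pick any dual complement $A_1$ of $B_1$. Because $f$ is symplectic and $f(B_1) = B_2$, the image $A_2 := f(A_1)$ is a dual complement of $B_2$, and $p := f|_{A_1} : A_1 \to A_2$ is an isomorphism of free abelian groups of rank $g = \rk H_1$, so $\det p = \pm 1$. Since $f$ lifts $h$, the square
\[
\begin{CD}
A_1 @>\pi_1>> H_1 \\
@VpVV @VVhV \\
A_2 @>\pi_2>> H_2
\end{CD}
\]
commutes, where $\pi_i$ is the restriction of $V_i \to H_i$ to $A_i$ (a minimal presentation by minimality of the pair). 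By Lemma~\ref{L:orientation-2}, $\det h \equiv \det p \equiv \pm 1 \pmod{\tau}$, and, invoking Lemma~\ref{L:the volume again} to identify $\pm \theta_i$ with the volume induced by $A_i$, this says exactly $h(\pm\theta_1) = \pm\theta_2$.

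\textbf{Sufficiency.} Conversely, assume $h$ is volume-preserving. Pick dual complements $A_i$ of $B_i$; the restrictions $\pi_i : A_i \to H_i$ are then minimal presentations whose induced volumes are $\pm \theta_i$. Since $\det h = \pm 1 \pmod \tau$, Theorem~\ref{T:equivalence of minimal presentations} provides an isomorphism $p : A_1 \to A_2$ lifting $h$. Identifying $B_i$ with $A_i^\ast$ via the symplectic pairing, let $q := (p^\ast)^{-1} : B_1 \to B_2$. A direct check using antisymmetry of the symplectic form shows that $f := p \oplus q : V_1 \to V_2$ is a symplectic isomorphism with $f(A_1) = A_2$ and $f(B_1) = B_2$. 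Because $p$ lifts $h$ and $f(B_1) = B_2 \subset \ker(V_2 \to H_2)$, the map $f$ lifts $h$ on all of $V_1$, so $f(B_1 + \Bbar_1) = B_2 + \Bbar_2$.

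\textbf{Correcting the image of $\Bbar_1$.} The map $f$ need not send $\Bbar_1$ to $\Bbar_2$. Set $\Bbar_1' := f^{-1}(\Bbar_2)$; then $(V_1; B_1, \Bbar_1')$ is a Heegaard pair, $f$ carries it isomorphically to $(V_2; B_2, \Bbar_2)$, and $B_1 + \Bbar_1' = f^{-1}(B_2 + \Bbar_2) = B_1 + \Bbar_1$, so both pairs share the same quotient $H_1$. The linking form induced on $H_1$ by $(V_1; B_1, \Bbar_1')$ is the pullback via $h$ of the linking on $H_2$, which equals the linking induced by $(V_1; B_1, \Bbar_1)$ since $h$ is a linking isomorphism. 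Lemma~\ref{L:linking form-5} then yields $\sigma \in \Sp(V_1)$ with $\sigma(B_1) = B_1$, $\sigma(\Bbar_1) = \Bbar_1'$, and $\sigma$ inducing the identity on $H_1$. The composition $f \circ \sigma$ sends $B_1 \mapsto B_2$ and $\Bbar_1 \mapsto \Bbar_2$, is symplectic, and induces $h$ on quotients, so it is the desired Heegaard isomorphism.

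The main obstacle is the third step: after constructing the symplectic lift $f$ from Theorem~\ref{T:equivalence of minimal presentations}, one has no a priori control over $f(\Bbar_1)$, and it is precisely the linking-preserving hypothesis (rather than merely the group-theoretic isomorphism $h$) that allows Lemma~\ref{L:linking form-5} to supply the correcting automorphism $\sigma$. Keeping straight which input of each lemma supplies which piece of data, and in particular that the $B$-summand and the $B + \Bbar$-summand must be preserved simultaneously, is the delicate part of the bookkeeping.
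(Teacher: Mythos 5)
Your proof is correct and follows essentially the same route as the paper: the paper proves this theorem by rerunning the argument of Theorem~\ref{T:linking isomorphism lifts to Heegaard isomorphism} (i.e.\ the lift $p\oplus (p^*)^{-1}$ followed by the correction of $\Bbar_1$ via Lemma~\ref{L:linking form-5}), with Theorem~\ref{T:equivalence of minimal presentations} supplying the lift $p:A_1\to A_2$ in place of the non-minimal lifting theorem. You have simply written out in full the steps the paper cites by reference, and your explicit necessity argument via Lemma~\ref{L:orientation-2} and Lemma~\ref{L:the volume again} matches the paper's machinery.
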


\begin{proof}  Assume that $g = \rk H$. The proof of Theorem \ref{T:linking isomorphism lifts to Heegaard isomorphism} goes through exactly as is whenever we can lift $h$ to $f$ as in Lemma \ref{L:linking form-6}, and examining the proof of this lemma, we see that it also goes through as is if we can only lift $h$ to an isomorphism $p : A \to A'$ such that
$$
\begin{CD}
A @>>> H \\
@VpVV @VVhV \\
A' @>>> H'
\end{CD}
$$
commutes. Since $\rk A = \rk A' = g = \rk H$, the abelian groups $H$ and $H'$ have volumes $\theta, \theta'$ induced by these presentations, and Theorem \ref{T:equivalence of minimal presentations} tells us that $h$ lifts if and only if $h(\pm \theta) = \pm \theta'$, as desired.
\end{proof}

\begin{corollary} \label{C:stabilize}
Every Heegaard pair is isomorphic to a stabilization of a Heegaard pair whose genus is equal to the rank of the quotient.
\end{corollary}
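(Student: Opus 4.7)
The plan is to directly combine Theorem~\ref{T:a linked group of rank r has a Heegaard presentation of genus r} with Theorem~\ref{T:linking isomorphism lifts to Heegaard isomorphism}. Let $(V;B,\Bbar)$ be a Heegaard pair of genus $g$ with linked quotient $H$, and set $r=\rk H$. If $g=r$ there is nothing to prove, so assume $g>r$.

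First, I invoke Theorem~\ref{T:a linked group of rank r has a Heegaard presentation of genus r} to produce a Heegaard presentation $(V_0;B_0,\Bbar_0;\pi_0)$ of $H$ whose genus is exactly $r$; by definition of a Heegaard presentation, the linking form induced on $H$ by $(V_0;B_0,\Bbar_0)$ agrees with the prescribed linking on $H$. Next, I stabilize this pair by index $g-r$ to obtain
\[
(V_0\oplus X_{g-r};\,B_0\oplus E_{g-r},\,\Bbar_0\oplus F_{g-r}),
\]
which has genus $g$. By Lemma~\ref{L:linking form-2}, stabilization does not change the linked quotient, so this new pair has linked quotient $H$ as well.

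Now both $(V;B,\Bbar)$ and the stabilized pair have the same genus $g$ and link-isomorphic quotients (via the identity $\id:H\to H$). Because $g>r=\rk H$, the hypothesis of Theorem~\ref{T:linking isomorphism lifts to Heegaard isomorphism} is satisfied, so $\id_H$ lifts to a Heegaard isomorphism between $(V;B,\Bbar)$ and the stabilization of $(V_0;B_0,\Bbar_0)$. This exhibits $(V;B,\Bbar)$ as (isomorphic to) a stabilization of the genus-$r$ Heegaard pair $(V_0;B_0,\Bbar_0)$, finishing the argument.

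There is essentially no obstacle: the entire content has already been packaged into the two cited results. The only point to be careful about is the strict inequality $g>\rk H$ required by Theorem~\ref{T:linking isomorphism lifts to Heegaard isomorphism}, which is exactly why the case $g=r$ must be handled trivially and separately.
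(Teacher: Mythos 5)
Your proof is correct and follows essentially the same route as the paper: invoke Theorem~\ref{T:a linked group of rank r has a Heegaard presentation of genus r} to produce a genus-$r$ model, stabilize it to genus $g$, and then argue the two genus-$g$ pairs are isomorphic. The only difference is the final step: the paper cites Corollary~\ref{C:stably isomorphic iff linked quotients}, which on its face only asserts \emph{stable} isomorphism, whereas you go directly to Theorem~\ref{T:linking isomorphism lifts to Heegaard isomorphism} (together with Lemma~\ref{L:linking form-2} to keep track of the linked quotient under stabilization), which gives the genuine isomorphism immediately since $g>\rk H$. Your version is if anything slightly more self-contained and precise at that juncture, but the underlying argument is the same one the paper intends.
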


\begin{proof}
Let $(V; B, \Bbar)$ be of genus $g$ and let its quotient be $H$ of rank $r$. If $g=r$ we are done. If $g > r$, then by Theorem 
\ref{T:a linked group of rank r has a Heegaard presentation of genus r} there is a Heegaard presentation of $H$ of genus $r$, and then by Corollary \ref{C:stably isomorphic iff linked quotients}, its stabilization of index $k = g-r > 0$ is isomorphic to $(V; B, \Bbar)$.
\end{proof} 

Problem 3 asked whether there is a bound, or even more a uniform bound on the stabilization index for arbitrary minimal inequivalent but stably equivalent pairs $\cH_1, \cH_2 \in \Gamma_g$. 

\begin{corollary}
\label{C:problem on stabilization index}
  If two Heegaard splittings of the same 3-manifold $W$ have the same genus, then their associated symplectic Heegaard splittings are either isomorphic or become isomorphic  after at most single stabilization.   In particular, if the genus of the Heegaard splitting is greater than the rank of $H_1(W;\Z)$, then the symplectic Heegaard splittings are always isomorphic. 
  \end{corollary}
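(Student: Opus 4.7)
The plan is to derive this corollary almost immediately from Theorem~\ref{T:linking isomorphism lifts to Heegaard isomorphism} together with Corollary~\ref{C:stably isomorphic iff linked quotients}. Let $W$ be a closed orientable $3$-manifold equipped with two Heegaard splittings of the same genus $g$, and let $(V_i; B_i, \Bbar_i)$, $i = 1,2$, be the associated Heegaard pairs obtained from Theorem~\ref{T:double cosets and Heegaard pairs}(2). The key observation is that both pairs have the same \emph{linked quotient}: the underlying abelian group is $H := H_1(W;\Z)$ (independent of the splitting), and the natural linking form constructed in Theorem~\ref{T:quotient group of a Heegaard pair has a linking form} recovers the topological linking form of $W$ on the torsion subgroup of $H$, and hence is again independent of the splitting.

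Next I would bound the rank of $H$ against the genus. Since $H_1(M_g;\Z)$ has rank $2g$ and is the sum of the two rank-$g$ lagrangians $B_i + \Bbar_i$, the quotient $H = V_i/(B_i + \Bbar_i)$ has rank at most $g$. Thus always $\rk H \leqslant g$, and there are only two cases to consider.

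If $g > \rk H$, then Theorem~\ref{T:linking isomorphism lifts to Heegaard isomorphism} applies directly to the link-automorphism $\mathrm{id}:H \to H$: it lifts to a Heegaard isomorphism $(V_1;B_1,\Bbar_1) \to (V_2;B_2,\Bbar_2)$, so the two symplectic Heegaard splittings are already isomorphic, with no stabilization required. This establishes the ``In particular'' clause. If instead $g = \rk H$, stabilize each pair once: the stabilized pairs have genus $g+1 > \rk H$ and still have linked quotient $H$ (by Lemma~\ref{L:linking form-2}), so the previous case now applies to them and yields an isomorphism of the once-stabilized pairs.

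There is no real obstacle here; the whole content sits in Theorem~\ref{T:linking isomorphism lifts to Heegaard isomorphism}, which was the hard work of the section. The only mild subtlety is verifying that the linking form coming from the abstract Heegaard pair coincides with the topological linking form on $H_1(W;\Z)$, so that both splittings of $W$ really do induce the \emph{same} linked group $H$; this is built into the construction in Theorem~\ref{T:quotient group of a Heegaard pair has a linking form} and Theorem~\ref{T:double cosets and Heegaard pairs}, so no new argument is required.
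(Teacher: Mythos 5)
Your proposal is correct and follows essentially the same route as the paper: pass to the associated Heegaard pairs via Theorem~\ref{T:double cosets and Heegaard pairs}, note that both have linked quotient $H_1(W;\Z)$ with its linking form, and apply Theorem~\ref{T:linking isomorphism lifts to Heegaard isomorphism}, with one stabilization used precisely to force the genus above $\rk H_1(W;\Z)$ (the paper phrases this last step via Theorem~\ref{T:a linked group of rank r has a Heegaard presentation of genus r}, but the content is the same as your appeal to Lemma~\ref{L:linking form-2}). Your explicit observation that $\rk H \leqslant g$ always, so the two cases are exhaustive, is a nice touch that the paper leaves implicit.
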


\begin{proof}  Let $\tilde{h}, \tilde{h'}$ be Heegaard gluing maps of genus $g$ for the same 3-manifold. Let $h,h'$ be their images in 
$\Sp(2g,\Z)$.  By Theorem~\ref{T:double cosets and Heegaard pairs}  we know that the stable double cosets which characterize their stabilized symplectic Heegaard splittings  are in 1-1 correspondence with 
isomorphism classes of associated stabilized Heegaard pairs.  Let $(V;B,\bar{B}), (V'; B', \bar{B'})$ be the Heegaard pairs determined by $h =\rho^2(\tilde{h}), h' =\rho^2(\tilde{h'})$.   The fact that $\tilde{h}, \tilde{h'}$ determine the same 3-manifold $W$ shows that there is a linking isomorphism $H \to H'$ of their linked quotient groups.  Theorem~\ref{T:linking isomorphism lifts to Heegaard isomorphism} then asserts that, if $g > {\rm rank}H_1(W;\Z)$, then there is a Heegaard isomorphism  $(V;B,\bar{B}) \to (V'; B', \bar{B'})$.  In particular, the Heegaard splittings are equivalent.  By Theorem~\ref {T:a linked group of rank r has a Heegaard presentation of genus r}, every linked group $H$ has a Heegaard presentation of genus equal to the rank of $H$.  Thus, at most a single stabilization is required, and that only  if the genus is minimal and the Heegaard pairs are not isomorphic. 
\end{proof}

%%%%%%%%%%%%%%%%%%%%%%%%%%%%%%%%%%%%%%%%%%%%%%%%%
%%%%%%%%%%%%%%%%%%%%%%%%%%%%%%%%%%%%%%%%%%%%%%%%%
%%%%%%%%%%%%%%%%%%%%%%%%%%%%%%%%%%%%%%%%%%%%%%%%%

\newpage
\section{The classification problem for linked abelian groups}
\label{S:the classification problem for linked abelian groups}
%%%%%%%%%%%%%%%%%%%%%%%%%%%%%%%%%%%%%%%%%%%%%%%%%
%%%%%%%%%%%%%%%%%%%%%%%%%%%%%%%%%%%%%%%%%%%%%%%%%
%%%%%%%%%%%%%%%%%%%%%%%%%%%%%%%%%%%%%%%%%%%%%%%%%

We have reduced the problem of classifying symplectic Heegaard splittings to the problem of the classification of linked abelian groups. It remains to find a system of invariants that will do the job, and that is our goal in this section.
%%%%%%%%%%%%%%%%%%%%%%%%%%%%%%%%%%%%%%%%%%%%%%%%%
%%%%%%%%%%%%%%%%%%%%%%%%%%%%%%%%%%%%%%%%%%%%%%%%%
\subsection{Direct sum decompositions} 
\label{SS:direct sum decompositions}
%%%%%%%%%%%%%%%%%%%%%%%%%%%%%%%%%%%%%%%%%%%%%%%%%
%%%%%%%%%%%%%%%%%%%%%%%%%%%%%%%%%%%%%%%%%%%%%%%%%

We begin by showing that the problem of finding a complete system of invariants for a linked group $(H, \lambda)$ reduces to the problem of studying the invariants on the $p$-primary summands of the torsion subgroup $T$ of $H$.  
 \begin{theorem}[\cite{Sei}]  
 \label{T:linkings always split as direct sums}
 Every linking form on $T$ splits as a direct sum of linkings associated to the $p$-primary summands of $T$,
 and two linking forms are equivalent if and only if the linkings on the summands are equivalent.
 \end{theorem}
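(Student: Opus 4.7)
The plan is to establish the splitting by showing that the linking form $\lambda$ vanishes identically on pairs of elements coming from different $p$-primary components of $T$, and then to verify that each restriction is itself a non-singular linking form. First I would recall the canonical decomposition $T = \bigoplus_p T(p)$ from Theorem~\ref{T:fundamental theorem for f.g. abelian groups}(ii), where the sum is over the finite set of primes dividing $|T|$. The key computation is this: fix primes $p \neq q$ and take $x \in T(p)$ with $p^a x = 0$ and $y \in T(q)$ with $q^b y = 0$. By bilinearity, $p^a \lambda(x,y) = \lambda(p^a x, y) = 0$ and $q^b \lambda(x,y) = \lambda(x, q^b y) = 0$ in $\mathbb{Q}/\mathbb{Z}$. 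Since $\gcd(p^a, q^b) = 1$, a $\mathbb{Z}$-linear combination gives $\lambda(x,y) = 0$, so the $T(p)$ are mutually orthogonal with respect to $\lambda$.

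Once orthogonality is in hand, the direct sum decomposition is automatic: writing $x = \sum_p x_p$ and $y = \sum_q y_q$ with $x_p, y_p \in T(p)$, bilinearity gives $\lambda(x,y) = \sum_p \lambda(x_p, y_p)$, so $\lambda$ is the orthogonal direct sum of the restrictions $\lambda_p := \lambda|_{T(p) \times T(p)}$. Next I would verify that each $\lambda_p$ is non-singular. Given a homomorphism $\varphi_p : T(p) \to \mathbb{Q}/\mathbb{Z}$, extend it to $\varphi : T \to \mathbb{Q}/\mathbb{Z}$ by zero on all other $T(q)$. By non-singularity of $\lambda$ there is a unique $z \in T$ with $\varphi(y) = \lambda(z, y)$ for all $y \in T$. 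Writing $z = \sum_q z_q$ and testing $\varphi$ against elements of each $T(q)$, the orthogonality above forces $z_q = 0$ for $q \neq p$, so $z = z_p \in T(p)$ represents $\varphi_p$ via $\lambda_p$; uniqueness is inherited from $\lambda$.

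For the equivalence statement, the forward direction is the content of the observation that the $T(p)$ are characteristic subgroups of $T$: any group isomorphism $f : (T, \lambda) \to (T', \lambda')$ must send $T(p)$ to $T'(p)$ for each prime $p$, since $T(p)$ is intrinsically defined as the subgroup killed by a power of $p$. Hence $f$ restricts to a linking isomorphism $f_p : (T(p), \lambda_p) \to (T'(p), \lambda'_p)$ for each $p$. The reverse direction is the easy one: given link-isomorphisms $f_p : (T(p), \lambda_p) \to (T'(p), \lambda'_p)$ for each prime $p$ dividing $|T| = |T'|$, the direct sum $\bigoplus_p f_p$ is a link-isomorphism $(T, \lambda) \to (T', \lambda')$ by the orthogonal decomposition established above.

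I do not expect any real obstacle in this argument; the entire content is the orthogonality computation $\lambda(T(p), T(q)) = 0$, which is a one-line consequence of the coprimality of the orders. The only point requiring mild care is confirming non-singularity of the summands, and for this the extension-by-zero trick above works cleanly because $\mathbb{Q}/\mathbb{Z} = \bigoplus_p \mathbb{Z}[1/p]/\mathbb{Z}$ so homomorphisms from $T(p)$ to $\mathbb{Q}/\mathbb{Z}$ land in the $p$-primary part and extend trivially.
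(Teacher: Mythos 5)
Your proof is correct and follows essentially the same approach as the paper: both arguments hinge on the orthogonality computation that $\lambda(x,y)=0$ when $x,y$ lie in distinct $p$-primary components, derived from the coprimality of the annihilating exponents. Your writeup is actually more thorough than the paper's, which gives only the orthogonality calculation and leaves the non-singularity of the restrictions $\lambda_p$ and both directions of the equivalence claim implicit; your extension-by-zero argument for non-singularity and the characteristic-subgroup observation for the equivalence direction fill in exactly what the paper omits.
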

 \begin{proof}
 Let $x, y \in T$ where $x$ has order $m$ and $y$ has order $n$. Then
 $\lambda(x,y) = \lambda (x, y + 0) =  \lambda (x, y) + \lambda (x,0),$ hence  $\lambda (x,0) = 0.$
 From this it follows that
 $
 n \lambda (x, y) = \lambda(x, ny) = \lambda (x, 0) = 0 (mod~1)$ because $ny=0$.
 By the symmetry of linking numbers, we also have $m \lambda (x, y) = 0 (mod ~1).$  Therefore
 $
 \lambda (x,y) = \frac{r}{m} = \frac{s}{n} (mod ~ 1)$ for some integers $r,s$. This implies that $ \lambda(x,y) = \frac{t}{(m,n)} (mod ~ 1)$ for some integer $t$, where $(m,n)$ is the greatest common divisor of $m$ and $n$.
 Thus if $x, y$ have order $p^a, q^b$ where $p, q$ are distinct primes, then $\lambda(x,y) = 0$.
 Thus the linking on $T$ splits into a direct sum of linkings on the $p$-primary summands, as claimed.
 \end{proof}
  
In view of Theorem \ref{T:linkings always split as direct sums}, we may restrict our attention to a summand $T(p_j)$ of $T$ of 
prime power order $p_j^t$, where $p_j \in \{p_1,p_2,\dots,p_k\}$, the set of prime divisors of the largest torsion coefficient
$\tau_t$.  This brings us, immediately, to a very simple question: how do we find the linking form on $T(p_j)$ from a symplectic Heegaard splitting?  Our next result addresses this issue.
\begin{corollary}
\label{C:Seifert and Reidemeister}
Let $T$ be the torsion subgroup of $H_1(W;\Z)$.  Let  $\cQ^{(2)} = ||q_{ij}||$ and $\cP^{(2)} = {\rm Diag}(\tau_1,\tau_2,\dots,\tau_t)$ be the matrices that are given in 
Theorem~\ref{T:partial normal form}.  
\begin{enumerate}
\item [(1)] The $t\times t$ matrix 
$\cQ^{(2)}(\cP^{(2)})^{-1} = \| \lambda (y_r, y_j) \| = ||{q_{ij} \over \tau_j} ||$
 determines a linking on $H$.  
 \item [(2)] The linking matrices that were studied by Seifert in {\rm \cite{Sei}}  are the direct sum of $k$ distinct $t\times t$ matrices, one for each prime divisor $p_d$ of $\tau_t$. Each  summand represents the restriction of the linking in (1) to the cyclic summands of $T$ whose order is a fixed power of $p_d$. The one that is associated to the prime $p_d$ is a matrix of dimension at most  $t\times t$:
 \begin{equation}
\label{E:Seifert's linking matrix}
\lambda(g_{id}, g_{jd}) = ||{\tau_i\tau_jq_{ij} \over (p_d^{e_{i,d}})(p_d^{e_{jd}})\tau_j}|| = ||{\tau_iq_{ij} \over (p_d^{e_{id}})(p_d^{e_{jd}})}|| 
\end{equation} 
\end{enumerate}
 \end{corollary}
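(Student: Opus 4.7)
The plan is to unpack the partial normal form of Theorem~\ref{T:partial normal form} geometrically as a Heegaard pair and then apply the linking construction from the proof of Theorem~\ref{T:quotient group of a Heegaard pair has a linking form}. First I would identify $B$ and $\bar B$ concretely: working in the standard symplectic basis $a_1,\dots,a_g,b_1,\dots,b_g$ of Definition~\ref{D:symplectic space}, the lagrangian $B = \ker j_*$ is the span of the $b_i$, and (via the correspondence of Theorem~\ref{T:double cosets and Heegaard pairs}) $\bar B = \cH'(B)$ is spanned by the images $\bar b_j := \cH'(b_j)$, \ie the last $g$ columns of $\cH'$. Reading these columns off from the block form in (\ref{E:partial normal form for cH}), only the middle block matters for the torsion quotient, and after the obvious shift of indices to the torsion range we have $\bar b_i = \tau_i a_i + \sum_k q_{ki}\, b_k$ for $i = 1, \dots, t$. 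The trivial-summand columns and free-summand columns contribute either vacuously or to the free summand of $H$, which carries no linking.

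Next I would compute $\lambda(y_i, y_j)$ directly using the recipe from the proof of Theorem~\ref{T:quotient group of a Heegaard pair has a linking form}. The generator $y_i$ of the cyclic summand of order $\tau_i$ lifts to $a_i \in V$, and the identity $\tau_i a_i = -\sum_k q_{ki}\, b_k + \bar b_i$ is exactly the decomposition $\tau_i u_i = b + \bar b$ with $b \in B$, $\bar b \in \bar B$. Using the sign convention $a_j \cdot b_k = \delta_{jk}$ from Definition~\ref{D:symplectic constraints}, this yields
\[
\lambda(y_i, y_j) \;=\; \frac{1}{\tau_i}\bigl(b \cdot a_j\bigr) \;=\; \frac{q_{ji}}{\tau_i} \pmod{1}.
\]
The symmetry relation in Theorem~\ref{T:partial normal form}(iv) implies $q_{ji}/\tau_i \equiv q_{ij}/\tau_j \pmod 1$ regardless of whether $i \le j$ or $i > j$, so $\lambda(y_i, y_j)$ is exactly the $(i,j)$-entry of $\cQ^{(2)}(\cP^{(2)})^{-1}$. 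This proves~(1).

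For part~(2) I would first invoke Theorem~\ref{T:linkings always split as direct sums} to split the linking as a direct sum over the $p$-primary summands of $T$; this immediately gives $\lambda(g_{i,d}, g_{j,d'}) = 0$ whenever $d \ne d'$, since the orders of $g_{i,d}$ and $g_{j,d'}$ are then coprime. For a fixed prime $p_d$, equation~(\ref{E:gid from y_i}) expresses $g_{i,d} = (\tau_i/p_d^{e_{i,d}})\, y_i$, so bilinearity of $\lambda$ combined with part~(1) yields
\[
\lambda(g_{i,d}, g_{j,d}) \;=\; \frac{\tau_i}{p_d^{e_{i,d}}} \cdot \frac{\tau_j}{p_d^{e_{j,d}}} \cdot \frac{q_{ij}}{\tau_j} \;=\; \frac{\tau_i\, q_{ij}}{p_d^{e_{i,d}}\, p_d^{e_{j,d}}} \pmod{1},
\]
which is precisely Seifert's formula.

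The main obstacle I anticipate is bookkeeping: one must pin down unambiguously the direction in which $\cH$ transports $B$ to $\bar B$ (this is the choice that was suppressed in the $(\Gamma_g, \Lambda_g)$-double coset picture of Corollary~\ref{corollary:3.4} and Theorem~\ref{T:double cosets and Heegaard pairs}), fix the sign of the intersection pairing so that the formula $\lambda(y_i, y_j) = q_{ij}/\tau_j$ emerges without a spurious minus, and verify that the symmetry relation of Theorem~\ref{T:partial normal form}(iv), which is stated only for $i \le j$, really does force $q_{ji}/\tau_i \equiv q_{ij}/\tau_j \pmod 1$ for all index pairs, so that the compact matrix expression $\cQ^{(2)}(\cP^{(2)})^{-1}$ is legitimate even when the $\tau_i$'s coincide or divide each other nontrivially.
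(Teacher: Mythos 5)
Your part (2) is essentially the paper's own argument: both split the linking over the $p$-primary summands via Theorem~\ref{T:linkings always split as direct sums} and then apply bilinearity to the relation (\ref{E:gid from y_i}). For part (1), however, you take a genuinely different (and correct) route. The paper argues geometrically: the entries $q_{ij}$ of $\cQ^{(2)}$ are read as intersection numbers of the curves $h(b_j)$ with the meridian discs $D_i$ bounded by the $b_i$ in $N_g$ (equivalently, of $b_i$ with the discs $\bar{D}_j$ in $\bar{N}_g$), and dividing by $\tau_j$ turns intersection numbers into linking numbers, tying the matrix $\cQ^{(2)}(\cP^{(2)})^{-1}$ directly to Seifert's topological linking in $W$; symmetry is then read off from the symplectic constraint that $\cP^{(2)}\cQ^{(2)}$ is symmetric. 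You instead stay inside the algebraic framework of \S\ref{S:Heegaard pairs and their linked abelian groups}: you extract $\bar{b}_i = \tau_i a_i + \sum_k q_{ki} b_k$ from the columns of the partial normal form and evaluate the linking form of Theorem~\ref{T:quotient group of a Heegaard pair has a linking form} on the lifts $a_i$ of the generators $y_i$, obtaining $\lambda(y_i,y_j)\equiv q_{ji}/\tau_i$; this is in effect the computation of Lemma~\ref{L:linking form-4} (and of Theorem~\ref{T:a linked group of rank r has a Heegaard presentation of genus r}) specialized to the normal form, and it identifies the matrix with the Heegaard-pair linking used throughout the classification, whereas the paper's shorter geometric argument identifies it with the topological linking numbers. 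The worries you flag are only bookkeeping: the symmetry relation is not restricted to $i\le j$, since (\ref{equation:normalform14}) says $\cP^{(2)}\cQ^{(2)}$ is symmetric, i.e.\ $\tau_i q_{ij}=\tau_j q_{ji}$ for all $i,j$, giving $q_{ji}/\tau_i=q_{ij}/\tau_j$ as an exact identity (not merely mod $1$), exactly the observation the paper makes; and the choices of transport direction and sign of the intersection pairing at worst replace $\cH$ by one of the four matrices of Remark~\ref{R:4 double cosets} or negate the form, which affects neither assertion of the corollary.
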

\begin{proof}   
(1) The easiest way to see that $\cQ^{(2)}(\cP^{(2)})^{-1}$ is a linking on $H$ is from the geometry. The matrix $\cQ^{(2)}$ records the number of appearances (algebraically) of each homology basis element $b_i$ in $h(b_j)$. The curves which represent the $b_i$'s bound discs $D_i$ in $N_g$, and the curves which represent the $h(b_j)$'s bounds discs $\bar{D}_j$ in $\Nbar_g$. The $g_{ij}$'s are intersection numbers of $h(b_j)$ with $D_i$ or of $b_i$ with $\bar{D}_j$. Dividing by $\tau_j$, intersection numbers go over to linking numbers.  Note that the submatrix $\cP^{(2)} \cQ^{(2)}$ is symmetric by (\ref{E:symplectic constraints-2}), hence $\cQ^{(2)} (\cP^{(2)})^{-1}$  is likewise symmetric, as it must be because $\cQ^{(2)} (\cP^{(2)})^{-1}$ is a linking. 

(2) By Theorem~\ref{T:linkings always split as direct sums}, $T$ is a direct sum of $p$-primary groups $T(p_1) \oplus \cdots \oplus T(p_k)$.  From this it follows that the linking on $T$ also splits as a direct sum of the linkings associated to $T(p_1),\dots,T(p_k)$.  

We focus on one such prime $p = p_d$. By Theorem~\ref{T:fundamental theorem for f.g. abelian groups}  the $p$-primary group $T(p)$ splits in a unique way as a direct sum of cyclic groups whose orders are powers of $p$, moreover the powers of $p$ which are involved occur in a non-decreasing sequence, as in (\ref{E:inequality on prime powers}) of Theorem~\ref{T:fundamental theorem for f.g. abelian groups}.
The generators of these groups are ordered in a corresponding way, as $g_{1,d},g_{2,d},\dots,g_{t,d}$, where distinct generators $g_{i,d}, g_{{i+1},d}$ may generate cyclic groups of the same order, and where it is possible that the first $q$ of these groups are trivial.  
As in the statement of Theorem~\ref{T:fundamental theorem for f.g. abelian groups},  the generators $g_{i,d}$ and  $y_i$ are related by (\ref{E:gid from y_i}).  The expression on the right in (\ref{E:Seifert's linking matrix}) follows immediately.   There are $k$ such matrices in all, where $k$ is the number of distinct prime divisors of $\tau_t$.     \end{proof}

%%%%%%%%%%%%%%%%%%%%%%%%%%%%%%%%%%%%%%%%%%%%%%%%%
%%%%%%%%%%%%%%%%%%%%%%%%%%%%%%%%%%%%%%%%%%%%%%%%%
\subsection{Classifying linked $p$-groups when $p$ is odd.  Solution to 
Problem 2, odd $p$.} 
\label{SS:classifying linked p-groups, p odd}  
%%%%%%%%%%%%%%%%%%%%%%%%%%%%%%%%%%%%%%%%%%%%%%%%%
%%%%%%%%%%%%%%%%%%%%%%%%%%%%%%%%%%%%%%%%%%%%%%%%%
In this section we describe Seifert's classification theorem for the case when all of the torsion coefficients are odd.  
Seifert studied the $t\times t$ matrix $\lambda(g_i, g_j)$ in (\ref{E:Seifert's linking matrix}) belonging to a fixed prime $p= p_d$. 
To explain what he did, we start with the inequalities in (\ref{E:inequality on prime powers}), but restrict to a subsequence of 
cyclic groups all of which have the same prime power order.  We simplify the notation, using the symbols
$$\varepsilon_1=\cdots=\varepsilon_1<\varepsilon_2=\cdots = \varepsilon_2 < \cdots <\varepsilon_r=\varepsilon_r=\cdots\varepsilon_r$$ 
in place of the powers $e_{i,d}$ which appear in 
(\ref{E:Seifert's linking matrix}).  The linking matrix then divides into blocks whose size is determined by the number of times, 
denoted $t_i$, that a given power, say $\varepsilon_i$, is repeated.  Among these, the blocks that interest us are the square blocks 
whose diagonals are along  the main diagonal of the linking matrix.  There will be $r$ such blocks of dimension $t_1,\dots,t_r$  
if $r$ distinct powers $p^{\varepsilon_i}$ occur in the subgroups of $T$ that are cyclic with order a power of $p$:
\begin{equation} \label{E:Seifert's matrix}
\big\| \lambda (g_i, g_j) \big\|  \quad = \quad 
\left(\begin{array}{cccc}
\frac{\cA_1}{p^{{\varepsilon_1}}} & * & \cdots & * \\
* & \frac{\cA_{2}}{p^{\varepsilon_{2}}} & \cdots & * \\
\vdots & \vdots & \ddots & \vdots \\
* & \cdots & \cdots   &  \frac{\cA_{r}}{p^{\varepsilon_{r}}}
\end{array} \right)
\end{equation}
The stars relate to linking numbers that we shall not consider further.   

\begin{theorem}[\cite{Sei}]
\label{T:Seifert's theorem on the linking form}
Two linkings of $T(p)$ are equivalent if and only if the corresponding box determinants $| \cA_1 |, | \cA_{2} |, \dots, | \cA_{r} |$
have the same quadratic residue characters $\mod p$.
\end{theorem}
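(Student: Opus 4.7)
The plan is to reduce the linking form to block-diagonal shape and then invoke the classical classification of nondegenerate symmetric bilinear forms over $\Z/p^k$ for odd $p$.

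First, I would show that the off-diagonal stars in (\ref{E:Seifert's matrix}) can be killed by a change of basis on $T(p)$, so that the matrix becomes the orthogonal sum of the diagonal blocks $\cA_i/p^{\varepsilon_i}$. Partition the given generators into the $r$ blocks according to their order $p^{\varepsilon_i}$, and proceed downward from $i=r$. Since $p^{\varepsilon_i}g_i=0$, we have $p^{\varepsilon_i}\lambda(g_i,g_j)\equiv 0\ \mod 1$; so when $i<j$ the cross linkings $\lambda(g_i,g_j)$ lie in $p^{-\varepsilon_i}\Z/\Z$. The nondegeneracy of $\cA_j$ mod $p$ (hence mod $p^{\varepsilon_j}$ by Hensel, since $p$ is odd and $\cA_j$ is symmetric) lets us solve for integer coefficients $c_{ij}$, well defined modulo $p^{\varepsilon_i}$, such that replacing $g_i$ by $g_i-\sum_{k} c_{ik}g_k$ (sum over the $j$-th block) annihilates all linkings between the modified $g_i$ and block $j$. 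Iterating from $j=r$ down to $j=i+1$ orthogonalizes block $i$ against every larger block without disturbing previously achieved orthogonality.

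Second, in this orthogonalized form the linking splits as a direct sum of forms, one for each block. Two linkings on $T(p)$ are equivalent if and only if the corresponding orthogonal summands are equivalent: necessity follows because any isomorphism carries the subgroup $p^{\varepsilon_i-1}(T(p)[p^{\varepsilon_i}])/p^{\varepsilon_i-1}(T(p)[p^{\varepsilon_{i-1}}])$ canonically to itself (it is intrinsically defined from the abelian group structure), and the $i$-th block is recovered as the induced form on this $\F_p$-subquotient; sufficiency is immediate by taking direct sums of the block isomorphisms. Hence the classification reduces to classifying a single nondegenerate symmetric bilinear form $\cA_i/p^{\varepsilon_i}$ on $(\Z/p^{\varepsilon_i})^{t_i}$.

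Third, I would invoke the classical fact: for odd $p$, a nondegenerate symmetric form on $(\Z/p^k)^{t}$ is determined up to equivalence by its rank $t$ and by the quadratic residue character of its determinant mod $p$. The proof is a standard two-step orthogonal diagonalization (possible because $2$ is a unit mod $p^k$), followed by the observation that any unit $u \in \Z/p^k$ that is a square mod $p$ is a square mod $p^k$ by Hensel's lemma. Consequently each diagonal entry can be rescaled to $1$ except possibly the last, which represents $|\cA_i|$ mod $p$ in $\F_p^\times / (\F_p^\times)^2$. Combining with the block decomposition, the full invariant is precisely the tuple of Legendre symbols $(|\cA_1|/p),\ldots,(|\cA_r|/p)$, which is the theorem.

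The main obstacle is the orthogonalization step: one must verify with care that the coefficients $c_{ij}$ used to correct $g_i$ are well defined as elements of $\Z/p^{\varepsilon_i}$, i.e.\ that the cross linkings really lie in $p^{-\varepsilon_i}\Z/\Z$ and that the invertibility of $\cA_j$ mod $p^{\varepsilon_j}$ can be used at the level mod $p^{\varepsilon_i}$. Both points use only that $p$ is odd and the symmetry and nondegeneracy of the $\cA_j$; once this lemma is in place, the remaining steps are essentially formal consequences of the theory of quadratic forms over finite local rings of odd residue characteristic.
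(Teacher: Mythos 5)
The paper does not actually prove this theorem; it is attributed to Seifert \cite{Sei} and used as a black box, with no argument supplied in the text. There is therefore nothing in the paper to compare against. Your reconstruction follows what is indeed the classical route: orthogonalize the linking matrix to block-diagonal form, then invoke the classification of nondegenerate symmetric bilinear forms over $\Z_{p^k}$ for $p$ odd. The orthogonalization step and the final classification step are essentially sound. Two small points worth flagging: a unit mod $p$ is automatically a unit mod $p^{\varepsilon_j}$, so Hensel's lemma is not needed to invert $\cA_j$; and nondegeneracy of $\cA_j$ mod $p$ is itself available only after blocks larger than $j$ have been orthogonalized away (and $\cA_j$ changes during those earlier steps), so the induction has to begin at $j=r$ and the nondegeneracy claim for each block established in turn rather than invoked up front.

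The genuine gap is in the necessity half of your second step, where you claim the block determinants mod $p$ are intrinsic invariants of the linked group. The subquotient you propose, $p^{\varepsilon_i-1}(T(p)[p^{\varepsilon_i}])/p^{\varepsilon_i-1}(T(p)[p^{\varepsilon_{i-1}}])$, does not isolate block $i$: since $\varepsilon_i-1\geq\varepsilon_{i-1}$, multiplication by $p^{\varepsilon_i-1}$ annihilates $T(p)[p^{\varepsilon_{i-1}}]$, so the denominator is zero and the quotient is all of $p^{\varepsilon_i-1}(T(p)[p^{\varepsilon_i}])$, whose $\Z_p$-dimension is $\sum_{j\geq i}t_j$ rather than $t_i$. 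Moreover, $\lambda$ restricted to that subgroup (which lies in $T(p)[p]$) vanishes identically once $\varepsilon_i\geq 2$, so one cannot speak of an ``induced form'' there without rescaling. The idea is right but the formula needs fixing: a correct intrinsic description is the graded piece $\bigl(T(p)[p]\cap p^{\varepsilon_i-1}T(p)\bigr)/\bigl(T(p)[p]\cap p^{\varepsilon_i}T(p)\bigr)$, equipped with the rescaled form $\bar\lambda(\bar x,\bar y)\equiv p^{\varepsilon_i-1}\lambda(x',y')$ where $x=p^{\varepsilon_i-1}x'$; one then checks that this is well-defined and, in any block-diagonal basis, equals $\cA_i\bmod p$. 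Alternatively, one can read the block Legendre symbols off the Gauss sums $\sum_{x\in T(p)}E(p^k\lambda(x,x))$, in direct analogy with the $p=2$ computations the paper carries out in \S6.3.
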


Summarizing, we can now give the promised solution to Problem 2 in the case when $T$ has no 2-torsion.
\begin{theorem}
\label{T:compute invariants for stable equivalence, p odd}
\begin{enumerate}
\item  We are given the gluing map $\tilde{h}\in\tilde{\Gamma}_g$ for a Heegaard splitting of genus $g$ of a 3-manifold $W$. Let 
$$\cH = \rho^2(\tilde{h}) = \mattwotwo{\cR}{\cP}{\cS}{\cQ} \in \Sp(2g,\Z).$$
\item Use the methods described in the proof of Theorem II.9 of \cite{Newman} to find matrices $\cU,\cV$ such that $\cU\cP\cV = \cP^{(1)} = \diag (1, 1, \dots, 1, \tau_1, \tau_2, \dots, \tau_t, 0, \dots, 0)$.    
\item Use the methods described in the proof of Theorem~\ref{T:partial normal form} to find the equivalent matrix $\cH'$.   
\item  Let $y_i$ be a generator of the subgroup of order $\tau_i$ of $T$.  Let $p_1< p_2<\dots < p_k$ be the primes divisors of $\tau_1, \tau_2, \dots, \tau_t$. Compute the elements $g_{ij}$, using {\rm (\ref{E:gid from y_i})}.  Then compute the $k$ symmetric matrices $\big\| \lambda (g_i, g_j) \big\|$, using (\ref{E:Seifert's linking matrix}) above.  
\item  Using the matrices $\big\| \lambda (g_i, g_j) \big\|$, determine the submatrices $\cA_1,\dots,\cA_r$ that are shown in (\ref{E:Seifert's matrix}) above.  Here each block matrix $\cA_q$ belongs to a sequence (possibly of length 1) of cyclic subgroups of $T(p)$ of like prime power order. The matrix $\cA_q$ might be the identity matrix. Compute the quadratic residue characters $\mod p$ of the determinants $| \cA_1 |, | \cA_{2} |, \dots, | \cA_{r} |$.  Repeat this for each $p$.
\item By Corollary~\ref{C:stably isomorphic iff linked quotients} and Theorem~\ref {T:Seifert's theorem on the linking form}, the rank $r$, the torsion coefficients $\tau_1,\dots,\tau_t$ and the complete array of quadratic reside characters mod $p$ are the complete set of topological invariants of the associated symplectic Heegaard splitting of $W$ that is determined by $\cH$.  
\end{enumerate} 
\end{theorem}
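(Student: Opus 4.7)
The theorem bundles together several results already established, so the plan is to verify that each step of the algorithm is justified by an earlier result, and then to check that the output is both well-defined and a complete invariant.

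First, I would dispatch the algorithmic steps in order. Step (2) is Smith normal form (Corollary~\ref{C:Smith normal form}), which produces $\cP^{(1)}$ from $\cP$ and is a standard integral row/column reduction. Step (3) applies Theorem~\ref{T:partial normal form} to pass from $\cH$ to a representative $\cH'$ of its double coset $\Lambda_g \cH \Lambda_g$ in the partial normal form, so in particular $\cP^{(2)} = \diag(\tau_1, \dots, \tau_t)$ is a relation matrix for $T$, and the submatrix $\cH^{(2)}$ determines the linking on $T$ by Corollary~\ref{C:Seifert and Reidemeister}(1). Step (4) then computes the generators $g_{i,d}$ of the $p_d$-primary components via the formula (\ref{E:gid from y_i}) from Theorem~\ref{T:fundamental theorem for f.g. abelian groups}, and the resulting restricted linking matrices by formula (\ref{E:Seifert's linking matrix}) of Corollary~\ref{C:Seifert and Reidemeister}(2). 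Step (5) extracts the block determinants along the main diagonal (organized according to the equal prime-power blocks in~(\ref{E:inequality on prime powers})) and reads off their quadratic residue characters modulo $p$.

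The substantive content is step (6), which asserts that the list just produced is a \emph{complete} set of stable-equivalence invariants. The plan here is to invoke two things. On the one hand, Theorem~\ref{T:double cosets and Heegaard pairs} identifies stable double cosets $\Gamma_g \ \mod\ \Lambda_g$ with stable isomorphism classes of Heegaard pairs, and Corollary~\ref{C:stably isomorphic iff linked quotients} identifies the latter with link-isomorphism classes of the quotient $H$ equipped with its natural linking form on $T$ (together with the torsion-free rank). Thus the stable invariants are exactly: the free rank $r$, the torsion coefficients $\tau_1,\dots,\tau_t$, and the link-isomorphism class of $(T,\lambda)$. On the other hand, Theorem~\ref{T:linkings always split as direct sums} reduces the classification of $(T,\lambda)$ to the classification of the $p$-primary summands $(T(p),\lambda|_{T(p)})$ for each prime $p \mid \tau_t$, and for $p$ odd Theorem~\ref{T:Seifert's theorem on the linking form} asserts that within each block of constant prime-power order $p^{\varepsilon_i}$ the only invariant is the quadratic-residue character mod $p$ of the diagonal block determinant $|\cA_i|$. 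Assembling these two reductions gives the completeness claim.

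The only point requiring a small argument beyond citation is the well-definedness of the quadratic residue characters of $|\cA_1|,\dots,|\cA_r|$, since Theorem~\ref{T:partial normal form} only normalizes $\cH$ up to the residual group $G \subset \Sigma_t \times \Sigma_t$ described in Lemma~\ref{L:improved normal form}. I would conclude by observing that the action in (\ref{E:fiddling with P}) replaces $\cQ^{(2)}(\cP^{(2)})^{-1}$ by $\hat{\cU}^{-1}\bigl(\cQ^{(2)}(\cP^{(2)})^{-1}\bigr)\cU^{-1}$, an action which preserves each $p$-primary block up to a congruence $\cA_i \mapsto \hat{\cW}_i \cA_i \cW_i \pmod{p^{\varepsilon_i}}$ with $\cW_i \in \GL(t_i,\mathbb{Z}/p^{\varepsilon_i})$; the determinant therefore changes by $(\det \cW_i)^2 \pmod{p^{\varepsilon_i}}$, which is a square and in particular a nonzero square mod $p$, so the Legendre symbol is unchanged. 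The main obstacle in the whole argument is really just this last well-definedness check together with the organizational bookkeeping of the two orthogonal decompositions of $T$ (invariant factors versus primary decomposition); the rest is citation.
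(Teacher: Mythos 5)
Your proposal matches the paper's treatment: Theorem~\ref{T:compute invariants for stable equivalence, p odd} is presented there as a summary algorithm whose justification is exactly the chain of citations you assemble (Corollary~\ref{C:Smith normal form} and Theorem~\ref{T:partial normal form} for steps (2)--(3), Theorem~\ref{T:fundamental theorem for f.g. abelian groups} and Corollary~\ref{C:Seifert and Reidemeister} for steps (4)--(5), and Corollary~\ref{C:stably isomorphic iff linked quotients} together with Theorem~\ref{T:linkings always split as direct sums} and Theorem~\ref{T:Seifert's theorem on the linking form} for completeness in step (6)), with no further argument supplied. Your closing well-definedness check is harmless but not needed: since Theorem~\ref{T:Seifert's theorem on the linking form} is an if-and-only-if statement about the linking form itself, the quadratic residue characters are automatically invariants of the link-isomorphism class, which by Corollary~\ref{C:stably isomorphic implies linked quotients} is independent of the representative chosen in the double coset.
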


\newpage

%%%%%%%%%%%%%%%%%%%%%%%%%%%%%%%%%%%%%%%%%%%%%%%%%
%%%%%%%%%%%%%%%%%%%%%%%%%%%%%%%%%%%%%%%%%%%%%%%%%
\subsection{Classifying linked $p$-groups, $p=2$.  Solution to 
Problem 2, $p=2$.} 
\label{SS:classifying linked p-groups, p=2} 
%%%%%%%%%%%%%%%%%%%%%%%%%%%%%%%%%%%%%%%%%%%%%%%%%
%%%%%%%%%%%%%%%%%%%%%%%%%%%%%%%%%%%%%%%%%%%%%%%%%

If $(H, \lambda)$ is a linked $p$-group, we have seen that $H$ may be decomposed into orthogonal summands $B_j$, each 
of which is a free $\Z_{p^j}$-module, that is, a direct sum of some number $r_j$ of copies of $\Z_{p^j}$; $r_j$ is the 
{\it rank} of $B_j$, and $B_j$ is the $j^{\Th}$ {\it block}.  The $r_j$'s are as usual invariants of $H$ alone. 
For odd $p$, the linking type of $\lambda |_{B_j}$ is an invariant of $\lambda$, and these types (which are determined 
by a Legendre symbol) give a complete set of invariants.  For $p=2$, however, no such decomposition is possible; 
this is due to the fact that the linking type of $B_j$ {\it is no longer an invariant}.  Here is a typical example 
of the kind of thing that can happen.  

\begin{definition}  An element $x \in H$ is said to be {\it primitive} if it generates a direct summand of $H$, or, equivalently, if $x \notin 2H$.  $\|$
\end{definition}

\begin{example}
\label{example:p2problem}
Let $H = \Z_{2^{n-1}} \oplus \Z_{2^n} \oplus \Z_{2^n}$, so $H$ has rank 3.  Let 
$$\cC= \bmattwotwo{0}{1}{1}{0} \ \ \   {\rm and} \ \ \  \cD=\bmattwotwo{2}{1}{1}{2},$$
and consider the two linking forms $\lambda$ and $\lambda'$ on $H$ whose matrices with respect to the standard basis are
\begin{equation}
\label{E:basic forms} 
(\frac{1}{2^{n-1}}) \oplus (\frac{1}{2^n}\cC) \ \ \  
  {\rm and} \ \ \ (\frac{-3}{2^{n-1}}) \oplus (\frac{1}{2^n}\cD),
  \end{equation}
respectively, where $(\frac{1}{2^{n-1}})$ and $(\frac{-3}{2^{n-1}})$ denote $1\times 1$ matrices.  We claim 
that $\lambda \simeq \lambda'$.  Indeed, let $\vec{e} = (e_1, e_2, e_3)$ be the standard basis of $H$ and put
$$
e_1' = e_1 + 2 e_2 - 2 e_3,
\qquad \qquad
e_2' = e_1 + e_2,
\qquad \qquad
e_3' = e_1 - e_3.
$$
Now, we have:
$$
3e_1 = -e_1' + 2e_2' + 2e_3', \qquad 3e_2 = e_1' + e_2' - 2 e_3',
\qquad  3e_3 = -e_1' + 2 e_2' - e_3'.
$$
Since 3 is a unit in $\Z_{2^n}$, it follows that $\vec{e'} = (e_1',e_2',e_3')$ is also a basis.  Now, observe that
\begin{align*}
\lambda(e_1',e_1') &= \lambda(e_1,e_1)-8\lambda(e_2,e_3) = \frac{-3}{2^{n-1}} 
& \lambda(e_2',e_2') &= \lambda(e_1,e_1) = \frac{2}{2^n}\\
\lambda(e_3',e_3') &= \lambda(e_1,e_1) = \frac{2}{2^n}                        
& \lambda(e_1',e_2') &= \lambda(e_1,e_1)-2\lambda(e_3,e_2)=0\\
\lambda(e_1',e_3') &= \lambda(e_1,e_1)-2\lambda(e_3,e_2)=0                    
& \lambda(e_2',e_3') &= \lambda(e_1,e_1)-\lambda(e_2,e_3)=\frac{1}{2^n}
\end{align*}
Thus  $\lambda$ with respect to the basis $\vec{e'}$ is equal to $\lambda'$ with respect to the basis $\vec{e}$. 
But $\frac{1}{2^n}\cC$ is {\it not} isomorphic to $\frac{1}{2^n}\cD$ as a linking
%linkings 
on $\Z^2_{2^n}$ 
when $n \geqslant 2$.  For example, $\lambda(x,x)=0$ has no primitive solutions for the second linking, whereas it does for the first. $\|$
\end{example}

Our example shows that we must approach the case $p = 2$ in a different manner. Burger (see \cite{Bu}) reduced the classification of linked $p$-groups to the classification of quadratic forms (more precisely, 
symmetric bilinear forms) over $\Z_{p^n}$, and his procedure gives in theory at least a complete set of invariants, but for 
$p=2$ they are inconveniently cumbersome.  Our goal in this section is to reduce his invariants to a simple, manageable set and
to demonstrate how to calculate them.  We will proceed  as follows:
\begin{itemize}
\item In $\S$\ref{SSS:Decomposing a linked abelian $2$-group} we show how to decompose a linking form (in a non-canonical way) into a direct sum of 3 types of basic forms.
\item In $\S$\ref{SSS:Burger's numerical invariants}  we discuss a variant of the Burger's numerical invariants.
\item Our variant of Burger's invariants cannot achieve arbitrary values.  To determine the values they can achieve, we
calculate the values of our numerical invariants on the basic forms.  This calculation will be the basis of the next step, given in $\S$\ref{SSS:Calculating the numerical invariants for the basic forms}.
\item Finally, in $\S$\ref{SSS:Reduction to the phase invariants} we show that all the information in Burger's numerical invariants is contained in
a simpler set of mod $8$ {\em phase invariants} together with the ranks of the various blocks.
\end{itemize}
The section ends with several examples.

\subsubsection{Decomposing a linked abelian $2$-group}
\label{SSS:Decomposing a linked abelian $2$-group}

Strengthening a result of Burger \cite{Bu}, Wall \cite{W1} showed how to decompose a linked abelian $2$-group
into an orthogonal direct sum of certain basic linking forms.  In this section, we review this result and
give a proof of Wall's theorem which builds on Burger's original ideas and which is better suited for
computations.  

\begin{definition} \label{D:basic linking forms}
The {\em basic linking forms} on a finite abelian $2$-group are the following
\begin{itemize}
\item The {\em unary forms}.  These are the forms on $\Z_{2^j}$ for $j \geq 1$ whose matrices are $\left(\frac{a}{2^j}\right)$
for odd integers $a$.
\item The two {\em binary forms}.  These are the forms on $(\Z_{2^j})^2$ for $j \geq 1$ whose matrices are
either $\frac{1}{2^j} \cC$ or $\frac{1}{2^j} \cD$, where $\cC$ and $\cD$ were defined in Example~\ref{example:p2problem}.  $\|$
\end{itemize}
\end{definition}

We can now state Wall's theorem.

\begin{theorem}[{Wall, \cite{W1}}]
\label{theorem:p2decomp}
Let $(H,\lambda)$ be a linked abelian $2$-group.  Then $(H,\lambda)$ is isomorphic to an orthogonal 
direct sum $(H_1,\lambda_1) \oplus \cdots \oplus (H_n,\lambda_n)$, where the $(H_i,\lambda_i)$ are basic linking
forms.
\end{theorem}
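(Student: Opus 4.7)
My plan is to prove the theorem by induction on $|H|$, splitting off one basic form at each step. Let $j$ be the largest integer such that $H$ contains an element of order $2^j$. For any primitive $x \in H$ of order $2^j$, non-degeneracy of $\lambda$ forces the image of $\lambda(x,-) : H \to \Q/\Z$ to equal $\tfrac{1}{2^j}\Z/\Z$ exactly: if it were contained in $\tfrac{1}{2^{j-1}}\Z/\Z$, then $\lambda(2^{j-1}x,-)$ would vanish, contradicting $2^{j-1}x \neq 0$. The inductive step then divides into two cases, according to whether \emph{some} primitive $x$ of order $2^j$ satisfies $\lambda(x,x) = a/2^j$ with $a$ odd.

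In the first (unary) case I claim $H = \langle x \rangle \oplus \langle x \rangle^{\perp}$: given $z \in H$, the equation $\lambda(z-cx, x) = 0$ for $c \in \Z_{2^j}$ reduces to $c\,\lambda(x,x) = \lambda(z,x)$, which is solvable because $\lambda(x,x)$ generates $\tfrac{1}{2^j}\Z/\Z$. The restriction of $\lambda$ to $\langle x \rangle$ is the unary form $(a/2^j)$, while the restriction to $\langle x \rangle^{\perp}$ remains non-degenerate, so induction on $|H|$ finishes this case.

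In the second (binary) case I fix any primitive $x$ of order $2^j$, so $\lambda(x,x) = 2a'/2^j$, and use the image computation above to choose $y \in H$ with $\lambda(x,y) = b/2^j$, $b$ odd. Since $b$ is odd, the relation $2^k \lambda(x,y) = \lambda(x,2^k y)$ forces $y$ to have order at least $2^j$, hence exactly $2^j$; $y$ must then be primitive (else $y = 2z$ would produce an element of order $2^{j+1}$). The case hypothesis applied to $y$ yields $\lambda(y,y) = 2c'/2^j$. A short pairing argument shows that $\langle x,y \rangle \cong \Z_{2^j} \oplus \Z_{2^j}$ and that the restriction of $\lambda$ is represented by $M = \frac{1}{2^j}\bmattwotwo{2a'}{b}{b}{2c'}$. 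The numerator determinant $4a'c' - b^2$ is odd, so both the independence of $x,y$ and the non-degeneracy of this restriction follow at once, and the same orthogonal-projection argument as in Case A yields the splitting $H = \langle x,y \rangle \oplus \langle x,y \rangle^{\perp}$.

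The main technical obstacle is the final identification: showing that any such $M$ is equivalent under $\GL_2(\Z_{2^j})$ to either $\tfrac{1}{2^j}\cC$ or $\tfrac{1}{2^j}\cD$. Since $b$ is odd, $b^2 \equiv 1 \pmod 8$, so $4a'c' - b^2$ is congruent to $-1$ or $3 \pmod 8$ according to whether $a'c'$ is even or odd, and these are precisely the discriminant classes of $\cC$ and $\cD$ modulo squares of units in $\Z_{2^j}$. I would give an explicit sequence of elementary base changes (following Burger's approach in \cite{Bu}) which reduce the diagonal entries modulo $2^{j-1}$ while preserving the odd off-diagonal, eventually landing at one of the two basic binary forms according to the discriminant class. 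Once this normalization is carried out, induction on $|H|$ completes the proof.
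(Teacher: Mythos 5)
Your inductive splitting argument is sound, and up to cosmetic differences (restricting to primitives of maximal order, and splitting off the summand by an orthogonal-projection argument rather than adjusting a full basis) it follows the same route as the paper: in Case A the solvability of $c\,\lambda(x,x)=\lambda(z,x)$ does give $H=\langle x\rangle\oplus\langle x\rangle^{\perp}$, and in Case B the invertibility mod $2^j$ of $N=\mattwotwo{2a'}{b}{b}{2c'}$ (its determinant $4a'c'-b^2$ being odd) gives both the independence of $x,y$ and the orthogonal splitting. The genuine gap is the step you yourself call the main obstacle: you never prove that the even binary form $\frac{1}{2^j}\mattwotwo{2a'}{b}{b}{2c'}$ with $b$ odd is equivalent to $\frac{1}{2^j}\cC$ or $\frac{1}{2^j}\cD$. "I would give an explicit sequence of elementary base changes" is a plan, not an argument, and this normalization is exactly where the content of Wall's theorem lies. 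In the paper it is an explicit lemma: one first arranges $\lambda(x,y)=\frac{1}{2^j}$ (your image computation already permits taking $b=1$), and then Hensel's lemma is used to produce either a primitive vector with self-linking $0$ (when one of $a',c'$ is even), giving $\cC$, or a vector with self-linking exactly $\frac{2}{2^j}$ (when both are odd), giving $\cD$.

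Moreover, the discriminant criterion you propose to steer the reduction cannot carry the argument. Matching discriminants modulo squares of units is only a necessary condition for equivalence, so even where it distinguishes the two targets you still must exhibit the base change; and for $j=2$ it does not distinguish them at all, since $\det\cC=-1$ and $\det\cD=3$ agree in $\Z_4$, while $\frac{1}{4}\cC\not\cong\frac{1}{4}\cD$ (the latter admits no primitive $x$ with $\lambda(x,x)=0$). The invariant that actually decides the dichotomy is the parity of $a'c'$ after normalizing $b=1$ (equivalently, the existence of a primitive element of self-linking $0$), and showing that this parity can always be realized by one of the two model forms is precisely the Hensel-type computation missing from your write-up. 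Supplying that lemma (or invoking Wall or Burger for it) closes the proof; as it stands, the binary case is incomplete.
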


\begin{proof}  We will use the following well-known result about solving congruences mod $p^k$. 
\begin{lemma}[{Hensel's Lemma \cite[Theorem 2.23]{NivenZuckermanNumbers}}]  
Let $f(x)$ be a polynomial with integer coefficients, let $p$ be a prime and let $k \geq 2$ be an integer.  
Assume that the integer $r$ is a solution to the equation  $f(x) = 0 \ ({\rm mod} \ p^{k-1})$, 
and moreover assume that $f'(r) \not= 0 \ {\rm mod}\  p$.  Then $f(r+tp^{k-1}) = 0 \ ({\rm mod} \ p^k)$ for some (unique) integer 
$t$ with $0\leq t \leq p-1$.
\end{lemma}

The proof of Theorem~\ref{theorem:p2decomp} begins with a lemma:
\begin{lemma}
\label{lemma:binaryclassification}
Let $H = \Z_{p^j}\oplus \Z_{p^j}$ and let $\lambda$ be the linking form on $H$ whose matrix with respect to the standard basis $(e_1,e_2)$ for $H$ 
is $\frac{1}{2^j}\mattwotwo{2m}{1}{1}{2n}$.  Then $(H,\lambda)$ is isomorphic to the linking form
whose matrix is $\frac{1}{2^j} \cC$ if both $m$ and $n$ are odd and to the linking form whose
matrix is $\frac{1}{2^j} \cD$ otherwise.
\end{lemma}
\begin{proof}
Assume first that one of $m$ and $n$ (say $m$) is even.  Observe that if $a$ is an integer,
then since $\lambda$ is defined by the matrix $\frac{1}{2^j}\mattwotwo{2m}{1}{1}{2n}$, we have:
$$\lambda((1,a),(1,a)) = 2m+2a^2 n+2a = 2(n a^2 + a + m).$$
Now, since $m$ is even it follows that $a=1$ is a solution to the equation
$n a^2 + a + m = 0$ mod $2$, so Hensel's lemma implies that there is some odd
integer $\tilde{a}$ so that $n \tilde{a}^2+\tilde{a}+m = 0$ mod $2^j$.  Set $\vec{v} = (1,a)$.  Since $\lambda$ is
nondegenerate, there is some $\vec{w} \in H$ so that $\lambda(\vec{v},\vec{w}) = \frac{1}{2^j}$.
Let $k$ be so that $\lambda(\vec{w},\vec{w}) = 2k$.  The pair $\{\vec{v}, \vec{w} - k\vec{v}\}$ is then a new basis, 
and with respect to this basis $\lambda$ has the matrix $\frac{1}{2^j} \mattwotwo{0}{1}{1}{0}$, as desired.

Now assume that both $m$ and $n$ are odd.  By the same argument as in the previous paragraph (but solving
$n a^2+a+m=1$ mod $2^j$),
we can assume that $m = 1$, so the matrix for $\lambda$ is $\frac{1}{2^j}\mattwotwo{2}{1}{1}{2n}$.  Set
$\vec{v} = (1,0)$, and for an arbitrary integer $c$ define $\vec{w}_c = (c,1-2c)$.  Observe that
$$\lambda(\vec{v},\vec{w}_c) = 2c + (1-2c) = 1.$$
Moreover,
$$\lambda(\vec{w}_c,\vec{w}_c) = 2c^2 + 2c(1-2c) + 2n(1-2c)^2 = 2((4n-1)c^2 + (1-4n)c+n).$$
Now, since $n$ is odd it follows that $c = 1$ is a solution to
$$(4n-1)c^2 + (1-4n)c+n = 1\ \ ({\rm mod}\ 2),$$ 
so Hensel's lemma implies that there is some odd integer $\tilde{c}$ so that 
$$(4n-1)\tilde{c}^2 + (1-4n)\tilde{c}+n = 1\ \ ({\rm mod}\ 2^j).$$ 
It follows that $\lambda(\vec{w}_{\tilde{c}}, \vec{w}_{\tilde{c}}) = 2$.
Observe that since $\tilde{c}$ is odd the vectors $\vec{v}$ and $\vec{w}_{\tilde{c}}$ form
a basis, and with respect to this basis $\lambda$ has the matrix $\frac{1}{2^j} \mattwotwo{2}{1}{1}{2}$, as desired.
\end{proof}

We are ready to prove Theorem \ref{theorem:p2decomp}. 
For $x \in H$, define the {\em order} of $x$ to be the smallest nonnegative integer $n$ so that $2^n x = 0$.  Assume
first that there is some primitive $x \in H$ of order $j$ so that $\lambda(x,x) = \frac{a}{2^j}$ with $a$ odd.  Construct
a basis $x=x_1,x_2,\ldots,x_N$ for $H$.  Observe now that for any $2 \leq i \leq N$, we have $\lambda(x,x_i) = \frac{b_i}{2^j}$
for some integer $b_i$.  Since $a$ is odd, we can write $b_i = c_i a$ mod $2^j$ for some integer $c_i$.  Set
$x_i' = x_i - c_i x$.  Observe that $x=x_1,x_2',\ldots,x_N'$ is a new basis for $H$, and that moreover $(H,\lambda)$
has an orthogonal direct sum decomposition
$$\langle x \rangle \oplus \langle x_2',\ldots,x_N' \rangle.$$
Since $\lambda$ restricted to $\langle x \rangle$ is the basic form with matrix $\left(\frac{a}{2^j}\right)$, we are
done by induction.

We can therefore assume that for all $x \in H$, if $j$ is the order of $x$ then $\lambda(x,x) = \frac{2n}{2^j}$ for
some integer $n$.  Fixing some primitive $x \in H$ of order $j$ and letting $n$ be the integer with $\lambda(x,x) = \frac{2n}{2^j}$,
the fact that $\lambda$ is non-singular implies that there is some $y \in H$ so that $\lambda(x,y) = \frac{1}{2^j}$.  Necessarily
$y$ has order $j$, so we can write $\lambda(y,y) = \frac{2m}{2^j}$ for some integer $m$.  Construct a basis
$x=z_1,y=z_2,z_3,\ldots,z_N$ for $H$, and for $3 \leq i \leq N$ write $\lambda(x,z_i) = \frac{h_i}{2^j}$ and
$\lambda(y,z_i) = \frac{k_i}{2^j}$.  Since the matrix $\mattwotwo{2n}{1}{1}{2m}$ is invertible mod $2^j$, we can find
integers $a_i$ and $b_i$ so that
$$\bmattwotwo{2n}{1}{1}{2m} \cdot \bvectortwoone{a_i}{b_i} = \bvectortwoone{-h_i}{-k_i}$$
mod $2^j$.  For $3 \leq i \leq N$, set $z_i' = z_i + a_i x + b_i y$, and observe that $\lambda(x,z_i')=\lambda(y,z_i')=0$.  The linked
group $(H,\lambda)$ now has an orthogonal direct sum decomposition
$$\langle x,y \rangle \oplus \langle z_3',\ldots,z_N' \rangle.$$
Since $\lambda$ restricted to $\langle x,y \rangle$ is the linking form with matrix $\mattwotwo{2n}{1}{1}{2m}$,
Lemma \ref{lemma:binaryclassification} says that it is one of the basic forms, and we are done by induction.   This completes the proof of Wall's theorem.
\end{proof}

\subsubsection{Burger's numerical invariants}
\label{SSS:Burger's numerical invariants}

In \cite{Bu}, Burger constructs a complete set of numerical invariants of a linked abelian $p$-group $(H,\lambda)$.
The following formulation of Burger's result was suggested by Fox in \cite{FOXBURGERREVIEW}, and is
manifestly equivalent to Burger's original formulation.  Define the {\em degree} of an abelian $p$-group to be
the smallest nonnegative integer $n$ so that $p^n H = 0$.

\begin{definition}
Let $(H,\lambda)$ be a linked abelian $p$-group of degree $n$, and let $a = \frac{k}{p^n}$ for some $0 \leq k < p^n$.
We then define $\N_a(\lambda)$ to be the number of solutions $x \in H$ to the equation $\lambda(x,x)=a$. $\|$
\end{definition}

\begin{theorem}[Burger, \cite{Bu}]
Fix an abelian $p$-group $H$ of degree $n$.  Two linking forms $\lambda$ and $\lambda'$ on $H$ are then isomorphic
if and only if $\N_a(\lambda)=\N_a(\lambda')$ for all $a = \frac{k}{p^n}$ with $0 \leq k < p^n$. $\|$
\end{theorem}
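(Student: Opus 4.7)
The forward direction is immediate: if $\phi\colon(H,\lambda)\to(H,\lambda')$ is an isomorphism, then $x\mapsto \phi(x)$ restricts to a bijection from $\{x\in H: \lambda(x,x)=a\}$ onto $\{x\in H: \lambda'(x,x)=a\}$ for every $a\in \frac{1}{p^n}\Z/\Z$, giving $\N_a(\lambda)=\N_a(\lambda')$. All of the work is in the converse. The plan is to encode the numerical data $\{\N_a(\lambda)\}$ as a collection of Gauss sums that is multiplicative under orthogonal direct sums, then to use Wall's decomposition (Theorem \ref{theorem:p2decomp}) for $p=2$ or its odd analogue to reduce the question to showing that these Gauss sums detect the multiplicities with which each basic form appears.

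Concretely, for each $\alpha\in\Z_{p^n}$ I would define the Gauss sum
\begin{equation*}
G_\alpha(\lambda)\;=\;\sum_{x\in H}\, e^{2\pi i \alpha\,\lambda(x,x)}.
\end{equation*}
The sequences $(\N_a(\lambda))_a$ and $(G_\alpha(\lambda))_\alpha$ are related by the discrete Fourier transform on $\Z_{p^n}$, so they determine each other; consequently, $\N_a(\lambda)=\N_a(\lambda')$ for all $a$ is equivalent to $G_\alpha(\lambda)=G_\alpha(\lambda')$ for all $\alpha$. The point of passing to $G_\alpha$ is the multiplicativity identity
\begin{equation*}
G_\alpha\!\left(\lambda_1\oplus\lambda_2\right)\;=\;G_\alpha(\lambda_1)\cdot G_\alpha(\lambda_2),
\end{equation*}
which converts the problem on $H$ into a product problem over basic summands.

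Now fix the abstract group $H$. By Theorem \ref{theorem:p2decomp} (and the analogous, easier statement for odd $p$ recorded in Seifert's work), both $\lambda$ and $\lambda'$ split as orthogonal sums of basic forms. The underlying abelian structure of $H$ forces the number and degrees of the summands to coincide for the two decompositions, so all the remaining freedom lies in which basic isomorphism type is chosen in each block. To finish, I would compute $G_\alpha$ explicitly on each basic form: for a unary summand $(a/p^j)$ the sum becomes a classical quadratic Gauss sum whose value is determined by the residue class of $a$ modulo squares in $(\Z/p^j)^\times$, and for the two binary forms $\tfrac{1}{2^j}\cC$ and $\tfrac{1}{2^j}\cD$ direct evaluation yields distinct values (the $\cD$-form produces the characteristic factor $e^{2\pi i \alpha/2^{j-1}}$ that is absent from the hyperbolic $\cC$-form).

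The main obstacle is the final separation step: one must verify that, as $\alpha$ ranges over $\Z_{p^n}$, the resulting system of multiplicative equations in the unknown multiplicities has a unique solution. The cleanest way to do this is to argue one degree $j$ at a time, starting from the largest $j$ (where $\alpha=1/p^j$ probes only the top block) and proceeding inductively downward, using at each step that the Gauss sums of the top-degree basic forms are linearly independent over $\C$. Once this independence is established the multiplicities at each degree are recovered uniquely from the $G_\alpha$'s, the two decompositions of $\lambda$ and $\lambda'$ agree summand-by-summand up to isomorphism, and the required isomorphism $(H,\lambda)\cong(H,\lambda')$ follows.
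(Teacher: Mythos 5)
The paper does not prove this statement: it is cited as Burger's theorem (with Fox's reformulation) and then \emph{used} throughout \S6.3, so there is no internal proof to compare against. I am therefore judging your proposal on its own merits. The machinery you set up --- the Gauss sums $G_\alpha$, their equivalence to the $\N_a$'s via the discrete Fourier transform, multiplicativity under orthogonal sums, and Wall's decomposition into basic forms --- is exactly the toolkit the paper develops downstream of Burger's theorem, so the setup is sound.

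The gap is in the final step. You claim that, arguing block by block and using linear independence of the Gauss sums of the basic forms of the top degree, ``the multiplicities at each degree are recovered uniquely from the $G_\alpha$'s.'' This is false, and the paper's own computations show it. The orthogonal decomposition of a linked $2$-group into basic forms is not unique, and distinct multiplicity vectors give the same Gauss sums whenever the underlying forms are isomorphic. Example~\ref{example:p2problem} already exhibits $\leg{1}{2^{n-1}}\oplus\frac{1}{2^n}\cC \cong \leg{-3}{2^{n-1}}\oplus\frac{1}{2^n}\cD$, and the phase-vector calculations in \S\ref{SSS:Reduction to the phase invariants} produce even simpler unary examples such as
\[
\frac{1}{2^n}\oplus\frac{3}{2^{n-1}}\oplus\frac{5}{2^{n-2}}\ \cong\ \frac{3}{2^n}\oplus\frac{5}{2^{n-1}}\oplus\frac{1}{2^{n-2}},
\]
where the multiplicities of the individual unary types $\left(\frac{a}{2^j}\right)$ differ between the two sides. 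So the ``system of multiplicative equations in the unknown multiplicities'' does not have a unique solution, and an inductive argument built on that uniqueness cannot go through. What one actually has to prove is that any two basic-form decompositions with equal Gauss sums are connected by \emph{moves} (like the one in Example~\ref{example:p2problem}) preserving the isomorphism class --- this is the genuinely hard content of Burger's and Wall's classification and it is absent from your sketch. A secondary problem is that the ``one degree $j$ at a time'' induction does not cleanly isolate a single block: by the explicit formulas in Proposition~\ref{proposition:basiccalculation}, $\Gamma_k$ is nontrivial on every block $B_j$ with $j > k+1$, so each Gauss sum mixes contributions from many blocks simultaneously; and since the block contributions enter \emph{multiplicatively}, ``linear independence over $\C$'' is not the right separation criterion to invoke.
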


We now confine ourselves to the case $p=2$.  Fix a linked abelian $2$-group $(H,\lambda)$ of degree $n$.  We shall use the notation 
$E(z)$ for $e^{2 \pi i z}$.  Also, to simplify our formulas we shall denote $\lambda(x,x)$ by
$x^2$.  Observe now that if $x \in H$, then $x^2 = \frac{k}{2^n}$ where $k$ is an integer which is
well-defined mod $2^n$.  Hence if $b \in \Z_{2^n}$, then the expression $b x^2 = \frac{b k}{2^n}$ is
a well-defined real number mod $1$, so the number $E(b x^2)$ is well-defined.  The
following definition therefore makes sense.

\begin{definition}
For $b \in \Z_{2^n}$, define $\overline{\Gamma}_b(\lambda) := \sum_{x \in H} E(b x^2)$. $\|$
\end{definition}

\noindent
Observe that we clearly have the identity
$$\overline{\Gamma}_b(\lambda) = \sum_{k=0}^{2^n-1} \N_{\frac{k}{p^n}}(\lambda) E(\frac{b k}{2^n}).$$
In other words, the numbers $\overline{\Gamma}_b(\lambda)$ are the result of applying the 
{\em discrete Fourier transform} (see \cite{DiscreteFourier}) to the numbers $\N_a(\lambda)$.
In particular, since the discrete Fourier transform is invertible, it follows that the
numbers $\overline{\Gamma}_b(\lambda)$ for $b \in \Z_{2^n}$ also form a complete set of invariants for linking
forms on $H$.  In fact, since $\overline{\Gamma}_0(\lambda) = \sum_{x \in H} 1 = |H|$, we only need
$\overline{\Gamma}_b(\lambda)$ for $b \neq 0$.

However, there is a certain amount of redundancy among the $\overline{\Gamma}_b(\lambda)$.  
Indeed, set $\theta = E(1/2^n)$.  Observe that $\theta$ is a $(2^n)^{\Th}$ root of unity and that the numbers
$\overline{\Gamma}_b(\lambda)$ lie in $\Q(\theta)$.  If $a$ is any odd integer, then $\theta^a$ is another
$(2^n)^{\Th}$ root of unity, and $\theta \mapsto \theta^a$ defines a Galois automorphism of
$\Q(\theta)$, which we will denote by $g_a$.  Consider a nonzero $b \in \Z_{2^n}$.  Write $b = 2^k b_0$, where
$b_0$ is odd and $k < n$.  Since $2^n x^2$ is an integer for all $x \in H$, we have
\begin{align*}
\overline{\Gamma}_b(\lambda) & = \sum_{x \in H} e^{\frac{2\pi i (2^k b_0) (2^n x^2)}{2^n}}
= \sum_{x \in H} {\left(e^{\frac{2\pi ib_0}{2^n}}\right)}^{2^{k+n}x^2}
= g_{b_0} \left(\sum_{x \in H} {\left(e^{\frac{2\pi i}{2^n}}\right)}^{2^{k+n}x^2}\right) \\
& = g_{b_0} \left(\sum_{x \in H} E(2^k x^2)\right)
\ = \ g_{b_0} (\overline{\Gamma}_{2^k}(\lambda)).
\end{align*}
We conclude that the set of all $\overline{\Gamma}_b(\lambda)$ for $b \in \Z_{2^n}$
can be calculated from the set of all $\overline{\Gamma}_{2^k}(\lambda)$ for $0 \leq k < n$.
This discussion is summarized in the following definition and lemma.

\begin{definition}
Let $\lambda$ be a linking form on $H$.  For $0 \leq k < n$, define $\Gamma_k(\lambda) := \overline{\Gamma}_{2^k}(\lambda)$. $\|$
\end{definition}

\begin{lemma}
\label{lemma:gammakcomplete}
The set of numbers $\Gamma_k(\lambda)$ for $0 \leq k < n$ form a complete set of invariants for linking forms on $H$.
\end{lemma}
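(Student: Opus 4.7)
The plan is to assemble the result from the discussion immediately preceding the statement: Burger's theorem and the invertibility of the discrete Fourier transform already reduce the classification to the $\overline{\Gamma}_b(\lambda)$, and the Galois-theoretic observation reduces this further to the $\Gamma_k(\lambda)$. So essentially all the work has been done, and the proof amounts to a verification that no information is lost at either reduction step.

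First I would invoke Burger's theorem to note that $\{\N_a(\lambda) : 0 \leq k < 2^n,\ a = k/2^n\}$ is a complete set of invariants of linking forms on $H$. Next, since the discrete Fourier transform on $\Z_{2^n}$ is an invertible linear map over $\C$, knowledge of the $\overline{\Gamma}_b(\lambda)$ for $b \in \Z_{2^n}$ is equivalent to knowledge of the $\N_a(\lambda)$, so the $\overline{\Gamma}_b(\lambda)$ form a complete set of invariants as well.

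It then suffices to show that each $\overline{\Gamma}_b(\lambda)$ is determined by the data $\{\Gamma_k(\lambda) : 0 \leq k < n\}$ together with the fixed underlying group $H$. For $b = 0$, one has $\overline{\Gamma}_0(\lambda) = |H|$, which depends only on $H$. For $b \neq 0$, write $b = 2^k b_0$ with $b_0$ odd and $0 \leq k < n$; the computation carried out just before the lemma shows $\overline{\Gamma}_b(\lambda) = g_{b_0}(\Gamma_k(\lambda))$, where $g_{b_0}$ is the Galois automorphism of $\Q(\theta)$ defined by $\theta \mapsto \theta^{b_0}$. Applying this automorphism is a purely formal operation once $\Gamma_k(\lambda) \in \Q(\theta)$ is known, so $\overline{\Gamma}_b(\lambda)$ is indeed determined. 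This completes the argument.

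There is no real obstacle here: the only thing to double-check is that the $\Gamma_k(\lambda)$ are to be regarded as elements of $\Q(\theta)$ (not merely as complex numbers), so that the Galois automorphisms $g_{b_0}$ can be meaningfully applied; this is automatic from the defining sum $\Gamma_k(\lambda) = \sum_{x \in H} E(2^k x^2)$, whose terms are $(2^{n-k})^{\Th}$ roots of unity and hence lie in $\Q(\theta)$.
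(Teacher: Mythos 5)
Your proposal is correct and follows exactly the same route as the paper: Burger's theorem, invertibility of the discrete Fourier transform, and the Galois-automorphism identity $\overline{\Gamma}_b(\lambda) = g_{b_0}(\Gamma_k(\lambda))$ are precisely what the paper uses in the discussion immediately preceding the lemma (the paper states the lemma as a summary of that discussion rather than giving a separate proof). Your closing observation that the $\Gamma_k(\lambda)$ must be regarded as elements of $\Q(\theta)$ for the Galois action to make sense is a worthwhile explicit check that the paper leaves implicit.
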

 
\subsubsection{Calculating the numerical invariants for the basic forms}
\label{SSS:Calculating the numerical invariants for the basic forms}

By Theorem \ref{theorem:p2decomp}, we can decompose any linked abelian group
into a direct sum of basic linking forms.  The following lemma shows how this reduces
the calculation of the numbers $\Gamma_k(\lambda)$ to the knowledge of the $\Gamma_k(\cdot)$ for the basic
forms.
\begin{lemma}
\label{lemma:gammamult}
If $(H, \lambda)$ is the orthogonal direct sum $A_1 \oplus A_2$ with linkings $\lambda_i$ on $A_i$, then for $0 \leq k < n$
we have $\Gamma_k(\lambda) = \Gamma_k(\lambda_1) \cdot \Gamma_k(\lambda_2)$.
\end{lemma}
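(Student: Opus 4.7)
The plan is to exploit the orthogonality of the decomposition $H = A_1 \oplus A_2$ together with the multiplicativity of the exponential $E(\cdot)$. First I would note that every $x \in H$ has a unique decomposition $x = x_1 + x_2$ with $x_i \in A_i$, and because $A_1 \perp A_2$ with respect to $\lambda$, the bilinearity and symmetry of $\lambda$ give
\begin{equation*}
\lambda(x,x) \;=\; \lambda_1(x_1,x_1) + \lambda_2(x_2,x_2)
\end{equation*}
in $\Q/\Z$. Applying $E(\cdot)$ (which takes sums in $\Q/\Z$ to products in $\C^\times$) yields
\begin{equation*}
E\bigl(2^k\,\lambda(x,x)\bigr) \;=\; E\bigl(2^k\,\lambda_1(x_1,x_1)\bigr)\cdot E\bigl(2^k\,\lambda_2(x_2,x_2)\bigr).
\end{equation*}

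Next I would substitute this factorization into the definition of $\Gamma_k$ and split the resulting sum:
\begin{equation*}
\Gamma_k(\lambda) \;=\; \sum_{x \in H} E(2^k x^2) \;=\; \sum_{x_1 \in A_1} \sum_{x_2 \in A_2} E(2^k\,\lambda_1(x_1,x_1))\,E(2^k\,\lambda_2(x_2,x_2)),
\end{equation*}
and the right-hand side factors as the product of two independent sums, giving precisely $\Gamma_k(\lambda_1)\cdot\Gamma_k(\lambda_2)$.

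The only mildly subtle point — and it is not really an obstacle, just something to address — is that the degree $n$ of $(H,\lambda)$ in general exceeds the degrees $n_i$ of $(A_i,\lambda_i)$, so the index $k$ in $\Gamma_k(\lambda)$ may lie outside the range $0 \le k < n_i$ originally used to define $\Gamma_k(\lambda_i)$. I would simply observe that the defining formula $\sum_{x\in A_i} E(2^k\,\lambda_i(x,x))$ makes sense verbatim for any $k\ge 0$, and that when $k\ge n_i$ every term is $1$ (since $2^k\lambda_i(x,x)\equiv 0 \pmod 1$), so the extended quantity equals $|A_i|$. Thus both sides of the claimed identity are unambiguously defined, and the argument above proves the equality.
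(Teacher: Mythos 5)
Your proof is correct and follows essentially the same route as the paper: decompose $x = x_1 + x_2$, use orthogonality to write $x^2 = x_1^2 + x_2^2$, and factor the sum via the multiplicativity of $E(\cdot)$. The additional remark about the degrees $n_i$ of the summands is a fine point of care but does not change the argument.
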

\begin{proof} For $x = (x_1,x_2)$:
\begin{align*}
\Gamma_k(\lambda) &= \sum_{x \in H} E(2^k x^2) = \sum_{x_1 \in A_1} \sum_{x_2 \in A_2} E(2^k(x_1^2 + x_2^2)) \\
                  &= \left(\sum_{x_1 \in A_1} E(2^k x_1^2)\right) \cdot \left(\sum_{x_2 \in A_2} E(2^k x_2^2)\right)
= \Gamma_k(\lambda_1) \cdot \Gamma_k(\lambda_2).
\end{align*}
\end{proof}
In the remainder of this section, we calculate the numbers $\Gamma_k(\cdot)$ for the basic forms.  Recall that 
$\cC = \mattwotwo{0}{1}{1}{0}$ and $\cD = \mattwotwo{2}{1}{1}{2}$.  The calculation
is summarized in the following proposition.

\begin{proposition}
\label{proposition:basiccalculation}
Define $\rho = E(1/8)$ and
$$
\varepsilon (a) =
\left\{ \begin{array}{rl}
1 \ \mod 8 & \text{ if } a \equiv 1 \ \mod 4 \\
-1 \ \mod 8 & \text{ if } a \equiv -1 \ \mod 4
\end{array}\right.
$$
Then for $j \geq 1$ and $k \geq 0$, we have
\begin{itemize}
\item $
\Gamma_k \left(\frac{a}{2^j} \right) =
\left\{ \begin{array}{ll}
2^j & \text{ if } k \geqslant j \\
0 & \text{ if } k = j-1 \\
2^{\frac{j+k+1}{2}}\rho^{\varepsilon(a)} & \text{ if $j-k \geqslant 2$ and is evenÊ} \\
2^{\frac{j+k+1}{2}}\rho^{a} & \text{ if $j-k \geqslant 2$ and is odd.}
\end{array}\right.$
\item $
 \Gamma_k \left(\frac{\cC}{2^j} \right) =
 \left\{ \begin{array}{ll}
 2^{2j} & \text{ if } k \geqslant j-1 \\
2^{j+k+1} & \text{ if } k < j-1.
\end{array}\right.$
\item $
  \Gamma_k \left( \frac{\cD}{2^{j}} \right) =
  \left\{ \begin{array}{ll}
 2^{2j} & \text{ if } k \geqslant j-1 \\
 (-1)^{j+k+1}2^{j+k+1} & \text{ if } k < j-1.
\end{array}\right.$
\end{itemize}
\end{proposition}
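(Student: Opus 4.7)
My plan is to evaluate each of the three Gauss-type sums directly, treating the three basic forms in order of increasing difficulty. The unary sum is a classical quadratic Gauss sum on a $2$-adic cyclic group; the $\cC$-form sum is bilinear and so collapses to a simple delta-function count; and the $\cD$-form sum requires an integer identity that decomposes the quadratic form as a sum of two unary squares, reducing its evaluation to a product of unary Gauss sums.

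For the unary form $(a/2^j)$ on $\Z_{2^j}$ with $a$ odd, one has
$$\Gamma_k \;=\; \sum_{x\in\Z_{2^j}} E\!\bigl(ax^2/2^{j-k}\bigr).$$
The cases $k\ge j$ (integer exponent) and $k=j-1$ (the residual sum $\sum_x(-1)^x$ vanishes) are immediate. For $j-k\ge 2$, the observation that $x^2\bmod 2^{j-k}$ depends only on $x\bmod 2^{j-k}$ gives $\Gamma_k=2^k G(a,2^{j-k})$, where $G(a,N):=\sum_{x=0}^{N-1}e^{2\pi iax^2/N}$ is the classical quadratic Gauss sum. For $m\ge 2$, the value $G(a,2^m)$ has magnitude $2^{(m+1)/2}$ with eighth-root phase $\rho^{\varepsilon(a)}$ when $m$ is even and $\rho^{a}$ when $m$ is odd; this is a classical fact, provable either by induction via $G(a,4N)=2G(a,N)$ or by direct computation at $m=2,3$. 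Substituting $m=j-k$ produces both branches of the unary formula.

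For the binary form $\cC/2^j$ the quadratic form is $2xy/2^j$, so summing first over $x$ for fixed $y$ gives an inner sum that equals $2^j$ when $2^{k+1}y\equiv 0\pmod{2^j}$ and zero otherwise: this holds for every $y$ when $k\ge j-1$ (yielding $2^{2j}$), and otherwise cuts out the subgroup $2^{j-k-1}\Z_{2^j}$ of order $2^{k+1}$ (yielding $2^{j+k+1}$). For $\cD/2^j$, where the form is $2(x^2+xy+y^2)/2^j$, setting $m=j-k-1$ reduces the computation to $\Gamma_k=2^{2(k+1)}G_\cD(2^m)$ with $G_\cD(2^m):=\sum_{x,y\in\Z_{2^m}}e^{2\pi i(x^2+xy+y^2)/2^m}$. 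The key tool here is the integer identity $4(x^2+xy+y^2)=(2x+y)^2+3y^2$. After substituting $u=2x+y$ (so that for fixed $y$ the variable $u$ runs through the elements of $\Z_{2^{m+1}}$ with $u\equiv y\pmod 2$, the exponent being well defined on these cosets because $(u+2^{m+1})^2\equiv u^2\pmod{2^{m+2}}$) and splitting by the parity of $y$, the odd-$u$ sub-sum turns out to vanish for $m\ge 2$, while the even-$u$ sub-sum (after writing $y=2z$) factors as $\tfrac12 G(1,2^m)G(3,2^m)$. Using the unary formula, $G(1,2^m)G(3,2^m)=2^{m+1}\rho^{\varepsilon(1)+\varepsilon(3)}=2^{m+1}$ when $m$ is even and $2^{m+1}\rho^{1+3}=-2^{m+1}$ when $m$ is odd, which gives $G_\cD(2^m)=(-1)^m 2^m$ uniformly (with $m=1$ verified by hand). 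Substituting back and observing that $(-1)^{j-k-1}=(-1)^{j+k+1}$ produces the claimed formula.

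The hardest part will be the $\cD$-computation. The substitution $u=2x+y$ is not a bijection onto $\Z_{2^{m+1}}$ but only onto a parity coset, so parity bookkeeping is unavoidable, and the vanishing of the odd-parity sub-sum has to be extracted from the unary Gauss sum formula rather than being self-evident. The case $m=1$ lies outside this framework and must be dispatched separately. Once the identity $4(x^2+xy+y^2)=(2x+y)^2+3y^2$ and the classical unary Gauss sum evaluation are in hand, the remaining work is a careful case-split on the parities of $j-k$ and of $m$.
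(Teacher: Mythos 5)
Your proposal is correct, and for the unary form it follows essentially the same path as the paper: reduce to the classical Gauss sum $G(a,2^m)$ and establish the periodicity $G(a,4N)=2G(a,N)$, matching the paper's identity $\T_k = 2\T_{k-2}$.

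For the two binary forms, however, your route differs from the paper's, and the divergence is genuine enough to be worth noting. For $\cC/2^j$ you sum over $x$ first and invoke orthogonality of characters of $\Z_{2^j}$ to collapse to a count of $y$ in a subgroup; the paper instead reduces to $\U_{j-k-1}$ and proves $\U_k = 4\U_{k-2}$ inductively. Your one-line delta-function argument is cleaner. For $\cD/2^j$ the approaches are quite different: the paper exploits the explicit isomorphism $\frac{1}{2^{n-1}}\oplus\frac{\cC}{2^n}\cong\frac{-3}{2^{n-1}}\oplus\frac{\cD}{2^n}$ from Example~\ref{example:p2problem}, combined with the multiplicativity of $\Gamma_k$ under orthogonal direct sums (Lemma~\ref{lemma:gammakcomplete}), to read off $\Gamma_k(\cD/2^j)$ as a quotient of known quantities. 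You instead diagonalize over $\Z[\tfrac12]$ via $4(x^2+xy+y^2)=(2x+y)^2+3y^2$ and reduce to a product $\tfrac12\,G(1,2^m)G(3,2^m)$, handling the parity-coset issue for $u=2x+y$ and disposing of the odd-$u$ sub-sum separately. Both are valid. The paper's argument for $\cD$ is shorter and reuses structure already present in the paper; yours is self-contained and more in the spirit of classical Gauss-sum manipulations, at the cost of the parity bookkeeping you flag and the separate $m=1$ check (needed because the $G(a,2^m)$ formula requires $m\geq 2$, which also underlies the factorization $\sum_{z\bmod 2^{m-1}}E(3z^2/2^m)=\tfrac12 G(3,2^m)$). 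One small simplification to your vanishing argument: the odd-$u$ sub-sum dies under the involution $u\mapsto u+2^m$, since for odd $u$ and $m\geq 2$ one has $(u+2^m)^2\equiv u^2+2^{m+1}\pmod{2^{m+2}}$, which flips the sign of the summand; no appeal to the unary Gauss-sum formula is needed.
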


\begin{proof}[Proof of Proposition \ref{proposition:basiccalculation} for $\left(\frac{a}{2^j}\right)$]
We begin with the case $k \geq j$.  For these values of $k$, the number $2^k x^2$ is an
integer for all $x \in H$, so $E(2^k x^2)=1$ for all $x \in H$.  This implies that 
$$\Gamma_k\left(\frac{a}{2^j}\right) = |H| = 2^j,$$
as desired.

The next step is to prove the following formula, which reduces the remaining cases to the case $a=1$ and $k=0$:
\begin{equation}
\label{eqn:agammakreduction}
\Gamma_k \left(\frac{a}{2^j}\right) = 2^k g_a \left(\Gamma_0 \left(\frac{1}{2^{j-k}}\right)\right)\ \ \ \ \ \ \text{for $k<j$}
\end{equation}
We calculate:
$$
\Gamma_k \left(\frac{a}{2^{j}}\right) = \sum_{x \ \mod 2^j} E\left( \frac{2^k a x^2}{2^{j}}\right) = g_a \left(\sum_{x \ \mod 2^j} E\left( 
\frac{x^2}{2^{j-k}}\right)\right).
$$
Since $E(\frac{x^2}{2^{j-k}})$ depends only on $x \ \mod 2^{j-k}$, this equals
$$
g_a \left( 2^k \sum_{x \ \mod 2^{j-k}} E\left( \frac{x^2}{2^{j-k}}\right)\right) = 2^k g_a \left( \Gamma_0 \left
(\frac{1}{2^{j-k}}\right)\right),
$$
as desired.

To simplify our notation, define $\T_k = \Gamma_0 (\frac{1}{2^k})$.  We will prove that
\begin{equation}
\label{eqn:teqn}
\T_k = \left\{
\begin{array}{ll}
0                      & \text{if $k=1$}\\
2^{\frac{k+1}{2}} \rho & \text{if $k\geq2$}\\
\end{array} \right.
\end{equation}
First, however, we observe that the proposition follows from Equations (\ref{eqn:teqn}) and (\ref{eqn:agammakreduction}).
Indeed, for $k = j-1$ this is immediate.  For $k < j-1$ with $j-k$ odd, we have
$$\Gamma_k \left(\frac{a}{2^{j}}\right) = 2^k g_a(2^{\frac{j-k+1}{2}} \rho) = 2^{\frac{j+k+1}{2}} \rho^a.$$
Finally, for $k < j-1$ with $j-k$ even, using the fact that $\sqrt{2} \rho^{\varepsilon(\pm 1)} = 1 \pm i$ we have
$$\Gamma_k \left(\frac{a}{2^{j}}\right) = 2^k g_a(2^{\frac{j-k+1}{2}} \rho) = 2^{\frac{j+k}{2}} g_a(\sqrt{2} \rho)
                                        = 2^{\frac{j+k}{2}} g_a(1+i) = 2^{\frac{j+k}{2}} (1+i^a) 
                                        = 2^{\frac{j+k+1}{2}} \rho^{\varepsilon(a)},$$
as desired.

The proof of Equation (\ref{eqn:teqn}) will be by induction on $k$.  The base cases $1 \leq k \leq 3$
are calculated as follows:
\begin{align*}
\T_1 &= E(0)+E(\frac{1}{2}) = 1 - 1 = 0\\
\T_2 &= E(0)+E(\frac{1}{4})+E(\frac{4}{4})+E(\frac{9}{4}) = 1+i+1+i=2(1+i)=2 \sqrt{2} \rho\\
\T_3 &= 2(E(0)+E(\frac{1}{8})+E(\frac{4}{8})+E(\frac{1}{8}))=2(1+\rho-1+\rho)=4\rho
\end{align*}
Assume now that $k \geq 4$.  We must prove that $\T_k = 2\T_{k-2}$.  To see this, note first that 
$2^{k-1} + 1$ is a {\it square $\mod 2^k$}.  Indeed,
$${(2^{k-2}+1)}^2 = (2^{k-2})^2+ 2^{k-1}+1  \equiv 2^{k-1}+1 \ \mod 2^k$$ 
when $k \geqslant 4$. Hence $x^2 \mapsto (2^{k-1}+1)x^2$ defines a bijection of the set
of squares modulo $2^k$.  This implies that
\begin{align*}
\T_k &= \sum_{x \ \mod 2^k} E\left( \frac{x^2}{2^{k}}\right) = \sum_{x \ \mod 2^k} E\left( \frac{(2^{k-1} +1) x^2}{2^{k}}\right) \\
     &= \sum_{x \ \mod 2^k} E\left( \frac{x^2}{2^{k}}\right) E\left( \frac{x^2}{2}\right) 
      = \sum_{x \ \mod 2^k} (-1)^x E\left( \frac{x^2}{2^k}\right),
\end{align*}
and hence
\begin{align*}
2\T_k &= \sum_{x \ \mod 2^k} E\left( \frac{x^2}{2^{k}}\right) + \sum_{x \ \mod 2^k} (-1)^x E\left( \frac{x^2}{2^k}\right) \\
      &= 2 \sum_{\text{even } x \ \mod 2^k} E\left( \frac{x^2}{2^k}\right).
\end{align*}
We conclude that
\begin{align*}
\T_k &= \sum_{y \ \mod 2^{k-1}} E\left( \frac{(2y)^2}{2^k}\right) 
        = \sum_{y \ \mod 2^{k-1}} E\left( \frac{y^2}{2^{k-2}}\right)
     &= 2 \sum_{y \ \mod 2^{k-2}} E\left( \frac{y^2}{2^{k-2}}\right) = 2 \T_{k-2}.
\end{align*}
\end{proof}

\begin{proof}[Proof of Proposition \ref{proposition:basiccalculation} for $\left(\frac{\cC}{2^j}\right)$]
Here $H$ consists of all pairs $(x,y)$ with $x,y \in \Z_{2^j}$.  We begin with the case $k \geq j-1$.  
For these values of $k$, the number $2^k (x,y)^2 = 2^k \frac{2xy}{2^j}$ is an
integer for all $(x,y) \in H$, so $E(2^k (x,y)^2)=1$ for all $(x,y) \in H$.  This implies that
$$\Gamma_k\left(\frac{a}{2^j}\right) = |H| = 2^{2j},$$
as desired.

The next step is to prove the following formula:
\begin{equation}
\label{eqn:ckreduction}
\Gamma_k \left(\frac{\cC}{2^j}\right) = 2^{2k+2} \sum_{x,y \ \ \mod 2^{j-k-1}} E\left(\frac{xy}{2^{j-k-1}}\right)\ \ \ \ \ \ \text{for $k<j-1$}
\end{equation}
We calculate:
$$
\Gamma_k \left(\frac{\cC}{2^{j}}\right) = \sum_{x,y \ \mod 2^j} E\left( \frac{2^k (2xy)}{2^{j}}\right) = \sum_{x,y \ \mod 2^j} E\left(
\frac{xy}{2^{j-k-1}}\right).
$$
Since $E(\frac{xy}{2^{j-k-1}})$ depends only on $x$ and $y$ $\mod 2^{j-k-1}$, this equals
$$
2^{2k+2} \sum_{x,y \ \mod 2^{j-k-1}} E\left( \frac{xy}{2^{j-k-1}}\right),
$$
as desired.

Define
$$\U_k = \sum_{x,y \ \mod 2^{k}} E\left( \frac{xy}{2^k} \right).$$
By Formula (\ref{eqn:ckreduction}), to prove the proposition it is enough to prove that $\U_k = 2^k$ for
$k \geq 0$.  The proof of this will be by induction on $k$.  The base cases $k=0,1$ are as follows:
\begin{align*}
\U_0 &= E(0) = 1,\\
\U_1 &= E(0)+E(0)+E(0)+E(\frac{1}{2}) = 1+1+1-1 = 2.
\end{align*}
Assume now that $k \geq 2$.  We will show that $\U_k = 4 \U_{k-2}$.  First, we first fix some $y$ with $0 \leq y \leq 2^k$.
Write $y$ as $2^r y_0$ with $y_0$ odd, and suppose that $y$ is not equal to $0$ or $2^{k-1}$.  This implies that $r < k-1$ and that 
$2^{k-r+1} +1$ is odd.  Using the fact that in the following sum the numbers $x y_0$ are odd, we have
\begin{align*}
\sum_{\text{odd } x \ \mod 2^k} E \left( \frac{xy}{2^k} \right) &= \sum_{\text{odd } x} E \left( \frac{(2^{k-r-1}+1)xy}{2^k} \right)
   = \sum_{\text{odd } x} E \left( \frac{xy}{2^k} \right) E \left( \frac{2^{k-r-1}x \cdot 2^r y_0}{2^k} \right) \\
& = \sum_{\text{odd } x} E \left( \frac{xy}{2^k} \right) E \left( \frac{x y_0}{2} \right)
   = -\sum_{\text{odd } x} E \left( \frac{xy}{2^k} \right)
\end{align*}
Thus
$$\sum_{\text{odd }x} E\left( \frac{xy}{2^k} \right) = 0 \text{ for all $y \not= 0, 2^{k-1}$}.$$
In particular, since $k\geqslant 2$, the number $2^{k-1}$ is even and hence
$$\sum_{\text{odd } x, \text{odd } y}E \left( \frac{xy}{2^k} \right) = 0.$$
Also,
$$
\sum_{\text{odd } x, \text{even } y} E \left( \frac{xy}{2^k} \right)
= \sum_{\text{odd } x} \left( E \left( \frac{x\cdot 0}{2^k} \right) + E \left( \frac{x\cdot 2^{k-1}}{2^k} \right) \right)
= \sum_{\text{odd } x} (1-1) = 0;
$$
by symmetry,
$$
\sum_{\text{even $x$, odd $y$}}  E \left( \frac{xy}{2^k} \right) = 0.
$$
We have thus shown that $\U_k$ is equal to
\begin{align*}
\sum_{\text{even $x$, even $y$}}  E \left( \frac{xy}{2^k} \right)
&= \sum_{x, y \ \mod 2^{k-1}}  E \left( \frac{(2x)(2y)}{2^k} \right)
= \sum_{x, y \ \mod 2^{k-1}}  E \left( \frac{xy}{2^{k-2}} \right) \\
&= 4 \sum_{x, y \ \mod 2^{k-2}}  E \left( \frac{xy}{2^{k-2}} \right) = 4 \U_{k-2},
\end{align*}
as desired.
\end{proof}

\begin{proof}[Proof of Proposition \ref{proposition:basiccalculation} for $\left(\frac{\cD}{2^j}\right)$]
The cases $k \geq j-1$ are identical to the analogous cases for $\left(\frac{\cC}{2^j}\right)$.  For $k < j-1$,
we use Lemma \ref{lemma:gammamult} together with the isomorphism of Example \ref{example:p2problem} to conclude
that
$$\Gamma_k \left( \frac{1}{2^{j-1}} \right)  \Gamma_k \left( \frac{\cC}{2^{j}} \right)
=
\Gamma_k \left( \frac{-3}{2^{j-1}} \right)  \Gamma_k \left( \frac{\cD}{2^{j}} \right).$$
But by the previously proven cases of Proposition \ref{proposition:basiccalculation}, we have
$$
\frac{ \Gamma_k \left( \frac{1}{2^{j-1}} \right)}{ \Gamma_k \left( \frac{-3}{2^{j-1}} \right)}
=
\left\{ \begin{array}{ll}
1 & \text{ if $j-k$ is odd (since $\varepsilon (-3) \equiv 1 \ \mod 8$)} \\
\frac{\rho}{\rho^{-3}} = \rho^4 = -1 & \text{ if $j-k$ is even.}
\end{array}\right.$$
The proposition follows.
\end{proof}

\subsubsection{Reduction to the phase invariants}
\label{SSS:Reduction to the phase invariants}

By Theorem \ref{theorem:p2decomp}, Lemma \ref{lemma:gammamult}, and Proposition \ref{proposition:basiccalculation}, for any
linked abelian group $(H,\lambda)$ the invariants $\Gamma_k(\lambda)$ are either equal
to $0$ or to $(\sqrt{2})^m \rho^{\varphi}$ for some $m \geq 0$ and some $\varphi \in \Z_8$.  The point
of the following definition and theorem is that the $\sqrt{2}$-term is purely an invariant
of the abelian group $H$, and thus is unnecessary for the classification of linking forms (the
re-indexing is done to simplify the formulas in Theorem \ref{theorem:phaseadd}.

\begin{definition}
Let $(H,\lambda)$ be a linked abelian group of degree $n$.  Then for $1 \leq k \leq n$ the
$k^{\Th}$ {\em phase invariant} $\varphi_k(\lambda) \in \Z_8 \cup \{\infty\}$ of $(H,\lambda)$ is
defined to equal $\infty$ if $\Gamma_{k-1}(\lambda) = 0$ and to equal $\varphi \in \Z_8$ with 
$$\frac{\Gamma_{k-1}(\lambda)}{|\Gamma_{k-1}(\lambda)|} = \rho^{\varphi}$$
if $\Gamma_{k-1}(\lambda) \neq 0$.  The {\em phase vector} $\varphi(\lambda)$ 
of $\lambda$ is the vector $(\varphi_n(\lambda),\ldots,\varphi_1(\lambda))$.  $\|$
\end{definition}

\begin{theorem}
Fix a finite abelian group $H$ of degree $n$.  Then two linking forms $\lambda_1$ and $\lambda_2$ on $H$ are
isomorphic if and only if $\varphi_k(\lambda_1) = \varphi_k(\lambda_2)$ for all $1 \leq k \leq n$; i.e. if and
only if $\lambda_1$ and $\lambda_2$ have identical phase vectors.
\end{theorem}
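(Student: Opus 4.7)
The plan is to reduce the theorem to Lemma \ref{lemma:gammakcomplete}, which states that the numbers $\Gamma_0(\lambda), \dots, \Gamma_{n-1}(\lambda)$ form a complete set of invariants of the isomorphism class of $\lambda$. If I can show that the phase vector, together with $H$, determines each $\Gamma_{k-1}(\lambda)$, then equality of phase vectors forces equality of all Gauss sums, and hence isomorphism. The forward implication is immediate: isomorphic linking forms produce equal values of $\Gamma_{k-1}$ and therefore equal phase invariants. So only the converse requires work.

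Unpacking the definition of the phase invariant, $\varphi_k(\lambda)$ already encodes two pieces of information: whether $\Gamma_{k-1}(\lambda)$ vanishes (namely, iff $\varphi_k = \infty$), and the unit-modulus direction $\rho^{\varphi_k}$ of $\Gamma_{k-1}(\lambda)$ when it is nonzero. Thus the only missing ingredient is the positive real magnitude $|\Gamma_{k-1}(\lambda)|$ in the nonvanishing case. The heart of the argument is the claim that this magnitude depends only on $H$, not on $\lambda$.

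To prove the claim, I would decompose $\lambda$ using Theorem \ref{theorem:p2decomp}, and for each $j$ let $u_j$ and $b_j$ be the numbers of unary summands on $\Z_{2^j}$ and binary summands on $(\Z_{2^j})^2$ appearing, so that $r_j = u_j + 2b_j$, where $r_j$ is the multiplicity of $\Z_{2^j}$ in $H$. Combining Lemma \ref{lemma:gammamult} with Proposition \ref{proposition:basiccalculation}, the block at level $j$ contributes a factor $(2^j)^{u_j}(2^{2j})^{b_j} = 2^{jr_j}$ to $|\Gamma_{k-1}(\lambda)|$ when $k-1 \geq j$, and a factor $(2^{(j+k)/2})^{u_j}(2^{j+k})^{b_j} = 2^{r_j(j+k)/2}$ when $k-1 \leq j-2$; in both cases the per-unary magnitude is precisely the square root of the per-binary magnitude, so the decomposition data $(u_j, b_j)$ collapses to the single number $r_j$. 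In the remaining boundary case $k-1 = j-1$, unary summands contribute $0$ while binary summands contribute $2^{2j}$, so the factor at this level is $0$ when $u_j > 0$ and equals $2^{jr_j}$ otherwise. Multiplying these contributions across $j$, it follows that $\Gamma_{k-1}(\lambda) = 0$ precisely when $u_k > 0$, which is equivalent to $\varphi_k(\lambda) = \infty$; and when $\Gamma_{k-1}(\lambda) \neq 0$, its magnitude is $\prod_{j \leq k} 2^{jr_j} \cdot \prod_{j > k} 2^{r_j(j+k)/2}$, an expression depending only on $H$.

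Feeding this magnitude into $\Gamma_{k-1}(\lambda) = |\Gamma_{k-1}(\lambda)|\,\rho^{\varphi_k}$ reconstructs each $\Gamma_{k-1}(\lambda)$ from the pair $(H, \varphi(\lambda))$, and Lemma \ref{lemma:gammakcomplete} then yields the theorem. The main obstacle is not conceptual but lies in the bookkeeping of the magnitude calculation: one must verify that the formulas in Proposition \ref{proposition:basiccalculation} are arranged so that at each level $j$ the unary and binary contributions are numerically consistent with a single factor of rank $r_j$, with the unique exception of the diagonal case $k-1 = j-1$, where the vanishing of the contribution flags precisely the presence of a unary summand on $\Z_{2^j}$.
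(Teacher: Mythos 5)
Your proposal is correct and follows essentially the same route as the paper: reduce to the completeness of the Gauss sums $\Gamma_{k}$ (Lemma \ref{lemma:gammakcomplete}) and check, via Theorem \ref{theorem:p2decomp}, Lemma \ref{lemma:gammamult}, and Proposition \ref{proposition:basiccalculation}, that the magnitude $|\Gamma_{k-1}(\lambda)|$ depends only on the ranks $r_j$ of $H$, with vanishing flagged by the phase value $\infty$. Your bookkeeping of the unary/binary contributions, including the diagonal case $j=k$, matches the paper's computation of the exponent $p_k$.
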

\begin{proof}
Let $r_j$ be the ranks of the blocks $B_j$ of $H$, and consider a linking form $\lambda$ on $H$.
We must show that for $0 \leq k < n$ the number $\Gamma_k(\lambda)$ is determined by the $r_j$ and the
phase invariants of $\lambda$.  First, we have $\Gamma_k(\lambda) = 0$ if and only if $\varphi_k(\lambda)=0$.  We
can thus assume that $\Gamma_k(\lambda) \neq 0$, so 
$$\Gamma_k(\lambda) = (\sqrt{2})^{p_k} \rho^{\varphi_{k+1}(\lambda)}.$$
We must determine $p_k$.  By Theorem \ref{theorem:p2decomp}, we can write $(H,\lambda)$ as an orthogonal direct sum
of copies of the basic linking forms.  Fixing such a decomposition, Lemma \ref{lemma:gammamult} and 
Proposition \ref{proposition:basiccalculation} say that the orthogonal components of $(H,\lambda)$ make
the following contributions to $p_k$ :
\begin{itemize}
\item Every unary summand $\frac{a}{2^{j}}$ with $j \leqslant k$ contributes $2j$; binary summands contribute $4j$. 
Thus the block $B_j$ contributes $2jr_j$ for all $j \leqslant k$.
\item There are no unary summands when $j = k+1$, and each binary summand contributes 
$4 (k+1)$; thus $B_{k+1}$ contributes $2 (k+1) r_{k+1} = 2 j r_j$ also.
\item For $j > k+1$, each unary summand contributes $j+k+1$ and each binary summand $2(j+k+1)$, so $B_j$ contributes $r_j (j+k+1)$.
\end{itemize}
Adding these three sets of contributions gives us
$$
p_k = \sum_{j \leqslant k+1} 2 j r_j + \sum_{j > k+1} (j+k+1) r_j,
$$
so $p_k$ is a function of the ranks $r_j$ alone, as desired.
\end{proof}

The computation of the phase invariants is facilitated by the following result, whose
proof is immediate from Lemma \ref{lemma:gammamult} and Proposition \ref{proposition:basiccalculation}.

\begin{theorem}
\label{theorem:phaseadd}
The phase invariants are additive under direct sums, and have the following values on the
basic linking forms:
\begin{align*}
\varphi_k \left(\frac{a}{2^j} \right) &= \left\{ 
\begin{array}{ll}
0              & \text{ if $k > j$}\\
\infty         & \text{ if $k = j$}\\
\varepsilon(a) & \text{ if $k < j$ and $k-j$ is odd}\\
a              & \text{ if $k < j$ and $k-j$ is even}\\
\end{array}\right.\\ 
\varphi_k \left(\frac{\cC}{2^j} \right) &= 0, \text{ all } k,\\
\varphi_k \left( \frac{\cD}{2^{j}} \right) &= \left\{ 
\begin{array}{ll}
0         & \text{ if $k \geq j$} \\
4 (j+k)   & \text{ if $k < j$}
\end{array}\right.
\end{align*}
\end{theorem}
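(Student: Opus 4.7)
The proof is essentially a bookkeeping translation from Proposition~\ref{proposition:basiccalculation} to the language of phase invariants, so I would keep it short and systematic. The first step is additivity. By Lemma~\ref{lemma:gammamult}, if $\lambda = \lambda_1 \oplus \lambda_2$ then $\Gamma_{k-1}(\lambda) = \Gamma_{k-1}(\lambda_1) \cdot \Gamma_{k-1}(\lambda_2)$. Dividing by absolute values (when nonzero) gives $\rho^{\varphi_k(\lambda)} = \rho^{\varphi_k(\lambda_1)} \rho^{\varphi_k(\lambda_2)}$, hence $\varphi_k(\lambda) \equiv \varphi_k(\lambda_1) + \varphi_k(\lambda_2) \pmod 8$; and if either factor vanishes, $\Gamma_{k-1}(\lambda) = 0$ so $\varphi_k(\lambda) = \infty$, consistent with the convention $\infty + x = \infty$.

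Next, I would read off the phase of each basic form directly from Proposition~\ref{proposition:basiccalculation}, being careful about the index shift $\varphi_k \leftrightarrow \Gamma_{k-1}$. For the unary form $\tfrac{a}{2^j}$: when $k > j$ we have $k-1 \geq j$, so $\Gamma_{k-1}=2^j$ is positive real and $\varphi_k = 0$; when $k = j$, $\Gamma_{k-1} = 0$ and $\varphi_k = \infty$; when $k < j$, the condition $j-(k-1) \geq 2$ is automatic, and Proposition~\ref{proposition:basiccalculation} gives a positive multiple of either $\rho^{\varepsilon(a)}$ (when $j-(k-1)$ is even, i.e.\ $k-j$ odd) or $\rho^a$ (when $j-(k-1)$ is odd, i.e.\ $k-j$ even). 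These are exactly the four cases in the theorem.

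For the binary form $\tfrac{\cC}{2^j}$, Proposition~\ref{proposition:basiccalculation} shows $\Gamma_{k-1}$ is a positive power of $2$ for every $k$, so $\varphi_k = 0$ always. For $\tfrac{\cD}{2^j}$: when $k \geq j$ we have $k - 1 \geq j - 1$, so $\Gamma_{k-1} = 2^{2j} > 0$ and $\varphi_k = 0$; when $k < j$, Proposition~\ref{proposition:basiccalculation} yields $\Gamma_{k-1} = (-1)^{j+k}\, 2^{j+k}$, and since $-1 = \rho^4$ this gives $\varphi_k \equiv 4(j+k) \pmod 8$ (equal to $0$ when $j+k$ is even and $4$ when $j+k$ is odd).

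The only point worth double-checking is the index convention, since Proposition~\ref{proposition:basiccalculation} is stated in terms of $\Gamma_k$ ($0 \leq k < n$) while $\varphi_k$ is indexed from $1$ to $n$ via $\Gamma_{k-1}$; the relabeling $k \mapsto k-1$ sends the parity condition ``$j-k$ odd'' in the $\Gamma$-statement to ``$k-j$ odd'' in the $\varphi$-statement, which matches the theorem. There is no genuine obstacle; the one thing to watch is that the parity swap and the index shift are carried out correctly in both binary cases, and that $\infty$ is handled separately in the additivity statement.
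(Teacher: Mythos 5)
Your proposal is correct and follows exactly the route the paper intends: the paper simply declares Theorem~\ref{theorem:phaseadd} immediate from Lemma~\ref{lemma:gammamult} and Proposition~\ref{proposition:basiccalculation}, and your argument is precisely that derivation, with the index shift $\varphi_k \leftrightarrow \Gamma_{k-1}$ and the resulting parity swap handled correctly in all cases.
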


Before doing some examples, we record a definition we will need later.

\begin{definition}
\label{definition:evenodd}
A linking form $\lambda$ on a free $\Z_{2^n}$ module is {\em even} if $\lambda(x,x) = \frac{2k}{2^n}$ for
some integer $k$.  Otherwise, is {\em odd}.  We remark that $\lambda$ is even if and only if any
orthogonal decomposition of it into basic forms has no unary summands.  $\|$
\end{definition}

\begin{example}
Let us test the method on the linkings of Example~\ref{example:p2problem}.
\begin{itemize}
\item $\left(\frac{\cC}{2^n} \right) \oplus  \left(\frac{1}{2^{n-1}} \right)$ gives $\varphi = (0, \infty, 1, 1, \dots)$
since $a = \varepsilon (a) = 1$ and $\varphi  \left(\frac{\cC}{2^n} \right) = 0$.
\item $\left(\frac{\cD}{2^n} \right) \oplus  \left(\frac{-3}{2^{n-1}} \right)$ gives 
$$\varphi = (0, 4, 0, 4, \dots) + (0, \infty, 1, -3, \dots) = (0, \infty, 1, 1, \dots),$$
as desired.  $\|$
\end{itemize} 
\end{example}

\begin{example}
$\displaystyle \frac{1}{2^n} \oplus \frac{3}{2^{n-1}} \not\simeq \frac{3}{2^n} \oplus \frac{1}{2^{n-1}}$. The reason is: $\varphi$ of the left hand side is
$\varphi = (\infty, 1, 1, 1, \dots) + (0, \infty, -1, 3, \dots) = (\infty, \infty, 0, 4, \dots)$, and 
$\varphi$ of the right hand side is
$\varphi = (\infty, -1, 3, -1, \dots) + (0, \infty, 1, 1, \dots) = (\infty, \infty, 4, 0, \dots).$ $\|$
\end{example}

\begin{example}
\begin{eqnarray*} 
\varphi \left(  \frac{1}{2^n} \oplus \frac{3}{2^{n-1}} \oplus \frac{5}{2^{n-2}} \right) 
& = & (\infty, \infty, 0, 4, 0,
\dots) + (0, 0, \infty, 1, 5, \dots) = (\infty, \infty, \infty, 5, 5, \dots), \\
 \varphi \left(  \frac{3}{2^n} \oplus \frac{5}{2^{n-1}} \oplus \frac{1}{2^{n-2}} \right) 
 & = & (\infty, -1, 3, -1, 3,  \dots) + (0, \infty, 1, 5, 1, \dots) + (0, 0, \infty, 1, 1, \dots) \\
& = & (\infty, \infty, \infty, 5, 5, \dots)\\
\varphi \left(  \frac{5}{2^n} \oplus \frac{1}{2^{n-1}} \oplus \frac{3}{2^{n-2}} \right) 
& = & (\infty, \infty, \infty, 1, 1, \dots)
\end{eqnarray*}
Therefore  the first two forms are isomorphic, and the third form is different from the first two. $\|$
\end{example}

\begin{example}
If $B_j$ is odd for all $j$ from $1$ to $n$, then $\varphi (\lambda) = (\infty, \infty, \dots, \infty)$; in particular, the class of $\lambda$ does not depend at all on the particular form of the individual blocks $B_j$. $\|$
\end{example}

\subsection{Reidemeister's invariants}
\label{SS:Reidemeister's invariants}   As we have seen, the Seifert-Burger viewpoint gives us a family of topological invariants of a 3-manifold that are associated to $H = H_1(W,\mathbb Z)$, yet are not determined by $H$.  Moreover, their determination depends crucially on whether there is, or is not, 2-torsion in  $H$.  We have learned how to compute them from a symplectic Heegaard splitting. 

 In 1933, the year that \cite{Sei} was published, a paper by Reidemeister \cite{R2}  also appeared. Moreover, there are remarks at the end both \cite{Sei} and \cite{R2} pointing to the work of the other. In particular, Reidemeister notes in his paper that Seifert's invariants are more general than his, as they must be because a quick scan of Reidemeister's paper \cite{R2} does not reveal any dependence of his results on whether there is 2-torsion.   We describe Reidemeister's invariants briefly.

Referring to Theorem~\ref{T:fundamental theorem for f.g. abelian groups} and using the notation adopted there, Reidemeister defines the integers $\tau_{i,j} = \tau_j/\tau_i$, where $1\leq i<j\leq t$.  Let $\cQ^{(2)} = (q_{ij})$ be the $t\times t$ matrix   in (\ref{E:R2,P2,S2,Q2}), where we defined the partial normal form of Theorem~\ref{T:partial normal form}.  Let $p_{im}$ be a non-trivial prime factor of the greatest common divisor of $(\tau_{1,2},\tau_{2,3},\dots,\tau_{i,i+1})$.   His invariants are a set of  symbols  which he calls $\varepsilon_{im}$, defined as follows:
\begin{eqnarray} \label{Reidemeister's invariants}
\nonumber
\varepsilon_{im} & = & 0  \quad \quad \quad  {\rm if \ \ } p_{im} {\rm \ \ divides \ \ } q_{ii}, \\
& = & q_{ii}/p_{im} \ \ \ {\rm if \ \ } p_{im} \ \ {\rm does \ \ not \ \ divide \ \ } q_{ii}.   
 \end{eqnarray}
Thus $\varepsilon_{im}$ is defined for every non-trivial prime divisor of the greatest common divisor of $(\tau_{1,2},\tau_{2,3},\dots,\tau_{i,i+1})$, and for every $i=1,\dots,t-1$. 
 He proves that his symbols are well-defined, independent of the choice of the representative 
 $\cH^{(2)}$ within the double coset of $\cH^{(2)}$ in $\Gamma_t$, by proving that remain unaltered  under the changes which we  described in $\S$\ref{SS:uniqueness questions}.  The fact that they are undefined when the greatest common divisor of $(\tau_{1,2},\tau_{2,3},\dots,\tau_{i,i+1})$ is equal to 1 show that they do not change under stabilization. 
 
 \begin{remark}   
We remark that Reidemeister's invariants are invariants of both the Heegaard splitting and of the stabilized Heegaard splitting.  
Therefore, if one happened to be working with a manifold which admitted two inequivalent Heegaard splittings, it would turn out 
that their associated Reidemeister symbols would coincide.  This illustrates the very subtle nature of the Heegaard splitting invariants 
that are, ipso facto, encoded in the higher order representations of the mapping class group in the Johnson-Morita filtration.  
Any such Heegaard splitting invariant is either a topological invariant (as is the case for Reidemeister's invariant), or 
an invariant which vanishes after sufficiently many stabilizations.   We will uncover an example of the latter type in the next section.
 \end{remark}  

%%%%%%%%%%%%%%%%%%%%%%%%%%%%%%%%%%%%%%%%%%%%%%%%%
%%%%%%%%%%%%%%%%%%%%%%%%%%%%%%%%%%%%%%%%%%%%%%%%%
%%%%%%%%%%%%%%%%%%%%%%%%%%%%%%%%%%%%%%%%%%%%%%%%%
\newpage
  \section{The classification problem for minimal (unstabilized) symplectic Heegaard splittings.}
  \label{S:the classification problem for minimal symplectic Heegaard splittings}
%%%%%%%%%%%%%%%%%%%%%%%%%%%%%%%%%%%%%%%%%%%%%%%%%
%%%%%%%%%%%%%%%%%%%%%%%%%%%%%%%%%%%%%%%%%%%%%%%%%
%%%%%%%%%%%%%%%%%%%%%%%%%%%%%%%%%%%%%%%%%%%%%%%%%

Referring the reader back to Remark~\ref{R:inequivalent Heegaard splittings}, it is clear that knowledge of a complete set of  invariants of minimal symplectic Heegaard splittings, and the ability to compute them,  are of interest in their own right.   This was our motivation when we posed Problems 4, 5 and 6 in $\S$\ref{SS:6 problems}.  In this section we will solve these problems.

Given two minimal Heegaard pairs with isomorphic linked quotient groups $H_i$ and canonical volumes $\pm \theta_i$, 
Theorem \ref{T:the volume again} tells us that the pairs are isomorphic if and only if there is a volume preserving 
linking isomorphism $H_1 \to H_2$. By hypothesis there is a linking isomorphism $h$, but it may not be volume 
preserving --- $\det h$ may not equal $\pm 1$.  Suppose $f: H_1 \to H_1$ is a linking automorphism, that is, an 
{\it isometry}, of $H_1$.  Then $hf$ is still a linking isomorphism, and 
$\det (hf) = \det h \cdot \det f$.  Thus if $\det h \not= \pm1$, we may hope to change it to 
$\pm 1$ by composing it with some isometry of $H_1$. This will be our approach, and it will give us a complete set of 
invariants for {\it minimal} Heegaard pairs.

\subsection{Statement of Results. Solutions to Problems 4, 5, 6.}

Let $(V; B, \Bbar)$ be a minimal Heegaard pair with quotient $H_0$.  Choose any dual complement $A$ of $B$ and let
$\pi : A \rightarrow H_0$ be the projection.  Let $H < H_0$ be the torsion subgroup with associated
linking form $\lambda$, and set $\F = \pi^{-1}(H)$.  We thus have a minimal presentation $\pi : \F \rightarrow H$.
If $e_i$ ($i=1, \dots, r$) is a basis for $\F$, put $x_i = \pi (e_i)$. 
We define a {\it linking matrix} for $\lambda$ by choosing rational numbers
$\lambda_{ij}$ which are congruent $\mod ~ 1$ to $\lambda (x_i, x_j)$ for each $i, j$, subject to the symmetry condition
$\lambda_{ij} = \lambda_{ji}$; we call such a choice a {\it lifting} of $\lambda (x_i, x_j)$.

Though the linking matrix for $\lambda$ depends on choices, we can extract an invariant from it.  The
first step is the following theorem, which will be proven in \S \ref{section:invariance1}.  Let $\tau$ be the smallest
elementary divisor of $H$.

\begin{theorem}
\label{theorem:unstableinvariants1}
The number $| H | \det (\lambda_{ij})$ is an integer, and its reduction modulo $\tau$ is a unit in $\Z_{\tau}$ which 
depends only on the isomorphism class of $(H,\lambda)$ and the isomorphism class of the presentation $\pi : F \rightarrow H$.
\end{theorem}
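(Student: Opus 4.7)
The plan is to work in a convenient basis of $F$ (an elementary divisor basis), establish integrality and the unit property there, and then verify invariance under all allowed changes. First, I would choose a basis $e_1,\ldots,e_r$ of $F$ adapted to $\ker\pi$ in the sense of Proposition~\ref{P:structure of abelian groups}, so that $x_i = \pi(e_i)$ has order $\tau_i$, where $\tau = \tau_1 \mid \tau_2 \mid \cdots \mid \tau_r$ are the elementary divisors of $H$. Since $\tau_i x_i = 0$ forces $\tau_i \lambda_{ij}\in\Z$, multiplying row $i$ of $\Lambda = (\lambda_{ij})$ by $\tau_i$ produces an integer matrix $M$ with $\det M = \bigl(\prod_i \tau_i\bigr)\det\Lambda = |H|\det\Lambda$, settling integrality in this basis.

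For the unit property modulo $\tau$, I would first split $(H,\lambda)$ into its $p$-primary summands using Theorem~\ref{T:linkings always split as direct sums}; the matrix $M$ then becomes block diagonal and $\det M$ factors as a product over primes, so it suffices to show that the contribution from each prime $p\mid\tau$ is a $p$-unit. Within a $p$-primary component I would appeal to the orthogonal decomposition into basic linking forms, provided for odd $p$ by Seifert's classification (Theorem~\ref{T:Seifert's theorem on the linking form}) and for $p=2$ by Wall's theorem (Theorem~\ref{theorem:p2decomp}), and use the multiplicativity of $|H|\det(\cdot)$ across orthogonal direct sums. Computing on each basic block is then immediate: a unary block $(a/p^j)$ contributes $p^j\cdot(a/p^j)=a$, and for $p=2$ the binary blocks give $4^j\det(\cC/2^j)=-1$ and $4^j\det(\cD/2^j)=3$; all of these are $p$-units, establishing the unit claim.

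Invariance is then a direct verification. For a change of lift $\lambda_{ij}\mapsto\lambda_{ij}+k_{ij}$ with $k_{ij}=k_{ji}$ integers, the matrix $M$ changes by $(\tau_i k_{ij})$, whose entries are all divisible by $\tau = \tau_1$, so $\det M$ is preserved modulo $\tau$. For a change of basis of $F$ by $U\in\GL(r,\Z)$, the linking matrix transforms to $\hat{U}\Lambda U + N$ for some symmetric integer correction $N$; since $(\det U)^2 = 1$ gives $\det(\hat{U}\Lambda U) = \det\Lambda$, and the correction is absorbed by lift invariance, the reduction mod $\tau$ is preserved (and this simultaneously delivers integrality of $|H|\det\Lambda$ in any basis). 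An isomorphism of presentations is by definition such a basis change, and a linking isomorphism $(H,\lambda)\cong(H',\lambda')$ transports bases and linking matrices simultaneously, so the invariant persists.

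The main obstacle is the unit property, which depends on the full structural classification of linked $p$-groups---particularly the delicate $p=2$ case, where the binary blocks $\cC$ and $\cD$ require separate attention. Once the basic-block computations are in hand, however, multiplicativity across the orthogonal decomposition means no further analysis of cross-terms between blocks is needed.
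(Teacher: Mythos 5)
Your proposal is correct in outline and reaches the stated result, but it takes a genuinely different route from the paper's. The paper avoids basis-dependent computations entirely: it packages the linking matrix $(\lambda_{ij})$ as a lift $L \colon \F \to R^*$ of the isomorphism $\lambda \colon H \to H^*$ through the dual short exact sequence $0 \to \F^* \to R^* \to H^* \to 0$ (where $R = \ker\pi$), and observes that the restriction $K = L|_R \colon R \to \F^*$ is an integral map with $\det K = |H|\det(\lambda_{ij})$. Integrality is then immediate, and the unit property follows in one line from $\det K \equiv \det\lambda \pmod \tau$ (via Lemma~\ref{L:orientation-2}) together with the fact that $\lambda$ is an isomorphism of $H$ with $H^*$ --- no structure theory is invoked at all. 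Your route instead fixes an elementary-divisor basis to get integrality by row-scaling, then invokes the full classification of linked $p$-groups (Seifert's orthogonal splitting for odd $p$, Wall's Theorem~\ref{theorem:p2decomp} for $p=2$) and checks the unit property block by block. That is correct, and the block computations you report (unary $\mapsto a$, binary $\mapsto \det\cC = -1$ or $\det\cD = 3$) are right, but it is considerably heavier machinery than the problem requires: non-singularity of $\lambda$ already forces the determinant to be a unit, and appealing to Wall's theorem to recover that fact is using a sledgehammer. What your route does buy is concreteness --- you can literally read off the unit --- which the paper's argument does not give directly.

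One step in your invariance argument needs care. Your lift-invariance computation is tied to the elementary-divisor basis: the row-scaled matrix $M = (\tau_i \lambda_{ij})$ is integral precisely because the generator $x_i$ has order exactly $\tau_i$ in that basis, and that is what makes the correction $(\tau_i k_{ij})$ uniformly divisible by $\tau$. In a general basis there is no such row-scaled integral $M$, so when you write $\Lambda' = \hat{U}\Lambda U + N$ and say ``the correction is absorbed by lift invariance,'' you are invoking lift invariance in a basis where you have not established it. The fix is to reverse the order of operations: write $\Lambda' = \hat{U}\bigl(\Lambda + \hat{U}^{-1} N U^{-1}\bigr)U$, note that $\hat{U}^{-1} N U^{-1}$ is again an integer symmetric matrix, apply your lift-invariance argument to $\Lambda + \hat{U}^{-1} N U^{-1}$ \emph{in the elementary-divisor basis}, and only then conjugate by $U$. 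As written the manipulation is in the wrong order; the paper sidesteps this issue entirely because $L \colon \F \to R^*$ (and hence integrality and the mod-$\tau$ class of its determinant) is defined independently of any choice of basis or symmetric lift.
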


This theorem was first proven (by different methods) in \cite{Birman1975}.  In some cases, we can do better.
We will need the following definition.

\begin{definition}
The linking on $H$ is {\em even} if for all $x \in H$ with $\tau x = 0$, we have $x^2 \in \leg{2}{\tau}$ (here we have 
abbreviated $x \cdot x$ to $x^2$ and $\leg{2}{\tau}$ is the subgroup of $\Q/\Z$ generated by $\frac{2}{\tau}$).
Otherwise the linking is {\em odd}.
\end{definition}

\begin{remark}
See Lemma \ref{lemma:8.2} below to relate this to the definition of an even linking form on a 2-group
defined in Definition \ref{definition:evenodd}.
\end{remark}

We will prove the following refinement of Theorem \ref{theorem:unstableinvariants1} in
\S \ref{section:invariance2}.

\begin{theorem}
\label{theorem:unstableinvariants2}
Let $\overline{\tau} = \tau$ if $\lambda$ is odd and $2 \tau$ if $\lambda$ is even.  Then 
the reduction modulo $\overline{\tau}$ of $| H | \det (\lambda_{ij})$ depends only
on the isomorphism class of $(H,\lambda)$ and the isomorphism class of the presentation $\pi : F \rightarrow H$.  
\end{theorem}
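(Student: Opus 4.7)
The plan is to refine the proof of Theorem~\ref{theorem:unstableinvariants1} by carrying the analysis one factor of $2$ further, invoking the evenness hypothesis exactly at the points where the parity must be controlled. Since $|H|\det(\lambda_{ij})$ is unchanged (not merely mod $\tau$) under change of basis of $F$—because the change multiplies $\Lambda$ by a pair of matrices of determinant $\pm 1$—and since change of the dual complement of $B$ alters $\Lambda$ by the addition of a symmetric integer matrix, the crux is to prove
$$
|H|\bigl[\det(\Lambda + M) - \det\Lambda\bigr] \;\equiv\; 0 \pmod{2\tau}
$$
for every symmetric integer matrix $M$, whenever $\lambda$ is even.

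To set this up I would choose a basis of $F$ adapted to the elementary divisors $m_1 \mid m_2 \mid \cdots \mid m_r$ (so $m_1 = \tau$) and write $\Lambda = D^{-1}N$, where $D = \mathrm{diag}(m_1,\ldots,m_r)$ and $N_{ij} = m_i\lambda_{ij}$; then $|H|\det\Lambda = \det N$. Symmetry of $\Lambda$ is equivalent to $ND$ being symmetric, i.e.\ $m_j n_{ij} = m_i n_{ji}$, from which one extracts the identity
$$
m_s \det N_{(\hat s,\hat l)} \;=\; m_l \det N_{(\hat l,\hat s)}
$$
by comparing the $(s,l)$ and $(l,s)$ minors of $ND$. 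Expanding $\det(N+DM)-\det N$ by row-multilinearity produces a sum over nonempty $S\subseteq\{1,\ldots,r\}$ of terms $\bigl(\prod_{s\in S} m_s\bigr)\det\bigl([N]_{S^c},[M]_S\bigr)$. The displayed identity collapses the off-diagonal $|S|=1$ pair contributions into $2 m_s m_{sl}\det N_{(\hat s,\hat l)}$, automatically divisible by $2\tau$; the diagonal $|S|=1$ contributions $m_s m_{ss}\det N_{(\hat s,\hat s)}$ and the $|S|\geqslant 2$ terms are what require the evenness hypothesis.

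At this point I would introduce the rescaled matrix $C=(c_{ij})$ with $c_{ij}=(m_j/\tau)n_{ij}$, which is integer since $\tau\mid m_j$. The $ND$-symmetry forces $C$ itself to be symmetric, and a direct unwinding of the definition of an even linking shows that $\lambda$ is even if and only if every $c_{ii}$ is even—equivalently, that $C \bmod 2$ is an alternating bilinear form on $\mathbb{F}_2^r$. The plan is then to deduce the required mod-$2$ vanishing of each diagonal minor $\det N_{(\hat s,\hat s)}$, and the necessary divisibility for the higher-$|S|$ terms, by invoking the classical vanishing theorems for minors of alternating $\mathbb{F}_2$-forms and then transporting the parity statements back via
$$
\det C_{(\hat s,\hat s)} \;=\; \Bigl(\prod_{i\ne s}(m_i/\tau)\Bigr)\det N_{(\hat s,\hat s)}.
$$

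The principal obstacle will be the bookkeeping needed to reconcile the alternating-form argument with the varied 2-adic profiles of the $m_i/\tau$: the translation between parities in $C$ and in $N$ behaves differently according to which $m_i/\tau$ are odd. I anticipate the cleanest route is to first invoke the orthogonal decomposition of $(H,\lambda)$ into basic linking forms guaranteed by Theorems~\ref{T:linkings always split as direct sums} and~\ref{theorem:p2decomp}, observe that both the invariant $|H|\det\Lambda \bmod 2\tau$ and the evenness hypothesis are compatible with orthogonal direct sums, and then verify the desired mod-$2\tau$ invariance on each basic summand by direct computation along the lines of Proposition~\ref{proposition:basiccalculation}.
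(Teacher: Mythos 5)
Your reduction is correct up to its last step, and in fact it shadows the paper's own argument: basis changes multiply $|H|\det(\lambda_{ij})$ by $(\det A)^2=1$; two symmetric lifts differ by a symmetric integral $M$; the subset expansion of $\det(N+DM)-\det N$ is valid; the identity $m_s\det N_{(\hat s,\hat l)}=m_l\det N_{(\hat l,\hat s)}$ does follow from the symmetry of $ND$, so the paired off-diagonal $|S|=1$ terms are divisible by $2\tau$, and the $|S|\geq 2$ terms are divisible by $\tau^2$, hence by $2\tau$ since evenness (in the paper's convention) forces $\tau$ to be even. Your observation that $\lambda$ is even exactly when every $c_{ii}=(m_i/\tau)N_{ii}$ is even is also correct; it is Lemma \ref{lemma:8.5a} of the paper in different notation. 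Everything therefore hinges on one remaining claim: for each $s$ with $m_s/\tau$ odd, $\det N_{(\hat s,\hat s)}$ is even (for $m_s/\tau$ even nothing is needed). That claim is where your proposal has a genuine gap.

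The transport identity $\det C_{(\hat s,\hat s)}=\bigl(\prod_{i\neq s}(m_i/\tau)\bigr)\det N_{(\hat s,\hat s)}$ gives no information precisely when some $m_i/\tau$ with $i\neq s$ is even: the left side is then automatically $0$ mod $2$ regardless of the parity you want, so "transporting back" is impossible. (Even in the all-odd case you must additionally note that $C$ is nonsingular mod $2$, forcing its size to be even; only then do the principal minors, being alternating of odd size, vanish — alternating forms of even size have no such vanishing.) The fallback via Wall's decomposition into basic forms does not repair this: $\det N_{(\hat s,\hat s)}$ is a minor of the full lifted matrix, and the perturbation $M$ comparing two lifts need not respect any orthogonal decomposition, so verifying the statement summand-by-summand only treats block-diagonal lifts; moreover evenness is a condition tying the lowest $2$-block to the global $\tau$, so it does not simply pass to summands. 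What is actually needed — and what the paper supplies in Lemmas \ref{lemma:8.5a}--\ref{lemma:8.7}, packaged there as $\mathrm{tr}(L^{-1}Y)\equiv 0 \pmod 2$ after the first-order expansion of Lemma \ref{lemma:8.4} — is the mod-$2$ block structure of $N$: ordering the basis so that $m_i/\tau$ is odd exactly for $i\leq b$, one has $N_{ij}$ even whenever $i>b\geq j$, while the $b\times b$ corner is symmetric with even diagonal (your $c_{ii}$ computation) and nonsingular mod $2$ because $\det N$ is a unit; hence $b$ is even, its $(b-1)\times(b-1)$ principal minors vanish mod $2$, and the triangular structure then gives $\det N_{(\hat s,\hat s)}\equiv 0\pmod 2$ for $s\leq b$. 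Without this block analysis your argument proves the theorem only in the special case that every $m_i/\tau$ is odd.
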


At the end of \S \ref{section:invariance2} we give an example which shows that we have indeed found
a stronger invariant than the one that was given in \cite{Birman1975}.

Corollary \ref{C:complete invariant of equivalence of minimal presentations} and Lemma \ref{L:the volume again} say 
that the reduction modulo $\overline{\tau}$ of $| H | \det (\lambda_{ij})$ is actually a well-defined
invariant of $(V; B, \Bbar)$, which we will denote by $\det(V; B, \Bbar)$.  
Our next theorem say that it is a complete invariant of minimal Heegaard pairs, solving Problem 4 of $\S$\ref{SS:6 problems}.

\begin{theorem} \label{T:unstabilizedHeegaardpairs}
Let $(V_i; B_i, \Bbar_i)$ ($i = 1, 2$) be minimal Heegaard pairs with linked quotients $(H_i, \lambda_i)$.
Then the pairs are isomorphic if and only if the linked quotients are isomorphic and 
$\det(V_1; B_1, \Bbar_1) = \det(V_2; B_2, \Bbar_2)$.
\end{theorem}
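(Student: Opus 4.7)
The necessity is essentially built into the construction of $\det(V;B,\Bbar)$. A Heegaard isomorphism $F:(V_1;B_1,\Bbar_1)\to(V_2;B_2,\Bbar_2)$ sends any dual complement $A_1$ of $B_1$ isomorphically onto a dual complement of $B_2$ and restricts to a linking isomorphism $h:(H_1,\lambda_1)\to(H_2,\lambda_2)$ which, by Lemma~\ref{L:the volume again} applied to the induced presentations, preserves the canonical volumes $\pm\theta_i$. Transporting a basis $e_1,\dots,e_r$ of $\pi_1^{-1}(H_1)$ across $F$ yields a basis of $\pi_2^{-1}(H_2)$ whose linking matrix is literally the pullback of the original, so $|H_1|\det(\lambda_{1,ij})=|H_2|\det(\lambda_{2,ij})$ already at the integral level, and in particular $\mod\overline\tau$.

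For sufficiency, suppose we are given a linking isomorphism $h:(H_1,\lambda_1)\to(H_2,\lambda_2)$ and equality of the invariants. By Theorem~\ref{T:the volume again}, it suffices to replace $h$ by a linking isomorphism which is also volume-preserving. Write $h(\theta_1)=d\cdot\theta_2$ for a unit $d\in\Z_\tau^\times$; the plan is to find an isometry $f$ of $(H_1,\lambda_1)$ with $\det f\equiv\pm d^{-1}\pmod\tau$, so that $h\circ f$ lifts to a Heegaard isomorphism. This will be done in two steps.

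First I would show that the hypothesis of equal invariants forces the constraint
\begin{equation*}
d^2\equiv 1 \pmod{\overline\tau}.
\end{equation*}
Here is how: choose a basis $e_1,\dots,e_r$ of $\pi_1^{-1}(H_1)$ and set $x_i=\pi_1(e_i)$ and $y_i=h(x_i)$. Then $\lambda_2(y_i,y_j)=\lambda_1(x_i,x_j)$, so the invariant computed from the presentation $h\circ\pi_1:F_1\to H_2$ equals the invariant of $(V_1;B_1,\Bbar_1)$. But this presentation carries volume $h(\theta_1)=d\theta_2$, whereas the presentation $\pi_2$ of pair 2 carries volume $\theta_2$; expressing $\{y_i\}$ in terms of a basis of $\pi_2^{-1}(H_2)$ (after lifting through the isomorphism of minimal presentations provided by Proposition~\ref{P:structure of abelian groups}) introduces a change-of-basis matrix $W$ over $\Z$ with $\det W\equiv d\pmod\tau$, and conjugating the linking matrix by $W$ multiplies its determinant by $(\det W)^2\equiv d^2\pmod{\overline\tau}$ (the refinement from $\tau$ to $\overline\tau$ in the even case follows because $|H|\det(\lambda_{ij})$ is integral and the two square integers agree modulo $2\tau$ exactly when the linking is even; compare Theorem~\ref{theorem:unstableinvariants2}). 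Equating the two computations of the invariant yields $d^2\cdot I\equiv I\pmod{\overline\tau}$, and since $I$ is a unit in $\Z/\overline\tau$, we get $d^2\equiv 1\pmod{\overline\tau}$.

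The main obstacle, and the second step, is the converse realization: every $d\in\Z_\tau^\times$ with $d^2\equiv 1\pmod{\overline\tau}$ occurs as $\pm\det f$ for some isometry $f$ of $(H_1,\lambda_1)$. I would prove this by decomposing $(H_1,\lambda_1)$ into its $p$-primary summands via Theorem~\ref{T:linkings always split as direct sums}, handling each summand independently, and then combining via the Chinese remainder theorem (using that isometries of summands assemble into an isometry of the direct sum, and determinants multiply). For odd $p$ the structure is controlled by Theorem~\ref{T:Seifert's theorem on the linking form}: on each diagonal block $\cA_q/p^{\varepsilon_q}$ one has isometries realizing all units of square $1\bmod p^{\varepsilon_q}$ (scaling automorphisms $x\mapsto ux$ on cyclic factors together with block-diagonal symmetries), which by an elementary Chinese-remainder argument gives every $d$ satisfying $d^2\equiv 1\pmod\tau$. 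For $p=2$ one uses the orthogonal decomposition of Theorem~\ref{theorem:p2decomp} into unary and binary basic forms: the binary blocks $\cC/2^j,\cD/2^j$ support isometries of determinant $-1$, and the unary blocks $a/2^j$ of highest order $\tau$ carry the automorphism $x\mapsto cx$ whose square-class $c^2\bmod 2\tau$ realizes precisely the residues demanded by $d^2\equiv 1\pmod{2\tau}$ in the even case, and $d^2\equiv 1\pmod{\tau}$ otherwise. Granting this realization, the composition $h\circ f$ satisfies $h\circ f(\theta_1)=\pm\theta_2$, so Theorem~\ref{T:the volume again} lifts it to the required Heegaard isomorphism.
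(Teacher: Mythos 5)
Your proposal is correct and is essentially the paper's own argument: you reduce equality of the invariants to the statement that the volume defect $d$ satisfies $d^2 \equiv 1 \pmod{\overline{\tau}}$, and then realize every such $d$ as $\pm\det$ of an isometry by working one $p$-primary component at a time (scaling an orthogonally split cyclic summand of order the $p$-part of $\tau$ in the odd case, swapping the two generators of a binary block in the even $2$-case), exactly as in Lemmas \ref{lemma:8.17} and \ref{lemma:5.18} of the paper, before lifting via Theorem \ref{T:the volume again}. The only slip is cosmetic: in the even case there are no unary summands of order $\tau$, but none are needed, since there $d^2 \equiv 1 \pmod{2\tau}$ forces $d \equiv \pm 1$ and your binary-block swap already supplies determinant $-1$.
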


Theorem~\ref{T:unstabilizedHeegaardpairs} will be proven in \S \ref{section:completeness}.  With Theorem~\ref{T:unstabilizedHeegaardpairs} in hand, we will be able to count the number of isomorphism classes of minimal Heegaard pairs with linked quotients $(H,\lambda)$, solving Problem 5 of $\S$\ref{SS:6 problems}.
To make sense of that result, we will need the following lemma.

\begin{lemma}
\label{lemma:squaredef}
Consider an integer $n \in \Z_{\tau}$.  Then $n^2$ is well defined mod $\overline{\tau}$.
\end{lemma}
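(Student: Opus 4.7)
The plan is a direct binomial computation. Let $\tilde n, \tilde n' \in \Z$ be two integer lifts of $n \in \Z_\tau$, and write $\tilde n' = \tilde n + k\tau$ for some $k \in \Z$. Then
\[
(\tilde n')^2 - \tilde n^2 \;=\; 2\tilde n k \tau + k^2 \tau^2 \;=\; \tau\bigl(2\tilde n k + k^2 \tau\bigr).
\]

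If $\lambda$ is odd, then $\overline{\tau} = \tau$ and the right-hand side is obviously divisible by $\overline\tau$, with no further work required. In the remaining case $\lambda$ is even and $\overline\tau = 2\tau$: divisibility by $2\tau$ demands $2 \mid 2\tilde n k + k^2 \tau$, and since $2\tilde n k$ is automatically even, this collapses to requiring $k^2 \tau$ to be even for every integer $k$, equivalently, to $\tau$ being even.

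The substantive content of the lemma is therefore the implication `$\lambda$ even $\Rightarrow$ $\tau$ even'. This is forced by a careful reading of the definition: when $\tau$ is odd, $2$ is a unit modulo $\tau$, so the subgroup $\langle 2/\tau\rangle$ already equals the full $\tau$-torsion subgroup $\langle 1/\tau\rangle$ of $\Q/\Z$, and the condition `$x^2 \in \langle 2/\tau\rangle$ for every $\tau$-torsion $x$' becomes vacuous. Interpreting the even/odd dichotomy as substantive only when $\tau$ is actually even---a convention consistent with Definition \ref{definition:evenodd} and the announced Lemma 8.2---ensures that whenever $\overline\tau = 2\tau$ we also have $\tau$ even, and the binomial identity then delivers the result.

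The only real obstacle is this semantic point; once the even/odd convention is pinned down (i.e.\ the vacuous odd-$\tau$ case is excluded from `even'), the lemma is essentially the one-line expansion displayed above.
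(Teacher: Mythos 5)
Your argument is correct and is essentially the paper's: after reducing to the even case ($\overline{\tau}=2\tau$), the paper makes the same binomial expansion $(a+b\tau)^2 = a^2 + 2ab\tau + b^2\tau^2$ and discards the last two terms modulo $2\tau$. The only difference is that the paper's justification, namely that $2\tau$ divides $\tau^2$, silently uses the fact that an even linking forces $\tau$ to be even --- precisely the convention you spell out --- so your added discussion fills in an implicit step rather than taking a different route.
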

\begin{proof}
We may assume that our linking is even, so $\overline{\tau} = 2 \tau$.  Then for $a, b \in \Z$ we
have
$$(a + b \tau)^2 = a^2 + 2 a b \tau + b^2 \tau^2,$$
which equals $a^2$ modulo $2 \tau$ since $2 \tau$ divides $\tau^2$.  Hence knowledge of $a$ modulo $\tau$
sufficies to determine $a^2$ modulo $\overline{\tau}$, as desired.
\end{proof}

Denote the group of units in $\Z_{\tau}$ by $\units$.  In light of Lemma \ref{lemma:squaredef}, it makes
sense to define
$$\sqrt{1} = \{\text{$x \in \units$ $|$ $x^2 = 1$ modulo $\overline{\tau}$}\}.$$
Our result is the following; it will be proven in \S \ref{section:completeness} as a byproduct
of the proof of Theorem \ref{T:unstabilizedHeegaardpairs}.

\begin{theorem}
\label{theorem:isomorphismcount}
The number of isomorphism classes of distinct minimal Heegaard pairs with linked quotients $(H,\lambda)$ is
$\frac{| \units |}{| \sqrt{1} |}$.
\end{theorem}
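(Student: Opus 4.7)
The plan is to apply Theorem~\ref{T:unstabilizedHeegaardpairs}: isomorphism classes of minimal Heegaard pairs with linked quotient $(H,\lambda)$ correspond bijectively to attainable values of the invariant $\det(V;B,\Bbar) \in \Z_{\overline{\tau}}$, once we account for the ambiguity arising from the choice of identification with $(H,\lambda)$.

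First I would identify the attainable invariants. Fix a base minimal Heegaard pair (which exists by Theorem~\ref{T:a linked group of rank r has a Heegaard presentation of genus r}) with invariant $d_0 \in \Z_{\overline{\tau}}$; note that $d_0$ is a unit modulo $\tau$ by Theorem~\ref{theorem:unstableinvariants1}. Under a change of basis of $H$ by an automorphism with determinant $u$, the linking matrix transforms as $L \mapsto MLM^T$, so the invariant transforms as $d_0 \mapsto u^2 d_0$. Together with the realizability of every volume on $(H,\lambda)$ by some genuine Heegaard pair (an extension of the construction in Theorem~\ref{T:a linked group of rank r has a Heegaard presentation of genus r}), this shows the attainable values form the set $S = \{u^2 d_0 : u \in \units\} \subseteq \Z_{\overline{\tau}}$.

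Next I would count $|S|$. The squaring map $u \mapsto u^2$ on $\units$, viewed in $\Z_{\overline{\tau}}$, has kernel $\sqrt{1}$ by Lemma~\ref{lemma:squaredef} and the definition of $\sqrt{1}$. Moreover, the fibers of $u \mapsto u^2 d_0$ coincide with those of $u \mapsto u^2$: if $u^2 d_0 \equiv v^2 d_0 \pmod{\overline{\tau}}$, then $(u^2 - v^2) d_0 \equiv 0 \pmod{\overline{\tau}}$, and using that $u^2 - v^2$ is even (since $u,v$ admit odd integer lifts) together with the fact that $d_0$ is coprime to $\tau$, a case analysis on whether $\overline{\tau} = \tau$ or $2\tau$ yields $u^2 \equiv v^2 \pmod{\overline{\tau}}$. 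Hence $|S| = |\units|/|\sqrt{1}|$.

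Finally, I would verify that the identification-ambiguity is trivial, i.e.\ $(\det f)^2 \equiv 1 \pmod{\overline{\tau}}$ for every isometry $f \in \mathrm{Aut}(H,\lambda)$. Applying $f$ to the base pair leaves the linking matrix unchanged (since $f$ is an isometry) but scales the induced volume by $\det f$, so the resulting invariant equals both $d_0$ (direct computation) and $(\det f)^2 d_0$ (by the transformation law); cancellation as in the previous step gives $(\det f)^2 \equiv 1 \pmod{\overline{\tau}}$. By Theorem~\ref{T:unstabilizedHeegaardpairs} the number of iso classes therefore equals $|S| = |\units|/|\sqrt{1}|$. The main technical subtlety will be the cancellation argument above, which requires the parity/coprimality case analysis; the realizability of every volume by a genuine Heegaard pair is a routine extension of Theorem~\ref{T:a linked group of rank r has a Heegaard presentation of genus r}.
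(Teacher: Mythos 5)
Your proposal is correct and takes essentially the same route as the paper: the count rests on the transformation law $d\mapsto u^2 d$ under a change of volume (your $L\mapsto MLM^{T}$ computation is exactly Lemma~\ref{lemma:8.8}/Lemma~\ref{lemma:8.12}), on the identification of the stabilizer with $\sqrt{1}$ via Lemma~\ref{lemma:squaredef}, on realizability of every volume, and on Theorem~\ref{T:unstabilizedHeegaardpairs}, which you quote as a black box and which the paper proves in the same section (its hard direction --- every element of $\sqrt{1}$ is the determinant of an isometry --- being precisely what makes equal invariants imply isomorphism). Your extra verification that isometries have $(\det f)^2\equiv 1 \pmod{\overline{\tau}}$ is the paper's Corollary~\ref{corollary:8.9}, so there is no substantive difference.
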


We close this chapter in \S \ref{SS:remarks on problem 6} with a discussion of 
how our techniques give normal forms for symplectic gluing matrices, in certain situations.  This was the problem that was posed in Problem 6 of $\S$\ref{SS:6 problems}.

\subsection{Proof of Invariance 1}
\label{section:invariance1}

This section contains the proof of Theorem \ref{theorem:unstableinvariants1}.  
We will need several definitions.

Let $\F$ be a free abelian group and $R \subset \F$ be a subgroup of equal rank.  If $\varphi, \psi$ are
orientations of $\F, R$, then the inclusion map
of $R$ into $\F$ has an integral determinant, and this determinant's absolute value is well known to be $(\F : R) = | \F/RÊ|$ (for
example, choose a basis for $\F$ as in Proposition \ref{P:structure of abelian groups}).  Thus, given either $\varphi$ or $\psi$
there is a unique choice of the other so that this determinant is {\it positive}.  Orientations of $\F, R$ so chosen will be called
{\it compatible}.  Note that a change in the sign of one orientation necessitates a change in the other also to maintain
compatibility. An orientation $\varphi$ of $\F$ also induces a canonical orientation $\varphi^*$ on the dual group
$\F^* = \Hom (\F, \Z)$: choose any basis $\{ e_i \}$ of $\F$ with $e_1 \wedge \cdots \wedge e_r = \varphi$, then use the
dual basis of $\F^*$ to define $\varphi^*$.

If $H$ is any finite group, its {\it character group} is the additive group $H^* = \Hom (H, \Q/\Z)$.  It is well known that
$H^*$ is isomorphic to $H$, but not canonically so. This mirrors the relationship between a free (finitely generated) abelian
group $\F$ and its dual (in the following, the $^*$ on a finite group indicates its character group, but on a free group
indicates its dual). Also as in the free case, $H^{**}$ is {\it canonically} isomorphic to $H$.

Suppose that $\F \xrightarrow{\pi} H$ is a presentation of $H$ with kernel $R$; we construct from it a canonical presentation of
$H^*$ which we call the {\it dual} presentation.  The group $\F^*$ is a subgroup of $\F^* \otimes \Q = \Hom (\F, \Q)$, namely,
all maps $f: \F \to \Q$ such that $f(\F) \subset \Z$.  The fact that $H$ is finite and hence $\rk R = \rk \F$ implies that
$R^*$ is precisely the subgroup of maps $f \in \F^* \otimes \Q$ such that $f(R) \subset \Z$.  Note that
$R^* \supset \F^*$ in $\F^* \otimes \Q$.  If $f \in R^*$, then $f$ is a map of $\F$ into $\Q$ taking $R$ into $\Z$ and so
induces a map of $\F/R = H$ to $\Q/\Z$.  This element of $H^*$ we denote by $\pi^*(f)$; we have then that $\pi^*$ is a map
$R^* \to H^*$.  If $v \in H^*$, \ie $v: \F/R \to \Q/\Z$, we can lift $v$ to a map $f : \F \to \Q$ since $\F$ is free,
and clearly $f(R) \subset \Z$, so $f \in R^*$ and $\pi^*(f) = v$.  This shows that $R^* \xrightarrow{\pi^*} H^*$ is
a presentation of $H^*$.  The kernel of $\pi^*$ consists of all $f$ such that $f(\F) \subset \Z$, that is,
precisely $\F^*$.  Note thus that the index $(R^*: \F^*) = | H^* | = | H | = (\F:R)$.

Just as a choice of symmetric isomorphism $\F \to \F^*$ is the same as an ``inner product" on $\F$, so the choice of a
symmetric isomorphism $H \xrightarrow{\lambda} H^*$ is the same as a {\it linking} on $H$: if we write $x \cdot y$
for the linking of $x, y$, then $x \cdot y$ is defined to be $\lambda (x) (y) \in \Q/\Z$ and conversely.  The fact that
$\lambda$ is an isomorphism corresponds to the non-singularity of the linking.

If $H$ is a linked group and $\F \xrightarrow{\pi} H$ is a presentation of $H$, consider the diagram
$$
\begin{CD}
0 @>>> R @>>> \F @>>> H @>>> 0 \\
&& @VV{K}V @VV{L}V @VV{\lambda}V \\
0 @>>> \F^* @>>> R^* @>>> H^* @>>> 0.
\end{CD}
$$
The fact that $\F$ is free implies the existence of a map $L$ making the right square commute, and $L$ induces $K$ on $R$.  We
call $L$ a {\it lifting} of $\lambda$, and it is well defined up to the addition of a map $X : \F \to \F^*$.  If now we choose
an orientation $\varphi$ of $\F$, it induces $\varphi^*$ on $\F^*$ and compatible orientations $\psi, \psi^*$ on $R, R^*$; it is
easy to see that $\psi, \psi^*$ are also dual orientations.  Furthermore, if $\F \to H$ is {\it minimal}, then so is
$R^* \to H^*$ and we get induced orientations $\theta, \theta^*$ on $H, H^*$.

A change in the sign of $\varphi$ uniformly changes the sign of all the other orientations.  Thus the determinants
$\det K, \det L \in \Z$ and $\det \lambda \in \Z_\tau$ are all well defined and independent of the orientations.
Furthermore, $\det \lambda$ depends only on $\lambda$ and the presentation $\pi$.
The connection between these determinants and that of Theorem
\ref{theorem:unstableinvariants1} is given by the following lemma.

\begin{lemma} \label{lemma:8.1b}
\begin{enumerate}[a)]
\item $\det L \equiv \det \lambda \ \mod \tau$
\item $\det K = \det L$
\item If $(\lambda_{ij})$ is a linking matrix as in Theorem \ref{theorem:unstableinvariants1}, then it determines a lifting $L: \F \to R^*$ and $| H | \det (\lambda_{ij}) = \det K = \det L$.
\end{enumerate}
\end{lemma}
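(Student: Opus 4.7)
The plan is to work with the commutative ladder
$$
\begin{CD}
0 @>>> R @>i>> \F @>\pi>> H @>>> 0 \\
&& @VV{K}V @VV{L}V @VV{\lambda}V \\
0 @>>> \F^* @>j>> R^* @>{\pi^*}>> H^* @>>> 0
\end{CD}
$$
where $i$ and $j$ are the inclusions, and to dispatch the three parts in turn.

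For part (a), I would apply Lemma~\ref{L:orientation-2} directly to the commutative right square $\pi^*\circ L = \lambda\circ\pi$. The orientation $\varphi$ on $\F$ induces $\theta = \pi(\varphi)$ on $H$, and the compatible orientation $\psi^*$ on $R^*$ induces $\theta^* = \pi^*(\psi^*)$ on $H^*$. Since $H^*\cong H$, its smallest elementary divisor is also $\tau$, so Lemma~\ref{L:orientation-2} gives $\det L \equiv \det \lambda \pmod \tau$.

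For part (b), I would exploit the commutative left square $L\circ i = j\circ K$ together with multiplicativity of determinants (Lemma~\ref{L:properties of det(f)-1}) for the four free abelian groups of equal rank $r$, giving $\det L \cdot \det i = \det j \cdot \det K$. The key is that, with respect to the chosen orientations, $\det i = \det j = +|H|$. The first equality is simply the definition of compatibility of $\psi$ with $\varphi$. The second is a direct computation: choose bases $\{e_k\}$ of $\F$ and $\{f_i\}$ of $R$ with inclusion matrix $M=(M_{ki})$ (so $\det M = +|H|$ by compatibility); then the dual basis $\{f_i^*\}$ of $R^*$ satisfies $e_k^* = \sum_i M_{ki}f_i^*$ in $\F^*\otimes\Q$, so the matrix of $j$ in the dual bases is $M$ and hence $j(\varphi^*) = \det M \cdot \psi^* = |H|\cdot\psi^*$. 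The two factors of $|H|$ cancel to give $\det K = \det L$.

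For part (c), given a linking matrix $(\lambda_{ij})$ with $\pi(e_i)=x_i$, I would define $L$ by requiring $L(e_i)(e_j) = \lambda_{ij}$ as an element of $\F^*\otimes\Q$. This lies in $R^*$ because for any $r = \sum_j c_j e_j \in R$, the relation $\sum_j c_j x_j = 0$ in $H$ implies $\sum_j c_j \lambda(x_i,x_j)\equiv 0\pmod 1$, hence $L(e_i)(r)=\sum_j c_j \lambda_{ij}\in\Z$; and the symmetry $\lambda_{ij}=\lambda_{ji}$ makes $\pi^*\circ L = \lambda\circ\pi$. To compute $\det L$, expand $L(e_i) = \sum_j\lambda_{ij}e_j^* = \sum_{j,k}\lambda_{ij}M_{jk}f_k^*$, so the matrix of $L$ in the bases $(e_i),(f_k^*)$ is the product $(\lambda_{ij})\cdot M$, giving $\det L = \det(\lambda_{ij})\cdot |H|$. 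Combined with (b), this yields $\det K = \det L = |H|\det(\lambda_{ij})$, and the integrality of $|H|\det(\lambda_{ij})$ follows for free since $K$ is an integer endomorphism.

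The main obstacle is the bookkeeping in part (b): one must check that the induced orientations $\varphi^*,\psi^*$ on the dual groups are indeed compatible (not merely dual), so that both inclusions $i$ and $j$ carry the same positive determinant $+|H|$ and the cancellation in $\det L\cdot\det i = \det j\cdot\det K$ is exact rather than merely up to sign. Once the sign conventions for orientations are properly aligned, the remaining calculations in (a) and (c) are essentially formal.
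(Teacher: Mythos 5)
Your proposal is correct and follows essentially the same route as the paper: part (a) by applying Lemma~\ref{L:orientation-2} to the right square, part (b) from commutativity of the left square together with the fact that both compatibly oriented inclusions $R\to\F$ and $\F^*\to R^*$ have determinant $|H|$, and part (c) by defining $L(e_i)=\sum_j\lambda_{ij}e_j^*$, checking $L(e_i)\in R^*$ via the relation argument, and reading off the determinant as $|H|\det(\lambda_{ij})$ (the paper computes the matrix of $K$ in the bases $s_i,e_k^*$ rather than that of $L$, but this is the same calculation transposed). The only cosmetic slip is attributing the lifting property $\pi^*\circ L=\lambda\circ\pi$ to the symmetry $\lambda_{ij}=\lambda_{ji}$, when it really only uses $\lambda_{ij}\equiv\lambda(x_i,x_j)\ \mod 1$; symmetry is needed later (for $L^*=K$), not here.
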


\begin{proof}
{\it a)} is proved in Lemma \ref{L:orientation-2}; {\it b)} follows from the commutativity of the left square and the fact
that the determinant of both (compatibly oriented) $R \to \F$, $\F^* \to R^*$ is $|H|$.  It remains to prove {\it c)}.  Now
the linking matrix $(\lambda_{ij})$ is clearly just the matrix of a map $L : \F \to \F^* \otimes \Q$ in terms of the basis
$\{ e_i \}$ of $\F$ used to define $(\lambda_{ij})$ and its dual basis in $\F^* \otimes \Q$, namely,
$L(e_i) = \sum_j \lambda_{ij} e_j^*$.  Put $x_i = \pi (e_i)$ and denote the linking in $H$ by the inner product dot; if then
$s = \sum \alpha_i e_i$ is in $R$, we find
\begin{eqnarray*}
L (e_i)(s)  & = & \sum_j \lambda_{ij} e_j^* (s) 
\ = \ \sum_j \lambda_{ij} \alpha_j 
\ \equiv_{\mod ~ 1} \ \sum (x_i \cdot x_j) \alpha_j \\
& = & x_i \cdot \left( \sum_j \alpha_j x_j \right)
\ = \ x_i \cdot \pi (s)
\ \equiv \ 0 \ \mod ~ 1.
\end{eqnarray*}
In other words, $L(e_i)$ takes $R$ into $\Z$ for all $i$, that is, $L(e_i) \in R^*$ for all $i$, which means that $L$ is
actually a map from $\F$ to $R^*$. By its very definition it is a lifting of $\lambda$.  Let now $s_1, \dots, s_r$ be a
basis of $R$ compatible with $e_1, \dots, e_r$ of $\F$, and let $s_i = \sum_j A_{ij} e_j$; thus $\det(A_{ij}) = |H|$.  We then
find that
$$
K (s_i) = L(s_i) = \sum_{j,k} A_{ij} \lambda_{jk} e_k^*,
$$
and since $K(s_i)$ is in $\F^*$, the matrix $A \cdot (\lambda_{ij})$ is integral and its determinant is
$\det K =$ $ \det A \det (\lambda_{ij}) $ $= |H| \det (\lambda_{ij})$.
\end{proof}

This lemma proves that $| H | \det (\lambda_{ij})$ is an integer whose mod $\tau$ reduction only
depends on the isomorphism class of $\pi : F \rightarrow H$ and the linking.  The fact that it is
a unit in $\Z_{\tau}$ follows from the fact that $\lambda : H \rightarrow H^*$ is an isomorphism.

\subsection{Proof of Invariance 2}
\label{section:invariance2}

We can assume that the linking form is even.  Let the notation be as in the previous section.  
The first step is to prove that the lifts $L$ which come from
the symmetric linking matrices are {\em symmetric} in the sense that the induced map $L^* : R \to F^*$ is the
map $K$.  Let the $e_i$, the $s_i$, and the matrix $A$ be as in the proof of Lemma \ref{lemma:8.1b}.  
We will determine the matrix of $L$ in terms of the bases
$\{ e_i\}$ of $\F$ and $\{ s_i^* \}$ of $R^*$.  Since $s_i = \sum_j A_{ij} e_j$, we have $e_j^* = \sum_i s_i^* A_{ij}$ so
$$
L(e_i) = \sum_j \lambda_{ij} e_j^* = \sum_{j, k} \lambda_{ij} A_{kj} s_k^*.
$$
Thus the matrix of $L$ is $\lambda A\t$, and in the dual bases $s_i$ and $e_i^*$ the operator $L^*$ has matrix $A \lambda\t$.
Since $(\lambda_{ij})$ was chosen to be symmetric, we have finally 
$L^* = A \lambda$, which is the matrix of $K$.  Note that two
symmetric liftings of $\lambda$ differ by a {\it symmetric} map $\F \to \F^*$.

Our goal is to prove that modulo $2 \tau$ the number $\det L$ is independent of the choice of a symmetric lifting
of $\lambda$.  By Theorem \ref{theorem:unstableinvariants1} and the fact that $\tau$ is even,
$(\det L, 2\tau) = 1$ and hence the matrix of $L$ has a $\mod 2\tau$ inverse, that is, there is an integral matrix
$L^{-1}$ such that $L L^{-1} \equiv \mathcal I \ \mod 2 \tau$.  Any other symmetric lifting of $L$ is of the form $L + X$
where $X$ is a symmetric map $\F \to \F^*$.  But note that $R \subset \tau \F$ and so $\F^* \subset \tau R^*$;
hence $X = \tau Y$ for some map $Y: \F \to R^*$.  Modulo $2 \tau$ we then have
$$\det (L + X) \equiv \det L \cdot \det (\mathcal I + L^{-1} X) = \det L \cdot \det (\mathcal I + \tau L^{-1} Y).$$

\begin{lemma} \label{lemma:8.4}
If $A$ is any square matrix and $\tau > 1$, then $\det (\cal I + \tau A) \equiv 1 + \tau \tr (A) \ \mod \tau^2$, where $\tr(A)$ is the trace of $A$.
\end{lemma}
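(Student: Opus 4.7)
The plan is to expand $\det(\mathcal{I} + \tau A)$ via the Leibniz formula and track the power of $\tau$ contributed by each permutation.

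Writing $A = (a_{ij})$ as an $n \times n$ matrix, the Leibniz expansion gives
\[
\det(\mathcal{I} + \tau A) = \sum_{\sigma \in S_n} \text{sgn}(\sigma) \prod_{i=1}^n \big(\delta_{i,\sigma(i)} + \tau a_{i,\sigma(i)}\big).
\]
First I would isolate the identity permutation $\sigma = \mathrm{id}$, whose contribution is $\prod_{i=1}^n (1 + \tau a_{ii})$. Expanding this product and discarding terms divisible by $\tau^2$ yields $1 + \tau \sum_{i=1}^n a_{ii} = 1 + \tau \tr(A) \ \mod \tau^2$.

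Next I would show that every $\sigma \neq \mathrm{id}$ contributes a term divisible by $\tau^2$. Indeed, if $\sigma \neq \mathrm{id}$, then there exist at least two distinct indices $i \neq \sigma(i)$; for each such $i$, the factor $\delta_{i,\sigma(i)} + \tau a_{i,\sigma(i)} = \tau a_{i,\sigma(i)}$ already carries a factor of $\tau$. Hence $\prod_i (\delta_{i,\sigma(i)} + \tau a_{i,\sigma(i)})$ is divisible by $\tau^2$, so these terms all vanish modulo $\tau^2$.

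Combining the two observations gives $\det(\mathcal{I} + \tau A) \equiv 1 + \tau \tr(A) \ \mod \tau^2$, as required. There is no real obstacle here: the argument is a direct bookkeeping exercise in the Leibniz expansion, and the hypothesis $\tau > 1$ is only needed implicitly (it merely ensures $\tau^2 \neq \tau$ so that the congruence is non-trivial); the identity itself holds as a polynomial identity in $\tau$ over $\Z$.
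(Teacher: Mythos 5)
Your proof is correct and follows essentially the same route as the paper: the paper likewise observes that any monomial in the expansion of $\det(\mathcal I + \tau A)$ with an off-diagonal factor must in fact contain at least two such factors (each contributing a factor of $\tau$), so only the diagonal product $\prod_i(1+\tau A_{ii}) \equiv 1 + \tau\,\tr(A) \ \mod \tau^2$ survives. No issues to report.
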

\begin{proof}
In the expansion of $\det (1 + \tau A)$ only those monomials involving at most one off-diagonal factor are non-zero
$\mod \tau^2$.  A single off-diagonal factor cannot occur, however, in any monomial, and so
$\det (1+ \tau A) \equiv \prod_i (1 + \tau A_{ii} ) \ \mod \tau^2$.  The product is clearly
equal to $1 +\tau (\sum A_{ii}) \ \mod \tau^2$.
\end{proof}
This lemma shows that
$$\det (L + X) \equiv \det L + \tau \det L \cdot \tr (L^{-1} Y) \ \mod \tau^2.$$
But since $\det L \equiv 1 \ \mod 2$, it follows that $\tau \det L \equiv \tau \ \mod 2\tau$.  Hence since $2 \tau | \tau^2$,
we have
$$\det (L + X) \equiv \det L + \tau \tr (L^{-1} Y) \ \mod 2 \tau.$$
Thus to prove the desired result it suffices to show that
$\tr (L^{-1}Y) \equiv 0 \ \mod 2$. We must now take a closer look at the matrices $L$ and $X$.

We choose the basis of $\F$ as in Proposition \ref{P:structure of abelian groups}, so that $s_i = m_i e_i$ ($i = 1, \dots, r$) with
$m_1 = \tau$ and $m_i | m_{i+1}$.  Let $2^n$ be the highest power of 2 dividing $\tau$ (we notate this in the future by
$2^n \parallel \tau$) and suppose that the same is true for $m_1$ through $m_b$; that is, $\frac{m_i}{2^n}$ is odd for
$1 \leqslant i \leqslant b$ and even for $i>b$.  Let $(\lambda_{ij})$ be a (symmetric) linking matrix lifting $x_i \cdot x_j$ as before.
The matrix $A$ describing the basis $\{ s_i \}$ in terms of $\{ e_i\}$ is now the diagonal matrix $\diag(m_i)$, and so the
matrix of $L$ is of the form $(L_{ij}) = (\lambda_{ij}) A \t = (\lambda_{ij} m_j)$.  We claim that $L_{ij}$ is even for $i \leqslant b$
and $j > b$.  Indeed, since $x_i = \pi (e_i)$ has order $m_i < m_j$ we must have $\lambda_{ij} = \frac{N}{m_i}$ for some integer $N$,
and thus $L_{ij} = \frac{N}{m_i} m_j = N \frac{m_j}{m_i}$.  But $\frac{m_j}{m_i}$ is even whenever $i \leqslant b$ and $j>b$.  If we divide
the coordinate indices into two blocks with $1 \leqslant i \leqslant b$ in the first block and $i > b$ in the second, then
$\mod 2$, the matrix $L$ takes the form $\mattwotwo{B}{0}{C}{D}$.  The fact that $\det L \equiv 1 \ \mod 2$ implies that
$\det B \equiv \det D \equiv 1 \ \mod 2$.

\begin{lemma} \label{lemma:8.5a}
$B$ is symmetric $\mod 2$ and has zero diagonal $\mod 2$.
\end{lemma}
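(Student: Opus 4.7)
The plan is to extract both statements from the single identity $L_{ij} = \lambda_{ij} m_j$, keeping careful track of 2-adic valuations on the block $B$. By the choice of $b$ one has $v_2(m_i) = v_2(\tau) = n$ for every $i \leqslant b$, and because we are in the even case $\tau$ is even, so $n \geqslant 1$ and every such $m_i$ is itself even. These two facts---equal 2-adic valuations across the block, and evenness of each $m_i$---are the only arithmetic inputs required.

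For the symmetry of $B$ modulo $2$, the key step is to write $L_{ij} - L_{ji} = \lambda_{ij}(m_j - m_i)$ using $\lambda_{ji} = \lambda_{ij}$, and, assuming without loss of generality $i \leqslant j$, to express $\lambda_{ij}$ over its minimal denominator. Since $x_i$ has order $m_i$, the product $m_i \lambda(x_i, x_j)$ vanishes in $\Q/\Z$, so $\lambda_{ij} = N/m_i$ for some integer $N$. The difference then becomes $N(m_j/m_i - 1)$, and since $v_2(m_i) = v_2(m_j) = n$ the integer $m_j/m_i$ is odd, yielding $L_{ij} \equiv L_{ji} \pmod 2$.

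For the vanishing of the diagonal, the plan is to invoke evenness at a carefully chosen element of order exactly $\tau$. For each $i \leqslant b$ form $y_i = (m_i/\tau)\, x_i$; since $\tau y_i = m_i x_i = 0$, the hypothesis yields $y_i \cdot y_i \in \leg{2}{\tau}$. Writing $\lambda_{ii} = N_i/m_i$ up to an integer, a short computation gives $y_i \cdot y_i \equiv (m_i/\tau) N_i / \tau \pmod 1$, and membership in $\leg{2}{\tau}$ forces $(m_i/\tau) N_i \equiv 0 \pmod 2$. Since $m_i/\tau$ is odd (again because $v_2(m_i) = v_2(\tau)$), this forces $N_i$ to be even. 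Finally $L_{ii} = m_i \lambda_{ii}$ equals $N_i$ modulo $m_i$, and $m_i$ being even gives $L_{ii} \equiv 0 \pmod 2$.

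I do not anticipate a serious obstacle: the lemma is essentially a 2-adic bookkeeping calculation. The only delicate point is the choice of $y_i = (m_i/\tau)\, x_i$ in place of $x_i$ itself, chosen precisely so that the scaling factor $m_i/\tau$ sitting in front of $N_i$ is odd and therefore cannot absorb the evenness forced by the hypothesis. This odd-scaling phenomenon is exactly what the restriction to indices $i \leqslant b$ is designed to guarantee.
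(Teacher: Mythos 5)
Your proof is correct and follows essentially the same strategy as the paper's: symmetry comes from writing $\lambda_{ij}=N/m_i$ and observing that $m_j/m_i$ is odd when $i,j\le b$, and the zero diagonal comes from applying the evenness hypothesis to $y_i=(m_i/\tau)x_i$, which has order $\tau$, and then dividing out the odd factor $m_i/\tau$. The only cosmetic difference is that you compute the difference $L_{ij}-L_{ji}$ directly, while the paper computes $B_{ij}$ and $B_{ji}$ separately and shows each is $\equiv N \bmod 2$; the arithmetic is the same.
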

\begin{proof}
If $i < j$, then $B_{ij} = N \frac{m_j}{m_i}$.  But $2^n \parallel m_i$ and $2^n \parallel m_j$, so $\frac{m_j}{m_i}$ is odd and
$B_{ij} \equiv N \ \mod 2$.  On the other hand, $B_{ji} = \lambda_{ji} m_i = \frac{N}{m_i} m_i = N$.  Thus $B$ is symmetric
$\mod 2$.  Its diagonal term $B_{ii}$ is $\lambda_{ii} m_i$ where $\lambda_{ii}^2 \equiv x_i^2 \ \mod ~ 1$.  Now $x_i$ is of order
$m_i$ so $\frac{m_i}{\tau} x_i$ is of order $\tau$, and $\frac{m_i}{\tau}$ is {\it odd}.  Thus
${\leg{m_i}{\tau}}^2 \lambda_{ii} \equiv {\left( \frac{m_i}{\tau}x_i \right)}^2 \ \mod ~ 1$, and the latter is in
$\leg{2}{\tau}$ by the assumption that $\lambda$ is even, 
so we have $\leg{m_i}{\tau}^2 \lambda_{ii} = \frac{2N}{\tau}$ for some integer $N$.
Multiplying by $\tau$ we get $\frac{m_i}{\tau} (m_i \lambda_{ii}) = \frac{m_i}{\tau} B_{ii} \equiv 0 \ \mod 2$, which
by the oddness of $\frac{m_i}{\tau}$ implies that $B_{ii} \equiv 0 \ \mod 2$.
\end{proof}

\begin{lemma} \label{lemma:8.5b}
Let $X : \F \to \F^* \subset R^*$ be symmetric. Then its matrix, written in the bases $e_i, s_i^*$, is congruent $\mod 2 \tau$
to a matrix of the block form $\tau \mattwotwo{U}{0}{V}{0}$, where $U$ is symmetric.
\end{lemma}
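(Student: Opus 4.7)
The plan is a direct calculation using the base change between $\{e_i^*\}$ and $\{s_i^*\}$, together with the symmetry of $X$ and the 2-adic hypothesis on the $m_i$.

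First, I would write out the matrix of $X$ in the mixed bases. Since $X : \F \to \F^*$ is symmetric, its matrix $(X_{ij})$ in the bases $\{e_i\}$ and $\{e_j^*\}$ satisfies $X_{ij} = X_{ji}$. In $\F^*\otimes \Q$ we have $s_j^* = (1/m_j) e_j^*$, so $e_j^* = m_j s_j^*$. Hence the matrix $(Y_{ij})$ of $X$ (viewed as a map $\F \to R^*$) in the bases $\{e_i\}$ and $\{s_j^*\}$ has entries
\[
Y_{ij} = X_{ij}\, m_j.
\]

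Next I would handle the two zero columns on the right, corresponding to $j > b$. For such $j$ the definition of $b$ gives $2^{n+1} \mid m_j$, while $2^n \parallel \tau$. Thus $2\tau \mid m_j$, and therefore $Y_{ij} \equiv 0 \pmod{2\tau}$ for every $i$ and every $j > b$. This gives the right two blocks of zeros.

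For the two left blocks I use that $j \leq b$ forces $m_j/\tau$ to be odd, so $m_j = \tau \cdot (m_j/\tau)$ and hence $Y_{ij} = \tau \cdot X_{ij}(m_j/\tau)$. Setting
\[
U'_{ij} = X_{ij}\,(m_j/\tau)\quad (i,j\leq b), \qquad V_{ij} = X_{ij}\,(m_j/\tau)\quad (i>b,\ j\leq b),
\]
we have shown $(Y_{ij}) = \tau\begin{pmatrix} U' & 0 \\ V & 0 \end{pmatrix}$ mod $2\tau$. It remains to replace $U'$ by a genuinely symmetric matrix $U$ that agrees with $U'$ modulo $2$ (which is harmless because $\tau U \equiv \tau U' \pmod{2\tau}$). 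Using $X_{ij} = X_{ji}$,
\[
U'_{ij}-U'_{ji} \;=\; X_{ij}\bigl((m_j/\tau) - (m_i/\tau)\bigr),
\]
and for $i,j\leq b$ both $m_i/\tau$ and $m_j/\tau$ are odd, so their difference is even. Thus $U'$ is symmetric modulo $2$, and we may set $U_{ij}=U'_{ij}$ for $i\leq j$ and $U_{ji}=U_{ij}$ to obtain a symmetric integer matrix $U$ with $U\equiv U'\pmod 2$. This produces the required form $\tau\begin{pmatrix} U & 0 \\ V & 0 \end{pmatrix}$ and completes the proof.

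The calculation is essentially routine; the one step that requires care is the passage from ``symmetric mod~$2$'' to ``symmetric on the nose,'' which is where the hypothesis that $X$ is symmetric on $\F$ (as opposed to merely symmetric after reduction) is needed.
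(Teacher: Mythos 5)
Your proof is correct and follows essentially the same base-change computation as the paper's. Two small remarks. First, the paper reaches the conclusion a bit more cleanly: since $m_j/\tau$ is odd for $j\le b$, one in fact has the stronger congruence $m_j \equiv \tau \ \mod 2\tau$, so the matrix $\diag(m_j)$ is congruent $\mod 2\tau$ to $\mattwotwo{\tau\cI_b}{0}{0}{0}$; multiplying the symmetric matrix $(X_{ij})$ (written in the bases $e_i,e_j^*$) by this block matrix immediately gives $U=(X_{ij})_{i,j\le b}$, which is already symmetric because it is the top-left block of a symmetric matrix. This avoids your detour of first forming a possibly asymmetric $U'_{ij}=X_{ij}(m_j/\tau)$ and then symmetrizing it mod $2$ — that step, while correct, is unnecessary. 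Second, your deduction that $2\tau \mid m_j$ for $j>b$ quietly uses $\tau \mid m_j$ (which holds because $\tau=m_1$ and the $m_i$ form a divisibility chain); it would be worth stating this, since the $2$-adic conditions alone do not account for the odd part of $\tau$.
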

\begin{proof}
In the bases $e_i, e_i^*$ the matrix of $X$ is $\mattwotwo{U}{V\t}{V}{W}$ where $U$ and $W$ are symmetric, but in the bases
$e_i, s_i^*$ it is $\mattwotwo{U}{V}{V\t}{W} \diag (m_i)$.  Since $\frac{m_i}{\tau}$ is odd for $i \leqslant b$ and even for $i> b$,
the block form of $\diag (m_i)$ is congruent $\mod 2 \tau$ to $\mattwotwo{\tau \mathcal I_b}{0}{0}{0}$ where $\mathcal I_b$ is the identity matrix.
Hence $X \equiv_{\mod 2 \tau} \mattwotwo{\tau U}{0}{\tau V}{0} =\tau \mattwotwo{U}{0}{V}{0}$.
\end{proof}

Recall the map $Y = \frac{1}{\tau} X$; by the above it is congruent $\mod 2$ to $\mattwotwo{U}{0}{V}{0}$.
We now calculate
$$
L^{-1} Y \equiv_{\mod 2} {\bmattwotwo{B}{0}{C}{D}}^{-1} \bmattwotwo{U}{0}{V}{0}
\equiv \bmattwotwo{B^{-1}}{0}{C'}{D^{-1}} \bmattwotwo{U}{0}{V}{0}
\equiv \bmattwotwo{B^{-1} U}{0}{C''}{0},
$$
where the precise calculation of the matrices $C', C''$ is unimportant to us.  We are interested only in
$\tr (L^{-1} Y) \equiv \tr (B^{-1} U) \ \mod 2$, where $B, U$ are symmetric and $B$ has zero diagonal.

\begin{lemma} \label{lemma:8.6}
Let $B$ be a nonsingular symmetric matrix over $\Z_2$ with zero diagonal; then its inverse has the same properties.
\end{lemma}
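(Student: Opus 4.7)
The plan is to verify the three required properties of $B^{-1}$---nonsingularity, symmetry, and zero diagonal---in turn. Nonsingularity is immediate, since $B$ itself is an inverse for $B^{-1}$. Symmetry follows from the general identity $(B^{-1})^{T}=(B^{T})^{-1}$ applied to $B=B^{T}$.

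The real work lies in showing that the diagonal of $B^{-1}$ is zero. For this I would use the adjugate formula: over $\Z_2$, $B^{-1}=(\det B)^{-1}\operatorname{adj}(B)$, and the $(i,i)$-entry of $\operatorname{adj}(B)$ is the determinant of the $(n-1)\times(n-1)$ principal submatrix $B^{(i)}$ obtained by deleting row $i$ and column $i$ from $B$. Since $B$ is symmetric with zero diagonal, each $B^{(i)}$ inherits both properties, so the claim reduces to the assertion that every symmetric matrix over $\Z_2$ with zero diagonal and odd dimension is singular.

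The key input is the classical observation that over $\Z_2$ a symmetric zero-diagonal matrix $C$ is exactly an alternating form: in characteristic $2$ the expression $u^{T}Cu=\sum_{i}C_{ii}u_{i}^{2}+2\sum_{i<j}C_{ij}u_{i}u_{j}$ reduces to $\sum_{i}C_{ii}u_{i}$, which vanishes identically precisely when the diagonal entries of $C$ are zero. A standard theorem from the classification of alternating forms then asserts that every nondegenerate alternating form has even rank, so on an odd-dimensional space such a form must be degenerate. Applying this first to $B$ itself shows that $n$ must be even; consequently each $B^{(i)}$ has odd size $n-1$ and is singular, so $\det B^{(i)}=0$ for all $i$. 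This forces the diagonal of $\operatorname{adj}(B)$, and hence of $B^{-1}$, to vanish.

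The main obstacle will be justifying that odd-dimensional symmetric zero-diagonal matrices over $\Z_2$ are singular, because the usual characteristic-$\ne 2$ trick $\det B=\det(-B^{T})=(-1)^{n}\det B$ is vacuous in characteristic $2$. I would handle this either by invoking the Pfaffian---which satisfies $\det=\mathrm{Pf}^{2}$ in even size and whose absence in odd size is reflected in the vanishing of the determinant---or, more self-containedly, by inductively peeling off a hyperbolic plane from any nondegenerate alternating form, which shows directly that its rank is even.
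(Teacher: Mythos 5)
Your proof is correct, but it takes a genuinely different route from the paper's. The paper lifts $B$ to an \emph{integral} antisymmetric matrix $\tilde{B}$ (possible since $B$ has zero diagonal and is symmetric mod $2$), observes that $\tilde{B}^{-1}$ is antisymmetric over $\Q$ because $(\tilde{B}^{-1})^T = (\tilde{B}^T)^{-1} = (-\tilde{B})^{-1} = -\tilde{B}^{-1}$, notes that $\tilde{C} = (\det\tilde{B})\tilde{B}^{-1}$ is an integral antisymmetric matrix, and finally reduces $\tilde{C}$ mod $2$ to get a symmetric zero-diagonal inverse of $B$. The key trick there is transferring to characteristic $0$, where antisymmetry is stable under inversion and the sign argument is painless; the adjugate then carries the conclusion back to $\Z_2$. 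You instead work entirely over $\Z_2$ via the adjugate: $(B^{-1})_{ii}$ is (up to the unit $\det B$) the determinant of the principal $(n{-}1)\times(n{-}1)$ minor $B^{(i)}$, which is again symmetric with zero diagonal, and you kill it by appealing to the fact that a nondegenerate alternating form has even rank, so an odd-size alternating matrix over any field is singular. You correctly flag that the usual $\det B = (-1)^n\det B$ shortcut collapses in characteristic $2$, and your fallbacks (the Pfaffian, or peeling off a hyperbolic plane inductively) are both sound; the induction is base case $n=1$ (a $1\times 1$ alternating matrix is zero), inductive step split off the plane spanned by $u,v$ with $B(u,v)=1$ and recurse on $H^\perp$ of odd dimension $n-2$. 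The trade-off is that the paper's lift avoids needing any structure theory of alternating forms, at the cost of invoking integral and rational linear algebra, while your argument is purely internal to $\Z_2$ but rests on the (standard, but nontrivial in characteristic $2$) even-rank theorem. Both are complete and of comparable length.
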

\begin{proof}
We may lift $B$ to an {\it integral} matrix $\tilde{B}$ which is {\it antisymmetric}, that is, $\tilde{B}\t = - \tilde{B}$.
Since $\det \tilde{B} \equiv 1 \ \mod 2$, the matrix $\tilde{B}^{-1}$ is rational and antisymmetric and
$\tilde{C} = \tilde{B}^{-1} \cdot \det \tilde{B}$ is {\it integral} and antisymmetric.  Reduced $\mod 2$, it is also an
inverse of $B$, since $\tilde{B} \cdot \tilde{C} = \det \tilde{B} \mathcal I \equiv \mathcal I \ \mod 2$. 
This proves the lemma.
\end{proof}

We can now see that the desired result follows from the above results and the following:

\begin{lemma} \label{lemma:8.7}
Let $C, U$ be symmetric matrices over $\Z_2$ such that $C$ has zero diagonal; then $\tr (CU) = 0$.
\end{lemma}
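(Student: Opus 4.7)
The plan is to compute the trace directly from the entries of the two matrices and exploit the two hypotheses (symmetry of both, plus vanishing diagonal of $C$) to pair off surviving terms.

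First I would write $\tr(CU) = \sum_i (CU)_{ii} = \sum_{i,j} C_{ij} U_{ji}$. Since $U$ is symmetric, $U_{ji} = U_{ij}$, so this becomes
\[
\tr(CU) = \sum_{i,j} C_{ij} U_{ij}.
\]
Next I would split the sum into diagonal and off-diagonal contributions. The diagonal part $\sum_i C_{ii} U_{ii}$ vanishes since $C_{ii}=0$ by hypothesis. For the off-diagonal part, I would pair each $(i,j)$ with $i<j$ together with its transpose $(j,i)$. Using symmetry of both $C$ and $U$, the two corresponding terms are equal, so their sum is $2 C_{ij} U_{ij}$, which is zero in $\Z_2$.

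Thus every piece of the sum is zero mod $2$, giving $\tr(CU)=0$. There is really no obstacle here; the only subtlety is that the argument uses both symmetries in an essential way (symmetry of $U$ to pass from $U_{ji}$ to $U_{ij}$, and symmetry of $C$ to collapse the paired off-diagonal contributions to a multiple of $2$), together with the vanishing diagonal of $C$ to kill the unpaired terms. Combining this lemma with the preceding reduction $\tr(L^{-1}Y)\equiv \tr(B^{-1}U)\ \mod 2$, Lemma~\ref{lemma:8.6} (which provides that $B^{-1}$ is symmetric with zero diagonal), and the congruence $\det(L+X)\equiv \det L + \tau\tr(L^{-1}Y) \ \mod 2\tau$ then yields the desired invariance of $\det L \ \mod 2\tau$, completing the proof of Theorem~\ref{theorem:unstableinvariants2}.
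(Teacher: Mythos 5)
Your proof is correct and follows essentially the same route as the paper's: split $\tr(CU)=\sum_{i,j}C_{ij}U_{ji}$ into diagonal and off-diagonal parts, kill the diagonal using $C_{ii}=0$, and use the symmetry of $C$ and $U$ to pair the $(i,j)$ and $(j,i)$ terms so that they cancel mod $2$. The only difference is cosmetic bookkeeping (you symmetrize $U$ first; the paper cancels $\sum_{i<j}$ against $\sum_{i>j}$ directly), so nothing further is needed.
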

\begin{proof}
$\tr (CU) = \sum_{i, j} C_{ij} U_{ji}$. We split this sum into three parts: $\sum_{i<j} + \sum_{i>j} + \sum_{i=j}$. Now
\begin{eqnarray*}
\sum_{i<j} C_{ij} U_{ji} & = & \sum_{i<j} C_{ji} U_{ji} \quad \text{ by symmetry of $C$ and $U$} \\
& = & \sum_{j<i} C_{ij} U_{ji} \quad \text{ interchanging $i, j$;}
\end{eqnarray*}
thus $\sum_{i<j} + \sum_{i>j} $ cancel over $\mathbb Z_2$.   But the last summand is $\sum_i C_{ii} U_{ii} = 0$ since $C_{ii} = 0$ for all $i$.
\end{proof}

We close this section with an example showing that we have indeed found a stronger invariant.

\begin{example}
\label{Example:inequivalent unstabilized splittings}
Consider the matrices
$$
\cU = \bmattwotwo{\mattwotwo{0}{-15}{-15}{0}}{\mattwotwo{8}{0}{0}{8}}{\mattwotwo{-2}{0}{0}{-2}}{\mattwotwo{0}{1}{1}{0}}
\quad \text{ and } \quad
\cV = \bmattwotwo{\mattwotwo{0}{-5}{-5}{0}}{\mattwotwo{8}{0}{0}{8}}{\mattwotwo{-2}{0}{0}{-2}}{\mattwotwo{0}{3}{3}{0}}.
$$
They are easily seen to be symplectic and thus define Heegaard pairs as discussed in
\S \ref{S:symplectic spaces, Heegaard pairs and symplectic Heegaard splittings}, namely $(X_2 ; \F_2, \cU(\F_2) \text{ or } \cV (\F_2))$. The quotient groups are $\Z_8 \oplus \Z_8$ in both cases, with respective linking matrices $\frac{1}{8} \mattwotwo{0}{1}{1}{0}$ and $\frac{1}{8} \mattwotwo{0}{3}{3}{0}$, which are even, so $\bar{\tau} = 16$. Multiplication by 3 in $\Z_8^2$ gives an isomorphism between the two linkings, so the pairs are stably isomorphic; furthermore, their determinants are both $\equiv -1 \ \mod \tau (= 8)$. But these pairs are {\it not} isomorphic, since their respective determinants $\mod \bar{\tau}$ are $-1 \ \mod ~ 16$ and $-9 \ \mod ~ 16$. 
$\|$
\end{example}

\subsection{Proof of Completeness and a Count.}
\label{section:completeness}

We now prove Theorem \ref{T:unstabilizedHeegaardpairs}, which says that our mod $\overline{\tau}$ determinantal invariant is a
complete invariant of minimal Heegaard pairs.  A byproduct of our proof will be a proof of Theorem
\ref{theorem:isomorphismcount}.  As we observed in \S \ref{S:Heegaard pairs and their linked abelian groups},
we can assume that the the Heegaard pairs in question have finite quotients.  Now, we have shown that our
invariant is really an invariant of the linked quotient together with its induced volume, and it
is easy to see that all such volumes occur.  For a linked finite abelian group $(H,\lambda)$ with a
volume $\theta$, denote this determinantal invariant by $\det(\lambda, \theta)$.  Our first
order of business is determine which volumes on $(H,\lambda)$ have the same determinant, so fix 
a linked finite abelian group $(H,\lambda)$
together with a minimal presentation $F \rightarrow H$, and let $\tau$ and $\overline{\tau}$ be defined
as before.  We begin with a lemma.

\begin{lemma} \label{lemma:8.8}
In the commutative diagram
$$
\begin{CD}
\F @>\pi>> H \\
@VfVV @VVhV \\
\F' @>\pi'>> H'
\end{CD}
$$
let $\pi, \pi'$ be minimal presentations of the linked groups $(H, \lambda)$ and $(H', \lambda')$, and let $h$ 
be a linking isomorphism (we do not assume that $f$ is an isomorphism). 
Then if $\det h$ is measured with respect to the induced volumes on $H, H'$, we have
$\det (\lambda, \pi) = (\det h)^2 \det (\lambda', \pi') \ \mod \bar{\tau}$.
\end{lemma}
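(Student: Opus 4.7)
The plan is to exhibit an explicit symmetric lift of $\lambda$ built from a given symmetric lift of $\lambda'$ using $f$, and then read off the result from Theorem~\ref{theorem:unstableinvariants2}. Fix any symmetric lift $L' \colon F' \to R'^*$ of $\lambda'$, where $R' = \ker \pi'$. The commutativity of the given diagram sends $R = \ker \pi$ into $R'$, so the dual map $f^* \colon (F')^* \to F^*$ carries $R'^*$ into $R^*$ when these are viewed inside $(F')^* \otimes \Q$ and $F^* \otimes \Q$ respectively, and the composite
$$K \;:=\; f^* \circ L' \circ f \colon F \longrightarrow R^*$$
is a well-defined symmetric map, since $K^* = f^*(L')^* f = f^* L' f = K$. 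To check that $K$ lifts $\lambda$, note that the dualization $f^* \colon R'^* \to R^*$ descends on characters to the expected $h^* \colon (H')^* \to H^*$ (a short diagram chase from the definition of the dual presentation), so
$$\pi^* K \;=\; h^*(\pi')^* L' f \;=\; h^* \lambda' \pi' f \;=\; h^* \lambda' h \pi \;=\; \lambda \pi,$$
the final equality being precisely the linking-isomorphism hypothesis $h^* \lambda' h = \lambda$.

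With compatible orientations on $F, F', R^*, R'^*$ as in Lemma~\ref{lemma:8.1b}, and using that a linear map and its transpose have the same determinant in dual bases, one gets
$$\det K \;=\; (\det f^*)(\det L')(\det f) \;=\; (\det f)^2 \det L'.$$
Both $L$ and $K$ are symmetric lifts of $\lambda$, so Theorem~\ref{theorem:unstableinvariants2} gives $\det L \equiv \det K \pmod{\bar\tau}$. Combined with the identification $\det L = \det(\lambda,\pi)$ and $\det L' = \det(\lambda',\pi')$ from Lemma~\ref{lemma:8.1b}, this yields
$$\det(\lambda,\pi) \;\equiv\; (\det f)^2 \det(\lambda',\pi') \pmod{\bar\tau}.$$

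To finish, I would replace $\det f$ by $\det h$. Lemma~\ref{L:orientation-2} gives $\det f \equiv \det h \pmod{\tau}$ with respect to the induced volumes on $H$ and $H'$, and Lemma~\ref{lemma:squaredef} upgrades this to $(\det f)^2 \equiv (\det h)^2 \pmod{\bar\tau}$; since $h$ is a linking isomorphism the invariants $\tau$ and $\bar\tau$ of $(H,\lambda)$ and $(H',\lambda')$ coincide, so the congruence makes sense and yields the desired identity. The only delicate step is the dualization at the start --- checking that $f^*$ really restricts to $R'^* \to R^*$ and that on characters it is genuinely $h^*$ --- but this is formal once the definition of the dual presentation is unwound. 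The rest of the argument is then bookkeeping, with the content loaded into the invariance result proved in the previous subsection.
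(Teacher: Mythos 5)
Your proposal is correct and takes essentially the same route as the paper: your coordinate-free lift $K = f^{\ast} L' f$ is precisely the paper's observation that $A(\lambda'_{ij})A^{t}$ (with $A$ the matrix of $f$) is a symmetric linking matrix for $\lambda$, after which both arguments invoke the mod-$\bar{\tau}$ invariance of the lifted determinant together with Lemma~\ref{L:orientation-2} and Lemma~\ref{lemma:squaredef} to replace $(\det f)^2$ by $(\det h)^2$. The only difference is that you spell out in operator language the diagram chase the paper summarizes as ``implies easily.''
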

\begin{proof}
Lemma \ref{lemma:squaredef} shows that the statement is meaningful. 
Let now $\{ e_i \}$, $\{ e_i'\}$ be bases of $\F, \F'$ respectively, and $(\lambda'_{ij})$ be a
linking matrix for $H'$ in the basis $\{ e_i' \}$.  If $f$ has matrix $A$, then the fact that $h$ is a linking isomorphism
implies easily that $A \lambda' A\t$ is a linking matrix for $H$ in the basis $\{ e_i \}$.  Hence
$$
\det (\lambda, \pi) \equiv_{\mod \bar{\tau}} |ÊH | \det (\lambda_{ij}) = | HÊ| \det A^2 \det (\lambda_{ij}') \equiv_{\mod \bar{\tau}} (\det f)^2 \det (\lambda', \pi').
$$
But by lemma \ref{L:orientation-2} we have $\det f \equiv \pm \det h \ \mod \tau$, so
$(\det h)^2 \equiv (\det f)^2 \ \mod \bar{\tau}$.
\end{proof}

\begin{corollary} \label{corollary:8.9}
Let $h$ be an isometry of $H$; then $(\det h)^2 \equiv 1 \ \mod \bar{\tau}$.
\end{corollary}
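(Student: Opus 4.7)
The plan is to deduce this corollary directly from Lemma \ref{lemma:8.8} by taking the source and target presentations to be identical. Fix a minimal presentation $\pi : \F \to H$. Because $\F$ is free abelian, the isometry $h : H \to H$ can always be lifted to \emph{some} endomorphism $f : \F \to \F$ (we do not need $f$ to be an isomorphism, which is precisely the weakened hypothesis allowed by Lemma \ref{lemma:8.8}). Applying Lemma \ref{lemma:8.8} to the commutative square with $\pi = \pi'$ and $h' = h$ yields
\begin{equation*}
\det(\lambda,\pi) \equiv (\det h)^2 \det(\lambda,\pi) \pmod{\bar{\tau}}.
\end{equation*}

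The remaining step is to justify cancelling $\det(\lambda,\pi)$ on both sides, i.e.\ to show that $\det(\lambda,\pi)$ is a unit in $\Z_{\bar{\tau}}$. By Theorem \ref{theorem:unstableinvariants1} we already know $\det(\lambda,\pi)$ is a unit modulo $\tau$, so in the odd case $\bar{\tau} = \tau$ there is nothing more to do. In the even case $\bar{\tau} = 2\tau$, one invokes the observation made at the start of \S\ref{section:invariance2}: an even linking forces $\tau$ to be even, so being coprime to $\tau$ automatically implies being coprime to $2\tau$. Either way, $\det(\lambda,\pi)$ is a unit mod $\bar{\tau}$, and cancellation gives $(\det h)^2 \equiv 1 \pmod{\bar{\tau}}$.

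The only conceptual subtlety is that Lemma \ref{lemma:8.8} is stated for arbitrary (not necessarily minimal) presentations, whereas here we apply it to a minimal presentation; but the proof of Lemma \ref{lemma:8.8} only uses the existence of a lift $f$ of $h$, and such a lift exists trivially when the target is free. There is no genuine obstacle in the argument: the corollary is essentially the diagonal case of Lemma \ref{lemma:8.8}, with the main point being to recognize that Lemma \ref{lemma:squaredef} is what makes $(\det h)^2 \bmod \bar{\tau}$ a well-defined quantity even though $\det h$ is only defined mod $\tau$.
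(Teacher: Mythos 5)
Your argument is correct and is essentially the paper's own: the corollary is stated there as the immediate diagonal case of Lemma \ref{lemma:8.8} (take $\pi'=\pi$, $\lambda'=\lambda$, $h$ an isometry, with a lift $f$ existing because $\F$ is free), followed by cancellation of $\det(\lambda,\pi)$, which is a unit mod $\bar{\tau}$ exactly as you argue. One small slip in your closing remark: Lemma \ref{lemma:8.8} is in fact stated for \emph{minimal} presentations rather than arbitrary ones, but since you apply it to a minimal presentation this has no effect on the proof.
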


This corollary restricts the determinant of an isometry to lie in $\sqrt{1}$.
The following is an immediate corollary of Lemma \ref{lemma:8.8}.

\begin{lemma} \label{lemma:8.12}
For any volume $\theta$ on $(H,\lambda)$, we have 
$\det (\lambda, m \theta) \equiv m^2 \det (\lambda, \theta) \ \mod \bar{\tau}$ 
for any $m$ in $\units$.
\end{lemma}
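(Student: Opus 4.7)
The plan is to derive this formula as a direct application of Lemma~\ref{lemma:8.8}, comparing the determinant invariant computed from two minimal presentations of $(H,\lambda)$, one realizing the volume $\theta$ and the other realizing $m\theta$, while taking the linking isomorphism between them to be $\id_H$.

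First I would verify that every volume on $(H,\lambda)$ is induced by some minimal presentation. Starting from a minimal presentation $\pi:F\to H$ inducing $\theta$, choose an automorphism $g$ of $H$ (not necessarily linking-preserving) whose determinant with respect to $\theta$ equals $m$; such a $g$ may be constructed by sending a chosen cyclic direct summand's generator to $m$ times itself and fixing the others. Then $\pi':=g\circ\pi$ is again a minimal presentation of $H$, and its induced volume is $g(\theta)=m\theta$.

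Next I would apply Lemma~\ref{lemma:8.8} to the diagram
\[
\begin{CD}
F @>\pi>> H\\
@VfVV @VV{\id}V\\
F @>{\pi'}>> H,
\end{CD}
\]
where $f:F\to F$ is any lift of $\id_H$; such a lift exists because $F$ is free and $\pi'$ is surjective, and Lemma~\ref{lemma:8.8} explicitly permits $f$ to be non-invertible. Measured with respect to the source volume $\theta$ and the target volume $m\theta$, the determinant of $\id_H$ is the unique $d\in\Z_\tau$ satisfying $\theta=d\cdot(m\theta)$, namely $d\equiv m^{-1}\pmod\tau$. Lemma~\ref{lemma:8.8} therefore yields
\[
\det(\lambda,\theta)\equiv (m^{-1})^2\det(\lambda,m\theta)\pmod{\bar\tau},
\]
and the conclusion will follow by multiplying both sides by $m^2$, with the squares interpreted modulo $\bar\tau$ via Lemma~\ref{lemma:squaredef}.

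The main obstacle is justifying that multiplication by $m^2$ is legitimate modulo $\bar\tau$, i.e.\ that $m^2$ is a unit there. When $\lambda$ is odd this is automatic because $\bar\tau=\tau$ and $m\in\units$. When $\lambda$ is even, the relevant fact (already implicit in the proof of Lemma~\ref{lemma:squaredef}, where the divisibility $2\tau\mid\tau^2$ is invoked) is that $\tau$ must itself be even. Granted this, $m\in\units$ forces $m$ to be odd, so $\gcd(m,2\tau)=1$ and $m^2$ is a unit modulo $\bar\tau=2\tau$. Multiplying through then delivers $\det(\lambda,m\theta)\equiv m^2\det(\lambda,\theta)\pmod{\bar\tau}$, as claimed.
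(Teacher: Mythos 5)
Your proof is correct and makes explicit what the paper leaves implicit in declaring Lemma~\ref{lemma:8.12} an immediate corollary of Lemma~\ref{lemma:8.8}; the shortest route is to apply Lemma~\ref{lemma:8.8} with $m\theta$ as the source volume and $\theta$ as the target, so that the determinant of the identity is $m$ directly and no final multiplication by $m^2$ (or appeal to its invertibility) is needed. One small refinement worth noting: the automorphism $g$ used to produce a minimal presentation inducing the volume $m\theta$ must scale a cyclic direct summand of order exactly $\tau$, since $m\in\units$ is a unit modulo $\tau$ but need not be one modulo the larger elementary divisors of $H$.
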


In particular, $\det (\lambda, m \theta) = \det(\lambda, \theta)$ if and only if $m \in \sqrt{1}$.  Observe
that this immediately implies Theorem \ref{theorem:isomorphismcount} : the set of volumes is
in bijection with $\units / \{\pm 1\}$, and two volumes define isomorphic Heegaard pairs if
and only if they are in the same coset of $\units / \sqrt{1}$. 

We conclude that to prove Theorem \ref{T:unstabilizedHeegaardpairs}, it is enough by Theorem \ref{T:the volume again} to prove that
everything in $\sqrt{1}$ is the determinant of an isometry.  We shall see that this
problem can be solved in each $p$-component independently and pieced together to get the general solution.  As
a technical tool, we will need the following result.

\begin{lemma} \label{lemma:8.2}
Let $H$ be a linked group whose smallest elementary divisor $\tau$ is even.  Then the following statements are equivalent:
\begin{enumerate}[a)]
\item Every $x \in H$ such that $\tau x = 0$ satisfies $x^2 \in \leg{2}{\tau}$ (here we have abbreviated $x \cdot x$ to $x^2$ and
$\leg{2}{\tau}$ is the subgroup of $\Q/\Z$ generated by $\frac{2}{\tau}$).
\item The lowest block of the $2$-component of $H$ is even (in the sense of Definition \ref{definition:evenodd}).
\end{enumerate}
\end{lemma}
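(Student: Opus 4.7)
The strategy is to use two successive orthogonal decompositions---first over primes, then over blocks of the $2$-component---to reduce condition (a) to a statement about a single block, where it matches (b) by definition.  Both implications will follow from the same chain of equivalences, so there is no need to argue the two directions separately.

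Write $\tau = 2^n m$ with $m$ odd and $n \ge 1$.  I would first invoke Theorem~\ref{T:linkings always split as direct sums} to split the linked group orthogonally as $H = H_2 \oplus H_{\mathrm{odd}}$ according to its prime components.  Since $\tau$ divides every elementary divisor of $H$, the $\tau$-torsion subgroup splits as $H[\tau] = H_2[2^n] \oplus H_{\mathrm{odd}}[m]$, and orthogonality across primes gives $x^2 = x_2^2 + x_{\mathrm{odd}}^2$ whenever $x = x_2 + x_{\mathrm{odd}}$.  Under the canonical isomorphism $\tfrac{1}{\tau}\Z/\Z \cong \tfrac{1}{2^n}\Z/\Z \oplus \tfrac{1}{m}\Z/\Z$, the target subgroup $\leg{2}{\tau}$ corresponds to $\leg{2}{2^n}\oplus \tfrac{1}{m}\Z/\Z$, so the odd-prime summand of the condition is automatic and (a) reduces to the purely $2$-primary statement: $y^2 \in \leg{2}{2^n}$ for every $y \in H_2[2^n]$.

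Next, I would pass to the orthogonal block decomposition $H_2 = \bigoplus_{j \ge n} B_j$, where each $B_j$ is a free $\Z_{2^j}$-module.  The blocks $B_j$ with $j < n$ vanish because $2^n$ is the minimal $2$-power appearing among the elementary divisors (forced by $\tau \mid \tau_i$ for each $i$), and hence $B_n$ is the lowest nonzero block.  Writing $H_2[2^n] = B_n \oplus 2B_{n+1} \oplus 4B_{n+2} \oplus \cdots$ and expanding $y = \sum_{j\ge n} 2^{j-n}z_j$ with $z_j \in B_j$, block-orthogonality gives $y^2 = \sum_{j \ge n} (2^{j-n}z_j)^2$.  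A short arithmetic check shows that for $j > n$ the summand $(2^{j-n}z_j)^2$ has order at most $2^{n-1}$ in $\Q/\Z$ and thus automatically lies in $\leg{2}{2^n} = \leg{1}{2^{n-1}}$.  Consequently the constraint collapses to $x^2 \in \leg{2}{2^n}$ for every $x \in B_n$, which by Definition~\ref{definition:evenodd} is exactly the statement that the linking on $B_n$ is even---condition (b).

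The only step requiring a bit of vigilance is the dismissal of the higher blocks, which relies on the elementary observation that any $2$-primary element of $\Q/\Z$ of order at most $2^{n-1}$ already belongs to $\leg{2}{2^n}$; everything else is bookkeeping with the two orthogonal decompositions.  I do not anticipate a serious obstacle.
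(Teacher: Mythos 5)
Your proof is correct and follows essentially the same route as the paper's: split the torsion group orthogonally by primes to isolate the $2$-component, then split the $2$-component orthogonally into blocks and observe that the $\tau$-torsion contributions from the higher blocks automatically land in $\leg{2}{2^n}$, leaving only the lowest block to test. The paper presents the two implications separately whereas you fold them into a single equivalence chain, but the decompositions and the arithmetic estimate on the higher-block contributions are the same.
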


\begin{proof}
Let $\{ p_i \}$ be the primes dividing $|H|$, with $p_1 = 2$, and let $\tau = \prod_i p_i^{n_i}$ (some of the $n_i$'s may be zero here,
but $n_1 >0$).  The group $H$ splits as an orthogonal direct sum of its $p_i$ components $H_i$, and every $x \in H$ can be written
uniquely as $x = \sum_i x_i$ with $x_i \in H_i$.  If $x_i$ has order $p_i^{r_i}$ then $x$ has order $\prod_i p_i^{r_i}$.  Thus
if $\tau x = 0$ we must have $r_i \leqslant n_i$ for all $i$.  Furthermore, $x^2 = \sum_i x_i^2$ and
$$
x_i^2 \in \leg{1}{p_i^{r_i}} \subset \leg{1}{p_i^{n_i}} = \leg{M_i}{\tau}
$$
where $M_i = \prod_{i \not= j} p_j^{n_j}$.  For odd primes (\ie $i>1$), the number $M_i$ is even and so $x_i^2 \in \leg{2}{\tau}$, but
for $i=1$ the number $M_i$ is odd.  Hence $x^2 \in \leg{2}{\tau}$ for all $x$ satisfying $\tau x = 0$ if and only if
$x_1^2 \in \leg{2}{\tau}$ for all $x_1 \in H_1$ satisfying $2^{n_1} x_1 = 0$.  The lowest block $B_1$ of $H_1$ consists of
elements all of which satisfy $2^{n_1} x_1 = 0$; their self linkings $x_1^2$ are in $\leg{1}{2^{n_1}}$, and are in $\leg{2}{\tau}$
if and only if they are in $\leg{2}{2^{n_1}}$.  Thus in this case $B_1$ must be even.  Conversely, suppose
$B_1$ is even. The group $H_1$ splits
orthogonally into blocks $B_1 \oplus B_2 \oplus \cdots \oplus B_k$ where each $B_i$ is a free $\Z_{2^{s_i}}$-module, with
$n_1 = s_1 < s_2 < \cdots < s_k$.  If $x_1 \in H_1$, write $x_1 = \sum_{i=1}^k y_i$, with $y_i \in B_i$.  We then have $2^{n_1} x_1 = 0$ if
and only if $2^{n_1} y_i = 0$ for all $i$, which is true if and only if $y_i \in 2^{s_i - n_1} B_i$ for all $i$.  Then
$x_1^2 = \sum y_i^2$ (by orthogonality) and
$$
y_i^2 \in \leg{2^{2 (s_i - n_1)}}{2^{n_1}} = \leg{2^{s_i - n_1}}{2^{n_1}} \subset \leg{2}{2^{n_1}} \text{ for all } i>1;
$$
but $y_1^2 \in \leg{2}{2^{n_1}}$ also since $B_1$ is even.  This concludes the proof.
\end{proof}

\begin{lemma} \label{lemma:8.15}
$\Lambda^r H$ is naturally isomorphic to the direct sum of $\Lambda^r H_p$ over all $p$ which divide $\tau$.
If $\theta$ is an
orientation (volume) on $H$ then its projection $\theta_p$ in $\Lambda^r H_p$ is an orientation (volume) on $H_p$.
\end{lemma}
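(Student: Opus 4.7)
The plan is to construct the natural map $\Phi \colon \bigoplus_{p\mid\tau}\Lambda^r H_p \to \Lambda^r H$ coming from functoriality of exterior powers applied to the inclusions $H_p \hookrightarrow H$, and then show $\Phi$ is an isomorphism by combining a multilinearity argument with an order count. Naturality is built in by construction; the work is in verifying bijectivity.

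First I would establish the key vanishing principle: any wedge $y_1 \wedge \cdots \wedge y_r \in \Lambda^r H$ in which two of the $y_i$'s lie in distinct primary components $H_p$ and $H_q$ ($p \neq q$) is zero. Indeed, if $y_i$ has order $p^a$ and $y_j$ has order $q^b$, then $p^a(y_1 \wedge \cdots \wedge y_r) = 0$ and $q^b(y_1 \wedge \cdots \wedge y_r) = 0$, so the element's order divides $\gcd(p^a,q^b)=1$. Given generators $x_1,\ldots,x_r$ of $H$, writing each $x_i = \sum_p x_{i,p}$ and expanding multilinearly then shows
$$
x_1 \wedge \cdots \wedge x_r \ = \ \sum_{p} x_{1,p}\wedge\cdots\wedge x_{r,p},
$$
with the sum ranging over primes $p$ dividing $|H|$. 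Each summand lies in the image of $\Lambda^r H_p \to \Lambda^r H$, so $\Phi$ is surjective.

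Next I would dispose of the primes $p$ with $p\nmid\tau$. For such $p$ the $p$-component $H_p$ has rank $r_p < r$ (since $p\mid\tau$ forces $p$ to divide every elementary divisor, so $p\nmid\tau$ means $p$ is absent from at least one, whence $r_p<r$). Thus $\Lambda^r H_p = 0$, justifying restricting the direct sum to $p\mid\tau$. For $p\mid\tau$ we have $r_p=r$, and by Lemma~\ref{L:wedge_r-1} the group $\Lambda^r H_p$ is cyclic of order $p^{n_p}$, where $p^{n_p}\parallel\tau$. By the Chinese remainder theorem,
$$
\Bigl| \bigoplus_{p\mid\tau} \Lambda^r H_p \Bigr| \ = \ \prod_{p\mid\tau} p^{n_p} \ = \ \tau \ = \ |\Lambda^r H|,
$$
the last equality by Lemma~\ref{L:wedge_r-1} again. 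A surjection between finite groups of equal order is an isomorphism, so $\Phi$ is the desired natural isomorphism.

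For the orientation/volume statement, let $\theta$ generate $\Lambda^r H$ and write $\Phi^{-1}(\theta) = \sum_{p\mid\tau}\theta_p$ with $\theta_p \in \Lambda^r H_p$. Since the summands $\Lambda^r H_p$ have pairwise coprime orders, the CRT-type decomposition forces each $\theta_p$ to generate $\Lambda^r H_p$; equivalently, $\theta_p$ is an orientation of $H_p$ (recall $r=r_p$ when $p\mid\tau$, so orientations of $H_p$ in the sense of Definition~\ref{D:orientation and volume of H} are precisely generators of $\Lambda^r H_p$). The same projection sends $\pm\theta$ to $(\pm\theta_p)$, giving the statement for volumes. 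I expect the main obstacle to be organizing the bookkeeping that $p\nmid\tau$ implies $r_p < r$ (and hence $\Lambda^r H_p=0$), since this is where the hypothesis on $\tau$ enters and where one must be careful that ``rank'' means the minimal number of cyclic summands rather than the $p$-rank.
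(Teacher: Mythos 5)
Your argument is correct, and it reaches the isomorphism by a different mechanism than the paper. The paper works one level up: it splits the tensor power $H^{\otimes r}=\bigoplus_p H_p^{\otimes r}$ (because $H_p\otimes H_q=0$ for $p\neq q$), notes that the kernel of $H^{\otimes r}\to\Lambda^r H$ splits into its primary pieces, and thus obtains the natural decomposition $\Lambda^r H\cong\bigoplus_p\Lambda^r H_p$ over all primes at once, discarding the summands with $p\nmid\tau$ by the same rank observation you make ($\rk H_p<r$, hence $\Lambda^r H_p=0$). You instead construct the natural map only over the primes $p\mid\tau$, prove surjectivity by expanding a wedge of arbitrary elements into primary components --- your gcd-of-orders vanishing of mixed wedges is precisely the exterior-power shadow of the fact that $H_p\otimes H_q=0$ --- and then get injectivity from a cardinality count, invoking Lemma~\ref{L:wedge_r-1} twice to identify $|\Lambda^r H|=\tau$ and $|\Lambda^r H_p|=p^{n_p}$ (where $p^{n_p}$ is the exact power of $p$ dividing $\tau$, using that $\rk H_p=r$ when $p\mid\tau$). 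The paper's route is more structural: it never needs the order of $\Lambda^r H$, and naturality is manifest because the isomorphism is induced by the splitting of $H^{\otimes r}$, though it asserts rather tersely that the relations defining the exterior power split by primary component. Your route is more elementary and self-contained at that step, at the price of leaning on finiteness of $H$ and on the computation of Lemma~\ref{L:wedge_r-1} (both legitimate here, since this section has already reduced to finite linked quotients). The orientation/volume statement is handled identically in both proofs; your appeal to the Chinese remainder theorem is superfluous (multiplicativity of orders in a direct sum is all that is needed) but harmless.
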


\begin{proof}
The tensor power $H^r$ splits into the direct sum $H_p^r$ over all $p$ since $H_p \otimes H_q = 0$ if $p \not= q$.
Likewise, the kernel of $H^r \to \Lambda^r H$ splits into its $p$-component parts, and so we get a natural direct sum
$\Lambda^r H = \bigoplus_{\text{all } p} \Lambda^r H_p$.  But if $p \nmid \tau$ then $\rk H_p < r$ and so $\Lambda^r H_p = 0$, proving
the first statement.  Note that $\Lambda^r H_p$ is precisely the $p$-component of $\Lambda^r H$, which is
$\simeq \Z_{p^n}$ if $p^n$ is the largest power of $p$ dividing $\tau$.  If now $\theta \in \Lambda^r H$, we may write
$\theta = \sum_{p | \tau} \theta_p$.  Thus if $\theta$ generates $\Lambda^r H$, then $\theta_p$ must generate $\Lambda^r H_p$.
Finally, if $\theta$ is determined up to sign, so is $\theta_p$.
\end{proof}

Suppose now that $h : H \to H$ is an isometry.  Hence $h$ takes each $p$-component into itself, so $h$ splits into a 
direct sum of
maps $h_p$ on $H_p$; conversely the maps $h_p$ define $h$ on $H$.  Furthermore, the action of $h$ on $\Lambda^r H$ is
just multiplication by $\det h \ \mod \tau$ and hence the action of $h_p$ on 
$\Lambda^r H_p$ is also multiplication by $\det h$.
But it is also multiplication by $\det h_p \ \mod p^n$ (where $p^n \parallel \tau$), since $\Lambda^r H_p \simeq \Z_{p^n}$; 
in other words:

\begin{lemma} \label{lemma:8.16}
If $h$ is an endomorphism of $H$, then $\det h_p \equiv \det h \ \mod p^n$ for 
every $p$ dividing $\tau$, where $p^n \parallel \tau$.  Thus $\det h_p$ is determined by $\det h$.  
Conversely $\det h$ is determined by the values of $\det h_p$ (the proof of this is by the Chinese Remainder Theorem).
\end{lemma}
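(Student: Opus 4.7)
The plan is to exploit the direct sum decomposition $\Lambda^r H \cong \bigoplus_{p\mid\tau} \Lambda^r H_p$ supplied by Lemma~\ref{lemma:8.15}. Recall that by Lemma~\ref{L:wedge_r-1}, $\Lambda^r H$ is cyclic of order $\tau$, so $\Lambda^r H \cong \Z_\tau$; likewise, $\Lambda^r H_p$ is the $p$-component of $\Lambda^r H$, so $\Lambda^r H_p \cong \Z_{p^n}$ where $p^n \parallel \tau$. The determinant of $h$ on $H$ was defined as the scalar by which $h$ acts on $\Lambda^r H$, and similarly $\det h_p$ is the scalar by which $h_p$ acts on $\Lambda^r H_p$.

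First I would observe that since any endomorphism $h$ of $H$ preserves each $p$-primary summand $H_p$ (torsion elements of order a power of $p$ map to torsion elements of order a power of $p$), the decomposition $H = \bigoplus_p H_p$ is respected by $h$, and $h$ splits as $\bigoplus_p h_p$. This splitting is compatible with the induced action on $\Lambda^r H$: under the natural identification of Lemma~\ref{lemma:8.15}, the action of $h$ on $\Lambda^r H$ is the direct sum of the actions of $h_p$ on $\Lambda^r H_p$. Under the isomorphism $\Lambda^r H \cong \Z_\tau$, the projection $\Lambda^r H \twoheadrightarrow \Lambda^r H_p$ is the canonical reduction $\Z_\tau \twoheadrightarrow \Z_{p^n}$.

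Next, I would fix a generator $\theta$ of $\Lambda^r H$ and write $\theta_p$ for its image in $\Lambda^r H_p$, which by Lemma~\ref{lemma:8.15} is itself a generator. Then $h(\theta) = \det h \cdot \theta$ in $\Z_\tau$, while $h_p(\theta_p) = \det h_p \cdot \theta_p$ in $\Z_{p^n}$. Applying the projection $\Z_\tau \to \Z_{p^n}$ to the first equation and comparing with the second immediately yields $\det h \equiv \det h_p \pmod{p^n}$. This gives the first claim and shows that each $\det h_p$ is determined by $\det h$.

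For the converse, the Chinese Remainder Theorem gives the ring isomorphism $\Z_\tau \cong \prod_{p\mid\tau} \Z_{p^n}$, whose inverse reconstructs an element of $\Z_\tau$ from its collection of reductions modulo each prime power. Applying this to $\det h$, the data $\{\det h_p\}_{p\mid\tau}$ determines $\det h \in \Z_\tau$ uniquely. The argument is essentially a bookkeeping exercise once the compatibility of Lemma~\ref{lemma:8.15} with the induced action of $h$ is in place, so there is no real obstacle; the only thing requiring care is verifying that the natural isomorphism $\Lambda^r H \cong \bigoplus_p \Lambda^r H_p$ is indeed $h$-equivariant, but this is immediate from the functoriality of $\Lambda^r$ and the preservation of primary components.
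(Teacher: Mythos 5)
Your proof is correct and follows essentially the same route as the paper: the paper likewise observes that $h$ preserves each $p$-component, that $h$ acts on $\Lambda^r H$ by multiplication by $\det h \bmod \tau$, hence acts on the cyclic $p$-component $\Lambda^r H_p \cong \Z_{p^n}$ by $\det h \bmod p^n$, which must coincide with $\det h_p$, with the converse handled by the Chinese Remainder Theorem. Your extra care in checking $h$-equivariance of the splitting from Lemma~\ref{lemma:8.15} is fine but not a new idea.
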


If $H$ has linking $\lambda$ then, by virtue of the orthogonality of distinct primary components, $\lambda$ splits into the
direct sum of linkings $\lambda_p$ on $H_p$.  Thus $h$ is an isometry of $H$ if and only if $h_p$ is so for each $p$.  Let now
$\tau = \prod_i p_i^{n_i}$.

\begin{lemma} \label{lemma:8.17}
$\sqrt{1} \ \mod \tau$ splits into the direct product of the groups $\sqrt{1} \ \mod p_i^{n_i}$.
\end{lemma}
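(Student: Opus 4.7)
The plan is to reduce the statement to an application of the Chinese Remainder Theorem (CRT), after verifying that the modulus $\overline{\tau}$ factors compatibly with the prime-power decomposition of $\tau$. Write $\tau = p_1^{n_1} \cdots p_k^{n_k}$. The CRT supplies a ring isomorphism $\Z_\tau \cong \prod_i \Z_{p_i^{n_i}}$, which restricts on units to a group isomorphism $\units \cong \prod_i \units_i$, where $\units_i$ denotes the group of units of $\Z_{p_i^{n_i}}$; under this isomorphism the squaring map decomposes as componentwise squaring.

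The heart of the proof is then to verify that $\overline{\tau} = \prod_i \overline{p_i^{n_i}}$, where the local modulus $\overline{p_i^{n_i}}$ is defined analogously to $\overline{\tau}$ using the restricted form $\lambda_{p_i}$ on $H_{p_i}$. For odd primes $p_i$ this factor must equal $p_i^{n_i}$: the alternative choice $2 p_i^{n_i}$ is illegitimate because $x^2$ modulo $2 p_i^{n_i}$ is not well defined from $x$ modulo $p_i^{n_i}$ (the argument of Lemma~\ref{lemma:squaredef} genuinely requires $2 \mid \tau$). Hence the only prime at which the even/odd distinction has bite is $p_1 = 2$, and for that prime Lemma~\ref{lemma:8.2} asserts precisely that $\lambda$ is even if and only if its $2$-component $\lambda_2$ is even. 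Consequently $\overline{\tau} = 2\tau$ iff $\overline{2^{n_1}} = 2 \cdot 2^{n_1}$, and in every case $\overline{\tau}/\tau = \overline{2^{n_1}}/2^{n_1}$, giving the factorization $\overline{\tau} = \overline{2^{n_1}} \cdot p_2^{n_2} \cdots p_k^{n_k} = \prod_i \overline{p_i^{n_i}}$.

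With this factorization in hand, a second invocation of CRT yields $\Z_{\overline{\tau}} \cong \prod_i \Z_{\overline{p_i^{n_i}}}$, so the single congruence $x^2 \equiv 1 \ \mod \overline{\tau}$ is equivalent to the componentwise conditions $x_i^2 \equiv 1 \ \mod \overline{p_i^{n_i}}$, where $x_i$ denotes the image of $x$ in $\Z_{p_i^{n_i}}$. Combining this with the decomposition $\units \cong \prod_i \units_i$ of the ambient unit groups yields the required isomorphism $\sqrt{1} \ \mod \tau \cong \prod_i \sqrt{1} \ \mod p_i^{n_i}$.

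The only step that requires any real thought is the translation between the global definition of evenness of $\lambda$ (phrased in terms of elements of $H$ annihilated by $\tau$) and the local evenness of $\lambda_2$; once Lemma~\ref{lemma:8.2} has been invoked, this translation is immediate, and the remainder of the argument is just the bookkeeping of CRT.
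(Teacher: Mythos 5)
Your proof is correct and follows essentially the same route as the paper's: both rest on the Chinese Remainder Theorem together with the factorization $\overline{\tau} = \overline{2^{n_1}}\cdot p_2^{n_2}\cdots p_k^{n_k}$ of the modulus, with Lemma~\ref{lemma:8.2} supplying the equivalence between evenness of $\lambda$ and evenness of its $2$-component at the one prime where the distinction matters. The only differences are presentational: the paper argues element-wise in the two directions where you package the same content as two CRT isomorphisms, and your explicit remark that $\overline{p_i^{n_i}} = p_i^{n_i}$ at odd primes just makes precise a convention the paper leaves implicit.
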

\begin{proof}
Observe that $e \in \sqrt{1}\ \mod \tau$ means that $e \in \Z_\tau$ and $e^2 \equiv 1 \ \mod \bar{\tau}$.  This means that
$e^2 \equiv 1 \ \mod p_i^{\bar{n}_i}$, where $\bar{n}_i = n_i$ if $p_i$ is odd or if $p_i = 2$ and $\lambda$ is odd, but
$\bar{n}_i = n_i +1$ if $p_i = 2$ and $\lambda$ is even, which by Lemma \ref{lemma:8.2} is true if and only if $\lambda_2$ is
even.  Hence $e$ reduced $\mod p_i^{n_i}$ is in $\sqrt{1} \ \mod p_i^{n_i}$.  Conversely let
$e_i \in \sqrt{1} \ \mod p_i^{\bar{n}_i}$, that is $e_i \in \Z_{p_i^{n_i}}$ be such that 
$e_i^2 \equiv 1 \ \mod p_i^{\bar{n}_i}$;
by the Chinese Remainder Theorem again, there is a unique $e \in \Z_\tau$ such that $e \equiv e_i \ \mod p_i^{n_i}$, and we
find $e^2 \equiv e_i^2 \equiv 1 \ \mod p_i^{\bar{n}_i}$ which implies $e^2 \equiv 1 \ \mod \bar{\tau}$.
\end{proof}

\begin{corollary}
An element $a \in \sqrt{1} \ \mod \tau$ is the determinant of an isometry of $(H, \lambda)$ if and only if its
reduction $\mod p_i^{n_i}$ is the determinant of an isometry of $(H_{p_i},\lambda_{p_i})$.
\end{corollary}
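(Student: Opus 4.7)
The plan is to derive this corollary directly from the three preceding lemmas, observing that both the structure of an isometry and the value of its determinant decompose cleanly along the primary decomposition of $H$. The key facts to assemble are: (1) the linking $\lambda$ is the orthogonal direct sum of its restrictions $\lambda_{p_i}$ to the primary summands $H_{p_i}$ (as noted just before Lemma~\ref{lemma:8.17}); (2) any endomorphism of $H$ preserves each $p_i$-primary summand, so it splits as $h = \bigoplus_i h_{p_i}$, and it is an isometry of $(H,\lambda)$ if and only if each $h_{p_i}$ is an isometry of $(H_{p_i},\lambda_{p_i})$; and (3) by Lemma~\ref{lemma:8.16}, the determinants are related by $\det h_{p_i} \equiv \det h \ \mod p_i^{n_i}$, and $\det h$ is the unique lift to $\Z_\tau$ of this system of congruences via the Chinese remainder theorem.

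For the forward direction, first I would assume that $h$ is an isometry of $(H,\lambda)$ with $\det h \equiv a \ \mod \tau$. Restriction to the $p_i$-primary summand gives an isometry $h_{p_i}$ of $(H_{p_i},\lambda_{p_i})$, and Lemma~\ref{lemma:8.16} immediately yields $\det h_{p_i} \equiv a \ \mod p_i^{n_i}$. By Corollary~\ref{corollary:8.9} applied to $h_{p_i}$, this reduction lies in $\sqrt{1} \ \mod p_i^{n_i}$, consistent with the splitting of $\sqrt{1}$ established in Lemma~\ref{lemma:8.17}.

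For the reverse direction, suppose $a \in \sqrt{1} \ \mod \tau$ and that for each $i$ there is an isometry $h_{p_i}$ of $(H_{p_i},\lambda_{p_i})$ with $\det h_{p_i} \equiv a \ \mod p_i^{n_i}$. I would then assemble $h := \bigoplus_i h_{p_i}$, which is an isometry of $(H,\lambda)$ by orthogonality of the primary summands. Applying Lemma~\ref{lemma:8.16} in the opposite direction, the determinants $\det h_{p_i}$ determine $\det h \in \Z_\tau$ uniquely via the Chinese remainder theorem, and since all the $\det h_{p_i}$ agree with the reductions of $a$, we conclude $\det h \equiv a \ \mod \tau$.

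Since every step is a direct application of a lemma already proved, there is no serious obstacle; the only thing to check is that the interaction between the splitting of $\sqrt{1}$ in Lemma~\ref{lemma:8.17} and the splitting of determinants in Lemma~\ref{lemma:8.16} is compatible. This is essentially automatic because both splittings are induced by the primary decomposition of $\Z_\tau$, and Lemma~\ref{lemma:8.17} was specifically set up so that the squared-unit condition defining $\sqrt{1}$ factors prime-by-prime (with the extra factor of $2$ in the exponent for $p=2$ in the even case absorbed by the definition of $\bar{\tau}$). Hence the corollary reduces the task of realizing elements of $\sqrt{1}$ as determinants of isometries of $(H,\lambda)$ to the same problem on each primary component $(H_{p_i},\lambda_{p_i})$, which is the desired reduction.
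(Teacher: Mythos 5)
Your proposal is correct and matches the paper's (implicit) reasoning exactly: the paper states this corollary without proof, treating it as an immediate consequence of the orthogonal splitting of $\lambda$ into its $p$-primary summands, the splitting $h = \bigoplus_i h_{p_i}$ of any endomorphism of $H$, and Lemmas \ref{lemma:8.16} and \ref{lemma:8.17} relating determinants and $\sqrt{1}$ across the decomposition. You have simply written out the two directions that the paper leaves to the reader.
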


We have now reduced the proof of Theorem \ref{T:unstabilizedHeegaardpairs} to the proof of:

\begin{lemma} \label{lemma:5.18}
Let $H$ be a linked $p$-group with $\tau = p^n$ and $e \in \sqrt{1} \ \mod p^n$. Then there is an isometry of $H$ with determinant $e$.
\end{lemma}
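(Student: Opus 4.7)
The plan is to construct, for each $e \in \sqrt{1}$, an explicit isometry of $H$ with determinant $e$, by splitting into three cases according to $p$ and the parity of $\lambda$. In each case the isometry will act nontrivially only on a single orthogonal summand of $H$ supplied by the appropriate decomposition theorem, and will be the identity on the perpendicular complement; its determinant on $\Lambda^r H$ then reduces, via the tensor-product decomposition of the top exterior power (as in Lemma~\ref{L:properties of det(f)-2}), to the determinant of its restriction to that summand.

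For odd $p$, the unit group $\Z_{p^n}^*$ is cyclic and so has a unique element of order two, giving $\sqrt{1}=\{\pm 1\}$; only $-1$ needs to be realized. Over an odd prime every nondegenerate symmetric form over $\Z_{p^k}$ diagonalizes, so the block decomposition recalled at the start of $\S$\ref{SS:classifying linked p-groups, p odd} refines to an orthogonal direct sum of rank-one pieces. Picking any such cyclic summand $\langle x\rangle$ and setting $h=-\mathrm{id}$ on $\langle x\rangle$ and $\mathrm{id}$ on $\langle x\rangle^{\perp}$ gives an isometry (since $(-x)\cdot(-x)=x\cdot x$) whose determinant equals $-1\bmod p^n$.

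For $p=2$ with $\lambda$ odd, Lemma~\ref{lemma:8.2} combined with Wall's Theorem~\ref{theorem:p2decomp} forces $H$ to contain an orthogonal unary summand $\langle x\rangle$ at the minimum level, with $x\cdot x=a/2^n$ and $a$ odd. Since $\bar{\tau}=\tau=2^n$ in this case, every $u\in\sqrt{1}$ satisfies $u^2\equiv 1\bmod 2^n$, and the map defined by $h(x)=ux$, $h|_{\langle x\rangle^{\perp}}=\mathrm{id}$, is an isometry of $H$ with $\det h = u$ in $\Z_{2^n}$; this realizes the entire group $\sqrt{1}$.

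For $p=2$ with $\lambda$ even, the first step will be to show that $\sqrt{1}=\{\pm 1\}$: any $u\in\Z_{2^n}^*$ satisfying $u^2\equiv 1\bmod 2^{n+1}$ must have $u\equiv\pm 1\bmod 2^n$, because $(u-1)(u+1)\equiv 0\bmod 2^{n+1}$ with exactly one of $u\pm 1$ divisible by $4$, which forces the other to absorb a factor of $2^n$. Only $-1$ then needs to be realized. Wall's theorem guarantees that in this subcase the minimum-level block $B_n$ consists entirely of binary summands of type $\cC$ or $\cD$; since both $\tfrac{1}{2^n}\cC$ and $\tfrac{1}{2^n}\cD$ are symmetric in their two basis vectors, the transposition on one binary summand, extended by the identity, is an isometry with determinant $-1$ in $\Z_{2^n}$. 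The main obstacle is the $p=2$ analysis: verifying the collapse $\sqrt{1}=\{\pm 1\}$ in the even case, and reading off from Wall's decomposition the existence of a unary summand in the odd subcase and an all-binary minimum-level block in the even subcase, are where the care is needed; the determinant computations themselves are essentially tautological once the splitting $H=\langle x\rangle\oplus\langle x\rangle^{\perp}$ (or the binary analogue) is available.
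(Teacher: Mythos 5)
Your proposal is correct and takes essentially the same approach as the paper: you realize every element of $\sqrt{1}$ by an isometry supported on a single orthogonal basic summand at the lowest level (scaling a unary summand $x\mapsto ex$ when the form is odd, and transposing the two generators of a binary summand $\tfrac{1}{2^n}\cC$ or $\tfrac{1}{2^n}\cD$ when $p=2$ and the form is even, after noting that $\sqrt{1}$ collapses to $\{\pm 1\}$ there), extended by the identity. The only cosmetic differences are that you treat odd $p$ separately via diagonalization while the paper folds it into the single ``$\lambda$ odd'' case using Burger's unary splitting $(x)\oplus H_0$, and that you obtain $\sqrt{1}=\{\pm1\}$ from the factorization $(u-1)(u+1)\equiv 0 \ \mod 2^{n+1}$ rather than by listing the four square roots of $1$ modulo $2^{n+1}$.
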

\begin{proof}
If $\lambda$ is odd, by \cite{Bu} the linked group $H$ has an orthogonal splitting of the form $(x) \oplus H_0$, where
$(x)$ is the cyclic subgroup generated by an element $x$ of order $p^n$ satisfying $x^2 = \frac{u}{p^n}$ with $p \nmid u$.  
The map $h$ which takes $x$ to $e x$ and which is the identity on $H_0$ is then an {\it isometry} and its determinant is 
clearly $e$.
So now let $p=2$ and $\lambda$ be even.  In this case, $\sqrt{1}$ consists of all $e$ mod $2^n$ such that 
$e^2 \equiv 1 \ \mod 2^{n+1}$.
There are four square roots of $1 \ \mod 2^{n+1}$ (when $n \geqslant 2$), namely $\pm 1$ and $2^n \pm 1$, but $\mod 2^n$ 
these give
only two distinct elements $\pm 1$ in $\sqrt{1}$.  Thus we must simply exhibit an isometry with determinant $-1 \ \mod 2^n$.
By Lemma \ref{lemma:8.2}, the even nature of $\lambda$ implies that the $2^n$-block of $H$ is even.  By the classification 
of linked $2$-groups in \S \ref{SS:classifying linked p-groups, p=2}, the linked group $H$ has 
an orthogonal splitting of the form
$Q \oplus H_0$, where $Q \cong \Z_{2^n} \oplus \Z_{2^n}$, generated by let us say $x, y$ of order $2^n$, 
and where $Q$ has a linking matrix
equal to one of $\frac{1}{2^n} \mattwotwo{0}{1}{1}{0}$ or $\frac{1}{2^n} \mattwotwo{2}{1}{1}{2} \ \mod ~ 1$.  Clearly
interchanging $x$ and $y$ is an isometry on either form, and extending by the identity on $H_0$ gives an isometry of $H$
with determinant $\equiv -1 \ \mod 2^n$.  This proves the lemma and concludes the proof of Theorem \ref{T:unstabilizedHeegaardpairs}.
\end{proof}

\subsection{Problem 6}
\label{SS:remarks on problem 6}

In Theorem~\ref{T:partial normal form} we found a partial normal form for the double coset associated to a symplectic matrix $\cH$, and in $\S$\ref{SS:uniqueness questions} we investigated its non-uniqueness.  We raised the question of whether the submatrix $\cQ^{(2)}$ could be diagonalized.  In fact, the following is true:
\begin{proposition}
\label{P:linking form can be diagonalized}
Let $W$ be a 3-manifold which is defined by a Heegaard splitting. Let $H=H_1(W;\mathbb Z)$ and let $T$ be the torsion subgroup of $H$. Let $t$ be the rank of $T$, so that $T$ is a direct sum of cyclic groups of order $\tau_1,\dots,\tau_t$, where each $\tau_i$ divides $\tau_{i+1}$.  Assume that every $\tau_i$ is odd. Then $T$ is an abelian group with a linking, and there is a choice of basis for $T$ such that the linking form for $T$ is represented by a $t\times t$ diagonal matrix. 
\end{proposition}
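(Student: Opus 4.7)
The plan is to reduce the statement to the classical diagonalization of bilinear forms on $p$-primary groups for odd $p$, and then to implement that diagonalization by a Gram--Schmidt style induction.

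First I would invoke Theorem~\ref{T:linkings always split as direct sums} to write $T = T(p_1) \oplus \cdots \oplus T(p_k)$ as an orthogonal direct sum of its $p$-primary components, with the linking form splitting accordingly. Since every $\tau_i$ is odd by hypothesis, every prime $p_d$ dividing $|T|$ is odd. A diagonal basis for $T$ is then obtained by concatenating diagonal bases for each $T(p_d)$, so it suffices to prove: for an odd prime $p$ and a non-singular linking form $\lambda$ on a finite abelian $p$-group $A$, there exists an orthogonal basis of $A$.

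I would prove this by induction on $|A|$. Let $p^j$ be the largest order occurring in $A$. The heart of the argument is the following claim: \emph{there exists a primitive $x \in A$ of order $p^j$ with $\lambda(x,x) \equiv u/p^j \mod 1$ for some $u$ coprime to $p$.} Once this is established, cyclicity of $\langle x \rangle$ of order $p^j$ together with the fact that $u/p^j$ generates $\langle 1/p^j \rangle \subset \Q/\Z$ shows that any $y \in A$ can be adjusted by a multiple of $x$ to become orthogonal to $x$: if $\lambda(x,y) = k/p^j$, pick any integer $a$ with $au \equiv k \pmod{p^j}$ and replace $y$ by $y - ax$. This exhibits an orthogonal splitting $A = \langle x \rangle \oplus \langle x \rangle^{\perp}$; the restriction of $\lambda$ to $\langle x \rangle^{\perp}$ is again non-singular, so the inductive hypothesis produces an orthogonal basis there, and appending $x$ finishes the step.

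To prove the claim, I would start with any primitive $e \in A$ of order $p^j$ (which exists by elementary divisor theory). Non-singularity of $\lambda$ combined with the maximality of the order of $e$ forces the existence of $y \in A$ with $\lambda(e,y) = c/p^j$ for some $c$ coprime to $p$. Writing $\lambda(e,e) = b/p^j$, a direct calculation gives
\[
\lambda(e + ay,\, e+ay) \equiv \frac{b + 2ac}{p^j} + a^2 \lambda(y,y) \pmod 1,
\]
and the numerator modulo $p$ is $b + 2ac$. Because $p$ is odd, $2c$ is a unit modulo $p$, so we can pick an integer $a$ making $b + 2ac$ a unit modulo $p$. The main obstacle, and the step where one must be slightly careful, is verifying that the modified element $x = e + ay$ is still primitive of order $p^j$; this is where oddness of $p$ enters decisively. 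If $y \in pA$ this is automatic, and if not one must pass to a fixed elementary-divisor basis and use that freedom in choosing $a$ modulo $p$ remains (again because $p$ is odd, hence at least $3$, so one has enough residues to simultaneously dodge the one forbidden value in $A/pA$ and the one forbidden value from the self-linking condition). That the analogous step fails for $p=2$ is precisely the obstruction studied in \S\ref{SSS:Decomposing a linked abelian $2$-group} and illustrated by Example~\ref{example:p2problem}.
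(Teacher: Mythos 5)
Your plan is correct, but it takes a genuinely different route from the paper. The paper does not orthogonalize anything directly: after the primary decomposition of Theorem~\ref{T:linkings always split as direct sums}, it simply writes down, for each odd prime $p$ and each set of cyclic summands of like order $p^{e_{\rho+1}}$, an explicit diagonal form with entries $|\cA_{\rho+1}|/p^{e_{\rho+1}}, 1/p^{e_{\rho+1}},\dots,1/p^{e_{\rho+1}}$, and then invokes Seifert's classification theorem (Theorem~\ref{T:Seifert's theorem on the linking form}) --- linkings on $T(p)$ for odd $p$ are determined by the quadratic residue characters of the block determinants --- to conclude that this diagonal form is isomorphic to the given one; the last step is only the translation (\ref{E:gid from y_i}) between the prime-power generators and the generators of orders $\tau_1,\dots,\tau_t$. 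That argument is very short but leans entirely on the classification theorem, which the paper quotes from Seifert without proof. Your argument is self-contained: it is the odd-$p$ analogue of the Wall--Burger splitting carried out in \S\ref{SSS:Decomposing a linked abelian $2$-group} (Theorem~\ref{theorem:p2decomp}), where for odd $p$ no binary summands can occur, so the induction splits off cyclic orthogonal summands until the form is diagonal; what you lose is the explicit description of the diagonal entries in terms of the Seifert determinants, what you gain is independence from the classification theorem. Two points in your write-up can be tightened. First, with $\lambda(y,y)=d/p^j$ the numerator of $\lambda(e+ay,e+ay)$ modulo $p$ is $b+2ac+a^2d$, not $b+2ac$; either observe that if $p\nmid d$ you may simply take $x=y$, or choose $a$ avoiding the at most two roots mod $p$ of the nonzero polynomial $da^2+2ca+b$ (possible since $p\geq 3$ and $2c\not\equiv 0 \bmod p$). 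Second, your primitivity worry is vacuous: once $\lambda(x,x)=u/p^j$ with $p\nmid u$ and $p^j$ equal to the exponent of $A$, the order of $x$ is at least the order of $\lambda(x,x)$ in $\Q/\Z$, hence exactly $p^j$, and the splitting $A=\langle x\rangle\oplus\langle x\rangle^{\perp}$ that you construct exhibits $\langle x\rangle$ as a direct summand, so $x$ is automatically primitive; no elementary-divisor bookkeeping is needed.
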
   
\begin{proof} 
Consider, initially, a fixed $p$-primary component $T(p)$ of $T$ and its splitting $T(p) = T_1 \oplus \cdots \oplus T_\nu$ into 
cyclic groups $T_j$ of prime power order $p^{e_j}$.  The $T_j$'s may be collected into subsets consisting of groups of like order. 
Keeping notation adopted earlier, consider a typical such subset $T_{\rho +1}, \dots, T_{\rho + k}$ containing all cyclic summands of 
$T(p)$ of order $p^{\mu + 1}$.  Let $g_{\rho+1}, \dots, g_{\rho + k}$ generate these summands.  Define a new linking $\lambda'$ on 
$T_{\rho + j}$ ($j = 1, \dots, k$) by the rule:
\begin{equation} \label{equation:normalform18}
\lambda' (g_{\rho+i}, g_{\rho+j}) = 
\left\{ 
\begin{array}{ll}
\frac{| \mathcal A_{\rho+1} |}{p^{e_{\rho+1}}} \ \mod ~ 1 & \text{ if } i = j = 1 \\
\frac{1}{p^{e_{\rho+1}}} \ \mod ~ 1 & \text{ if } i = j = 2, \dots, k \\
0 & \text{ if } i \neq j.
\end{array}
\right.
\end{equation}
There is an induced linking $\lambda'$ on $T(p)$ obtained by direct summing the linkings on all of the cyclic summands, and thus
an induced linking on $T$ obtained by taking the orthogonal direct sum of all the $p$-primary summands.  We will also denote this
by $\lambda'$.  By Theorem~\ref{T:Seifert's theorem on the linking form}, the linking on $T$ is determined entirely by the 
quadratic residue characters of linkings on the $p$-primary summands of $T$.  It follows that $(T, \lambda')$ is 
equivalent as a linked group to $(T, \lambda)$. 

To complete the proof we need only note that by 
Theorem~\ref{T:fundamental theorem for f.g. abelian groups} the generators $g_{ij}$ of the cyclic summands of prime power order determine 
the generators $y_i$ of the cyclic summands of order $\tau_1,\dots,\tau_t$.  This follows from (\ref{E:gid from y_i}) of 
Theorem~\ref{T:fundamental theorem for f.g. abelian groups}.  Therefore there is a $t\times t$ matrix which also defines 
$\lambda'$, and $\lambda'$ is equivalent to $\lambda$.  The proof is complete.
\end{proof}

\begin{remark} \label{R:diagonalize all blocks?}
One might be tempted to think that Proposition~\ref{P:linking form can be diagonalized} implies that 
there is a matrix in the same double coset as the matrix $\cH'$ in (\ref{E:partial normal form for cH}) of 
Theorem~\ref{T:partial normal form} in which the blocks $\cP^{(2)}, \cQ^{(2)}$ are both diagonal. 
Suppose we could prove that.  Then for each $j = 1, \dots, t$ choose $r_j, s_j$ so that $r_j q_j - \tau_j s_j = 1$. 
Define $\cR^{(3)} = \diag (r_1, \dots, r_t)$ and $\cS^{(3)} = \diag (s_1, \dots, s_t)$.  With these choices it is easy 
to verify that $\mattwotwo{\cR^{(3)}}{\cP^{(2)}}{\cS^{(3)}}{\cQ^{(3)}}$ is symplectic.  If so, that would imply that 
$\cR^{(2)}$ and $\cS^{(2)}$ also are diagonal.   
However, while we have learned that there is a change in basis for $T$ in which $\cQ^{(2)}$ is diagonal, we do not know 
whether this change in basis  preserves the diagonal form of the matrix $\cP^{(2)}$.  Therefore we do not know whether 
it is possible to find a representative of the double coset in which all four blocks are diagonal.  
Proposition~\ref{P:linking form can be diagonalized} tells us that there is no reason to rule this out.  The discussion in 
$\S$\ref{SS:uniqueness questions} also tells us that it might be possible.  On the other hand, the fact that such a 
diagonalization cannot always be achieved when there is 2-torsion tells us that the proof would have to be deeper than the work we 
have already done. $\|$
\end{remark}

\begin{example} In spite of the difficulties noted in Remark~\ref{R:diagonalize all blocks?}, we are able to construct a very large class of  examples for which all four blocks are diagonal.  We construct our examples in stages:  
\begin{itemize}
\item First, consider the case where our 3-manifold $W(h_{p,q})$ is a lens space of type $(p,q)$.  Then $W(h_{p,q})$ admits a genus 1 Heegaard splitting with gluing map that we call  $h_{p,q}$, where $p$ is the order of $\pi_1(W(h_{p,q}))$.   The symplectic image of $h_{p,q}$ will be 
$\mattwotwo{r}{p}{s}{q}$, where $rq - ps = \pm 1$.  
\item Next, consider the case when our 3-manifold $W(\tilde{h})$ is the connect sum of $g$ lens spaces of types $(p_1,q_1),\dots, (p_g,q_g)$, so that it admits a Heegaard splitting of genus $g$. Think of the Heegaard surface as the connect sum of $g$ tori. The restriction of the gluing map $\tilde{h}$ to the $i^{\rm th}$ handle will be $h_{p_i,q_i}$, so that the symplectic image of the gluing map will be $M = \mattwotwo{R}{P}{S}{Q}$, where $P={\rm diag}(p_1,\dots,p_g), \ \ R = {\rm diag}(r_1,\dots,r_g), \ \ S = {\rm diag}(s_1,\dots,s_g),$\\ $Q= {\rm diag}(q_1,\dots,q_g)$.  
\item Finally, consider the class of 3-manifolds $W(\tilde{f}\tilde{h})$ of Heegaard genus $g$ which are defined by the gluing map $\tilde{f}\tilde{h}$, where $\tilde{f}$ is any element in the kernel of the natural homomorphism $\tilde{\Gamma}_g\to Sp(2g,\Z)$.   The fact that $\tilde{f}$ has trivial image in $Sp(2g,\Z)$ shows that the symplectic image of the gluing map for $W(\tilde{f}\tilde{h})$ will still be $M$. Thus we obtain an example for every element in the Torelli group, for every choice of  integers $(p_1,q_1),\dots, (p_g,q_g)$. $\|$
 \end{itemize}
\end{example}

\newpage
\section{Postscript : Remarks on higher invariants}
\label{S:a remark on higher invariants}

In this section, we make a few comments about the search for invariants
of Heegaard splittings coming from the action of the mapping class group
on the higher nilpotent quotients of the surface group (\ie the higher
terms in the Johnson-Morita filtration).  In this paper,
our invariants have come from 3 sources:
\begin{enumerate}
\item [(1)] The abelian group $H_1(W)$ of the 3-manifold $W$.
\item [(2)] The linking form on the torsion subgroup of $H_1(W)$.
\item [(3)] The presentation of $H_1(W)$ arising from the Heegaard splitting.
\end{enumerate}

With regard to (1), It is easy to see that there is a natural generalization. The classical Van Kampen Theorem shows  that the Heegaard gluing map $\tilde{h}$ determines a canonical presentation for $G = \pi_1(W)$ which arises via the action of $h$ on $\pi$.  This action determines in a natural way a presentation for $G/G^{(k)}$,  the $k^{\rm th}$ quotient group in the lower central series for $G$.  We do not know of systematic studies of these invariants of the fundamental groups of closed, orientable 3-manifolds. 

With regard to (2) and (3), if $\pi_1$ is the fundamental group of the Heegaard surface (which in
this section we will consider to be a surface with 1 boundary component corresponding
to a disc fixed by the gluing map -- this will make $\pi_1$ a free group), then
$H_1(W)$ is the quotient of the abelian group $\pi_1 / \pi_1^{(2)}$ by the
two lagrangians arising from the handlebodies.  The obvious generalization
of this is a quotient of the free nilpotent group $\pi_1 / \pi_1^{(k)}$.  Since
it is unclear what the appropriate generalization of the linking form to this
situation would be, one's first impulse might be to search for presentation invariants.  

Now, it is easy to see that the quotient of $\pi_1 / \pi_1^{(k)}$ by one of the ``nilpotent
lagrangians'' is another free nilpotent group.  Our presentation is thus a surjection
$\pi : N_1 \rightarrow N_2$, where $N_1$ is a free nilpotent group.  The invariants
of presentations of abelian groups arise from the fact that automorphisms of the presented
group may not lift to automorphisms of the free abelian group.  Unfortunately, the
following theorem says that no further obstructions exist:

\begin{theorem}
\label{theorem:liftingautos}
Let $\pi : N_1 \rightarrow N_2$ be a surjection between finitely generated nilpotent groups, where $N_1$ is a free
nilpotent group.  Also, let $\phi$ be an automorphism of $N_2$.  Then $\phi$ may be lifted to an automorphism of $N_1$
if and only if the induced automorphism $\phi_{\ast}$ of $N_2^{\Ab}$ can be lifted to
an automorphism of $N_1^{\Ab}$.
\end{theorem}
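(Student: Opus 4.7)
The plan is as follows. The ``only if'' direction is immediate: abelianization is functorial, so any lift $f \in \Aut(N_1)$ of $\phi$ descends to a lift $f_\ast \in \Aut(N_1^{\Ab})$ of $\phi_\ast$. The real content is the converse, and my strategy mirrors the structure of the proof of Theorem \ref{T:h lifts to the presentation level}: first construct an endomorphism lifting $\phi$ with a prescribed action on the abelianization, then upgrade endomorphism to automorphism using a Hopfian argument.

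First, I would fix free nilpotent generators $x_1,\dots,x_n$ of $N_1$ and denote their images in $N_1^{\Ab}$ by $\bar x_i$. Let $f_\ast \in \Aut(N_1^{\Ab})$ be the given lift of $\phi_\ast$, and for each $i$ pick any $z_i' \in N_1$ with $\pi(z_i') = \phi(\pi(x_i))$; this is possible because $\pi$ is surjective. Writing $K = \ker\pi$, a short diagram chase shows that $[z_i']_{\Ab}$ and $f_\ast(\bar x_i)$ have the same image under $\pi_\ast: N_1^{\Ab} \to N_2^{\Ab}$, so their difference lies in $\ker\pi_\ast$, which equals the image of $K$ in $N_1^{\Ab}$ (by right-exactness of abelianization). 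Hence we can choose $k_i \in K$ so that $z_i := z_i' k_i$ satisfies both $\pi(z_i) = \phi(\pi(x_i))$ and $[z_i]_{\Ab} = f_\ast(\bar x_i)$. The universal property of the free nilpotent group $N_1$ now produces a (unique) endomorphism $f : N_1 \to N_1$ with $f(x_i) = z_i$, and by construction $\pi\circ f = \phi\circ\pi$ on generators (hence everywhere) and $f$ abelianizes to $f_\ast$.

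The main obstacle is then showing that this $f$ is actually an automorphism. For surjectivity, I would invoke the standard fact that if $H$ is a subgroup of a nilpotent group $N$ with $H\cdot[N,N] = N$, then $H = N$ (proved by an easy induction on nilpotency class using $Z(N)$); applied to $H = f(N_1)$, the hypothesis that $f_\ast$ is surjective gives $f(N_1)\cdot[N_1,N_1] = N_1$, hence $f(N_1) = N_1$. For injectivity, I would use that $N_1$, being a finitely generated nilpotent group, is residually finite and therefore Hopfian, so any surjective endomorphism is automatically an automorphism. Combining these two facts gives $f \in \Aut(N_1)$, completing the proof.

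The only subtle point I anticipate is the claim that $\ker\pi_\ast$ coincides with the image of $K$ in $N_1^{\Ab}$; this is a small piece of general nonsense but must be stated carefully since $K$ is in general not normally generated by anything one can easily describe. Every other ingredient, particularly the surjectivity lemma for nilpotent groups and the Hopfian property, is classical and well-suited to be invoked by reference rather than reproved.
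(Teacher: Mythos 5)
Your proof is correct and follows essentially the same strategy as the paper's: construct an endomorphism of $N_1$ lifting $\phi$ whose abelianization is an automorphism, then upgrade it to an automorphism using a criterion on the abelianization together with the Hopfian property of finitely generated nilpotent groups (the paper packages this criterion as Theorem~\ref{theorem:autocriterion}). The only cosmetic difference is the order of the two lifting steps: you first lift $\phi(\pi(x_i))$ to $N_1$ and then adjust by an element of $\ker\pi$, while the paper first lifts the abelianized automorphism applied to $\bar{g}_i$ to $N_1$ and then adjusts by an element of $[N_1,N_1]$.
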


The key to proving Theorem \ref{theorem:liftingautos} is the following criterion
for an endomorphism of a nilpotent group to be an automorphism.  It is
surely known to the experts, but we were unable to find an appropriate reference.

\begin{theorem}
\label{theorem:autocriterion}
Let $N$ be a finitely generated nilpotent group and let $\psi : N \rightarrow N$ be an
endomorphism.  Then $\psi$ is an isomorphism if and only if the induced
map $\psi_{\ast} : N^{\Ab} \rightarrow N^{\Ab}$ is an isomorphism.
\end{theorem}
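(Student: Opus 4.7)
One direction is immediate: if $\psi$ is an isomorphism then it descends to an isomorphism $\psi_\ast$ on the abelianization. The plan is to prove the nontrivial direction, namely that $\psi_\ast$ an isomorphism implies $\psi$ is an isomorphism. I will handle surjectivity and injectivity separately.

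For surjectivity, I will exploit the lower central series $N = N^{(1)} \supseteq N^{(2)} \supseteq \cdots \supseteq N^{(c+1)} = 1$, which is preserved by $\psi$ since it is a characteristic filtration. The key observation is that iterated commutators define a natural surjection
\[
\underbrace{N^{\Ab} \otimes \cdots \otimes N^{\Ab}}_{k \text{ factors}} \longrightarrow N^{(k)}/N^{(k+1)},
\]
and this surjection intertwines the endomorphism induced by $\psi$ on $N^{(k)}/N^{(k+1)}$ with $\psi_\ast^{\otimes k}$ on the tensor power. Since $\psi_\ast$ is surjective, so is each $\psi_\ast^{\otimes k}$, and hence the induced map $N^{(k)}/N^{(k+1)} \to N^{(k)}/N^{(k+1)}$ is surjective for every $k$. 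A downward induction on $k$ now shows $\psi(N^{(k)}) = N^{(k)}$: assuming $\psi(N^{(k+1)}) = N^{(k+1)}$, any $x \in N^{(k)}$ admits $y \in N^{(k)}$ with $\psi(y) \equiv x \pmod{N^{(k+1)}}$; then $\psi(y)^{-1}x \in N^{(k+1)} = \psi(N^{(k+1)})$, so $x \in \psi(N^{(k)})$. Starting from the base case $k = c+1$ (trivially) and working down to $k=1$ yields surjectivity of $\psi$.

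For injectivity, I will invoke the classical fact that every finitely generated nilpotent group is \emph{Hopfian}, i.e.\ every surjective endomorphism is injective. This is a standard consequence of Mal\cprime cev's theorem that finitely generated nilpotent groups are residually finite, combined with the observation that finitely generated residually finite groups are Hopfian (if $\psi$ were surjective but not injective, reduction modulo a finite-index normal subgroup avoiding a nontrivial kernel element would give a surjective non-injective endomorphism of a finite group, a contradiction). Applying this to our surjective $\psi$ finishes the proof.

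The main obstacle I anticipate is making the reduction on the associated graded precise: verifying that the iterated commutator map descends to a well-defined surjection from the tensor power of $N^{\Ab}$, and that it is natural with respect to $\psi$, requires a careful commutator-identity computation (the basic identities $[xy,z] = [x,z]^y[y,z]$ and $[x,yz] = [x,z][x,y]^z$ show that $[\,\cdot\,,\,\cdot\,]$ is bilinear modulo the next filtration stage, which is what is needed). Everything else is either formal induction or a citation of Hopficity.
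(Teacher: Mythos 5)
Your argument is correct, but it is organized differently from the paper's. You and the paper both dispose of injectivity the same way, by quoting Hopficity of finitely generated nilpotent groups, so the real comparison is in the surjectivity step. The paper inducts on the nilpotency class: it applies the theorem to $N/N^{(n)}$, uses the five lemma to reduce to surjectivity of $\psi$ on the last nontrivial term $N^{(n)}$, and settles that term with a one-line trick --- $N^{(n)}$ is generated by weight-$n$ commutators $\beta(g_1,\ldots,g_n)$ and is central, so $\beta(g_1h_1,\ldots,g_nh_n)=\beta(g_1,\ldots,g_n)$ for $h_i\in N^{(n)}$, and no multilinearity statement is ever needed. You instead treat all graded pieces at once, invoking the standard fact that the associated graded of the lower central series is generated in degree one, i.e.\ the iterated-commutator map $(N^{\Ab})^{\otimes k}\to N^{(k)}/N^{(k+1)}$ is a well-defined surjection natural in $\psi$, and then run a downward induction along the filtration to get $\psi(N^{(k)})=N^{(k)}$ for every $k$. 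This buys you a slightly stronger intermediate conclusion (surjectivity on each term of the lower central series, not just on $N$) and avoids the five lemma, at the cost of the commutator-calculus lemma you defer to the end; the identities you cite ($[xy,z]=[x,z]^y[y,z]$, $[x,yz]=[x,z][x,y]^z$, plus the observation that conjugation and changing $g_i$ by elements of $N^{(2)}$ only perturb a weight-$k$ commutator inside $N^{(k+1)}$) do suffice, and right-exactness of the tensor product gives surjectivity of $\psi_\ast^{\otimes k}$ as you claim. One small point of hygiene: the terms of the lower central series are preserved by \emph{all} endomorphisms because they are verbal (fully invariant) subgroups; ``characteristic'' strictly speaking only covers automorphisms, so you should say fully invariant when justifying that $\psi$ induces maps on $N^{(k)}/N^{(k+1)}$.
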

\begin{proof}
The forward implication being trivial, we prove the backward implication.  The proof
will be by induction on the degree $n$ of nilpotency.  If $n=1$, then $N$ is 
abelian and there is nothing to prove.  Assume, therefore, that $n>1$ and that
the theorem is true for all smaller $n$.  We begin by observing that since finitely generated nilpotent groups
are Hopfian, it is enough to prove that $\psi$ is surjective.  Letting
$$N = N^{(1)} \vartriangleright N^{(2)} \vartriangleright \cdots \vartriangleright N^{(n)} \vartriangleright N^{(n+1)} = 1$$
be the lower central series of $N$, we have an induced commutative diagram
$$\begin{CD}
1 @>>> N^{(n)}    @>>> N          @>>> N / N^{(n)} @>>> 1\\
@.     @VV{\psi}V      @VV{\psi}V      @VVV             @.\\
1 @>>> N^{(n)}    @>>> N          @>>> N / N^{(n)} @>>> 1
\end{CD}$$
Since $N / N^{(n)}$ is an $(n-1)$-step nilpotent group, the inductive hypothesis implies that
the induced endomorphism of $N / N^{(n)}$ is an isomorphism.  The five lemma therefore
says that to prove that the map
$$\psi : N \longrightarrow N$$
is surjective, it is enough to prove that the map
$$\psi : N^{(n)} \longrightarrow N^{(n)}$$
is surjective.  Now, $N^{(n)}$ is generated by commutators of weight $n$ in the
elements of $N$.  Let $\beta$ be a bracket arrangement of weight $n$ and let
$\beta(g_1,\ldots,g_n) \in N^{(n)}$ with $g_i \in N$ be some commutator 
of weight $n$.  Since $\psi$ induces an isomorphism of $N / N^{(n)}$, we can
find some $\tilde{g}_1,\ldots,\tilde{g}_n \in N$ and $h_1,\ldots,h_n \in N^{(n)}$ so that 
$$\psi(\tilde{g}_i) = g_i h_i$$ 
for all $i$.  Hence $\psi$ maps $\beta(\tilde{g}_1,\ldots,\tilde{g}_n)$
to $\beta(g_1 h_1,\ldots,g_n h_n)$.  However, since $N^{(n)}$ is central we have
that
$$\beta(g_1 h_1,\ldots,g_n h_n) = \beta(g_1,\ldots,g_n),$$
so we conclude that $\beta(g_1,\ldots,g_n)$ is in $\psi(N^{(n)})$, as desired.
\end{proof}

\noindent
We now prove Theorem \ref{theorem:liftingautos}.

\begin{proof}[Proof of Theorem \ref{theorem:liftingautos}]
Let $\{g_1,\ldots,g_k\}$ be a free nilpotent generating set for $N_1$, and let $\rho$ be an automorphism
of $N_1^{\Ab}$ lifting $\phi_{\ast}$.  Also, let $\overline{g}_i \in N_1^{\Ab}$ be the image of $g_i$.  Now,
pick any lift $h_i \in N_1$ of $\rho(\overline{g}_i)$.  Observe that by assumption $\pi(h_i)$ and
$\phi(\pi(g_i))$ are equal modulo $[N_2,N_2]$.  Since the restricted map $\pi : [N_1,N_1] \rightarrow [N_2,N_2]$
is easily seen to be surjective, we can find some $k_i \in [N_1,N_1]$ so that $\pi(h_i k_i) = \phi(\pi(g_i))$.  Since
$N_1$ is a free nilpotent group, the mapping
$$g_i \mapsto h_i k_i$$
induces an endomorphism $\tilde{\phi}$ of $N_1$ which by construction lifts $\phi$.  Moreover,
Theorem \ref{theorem:autocriterion} implies that $\tilde{\phi}$ is actually an automorphism, as desired.
\end{proof}

\begin{remark} \rm Theorem~\ref{theorem:liftingautos} does not destroy all hope for finding invariants of presentations, as there may be obstructions
to lifting automorphisms to automorphisms which arise ``geometrically''.
However, it makes the search for obstructions much more subtle. Moreover, we note that in \cite{L-M} Y. Moriah and M. Lustig used the presentation of $\pi_1(W)$ arising from a Heegaard splitting to prove that certain Heegaard splittings of Seifert fibered spaces are in fact inequivalent. Their subsequent efforts to generalize what they did (\cite{L-M-1993}) show that the problem is difficult, and the final word has not been said on invariants of Heegaard splittings that arise from the associated presentation of $\pi_1(W)$. $\|$
\end{remark}

%%%%%%%%%%%%%%%%%%%%%%%%%%%%%%%%%%%%%%%%%%%%%%%%%%%
%%%%%%%%%%%%%%%%%%%%%%%%%%%%%%%%%%%%%%%%%%%%%%%%%%%%%%%%%%%%%%%%%%%%%%%%%%%%%%%%%%%%%%%%%%%%%%%%%%%%%%

%%%%%%%%%%%%%%%%%%%%%%%%%%%%%%%%%%%%%%%%%%%%%%%%%%%
%%%%%%%%%%%%%%%%%%%%%%%%%%%%%%%%%%%%%%%%%%%%%%%%%%%
%%%%%%%%%%%%%%%%%%%%%%%%%%%%%%%%%%%%%%%%%%%%%%%%%%%


\begin{thebibliography}{99}

\bibitem{AM}  
S. Akbulut\ and\ J. D. McCarthy, {\it Casson's invariant for oriented homology $3$-spheres}, Princeton Univ. Press, Princeton, NJ, 1990.

\bibitem{Birman1975} 
J. S. Birman, On the equivalence of Heegaard splittings of closed, orientable $3$-manifolds, in {\it Knots, groups, and $3$-manifolds (Papers dedicated to the memory of R. H. Fox)}, 137--164. Ann. of Math. Studies, 84, Princeton Univ. Press, Princeton, N.J.

\bibitem{BC} 
J. S. Birman\ and\ R. Craggs, The $\mu $-invariant of $3$-manifolds and certain structural properties of the group of homeomorphisms of a closed, oriented $2$-manifold, Trans. Amer. Math. Soc. {\bf 237} (1978), 283--309.

\bibitem{BGM}  
J. S. Birman, F. Gonz\'alez-Acu\~{n}a\ and\ J. M. Montesinos, Heegaard splittings of prime $3$-manifolds are not unique, Michigan Math. J. {\bf 23} (1976), no.~2, 97--103.

\bibitem{BF} 
T. E. Brendle\ and\ B. Farb, The Birman-Craggs-Johnson homomorphism and abelian cycles in the Torelli group, Math. Ann. {\bf 338} (2007), no.~1, 33--53.

\bibitem{BFP} 
N. Broaddus, B. Farb\ and\ A. Putman, The Casson invariant  and the word metric in the Torelli group, C. R. Acad. Sci. Paris, Ser. I {\bf 345} (2007), 449--452.  

\bibitem{Bu} 
E. Burger, \"Uber Gruppen mit Verschlingungen, J. Reine Angew. Math. {\bf 188} (1950), 193--200.

\bibitem{Burkhardt1890} 
H. Burkhardt, Grundz\"{u}ge einer allgemeinen Systematik der hyperelliptischen Funktionen erster Ordnung, Math. Ann. {\bf 35} (1890), 198-296 (esp. p.p. 209-212).

\bibitem{CM} 
T. D. Cochran\ and\ P. Melvin, Finite type invariants of 3-manifolds, Invent. Math. {\bf 140} (2000), no.~1, 45--100. 

\bibitem{CGO} 
T. D. Cochran, A. Gerges\ and\ K. Orr, Dehn surgery equivalence relations on 3-manifolds, Math. Proc. Cambridge Philos. Soc. {\bf 131} (2001), no.~1, 97--127.

\bibitem{Day} 
M. Day,  Extending Johnson's and Morita's homomorphisms to the mapping class group, 
Algebr. Geom. Topol. {\bf 7} (2007), 1297--1326.

\bibitem{Eng} 
R. Engmann, Nicht-hom\"oomorphe Heegaard-Zerlegungen vom Geschlecht $2$ der zusammenh\"angenden Summe zweier Linsenr\"aume, Abh. Math. Sem. Univ. Hamburg {\bf 35} (1970), 33--38.

\bibitem{FOXBURGERREVIEW}
R. H. Fox,
review of \cite{Bu},
Math Reviews, MR0043089 (13,204a).

\bibitem{GL} 
S. Garoufalidis\ and\ J. Levine, Finite type $3$-manifold invariants, the mapping class group and blinks, J. Differential Geom. {\bf 47} (1997), no.~2, 257--320.

\bibitem{HTT} J. Hass, A. Thompson and W, Thurston, Stabilization of Heegaard splittings, 
preprint arXiv:0802.2145v2 [math.GT] 6 Mar 2008.

\bibitem{Humphries} 
S. P. Humphries, Generators for the mapping class group, in {\it Topology of low-dimensional manifolds (Proc. Second Sussex Conf., Chelwood Gate, 1977)}, 44--47, Lecture Notes in Math., 722, Springer, Berlin.

\bibitem{Johnson1980} 
D. Johnson, An abelian quotient of the mapping class group ${\cal I}\sb{g}$, Math. Ann. {\bf 249} (1980), no.~3, 225--242.

\bibitem{Johnson1980b}  
D. Johnson, Conjugacy relations in subgroups of the mapping class group and a group-theoretic description of the Rochlin invariant, Math. Ann. {\bf 249} (1980), no.~3, 243--263.

\bibitem{Johnson1983}  
D. Johnson, The structure of the Torelli group. I. A finite set of generators for ${\cal I}$, Ann. of Math. (2) {\bf 118} (1983), no.~3, 423--442.

\bibitem{Johnson1985} 
D. Johnson, The structure of the Torelli group. II. A characterization of the group generated by twists on bounding curves, Topology {\bf 24} (1985), no.~2, 113--126.

\bibitem{Johnson1983b} 
D. Johnson, A survey of the Torelli group, in {\it Low-dimensional topology (San Francisco, Calif., 1981)}, 165--179, Contemp. Math., 20, Amer. Math. Soc., Providence, RI.

\bibitem{KK}  
A. Kawauchi\ and\ S. Kojima, Algebraic classification of linking pairings on $3$-manifolds, Math. Ann. {\bf 253} (1980), no.~1, 29--42.

\bibitem{L-M} M. Lustig and Y. Moriah,  Nielsen equivalence in Fuchsian groups and Seifert fibered spaces, Topology {\bf 30} (1991), no.~2, 191-204.

\bibitem{L-M-1993} M. Lustig and Y. Moriah, Generating systems of groups and Reidemeister-Whitehead torsion", J. of Algebra {\bf 157} (1993), no.~1, 170-198.

\bibitem{MKS} 
W. Magnus, A. Karrass\ and\ D. Solitar, {\it Combinatorial group theory: Presentations of groups in terms of generators and relations}, Interscience Publishers [John Wiley \& Sons, Inc.], New York, 1966. 

\bibitem{Min} 
H. Minkowski, Grundlagen f{\"u}r eine Theorie der quadratischen Formen mit ganzzahligen koeffiziente, in {\it Werke}, Bd. I, 3--143.

\bibitem{MS} 
J. M. Montesinos\ and\ C. Safont, On the Birman invariants of Heegaard splittings, Pacific J. Math. {\bf 132} (1988), no.~1, 113--142.

\bibitem{M-S} 
Y. Moriah\ and\ J. Schultens, Irreducible Heegaard splittings of Seifert fibered spaces are either vertical or horizontal, Topology {\bf 37} (1998), no.~5, 1089--1112.

\bibitem{Morita1991} 
S. Morita, On the structure of the Torelli group and the Casson invariant, Topology {\bf 30} (1991), no.~4, 603--621.

\bibitem{Morita1993a}  
S. Morita, The extension of Johnson's homomorphism from the Torelli group to the mapping class group, Invent. Math. {\bf 111} (1993), no.~1, 197--224.

\bibitem{Morita1993b}  
S. Morita, Abelian quotients of subgroups of the mapping class group of surfaces, Duke Math. J. {\bf 70} (1993), no.~3, 699--726.

\bibitem{Morita1996}  
S. Morita, A linear representation of the mapping class group of orientable surfaces and characteristic classes of surface bundles, in {\it Topology and Teichm\"uller spaces (Katinkulta, 1995)}, 159--186, World Sci. Publ., River Edge, NJ.

\bibitem{Morita2001} 
S. Morita, Structure of the mapping class group and symplectic representation theory, in {\it Essays on geometry and related topics, Vol. 1, 2}, 577--596, Monogr. Enseignement Math., {\bf 38}, Enseignement Math., Geneva 2001.

\bibitem{Newman} 
M. Newman, {\it Integral matrices}, Academic Press, New York, 1972.

\bibitem{NivenZuckermanNumbers}
I. Niven, H. S. Zuckerman\ and\ H. L. Montgomery, {\it An introduction to the theory of numbers}, Fifth edition, Wiley, New York, 1991.

\bibitem{OMA1} 
O. T. O'Meara, {\it Introduction to quadratic forms}, Academic Press, Publishers, New York, 1963.

\bibitem{Pitsch} W. Pitsch, Trivial cocycles and invariants of homology 3-spheres, preprint math.GT/0605725.

\bibitem{Pitsch2} W. Pitsch, Integral homology 3-spheres and the Johnson filtration, to appear in Trans. Amer. Math. Soc.

\bibitem{R1}  
K. Reidemeister, Zur Dreidimensionalen Topologie, Abh. Math. Sem. Univ. Hamburg {\bf 9} (1933), 189--194.

\bibitem{R2} 
K. Reidemeister, Heegaarddiagramme und Invarianten von Mannigfaltighesten,
Abh. Math. Sem. Univ. Hamburg {\bf 10} (1934), 109--118.

\bibitem{Scharlemann} 
M. Scharlemann, Heegaard splittings of compact 3-manifolds, in {\it Handbook of geometric topology}, 921--953, North-Holland, Amsterdam.

\bibitem{Schultens1996} 
J. Schultens, The stabilization problem for Heegaard splittings of Seifert fibered spaces, Topology Appl. {\bf 73} (1996), no.~2, 133--139.

\bibitem{Sei} 
H. Seifert, Verschlingungsinvarianten, Sitzungsber. Preu{\ss}. Akad. Wiss. {\bf 26-29} (1933) 811--828.

\bibitem{Si} 
J. Singer, Three-dimensional manifolds and their Heegaard diagrams, Trans. Amer. Math. Soc. {\bf 35} (1933), no.~1, 88--111.

\bibitem{DiscreteFourier}
D. Sundararajan, {\it The discrete Fourier transform}, World Sci. Publishing, River Edge, NJ, 2001.

\bibitem{W} 
F. Waldhausen, Heegaard-Zerlegungen der $3$-Sph\"are, Topology {\bf 7} (1968), 195--203.

\bibitem{W1} 
C. T. C. Wall, Quadratic forms on finite groups, and related topics, Topology {\bf 2} (1963), 281--298.

\end{thebibliography}
\end{document}